\def\CC {{\mathbb C}}     
\def\NN {{\mathbb N}}     
\def\TT {{\mathbb T}}     
\def\VV {{\mathbb V}}     
\def\ZZ {{\mathbb Z}}     
\def\B  {\mathcal B}
\def\lw  {\longrightarrow}
\def\mc {\mathcal}
\def\mk {\mathfrak}
\def\tst {\Longleftrightarrow}
\def\ul  {\underline}
\newtheorem{theorem}{Theorem}[section]
\newtheorem{lemma}[theorem]{Lemma}
\newtheorem{prop}[theorem]{Proposition}
\newtheorem{coro}[theorem]{Corollary}
\newtheorem{rem}{Remark}[section]
\newtheorem{definition}{Definition}[section]
\newtheorem{example}{Example}[section]
\begin{document}
\title{Representations of large Mackey Lie algebras and universal tensor categories}
\author{Ivan Penkov, Valdemar Tsanov}

\maketitle


\begin{abstract}
We extend previous work by constructing a universal abelian tensor category ${\bf T}_t$ generated by two objects $X,Y$ equipped with finite filtrations $0\subsetneq X_0\subsetneq ... X_{t+1}\subsetneq X$ and $0\subsetneq Y_0\subsetneq ... Y_{t+1}\subsetneq Y$, and with a pairing $X\otimes Y\to \mathbbm{1}$, where $\mathbbm{1}$ is the monoidal unit. This category is modeled as a category of representations of a Mackey Lie algebra $\mk{gl}^M(V,V_*)$ of cardinality $2^{\aleph_t}$, associated to a diagonalizable pairing between two complex vector spaces $V,V_*$ of dimension $\aleph_t$. As a preliminary step, we study a tensor category $\TT_t$ generated by the algebraic duals $V^*$, $(V_*)^*$. The injective hull of $\CC$ in $\TT_t$ is a commutative algebra $I$, and the category ${\bf T}_t$ is consists of the free $I$-modules in $\TT_t$. An essential novelty in our work is the explicit computation of Ext-groups between simples in both categories ${\bf T}_t$ and $\TT_t$, which had been an open problem already for $t=0$. This provides a direct link from the theory of universal tensor categories to Littlewood-Richardson-type combinatorics.
\end{abstract}

MSC 2020: 17B65; 17B10; 18M05; 18E10; 16S37.

\section{Introduction}

For us, a {\it tensor category} is a $\CC$-linear, not necessarily rigid, symmetric monoidal abelian category. In this paper we construct a tensor category ${\bf T}_t$, generated by two objects $X$ and $Y$, equipped with finite filtrations $0\subsetneq X_0\subsetneq ... X_{t+1}\subsetneq X$ and $0\subsetneq Y_0\subsetneq ... Y_{t+1}\subsetneq Y$, and with a pairing $X\otimes Y\to \mathbbm{1}$ where $\mathbbm{1}$ is the monoidal unit, such that the category ${\bf T}_t$ is universal in the following sense: for every other tensor category equipped with objects $X',Y'$, a morphism $X'\otimes Y'\to\mathbbm{1}'$, and finite filtrations $0\subsetneq X'_0\subsetneq ... X'_{t'+1}\subsetneq X'$ and $0\subsetneq Y'_0\subsetneq ... Y'_{t'+1}\subsetneq Y'$ with $t'\leq t$, there is a left exact monoidal functor from the category ${\bf T}_t$ to this other category such that
$$
F(X)=X'\;,\; F(Y)=Y'\;,\; F(X_\alpha)=X'_{s(\alpha)} \;,\; F(Y_\alpha)=Y'_{s(\alpha)} \;,
$$
for some order preserving surjection $s:\{0,...,t+1\}\to\{0,...,t'+1\}$. 

Our work extends several previous works \cite{Penkov-Serganova-Mackey}, \cite{Sam-Snowden}, \cite{Chirvasitu-Penkov-OTC}, \cite{Chirvasitu-Penkov-RC}, \cite{Chirvasitu-Penkov-UTC}. The most recent of them is the paper \cite{Chirvasitu-Penkov-UTC} where the filtrations of $X$ and $Y$ are just of length two, i.e., amount to fixed subobjects $X_0\subset X$ and $Y_0\subset Y$. This case has many features of the general case, and we follow the main idea of \cite{Chirvasitu-Penkov-UTC}. Namely, we first construct a tensor category $\TT_{t}$ which consists of tensor modules over the Mackey Lie algebra $\mk{gl}^M=\mk{gl}^M_{t+1}$ of a split pairing ${\bf p}:V\otimes V_*\to\CC$. Here $V$ is a complex vector space of dimension $\aleph_{t}$ with $t\in\NN$, and $V_*$ is the span within $V^*:={\rm Hom}(V,\CC)$ of a system of vectors $\{x_b\}$ dual to a basis $\{v_b\}$ of $V$. The Lie algebra $\mk{gl}^M$ consists of all linear operators $\varphi:V\to V$ such that $\varphi^*(V_*)\subset V_*$, where $\varphi^*$ stands for the dual operator. We recall that the $\mk{gl}^M$-modules $V^*$ and $\bar V:=(V_*)^*={\rm Hom}(V_*,\CC)$ have finite filtrations $V_*=V^*_{0}\subsetneq ... \subsetneq V^*_{t+1} = V^*$ and $V=\bar V_{0}\subsetneq ... \subsetneq \bar V_{t+1}=\bar V$ with irreducible successive quotients. Using these filtrations we compute the socle and radical filtrations of the adjoint $\mk{gl}^M$-module, and also describe all ideals of the Lie algebra $\mk{gl}^M$. The latter result is not necessarily needed for our study of the category ${\bf T}_{t}$ and is of interest on its own.

The category $\TT_{t}$ is defined as the full tensor subcategory of the category of $\mk{gl}^M$-modules, generated by the two modules $V^*$ and $\bar V$, and closed under arbitrary direct sums. This category is not yet our desired universal tensor category, but is a natural and interesting tensor category. We classify the simple objects in $\TT_{t}$. It turns out that they are parametrized by pairs $\lambda_\bullet,\mu_\bullet$ where $\lambda_\bullet$ and $\mu_\bullet$ are finite sequences of length $t+2$ with elements arbitrary Young diagrams. We then describe the indecomposable injective objects in $\TT_{t}$ (equivalently, the injective hulls of the simple objects) and compute explicitly the layers of their socle filtrations. The simple objects of $\TT_{t}$ have infinite injective length and the injective hull $I$ of the trivial 1-dimensional $\mk{gl}^M$-module $\CC$ plays a special role. In particular, the $\mk{gl}^M$-module $I$ has also the structure of a commutative associative algebra.

An essential novelty going beyond the ideas of \cite{Chirvasitu-Penkov-UTC} is that we write down an explicit injective resolution of any simple object, and hence obtain explicit formulas for all Exts between simple modules in $\TT_{t}$.

Finally, following again \cite{Chirvasitu-Penkov-UTC}, we define the desired universal category ${\bf T}_{t}$. This is the category of $(\mk{gl}^M,I)$-modules, whose objects are the objects of $\TT_{t}$ which are free as $I$-modules (in particular, $I\in {\bf T}_{t}$) and whose morphisms are morphisms of $\mk{gl}^M$-modules as well as of $I$-modules. The tensor product in ${\bf T}_{t}$ is $\otimes_I$ and the simple objects in the new category are nothing but simple objects of $\TT_{t}$ tensored by $I$. These new simple objects have finite injective length in ${\bf T}_t$. Moreover, as an object of ${\bf T}_{t}$ the module $I$ is both simple and injective. We compute explicitly all Exts between simple objects in ${\bf T}_{t}$ by writing down canonical injective resolutions of simples. In the case of ${\bf T}_{0}$ studied in \cite{Chirvasitu-Penkov-UTC}, this yields a new formula for the dimension of ${\rm Ext}_{{\bf T}_0}^{q}(I\otimes L_{\kappa_1,\kappa_0;\nu_0,\nu_1},I\otimes L_{\lambda_1,\lambda_0;\mu_0,\mu_1})$ as the multiplicity of $I\otimes L_{\kappa_1^\perp,\kappa_0;\nu_0^\perp,\nu_1}$ in the $q$-th layer of the socle filtration of the injective hull of the module $I\otimes L_{\lambda_1^\perp,\lambda_0;\mu_0^\perp,\mu_1}$, where $L_{\kappa_1,\kappa_0;\nu_0,\nu_1},L_{\lambda_1,\lambda_0;\mu_0,\mu_1}$ are arbitrary simple objects in $\TT_{0}$ and ${^\perp}$ stands for conjugate Young diagram.

A brief outline of the contents is as follows. In \S2 we define Mackey Lie algebras and determine their ideals. In \S3 we introduce the module $I$. In \S4 we collect necessary notions from category theory. In \S5 and \S6, which contain the technical bulk of the paper, we study the categories $\TT_t$ and ${\bf T}_t$, respectively. We exhibit some unexpected combinatorial symmetries of these categories in \S7. In \S8 we prove the universality property of ${\bf T}_t$.\\

\noindent{\bf Acknowledgement:} Both authors are supported in part by DFG grants PE 980/8-1 and PE 980/9-1. V.Ts. is also supported by the Bulgarian Ministry of Education and Science, Scientific Programme ``Enhancing the Research Capacity in Mathematical Sciences (PIKOM)'', No. DO1-67/05.05.2022. Some initial inspiration for our work came from a set of examples of injective resolutions and a conjecture on the injective length of modules in a subcategory of $\TT_{1}$ due to T. Pham, \cite{Pham}.

\section{Basic notions}

The ground field for all vector spaces and tensor products is the field $\CC$ of complex numbers, unless stated otherwise. We set $\otimes:=\otimes_\CC$. If $V$ is a vector space, then $V^*:={\rm Hom}(V,\CC)$ stands for the dual vector space and $\mk{gl}(V)$ denotes the Lie algebra of all linear operators on $V$. By $\NN$ we denote the natural numbers (including $0$), and $|A|$ stands for the cardinality of a set $A$.

Let $V$ be a vector space. For any subset $A\subseteq V$, we write ${\rm span}A\subseteq V$ for the set of all (finite) linear combinations of elements of $A$. A subset $\B\subseteq V$ is a basis of $V$, if ${\rm span}\B=V$ and $\B$ is minimal with this property. We assume the Axiom of Choice which implies that every vector space admits a basis. The dimension of a vector space is the cardinality of a basis.

The space of linear operators on a vector space $V$, considered as a Lie algebra, will be denoted by $\mk{gl}(V)$.

If $M$ is a module over a Lie algebra, or an associative algebra, the {\it socle} of $M$, ${\rm soc}M$ is the semisimple submodule of $M$. The {\it socle filtration} of $M$ is defined inductively by setting ${\rm soc}^1M:={\rm soc}M$, ${\rm soc}^qM:=\pi_{q-1}^{-1}({\rm soc}(M/{\rm soc}^{q-1}M))$, where $\pi_{q-1}:M\to M/{\rm soc}^{q-1}M$ is the canonical projection. The {\it layers} of the socle filtration are defined as $\ul{\rm soc}^qM:={\rm soc}^qM/{\rm soc}^{q-1}M$. The socle filtration of a module $M$ is {\it exhaustive} if $M=\lim\limits_\to {\rm soc}^qM$. The socle filtration of a module of finite length is always exhaustive.

The {\it radical} of a $M$ is the joint kernel of all homomorphisms from $M$ to simple quotients. Setting ${\rm rad}^1M:={\rm rad}M$ and ${\rm rad}^qM:={\rm rad}({\rm rad}^{q-1}M)$ we obtain the {\it radical filtration} of $M$.

\subsection{Mackey Lie algebra and its structure}

Let $V,W$ be fixed vector spaces and
$$
{\bf p}:V\otimes W \to \CC
$$
be a fixed nondegenerate pairing (nondegenerate bilinear form). This determines embeddings $W\subset V^*$ and $V\subset W^*$. The Mackey Lie algebra associated to the pairing ${\bf p}$ is
$$
\mk{gl}^M(V,W):=\{\varphi\in\mk{gl}(V): \varphi^*(W)\subset W \} \;,
$$
where $\varphi^*$ stands for the endomorphism of $V^*$ dual to $\varphi$. We consider $\mk{gl}^M(V,W)$ as a Lie subalgebra of $\mk{gl}(V)$, but it can also be considered as an associative subalgebra of ${\rm End}V$.

We shall focus on the case where the vector spaces $V$ and $W$ are isomorphic and the pairing is diagonalizable. The latter means that there exist bases $\{v_b:b\in \B\}$ of $V$ and $\{w_b:b\in\B\}$, parametrized by the same set $\B$, so that, for $v=\sum\limits_{b\in\B} v(b)v_b\in V$ and $w=\sum\limits_{b\in\B} w(b)w_b\in W$, we have
$$
{\bf p}(v,w) = \sum\limits_{b\in\B} v(b)w(b) \;.
$$
In this situation, $W$ is referred to as the {\it restricted dual} $V_*$ of $V$. Since $W=V_*$ and ${\bf p}$ are fixed, we shall use the short notation $\mk{gl}^M$ for the Lie algebra $\mk{gl}^M(V,V_*)$. Also, we denote $\bar V:=(V_*)^*$ and assume that $V$ is embedded in $\bar V$ by use of the pairing ${\bf p}$.

From now on, we suppose that the dimension of $V$ is an infinite cardinal number of the form $\aleph_{t}$ with a fixed $t\in\NN$. We also use the notation $\mk{gl}^M_{t+1}$ for $\mk{gl}^M$, when the dependence on $t$ is to be emphasized.

Let $\B$ be the index set for a fixed pair of dual bases of $V$ and $V_*$ as above. We have $|\B|=\aleph_t$. Since a pair of dual bases is fixed, both vector spaces $V^*$ and $\bar V$ can be identified with the space ${\rm Maps}(\B,\CC)$. For $s\leq t+1$ we define $V^*_{s}$ and $\bar V_{s}$ to be the respective subspaces of $V^*$ and $\bar V$, identified with the subspace $\{x\in {\rm Maps}(\B,\CC):|{\rm supp}(x)|<\aleph_{s}\}\subset {\rm Maps}(\B,\CC)$, where ${\rm supp}(x):=\{b\in\B:x(b)\ne0\}$. Thus $V_*=V^*_{0}$, $V=\bar V_{0}$, $V^*=V^*_{t+1}$ and $\bar V=\bar V_{t+1}$.

The dimensions and the cardinalities of the vector spaces $\mk{gl}^M$, ${\rm End}V$, $V^*_s$, $\bar V_s$, for $s>0$, are all equal to $2^{\aleph_t}$.

The notion of support is extended from vectors in $V^*$ to vectors in tensor powers $(V^*)^{\otimes q}$ as follows. Any $v\in (V^*)^{\otimes q}$ can be written as a finite sum $v=\sum\limits_{j=1}^n v_1^j\otimes v_2^j\otimes ... \otimes v_q^j$ with $v^j_i\in V^*$. We put
$$
{\rm supp}(v):=\bigcup\limits_{i,j} {\rm supp}(v_i^j) \;.
$$
Clearly $|{\rm supp}(v)|=\max\limits_{i,j}\{|{\rm supp}(v_i^j)|\}$. The notion of cardinality of support can be further induced for elements of the quotient spaces $V^*/V^*_{s}$ by use of representatives. Analogous definitions are valid for elements of $\bar V$ and $\bar V/\bar V_{s}$.

The Mackey Lie algebra can be expressed as
\begin{align*}
\mk{gl}^M &= \{\varphi\in\mk{gl}(V):\forall b\in\B, |{\rm supp}(\varphi^*(x_b))|<\infty\} \\
          &\cong \{ \varphi\in\CC^{\B\times\B} : \forall b\in\B, |{\rm supp}(\varphi_{b,.})|<\infty \;, |{\rm supp}(\varphi_{.,b})|<\infty \} \;,
\end{align*}
where, as customary, $\varphi_{a,b}$ denotes the value of $\varphi$ at $(a,b)\in\B\times \B$. After choosing a linear order on $\B$, the Mackey Lie algebra can be identified with the space of $\B\times \B$-matrices with finitely many nonzero entries in each row and each column, with commutator as Lie bracket. The support of an element $\varphi\in\mk{gl}^M$, with respect to the fixed basis, is defined as
$$
{\rm supp}(\varphi) := \{(a,b)\in\B\times\B:\varphi_{a,b}\ne 0\} \;.
$$

The subalgebra $\mk{gl}(V,V_*):=V\otimes V_*\subset \mk{gl}^M$ is an ideal and consists of all elements in $\mk{gl}^M$ of finite rank. We put $\mk{sl}(V,V_*):={\rm ker}{\bf p}$. This is also an ideal of $\mk{gl}^M$. The set of elementary matrices $\{e_{a,b}:=v_a\otimes x_b:a,b\in\B\}$ is a basis of $\mk{gl}(V,V_*)$.

\begin{prop} (\cite{Chirvasitu-Penkov-RC})

The filtration of length $t+2$
$$
V_*=V^*_{0}\subset V^*_{1}\subset V^*_{2}\subset ... \subset V^*_{t+1}= V^*
$$
is the socle filtration of the $\mk{gl}^M$-modules $V^*$. The layers $V^*_{s+1}/V^*_{s}$ are irreducible. Analogous statements hold for the filtration 
$$
V=\bar V_{0}\subset \bar V_{1}\subset \bar V_{2}\subset ... \subset \bar V_{t+1}= \bar V\;.
$$
\end{prop}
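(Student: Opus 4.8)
The plan is to analyze the action of $\mk{gl}^M$ on the spaces $V^*_s$ directly via the matrix description, showing (i) that each $V^*_s$ is a submodule, (ii) that each successive quotient $V^*_{s+1}/V^*_s$ is irreducible, and (iii) that the socle at each stage is exactly one layer, i.e. $\mathrm{soc}(V^*/V^*_s) = V^*_{s+1}/V^*_s$. Since the statement is attributed to \cite{Chirvasitu-Penkov-RC}, the intent here is presumably to recall the argument rather than to produce it from scratch; I would structure the recollection around these three points.

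First I would verify that $V^*_s$ is $\mk{gl}^M$-stable. For $\varphi \in \mk{gl}^M$ and $x \in V^*$, the action is by $\varphi^*$ (the dual operator, i.e. right multiplication by the transpose matrix, or equivalently $(\varphi \cdot x)(v) = -x(\varphi v)$ depending on sign conventions). If $x$ has support of cardinality $< \aleph_s$, then $\varphi^* x$ is obtained by applying a $\B \times \B$ matrix with finite columns to $x$; the support of the result is contained in $\bigcup_{b \in \mathrm{supp}(x)} \mathrm{supp}(\varphi_{b,\cdot})$ — wait, more carefully, it is $\bigcup_{b \in \mathrm{supp}(x)} \{a : \varphi_{a,b} \neq 0\}$, a union of fewer than $\aleph_s$ finite sets, hence of cardinality $< \aleph_s$ when $s \geq 1$ (and for $s = 0$ one uses instead that $\varphi$ has finite rows, so $\varphi^*(V_*) \subset V_*$ by the defining property of $\mk{gl}^M$). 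So each $V^*_s$ is a submodule, giving the filtration.

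Next, irreducibility of the layers. Fix a nonzero class $\bar x \in V^*_{s+1}/V^*_s$, so $|\mathrm{supp}(x)| = \aleph_s$ (choosing a representative with support of minimal cardinality). I would show the $\mk{gl}^M$-submodule generated by $\bar x$ is everything, by exhibiting, for an arbitrary target class $\bar y$ with $|\mathrm{supp}(y)| \leq \aleph_s$, an operator $\varphi \in \mk{gl}^M$ with $\varphi^* x \equiv y \pmod{V^*_s}$. Since $\mathrm{supp}(x)$ and $\mathrm{supp}(y)$ both have cardinality at most $\aleph_s$, one can build $\varphi$ as a matrix supported on $\mathrm{supp}(x) \times \mathrm{supp}(y)$ that is, up to a correction of support size $< \aleph_s$, a bijective "re-labelling" carrying the $x$-coefficients to the $y$-coefficients; the finite-row-and-column condition is arranged by using a bijection between two sets of equal cardinality $\aleph_s$, so that each row and column of $\varphi$ has exactly one nonzero entry. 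The coefficient bookkeeping (and handling that $x$ might have some zero coefficients on $\mathrm{supp}(y)$, etc.) is routine but needs the cardinal arithmetic $\aleph_s \cdot \aleph_s = \aleph_s$. The same argument, transposed, handles $\bar V$.

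Finally, and this is where I expect the main obstacle, one must show the socle of $V^*/V^*_s$ is no larger than $V^*_{s+1}/V^*_s$ — equivalently that any submodule of $V^*$ strictly containing $V^*_{s+1}$ is not semisimple over $V^*_{s+1}$, or directly that every simple submodule of $V^*/V^*_s$ lies in $V^*_{s+1}/V^*_s$. Here I would take a nonzero $\bar x \in V^*/V^*_s$ with $|\mathrm{supp}(x)| = \aleph_r$ for some $r > s$ and show its cyclic submodule is \emph{not} irreducible: applying operators $\varphi$ that "truncate" $x$ to a sub-support of cardinality $\aleph_{s'}$ for any $s \leq s' < r$ produces nonzero classes in $V^*_{s'+1}/V^*_s$, so the cyclic module surjects onto simple subquotients of strictly smaller "support type" and hence cannot itself be simple; more precisely its intersection with $V^*_{r}/V^*_s$ is a proper nonzero submodule. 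Assembling: any simple submodule must have all its nonzero elements of support cardinality $\leq \aleph_s$, i.e. must sit inside $V^*_{s+1}/V^*_s$, which is simple by the previous step. Combined with $V^*_{s+1}/V^*_s \subseteq \mathrm{soc}(V^*/V^*_s)$ (it is simple and a submodule), this pins down the socle and shows the given filtration is the socle filtration. The delicate point throughout is the interplay between the minimal-cardinality choice of representative modulo $V^*_s$ and the closure of $\mk{gl}^M$ under the operations used, so I would be careful to check at each step that the constructed operators genuinely have finite rows and columns.
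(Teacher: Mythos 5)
The paper does not prove this proposition; it is cited outright to \cite{Chirvasitu-Penkov-RC}, so there is no in-paper argument to compare against. Judged on its own merits, your three-step reconstruction (stability of each $V^*_s$ via finiteness of rows/columns and regularity of the cardinals $\aleph_s$ for $s\geq 1$; irreducibility of $V^*_{s+1}/V^*_s$ via ``permutation-type'' matrices with a single nonzero entry per row and column carrying coefficients of one representative to another; and identification of the socle by truncating large-support vectors to a sub-support of cardinality $\aleph_s$ via a diagonal idempotent in $\mk{gl}^M$) is correct and matches the standard line of argument in the cited reference. One small simplification worth noting in the last step: rather than intersecting the cyclic module with $V^*_r/V^*_s$, it is cleaner to observe directly that the truncation lands in $V^*_{s+1}/V^*_s\setminus\{0\}$, which by the irreducibility just established forces any simple submodule meeting it to equal $V^*_{s+1}/V^*_s$, contradicting the assumption that the generator has support of cardinality $>\aleph_s$.
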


Consequently, the above filtrations of $V^*$ and $\bar V$ depend only on the pairing ${\bf p}$ and not on the chosen basis of $V$ used in their definition.

Our goal in the rest of this section is to determine all ideals of the Lie algebra $\mk{gl}^M$. We also compute the socle and radical filtrations of the adjoint $\mk{gl}^M$-module. We start with

\begin{lemma}\label{Lemma Center and glMs ideals}
\begin{enumerate}
\item[{\rm (i)}] The center of the Mackey Lie algebra consists of the scalar transformations $\CC{\rm id}_V$ of $V$.
\item[{\rm (ii)}] For $0\leq s\leq t+1$, there is an ideal $\mk{gl}_{s}^M\subset\mk{gl}^M$ given by
\begin{gather*}
\mk{gl}_{s}^M := \{\varphi\in\mk{gl}^M:\varphi^*(V^*)\subset V^*_{s}\} \;.
\end{gather*}
\end{enumerate}
\end{lemma}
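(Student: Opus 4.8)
For part (i), the plan is to start with $\varphi$ in the center and test it against the finite-rank elementary matrices $e_{a,b}=v_a\otimes x_b$, which all lie in $\mk{gl}^M$. The relation $[\varphi,e_{a,b}]=0$ for all $a,b\in\B$ translates into $\varphi v_b\otimes x_a = v_a\otimes \varphi^*(x_b)$ as rank-one operators (or, in matrix terms, into the condition that $\varphi_{a,b}=0$ for $a\ne b$ and $\varphi_{a,a}=\varphi_{b,b}$ for all $a,b$). Hence $\varphi$ is a scalar multiple of $\mathrm{id}_V$; conversely every scalar operator is central, since it is central already in $\mk{gl}(V)$. The only point worth spelling out is that $e_{a,b}\in\mk{gl}^M$, which is immediate from the description of $\mk{gl}^M$ as $\B\times\B$-matrices with finite support in each row and column.

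For part (ii), I would proceed in two steps. First, check that $\mk{gl}_{s}^M$ is well defined as a subset of $\mk{gl}^M$: an operator $\varphi\in\mk{gl}(V)$ with $\varphi^*(V^*)\subseteq V^*_s\subseteq V^*=V^*_{t+1}$ in particular satisfies $\varphi^*(V_*)\subseteq V_*$ when $s=0$, and for general $s$ one still has $\varphi^*(V_*)\subseteq\varphi^*(V^*)\subseteq V^*_s$; but to land in $\mk{gl}^M$ we actually need $\varphi^*(V_*)\subseteq V_*=V^*_0$. I would therefore note that $\varphi^*$ maps $V^*$ into $V^*$ at all (so that $\varphi\in\mk{gl}^M$) is an extra requirement, and indeed the definition as written implicitly takes $\varphi\in\mk{gl}^M$ — i.e.\ $\mk{gl}_s^M=\{\varphi\in\mk{gl}^M:\varphi^*(V^*)\subseteq V^*_s\}$, so that membership in $\mk{gl}^M$ is built in. Granting that, $\mk{gl}_s^M$ is clearly a linear subspace.

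Second, and this is the substantive point, I would verify that $\mk{gl}_s^M$ is an ideal, i.e.\ $[\psi,\varphi]\in\mk{gl}_s^M$ for $\psi\in\mk{gl}^M$ and $\varphi\in\mk{gl}_s^M$. Dualizing, $[\psi,\varphi]^*=[\varphi^*,\psi^*]=\varphi^*\psi^*-\psi^*\varphi^*$ as operators on $V^*$. Now $\psi^*$ preserves $V^*$ (since $\psi\in\mk{gl}^M$) and also preserves the socle filtration $V^*_0\subset\cdots\subset V^*_{t+1}$ by Proposition (the filtration is canonical, being the socle filtration of the $\mk{gl}^M$-module $V^*$), so in particular $\psi^*(V^*_s)\subseteq V^*_s$. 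Hence $\varphi^*\psi^*(V^*)\subseteq\varphi^*(V^*)\subseteq V^*_s$ and $\psi^*\varphi^*(V^*)\subseteq\psi^*(V^*_s)\subseteq V^*_s$, so $[\psi,\varphi]^*(V^*)\subseteq V^*_s$, which is exactly the condition $[\psi,\varphi]\in\mk{gl}_s^M$. The one step needing care — and the only place where anything beyond formal manipulation is used — is the claim that every $\psi^*$ with $\psi\in\mk{gl}^M$ stabilizes each $V^*_s$; I would justify it either by invoking the canonicity of the socle filtration from the Proposition, or directly from the support description, since $\psi\in\mk{gl}^M$ acting on a map of support of cardinality $<\aleph_s$ produces a map whose support is a union of $<\aleph_s$ finite sets, hence still of cardinality $<\aleph_s$.
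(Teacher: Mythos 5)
Your proof is correct and carries out precisely the kind of direct verification the paper has in mind (the paper's own proof is just ``straightforward''): in part (i) you bracket against the elementary matrices $e_{a,b}$, and in part (ii) you use $[\psi,\varphi]^*=[\varphi^*,\psi^*]$ together with the fact that $\psi^*$ preserves each $V^*_s$, which you correctly justify either via the canonicity of the socle filtration or via the cardinality-of-support computation. One small indexing slip in part (i): the rank-one-operator translation of $[\varphi,e_{a,b}]=0$ should read $\varphi(v_a)\otimes x_b=v_a\otimes\varphi^*(x_b)$, not $\varphi v_b\otimes x_a=v_a\otimes\varphi^*(x_b)$; the matrix conclusion you then state ($\varphi$ diagonal with constant diagonal) is nonetheless correct, so nothing downstream is affected.
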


\begin{proof}
The proof is straightforward.
\end{proof}

\begin{rem}
Note that $\mk{gl}^M_0=\mk{gl}(V,V_*)$ and $\mk{gl}^M_{t+1}=\mk{gl}^M$. Furthermore, if $\mk{gl}^M$ is considered as a subalgebra of $\mk{gl}(V_*)$ instead of $\mk{gl}(V)$, then the ideal $\mk{gl}^M_s$ is given by $\{\psi\in\mk{gl}^M:\psi^*(\bar V)\subset \bar V_{s}\}$.
\end{rem}

For any subset $A\subset\B$ we denote by $\mk g^A$ the subalgebra whose elements are supported on $A\times A$. In particular, $\mk{gl}^M=\mk g^\B$. If $|A|=n$ is finite, then $\mk g^A$ is a copy of $\mk{gl}_n$. If $|A|$ is infinite, then $\mk g^A$ is a Mackey Lie algebra for the obvious restriction of the pairing ${\bf p}$. If $A,B\subset \B$ are disjoint then $\mk g^A$ and $\mk g^B$ commute, and we have a subalgebra of the form $\mk g^A\oplus\mk g^B\subset\mk{gl}^M$, which is block-diagonal if an order on $\B$ is chosen so that $A<B$.

\begin{lemma}\label{Lemma Levivarphi}
Let $\varphi\in\mk{gl}^M$. There exists a partition of $\B$ into a disjoint union of countable (possibly finite) sets $\B=\bigsqcup\limits_{b\in\B'}C_b$ such that 
\begin{gather}\label{For varphi in Levivarphi}
\varphi\in \mk{l}_\varphi:=\bigoplus\limits_{b\in\B'} \mk g^{C_b} \;,\;i.e.,\;\; {\rm supp}(\varphi)\subset \bigsqcup\limits_{b\in\B'} C_b^{\times 2} \;,
\end{gather}
where $\B'$ is an arbitrarily chosen set of representatives of the sets partitioning $\B$. Each set $C_b$ admits a (possibly finite) partition $C_b=\bigsqcup\limits_{n\in\NN}C_b^n$ so that
\begin{gather}\label{For varphi quasiblockdiag}
{\rm supp}(\varphi)\subset \bigsqcup\limits_{b\in\B'} \left( \bigcup\limits_{n\in\NN} (C_b^{n}\cup C_b^{n+1})^{\times 2} \right) \;.
\end{gather}
Moreover, there exists a well-order on $\B$, with respect to which the matrix of $\varphi$ is block-diagonal with blocks of (possibly finite) countable dimensions. Within each block there is a block structure with finite blocks, such that all nonzero entries of the matrix of $\varphi$ lie within the main block-diagonal and the two adjacent block-diagonals. 
\end{lemma}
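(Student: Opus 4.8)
The plan is to reduce everything to a single combinatorial statement about the ``directed graph'' of $\varphi$. Given $\varphi\in\mk{gl}^M$, define a graph $\Gamma$ on the vertex set $\B$ by joining $a$ and $b$ (unordered edge) whenever $\varphi_{a,b}\neq 0$ or $\varphi_{b,a}\neq 0$, i.e.\ whenever $(a,b)$ or $(b,a)$ lies in $\mathrm{supp}(\varphi)$. The column- and row-finiteness condition defining $\mk{gl}^M$ says precisely that every vertex of $\Gamma$ has finite degree: the neighbours of $b$ are contained in $\mathrm{supp}(\varphi_{\cdot,b})\cup\mathrm{supp}(\varphi_{b,\cdot})$, both finite. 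Let $\{C_b : b\in\B'\}$ be the set of connected components of $\Gamma$, with $\B'$ a chosen transversal. A connected graph all of whose vertices have finite degree is countable (it is the increasing union of finite balls around any fixed vertex), so each $C_b$ is countable, possibly finite, and $\B=\bigsqcup_{b\in\B'}C_b$. Since $\varphi$ has no matrix entries connecting distinct components, $\mathrm{supp}(\varphi)\subset\bigsqcup_{b\in\B'}C_b^{\times 2}$, which is exactly \eqref{For varphi in Levivarphi}; the direct sum decomposition $\mk l_\varphi=\bigoplus_{b\in\B'}\mk g^{C_b}$ is then automatic, each summand being a copy of $\mk{gl}_n$ or of a Mackey Lie algebra as noted before the lemma.

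Next I fix one component $C=C_b$ and produce the secondary, ``banded'' partition. Pick a base vertex $c_0\in C$ and set $C^n$ to be the sphere of radius $n$ in $C$, i.e.\ the set of vertices at graph-distance exactly $n$ from $c_0$. Because every vertex has finite degree, each $C^n$ is finite, and connectedness of $C$ gives $C=\bigsqcup_{n\in\NN}C^n$ (finite if $C$ is finite). The defining property of the distance function is that an edge of $\Gamma$ can only join vertices in consecutive or equal spheres: if $a\in C^n$ and $a'$ is a neighbour of $a$ then $a'\in C^{n-1}\cup C^n\cup C^{n+1}$. Hence every nonzero entry $\varphi_{a,a'}$ with $a,a'\in C$ has $\{a,a'\}\subset C^{n}\cup C^{n+1}$ for some $n$ (taking $n=\min$ of the two radii), which is \eqref{For varphi quasiblockdiag} after taking the union over components.

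Finally, the well-ordering statement is just a matter of linearising the data already obtained. Well-order the index set $\B'$ of components arbitrarily; within a component $C_b$ order the blocks $C_b^0<C_b^1<C_b^2<\cdots$ by their radius, and within each finite block $C_b^n$ choose any order; then concatenate, components in the order of $\B'$, and declare $C_b<C_{b'}$ entirely when $b<b'$. This is a well-order on $\B$ (a lexicographic assembly of well-orders, each block being finite) in which the matrix of $\varphi$ is block-diagonal with the countable blocks $C_b$, and inside each such block the finite sub-blocks $C_b^n$ witness that all nonzero entries lie on the main block-diagonal or the two adjacent ones, by \eqref{For varphi in Levivarphi} and \eqref{For varphi quasiblockdiag}. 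I do not anticipate a serious obstacle; the only point requiring a little care is the passage from ``finite degree at every vertex'' to ``components are countable and spheres are finite,'' which is the standard fact that a locally finite connected graph is the countable increasing union of its finite balls, and must be stated so that the finite case is not excluded.
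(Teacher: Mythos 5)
Your proof is correct and follows essentially the same route as the paper's: the paper phrases the construction via an equivalence relation generated by the finite ``adjacency'' sets $A_\varphi(b)$, whereas you phrase it as connected components of a locally finite graph, and the paper's sets $C_b^n$ (elements reachable by a chain of length $n$ but no shorter) are exactly your spheres of radius $n$; the well-order is built identically in both. The graph-theoretic vocabulary is a cosmetic repackaging of the same argument.
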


\begin{proof}
For $b\in \B$ we set $A_\varphi(b):=\{a\in\B:\varphi_{a,b}\ne0\;or\;\varphi_{b,a}\ne0\}$, and note that $A_\varphi(b)$ is a finite subset of $\B$ since $\varphi\in\mk{gl}^M$. We define an equivalence relation on $\B$ by declaring two elements $a,b\in\B$ equivalent if either $a=b$ or there is a finite sequence $b=b_0,b_1,...,b_n=a$ such that $b_{j}\in A_{\varphi}(b_{j-1})$ for $j=1,...,n$. Each equivalence class is at most countable. Let $C_b$ denote the equivalence class of $b\in\B$ and let $C_b^n$ denote the (finite) set of elements $a$ for which a sequence $b_0,...,b_n$ as above exists, but a shorter sequence does not exist. Also let $C_b^0:=\{b\}$. We fix a set of representatives $\B'$ for the equivalence classes. Thus we obtain a decomposition of $\B$ into finite subsets:
\begin{gather}\label{For B as union Cbn}
\B = \bigsqcup\limits_{b\in\B'} \left(\bigsqcup\limits_{n\in\NN} C_b^n\right) \;.
\end{gather}

Now formula (\ref{For varphi quasiblockdiag}) follows by construction and implies formula (\ref{For varphi in Levivarphi}). The asserted order is defined as follows. For $b\in\B'$, we define a well-order on $C_b$ by declaring $b$ to be the minimal element, ordering each $C_b^n$ well, and setting $C_b^n<C_b^{n+1}$. These orders are combined into a well-order of $\B$ through an arbitrarily well-order of $\B'$. Moreover, the subalgebra $\mk l_\varphi \subset\mk{gl}^{M}$ containing $\varphi$ takes the form of a block-diagonal subalgebra with blocks of countable dimension, and the remaining statements concerning the countable blocks of $\varphi$ are easy to verify.
\end{proof}

\begin{rem}\label{Rem Diag or Blocks}
The block structure of the matrix of $\varphi$ constructed in Lemma \ref{Lemma Levivarphi} can be made transparent as follows. Let $D\in\mk{gl}^M$ be the diagonal element with $D_{a,a}=n+1$ if $a\in C_b^n$ where $b\in\B'$ is the unique element such that $a\in C_b$. Then $\varphi$ can be decomposed as $\varphi=\varphi_{-1}+\varphi_0+\varphi_1$ with $[D,\varphi_j]=j\varphi_j$. Hence $\varphi_{-1},\varphi_0,\varphi_1$ belong to any ideal of $\mk{gl}^M$ containing $\varphi$.

The matrix of $\varphi_0$ is block-diagonal with respect to the decomposition (\ref{For B as union Cbn}), while the matrices of $\varphi_{-1}$ and $\varphi_1$ are supported respectively on the first block-diagonal below and above the main block-diagonal. We observe that $\varphi=\varphi_0$ if and only if $\varphi$ is diagonal, i.e., if $\B'=\B$. Furthermore, if $\varphi$ is not diagonal and a block of $\varphi_{\pm1}$ vanishes, then the transposed block of $\varphi_{\mp1}$ is nonzero, i.e., for every $b\in\B'$ and every $n\in\NN$ such that $C_b^{n+1}$ is nonempty, we have
\begin{gather}\label{For supp varphi nil blocks}
{\rm supp}(\varphi)\cap ((C_b^n\times C_b^{n+1})\cup(C_b^{n+1}\times C_b^n)))\ne \emptyset \;.
\end{gather}
\end{rem}

\begin{lemma}\label{Lemma Ideal of varphi}
Let the matrix of $\varphi\in\mk{gl}^M$ have an infinite support, i.e., $|{\rm supp}(\varphi)|=\aleph_s$ with $s\in\{0,...,t\}$. In case $s=t$, suppose furthermore $\varphi\notin\CC{\rm id}_V\oplus\mk{gl}^M_{t}$. Then the ideal $\mk J_\varphi\subset \mk{gl}^M$ generated by $\varphi$ is equal to $\mk{gl}^M_{s+1}$.
\end{lemma}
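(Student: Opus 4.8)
The inclusion $\mk J_\varphi\subseteq\mk{gl}^M_{s+1}$ is the easy one: for $x\in V^*$ the support of $\varphi^*(x)$ is contained in $\{b\in\B:\text{the column }\varphi_{\,\cdot\,,b}\text{ meets }{\rm supp}(x)\}$, a set of cardinality at most $|{\rm supp}(\varphi)|=\aleph_s<\aleph_{s+1}$; hence $\varphi^*(V^*)\subseteq V^*_{s+1}$, i.e.\ $\varphi\in\mk{gl}^M_{s+1}$, and $\mk{gl}^M_{s+1}$ is an ideal by Lemma \ref{Lemma Center and glMs ideals}, so the ideal it generates is contained in it. The content of the statement is the reverse inclusion $\mk{gl}^M_{s+1}\subseteq\mk J_\varphi$, which I would establish in three steps: first $\mk{gl}^M_0=\mk{gl}(V,V_*)\subseteq\mk J_\varphi$, then the presence in $\mk J_\varphi$ of enough ``shift-type'' operators of support $\aleph_s$, and finally the reduction of an arbitrary element of $\mk{gl}^M_{s+1}$ to these using finitely many brackets.

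\emph{Step 1.} By Lemma \ref{Lemma Levivarphi} and Remark \ref{Rem Diag or Blocks} write $\varphi=\varphi_{-1}+\varphi_0+\varphi_1$ with all three summands in $\mk J_\varphi$. If $\varphi$ is not diagonal, then by (\ref{For supp varphi nil blocks}) some $\varphi_{p,q}$ with $p\neq q$ is nonzero, and expanding a double bracket $[e_{a,b},[\varphi,e_{c,d}]]$ by means of $e_{a,b}\varphi=\sum_c\varphi_{b,c}\,e_{a,c}$ and $\varphi\,e_{a,b}=\sum_c\varphi_{c,a}\,e_{c,b}$ shows, for a suitable choice of the four indices, that $e_{a,d}\in\mk J_\varphi$ for some $a\neq d$. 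If $\varphi$ is diagonal, one instead uses $[e_{a,b},\varphi]=(\varphi_{b,b}-\varphi_{a,a})e_{a,b}$: for $s<t$ pick $a$ with $\varphi_{a,a}=0$ and $b$ with $\varphi_{b,b}\neq0$ (possible since $|{\rm supp}(\varphi)|=\aleph_s<\aleph_t=|\B|$), while for $s=t$ the hypothesis $\varphi\notin\CC{\rm id}_V\oplus\mk{gl}^M_t$ provides $a\neq b$ with $\varphi_{a,a}\neq\varphi_{b,b}$. Either way one off-diagonal elementary matrix lies in $\mk J_\varphi$, hence all of them by $[e_{c,a},e_{a,d}]=e_{c,d}$, so $\mk{sl}(V,V_*)\subseteq\mk J_\varphi$. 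To upgrade this to $\mk{gl}(V,V_*)$ it suffices to exhibit one finite-rank operator of nonzero trace in $\mk J_\varphi$: out of the infinite support of $\varphi$, and using the elementary matrices just produced to splice across the countable blocks of Lemma \ref{Lemma Levivarphi}, one selects basis indices $c_0,c_1,\dots$ along which $\varphi$ connects consecutive terms, so that the commutator of a weighted forward shift and a weighted backward shift supported on $\{c_n\}$ lies in $\mk J_\varphi$ and equals $e_{c_0,c_0}$ modulo $\mk{sl}(V,V_*)$.

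\emph{Steps 2 and 3.} Next I would show that $\mk J_\varphi$ contains, for suitable disjoint $P,Q\subseteq\B$ of cardinality $\aleph_s$ and any bijection $\sigma\colon P\to Q$, the operator $\varphi_\sigma=\sum_{p\in P}\nu_p\,e_{\sigma(p),p}$ (all $\nu_p\neq0$), and similarly the weighted shifts $\sum_n\mu_n\,e_{c_n,c_{n+1}}$ and subdiagonals $\sum_n\mu_n'\,e_{c_{n+1},c_n}$ along chains $\{c_n\}$ in $\B$ of length up to $\aleph_s$; these are extracted from the off-diagonal parts $\varphi_{\pm1}$ of $\varphi$ (or, in the diagonal case, from the weighted shifts of Step 1) by bracketing with elements of $\mk{gl}^M$ built from ``fresh'' basis indices and splicing finite staircase pieces via Step 1, the freedom in the fresh indices letting $P$, $Q$ and $\{c_n\}$ be essentially prescribed. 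Given then an arbitrary $\psi\in\mk{gl}^M_{s+1}$, let $R,C\subseteq\B$ be the sets of rows and columns in which $\psi$ has a nonzero entry, so $|R|,|C|\leq\aleph_s$. When $\psi$ is supported on columns lying in a set $P$ of size $\aleph_s$ disjoint from $R$, one checks using the identities of Step 1 that $\psi=[\chi,\varphi_\sigma]$ for the explicit $\chi\in\mk{gl}^M$ rescaling column $\sigma(p)$ of $\psi$ onto column $p$, the term $\varphi_\sigma\chi$ vanishing because $\chi$ has no row in $P$; symmetrically when $\psi$ is supported on rows disjoint from its columns. For a general $\psi$, splitting the support along $C=(C\setminus R)\sqcup(C\cap R)$ and then along $R=(R\setminus C)\sqcup(R\cap C)$ reduces, modulo these two cases and $\mk{gl}(V,V_*)$, to a ``square block'' $\psi\in\mk g^{R\cap C}$. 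Such a $\psi$ I would write as $\psi=[S,\beta]$, where $S$ is a shift (a direct sum of shifts over a decomposition of $R\cap C$ into countable pieces, the off-block part of $\psi$ being covered by the previous cases) and $\beta\in\mk g^{R\cap C}$ is determined by the telescoping relations $\beta_{k+1,j}-\beta_{k,j-1}=\psi_{k,j}$ together with the boundary condition that the first row of $\beta$ vanishes; one verifies that each row and column of $\beta$ is finite, because every column $j$ of $\beta$ receives contributions only from the finitely many diagonals of $\psi$ that meet the finite columns $0,\dots,j$ of $\psi$. Since $S$ and $\beta$ lie in $\mk J_\varphi$ by Step 2 (and an inner application of the same analysis for $\beta$), we get $\psi\in\mk J_\varphi$, so $\mk{gl}^M_{s+1}=\mk J_\varphi$.

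The crux is the square-block case of Step 3 with $R\cap C$ infinite. An ideal is closed only under \emph{finite} linear combinations and brackets, so one cannot decompose the support bipartite graph of $\psi$ — which has finite but possibly unbounded degrees, hence possibly infinite chromatic number — into matchings and sum; the whole construction must be arranged so that a single element $\psi$ is obtained by finitely many bracketings, and the identity $\psi=[S,\beta]$ (generalizing $e_{c_0,c_0}=[\,\text{shift},\text{backward shift}\,]$) together with the bookkeeping that certifies $S,\beta\in\mk J_\varphi$ is exactly the device for this. The remaining points — in particular the diagonal case with $s=t$, where the hypothesis $\varphi\notin\CC{\rm id}_V\oplus\mk{gl}^M_t$ is precisely what keeps $\mk J_\varphi$ from collapsing to a smaller ideal — are comparatively routine.
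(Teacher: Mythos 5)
Your proposal takes a genuinely different route from the paper, and the critical step in it has a gap. The paper's argument proceeds by first producing a \emph{diagonal} matrix in $\mk J_\varphi$ with support of cardinality $\aleph_s$: it decomposes $\varphi$ into finite and countable diagonal blocks via Lemma \ref{Lemma Levivarphi}, applies \cite[Lemma~6.5, Corollary~6.6]{Penkov-Serganova-Mackey} blockwise to deduce that $\mk J_\varphi$ already contains all diagonal matrices supported inside those blocks, and only then builds the ``generic'' diagonal matrices $\varphi^{(g)}$ whose adjoint action recovers the off-diagonal part of $\mk{gl}^M_{s+1}$. Your Step 1 instead only establishes $\mk{gl}(V,V_*)\subseteq\mk J_\varphi$, which produces nothing but finite-rank elements; combined with $\varphi$ (and its $\varphi_{-1},\varphi_0,\varphi_1$) this is the entirety of what is available before your Step 2.

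The gap is precisely in Step 2. You assert that $\mk J_\varphi$ contains shift operators $\varphi_\sigma=\sum_{p\in P}\nu_p e_{\sigma(p),p}$ with $|P|=\aleph_s$ and $P\cap\sigma(P)=\emptyset$, ``extracted from $\varphi_{\pm1}$ (or, in the diagonal case, from the weighted shifts of Step 1) by bracketing with elements built from fresh basis indices and splicing finite staircase pieces via Step 1.'' This does not amount to a construction. Bracketing with finite-rank elements of $\mk{gl}(V,V_*)$ can only alter finitely many entries and cannot produce a new element of support $\aleph_s$; the ``weighted shifts of Step 1'' live on a single countable chain and have support at most $\aleph_0$; and the off-diagonal parts $\varphi_{\pm1}$ may be small or even zero (take $\varphi$ diagonal, which is the core case) or, when large, are \emph{block} shifts whose source and target sets overlap, unlike the $\varphi_\sigma$ you want. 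Once one already has a diagonal matrix of support $\aleph_s$ in $\mk J_\varphi$, the desired $\varphi_\sigma$ is a single bracket $[\text{diagonal},\sum_p e_{\sigma(p),p}]$ with disjoint $P,Q$ chosen so the weights are nonzero --- and this is essentially what the paper does through its $\varphi^{(g)}$'s. So the missing ingredient is exactly the reduction-to-diagonal that your sketch bypasses; without it (or a replacement for the appeal to \cite{Penkov-Serganova-Mackey}), Step 2 does not go through. The $\psi=[S,\beta]$ device in your Step 3, together with the finiteness check on $\beta$'s rows and columns, looks salvageable with care, but it is moot until Step 2 is repaired.
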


\begin{proof}
For $\dim V=\aleph_0$ and $\varphi\notin \mk{gl}(V,V_*)\oplus\CC{\rm id}_V$, the statement is proven in \cite[Corollary 6.6]{Penkov-Serganova-Mackey} and the result is $\mk J_\varphi=\mk{gl}^M$. We shall use Lemma \ref{Lemma Levivarphi} to reduce the general case to the case $\dim V=\aleph_0$. In what follows, we identify the elements of $\mk{gl}^M$ with their matrices.

The first step is to show that the ideal $\mk J_\varphi$ contains a diagonal matrix whose support has the cardinality of the support of $\varphi$. Let $\mk l_\varphi\subset \mk{gl}^M$ be the subalgebra containing $\varphi$ provided by Lemma \ref{Lemma Levivarphi} and let $\varphi=\sum\limits_{b\in\B'} \varphi^{(b)}$ be resulting the decomposition, $\varphi^{(b)}$ being the projection of $\varphi$ to $\mk g^{C_b}$. For each $b\in\B'$ there are two possibilities: $C_b$ is either finite, or infinite countable. If $C_b$ is finite, then the ideal generated by $\varphi^{(b)}$ within $\mk g^{C_b}\cong\mk{gl}_{|C_b|}$ contains diagonal matrices by \cite[Lemma 6.5]{Penkov-Serganova-Mackey} (there exists $x,y,z\in\mk g^{C_b}$ such that $[x,[y,[z,\varphi^{(b)}]]]$ is diagonal). If $C_b$ is infinite countable, we can apply the aforementioned statement \cite[Corollary 6.6]{Penkov-Serganova-Mackey} to $\varphi^{(b)}\in\mk g^{C_b}$ because $\varphi^{(b)}$ is not equal to the sum of a scalar matrix and a finite matrix by (\ref{For supp varphi nil blocks}). We deduce that the ideal of $\mk g^{C_b}$ generated by $\varphi^{(b)}$ is the entire $\mk g^{C_b}$ and contains, in particular, the diagonal subalgebra of $\mk g^{C_b}$. 

Now suppose that $\varphi$ is diagonal and either $s<t$ or $\varphi\notin\CC{\rm id}_V\oplus\mk{gl}^M_{t}$. For the next step we will need a certain family of diagonal matrices $\varphi^{(g)}$ belonging to $\mk{J}_\varphi$. Consider a splitting of $\B$ in two parts, $\B=\B_1\sqcup\B_2$, such that $\B_1\subset{\rm supp}(\varphi)$, $|\B_1|=|{\rm supp}(\varphi)|$, and there is an injection $f:\B_1\to\B_2$ with $\varphi_{b,b}\ne \varphi_{f(b),f(b)}$ for all $b\in\B_1$. Put
$$
x:=\sum\limits_{b\in\B_1} \frac{1}{\varphi_{b,b}-\varphi_{f(b),f(b)}} e_{b,f(b)} \quad,\quad y:=\sum\limits_{b\in\B_1} g(b) e_{f(b),b} \;,
$$
where $g:\B_1\to\CC$ is arbitrary. Then
$$
\varphi^{(g)}:=[y,[x,\varphi]]=\sum\limits_{b\in\B_1} g(b) e_{b,b} - g(b) e_{f(b),f(b)}
$$
is a diagonal matrix with support contained in $\B_1\sqcup f(\B_1)$ and determined by the function $g$.

Next, using suitable matrices $\varphi^{(g)}$ we will show that any matrix in $\mk{gl}^M_{s+1}$ with zeros on its diagonal actually belongs to $\mk J_\varphi$. Let $(\mk{gl}^M)_{diag=0}$ be the set of matrices with zeros on the diagonal and $\psi\in(\mk{gl}^M)_{diag=0}$ be an arbitrary matrix with $|{\rm supp}(\psi)|=|{\rm supp}(\varphi)|$. Let $\B=\bigsqcup\limits_{b\in\B''}\tilde C_b$ be the partition of $\B$ defined by $\psi$ as in Lemma \ref{Lemma Levivarphi}. Note that
$$
{\rm supp}(\psi)\subset \bigsqcup\limits_{b\in\B'':|C_b|>1} \tilde C_b^{\times 2} \;.
$$
Hence the set $\B'''=\{b\in\B'':|C_b|>1\}$ has cardinality $|{\rm supp}(\varphi)|$. There is a surjective map 
$$
{\rm supp}(\varphi)\to \bigsqcup\limits_{b\in\B'''} \tilde C_b =:\B_3 \;.
$$
Let $\B_1\subset{\rm supp}(\varphi)$ be any subset such that $|\B_1|=|{\rm supp}(\varphi)|$ and $|\B\setminus \B_1|=|\B|$. Put $\B_2:=\B\setminus\B_1$. Let $f:\B_1\to\B_2$ be an injection such that $\varphi_{b,b}\ne\varphi_{f(b),f(b)}$ and $\B_3\subset \B_1\cup f(\B_1)$. Then $g:\B_1\to\CC$ can be selected so that $\varphi^{(g)}_{a,a}\ne\varphi^{(g)}_{a',a'}$ whenever $a,a'\in \tilde C_b$ for some $b$ and $a\ne a'$. The matrix $\varphi^{(g)}$ satisfies
$$
[\varphi^{(g)},(\mk{gl}^M)_{diag=0}\cap\mk l_\psi]= (\mk{gl}^M)_{diag=0}\cap \mk l_\psi \;.
$$
In particular $\psi\in \mk J_{\varphi}$. We conclude that $(\mk{gl}^M)_{diag=0}\cap \mk{gl}^M_{s+1}\subset\mk J_\varphi$, which in turn implies $\mk{gl}^M_{s+1}\subset \mk J_\varphi$. Since $\varphi\in\mk{gl}^M_{s+1}$, we get $\mk{gl}^M_{s+1}= \mk J_\varphi$.
\end{proof}

\begin{coro}\label{Coro no intermed ideals}
The nonzero ideals of $\mk{gl}^M$ contained in $\mk{gl}^M_{t}$ are exactly $\mk{sl}(V,V_*)$ and $\mk{gl}^M_{s}$ for $s\in\{0,...,t\}$. There is a single proper ideal of $\mk{gl}^M$ strictly containing $\mk{gl}^M_{t}$, and this is $\CC{\rm id}_V\oplus \mk{gl}^M_{t}$.
\end{coro}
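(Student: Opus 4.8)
The plan is to combine Lemma \ref{Lemma Ideal of varphi}, which identifies the ideal generated by a single element of infinite support, with one extra ingredient: a description of the ideals $\mk{gl}^M_s$ in terms of support,
$$
\mk{gl}^M_s \;=\; \bigl\{\varphi\in\mk{gl}^M : |{\rm supp}(\varphi)| < \aleph_s\bigr\}\,,\qquad 0\le s\le t\,.
$$
The inclusion $\supseteq$ is immediate, since for $|{\rm supp}(\varphi)|<\aleph_s$ the support of each $\varphi^*(x)$, $x\in V^*$, is contained in the projection of ${\rm supp}(\varphi)$ to the second factor. For $\subseteq$ I would argue contrapositively: the case $s=0$ is the identity $\mk{gl}^M_0=V\otimes V_*$, and for $s\ge 1$, if $|{\rm supp}(\varphi)|\ge\aleph_s$ then $\varphi$ has at least $\aleph_s$ nonzero columns, each finite, and each $b\in\B$ lies in the support of only finitely many of them (the rows of $\varphi$ being finite); a transfinite greedy choice, using the regularity of $\aleph_s$, then extracts $\aleph_s$ columns with pairwise disjoint supports, and taking $x\in V^*$ to be the indicator function of one chosen nonzero position per such column gives $|{\rm supp}(\varphi^*(x))|\ge\aleph_s$, so $\varphi\notin\mk{gl}^M_s$. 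I expect this elementary lemma to be the only genuinely new ingredient; its role is precisely to rule out, inside an ideal contained in $\mk{gl}^M_t$, any element whose support has the full cardinality $\aleph_t$ — the one configuration that Lemma \ref{Lemma Ideal of varphi} deliberately sets aside.

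Given this, I would prove the first statement as follows. Let $0\ne\mk I\subseteq\mk{gl}^M_t$ be an ideal; by the characterization above every element of $\mk I$ has support of cardinality $<\aleph_t$. If all elements of $\mk I$ have finite support, then $\mk I$ is a nonzero ideal of $\mk{gl}^M_0=\mk{gl}(V,V_*)$, and the classical structure of the ideals of this finitary Lie algebra (trivial center, simplicity of $\mk{sl}(V,V_*)$, $\dim\bigl(\mk{gl}(V,V_*)/\mk{sl}(V,V_*)\bigr)=1$; cf.\ \cite{Penkov-Serganova-Mackey}) forces $\mk I\in\{\mk{sl}(V,V_*),\mk{gl}^M_0\}$. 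Otherwise, let $s\in\{0,\dots,t-1\}$ be the largest index for which $\mk I$ contains an element $\varphi$ of support $\aleph_s$; then Lemma \ref{Lemma Ideal of varphi} gives $\mk{gl}^M_{s+1}=\mk J_\varphi\subseteq\mk I$, while maximality of $s$ together with the support characterization yields the reverse inclusion $\mk I\subseteq\mk{gl}^M_{s+1}$, so $\mk I=\mk{gl}^M_{s+1}$. Conversely, each $\mk{gl}^M_s$ with $0\le s\le t$ and $\mk{sl}(V,V_*)$ is an ideal of $\mk{gl}^M$ (Lemma \ref{Lemma Center and glMs ideals}(ii) and the text preceding it) contained in $\mk{gl}^M_t$; this completes the first assertion.

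For the second statement I would first check that $\CC{\rm id}_V\oplus\mk{gl}^M_t$ is a proper ideal: it is a sum of the central ideal $\CC{\rm id}_V$ and the ideal $\mk{gl}^M_t$ (Lemma \ref{Lemma Center and glMs ideals}), the sum is direct because a nonzero scalar operator is not in $\mk{gl}^M_t$, and it is proper because a diagonal operator whose diagonal equals $0$ on a set of cardinality $\aleph_t$ and $1$ on another such set cannot be written as $\lambda{\rm id}_V+\psi$ with $\psi\in\mk{gl}^M_t$ (the would-be $\psi$ is diagonal with support of cardinality $\aleph_t$). Conversely, let $\mk I\supsetneq\mk{gl}^M_t$ be a proper ideal and pick $\varphi\in\mk I\setminus\mk{gl}^M_t$. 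The characterization gives $|{\rm supp}(\varphi)|=\aleph_t$; if moreover $\varphi\notin\CC{\rm id}_V\oplus\mk{gl}^M_t$, then Lemma \ref{Lemma Ideal of varphi} (case $s=t$) yields $\mk J_\varphi=\mk{gl}^M_{t+1}=\mk{gl}^M$, contradicting properness. Hence $\varphi\in\CC{\rm id}_V\oplus\mk{gl}^M_t$ for every $\varphi\in\mk I$, and since $\mk{gl}^M_t$ has codimension one in $\CC{\rm id}_V\oplus\mk{gl}^M_t$ we conclude $\mk I=\CC{\rm id}_V\oplus\mk{gl}^M_t$.

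The main obstacle, as flagged above, is the support characterization of $\mk{gl}^M_s$ — or really just its borderline level $s=t$ — since without it the exceptional clause of Lemma \ref{Lemma Ideal of varphi} cannot be disposed of; once that is in place the remainder is routine assembly built on that lemma.
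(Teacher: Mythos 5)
Your proof is correct and amounts to the same argument the paper has in mind: the paper asserts both statements ``follow immediately'' from Lemma~\ref{Lemma Ideal of varphi}, and the support characterization $\mk{gl}^M_s=\{\varphi\in\mk{gl}^M:|{\rm supp}(\varphi)|<\aleph_s\}$ you supply (together with the elementary classification of ideals of $\mk{gl}^M$ lying in $\mk{gl}(V,V_*)$) is exactly the bridge needed to make that deduction rigorous. The greedy transfinite extraction of $\aleph_s$ pairwise disjointly supported columns, followed by the choice of an indicator $x\in V^*$, is a valid way to establish the nontrivial inclusion of that characterization, and the remaining assembly via the lemma matches the intended route.
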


\begin{proof}
Both statements follow immediately from Lemma \ref{Lemma Ideal of varphi}. 
\end{proof}

We are now in a position to describe the socle filtration of the Lie algebra $\mk{gl}^M$.

\begin{theorem}\label{Theo socfil glM}

The adjoint $\mk{gl}^M$-module is indecomposable, has length $t+4$, and its socle filtration is given by
\begin{gather*}
\begin{array}{ll}
{\rm soc}^1\mk{gl}^M  = \CC{\rm id}_V\oplus \mk{sl}(V,V_*) &\;,  \\
{\rm soc}^2\mk{gl}^M = \CC{\rm id}_V\oplus \mk{gl}(V,V_*) & \;,\quad \ul{\rm soc}^2\mk{gl}^M  = \mk{q} \cong \CC \\
{\rm soc}^{s+3}\mk{gl}^M = \CC{\rm id}_V\oplus \mk{gl}^M_{s+1} &\;,\quad \ul{\rm soc}^{s+3}\mk{gl}^M  = \mk{gl}^M_{s+1}/\mk{gl}^M_{s} \;, \quad s=0,...,t-1,\\
{\rm soc}^{t+3}\mk{gl}^M = \mk{gl}^M &\;,\quad \ul{\rm soc}^{t+3}\mk{gl}^M  = \mk{gl}^M/(\CC{\rm id}_V\oplus\mk{gl}^M_{t}) \;.
\end{array}
\end{gather*}
Moreover, for $s\geq 1$ the layer $\ul{\rm soc}^{s+1}\mk{gl}^M$ is a simple $\mk{gl}^M$-module.
\end{theorem}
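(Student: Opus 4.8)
The plan is to work entirely with the ideals of $\mk{gl}^M$: the submodules of the adjoint module are exactly the ideals, a layer $\ul{\rm soc}^q$ is a simple module precisely when the corresponding subquotient ideal is simple, and the socle filtration is built up by the rule ${\rm soc}^{q+1}/{\rm soc}^q={\rm soc}(\mk{gl}^M/{\rm soc}^q)$. Write $\mk b_{-1}:=\CC{\rm id}_V\oplus\mk{sl}(V,V_*)$, $\mk b_j:=\CC{\rm id}_V\oplus\mk{gl}^M_j$ for $0\le j\le t$, and $\mk b_{t+1}:=\mk{gl}^M$; then the asserted filtration is precisely ${\rm soc}^q=\mk b_{q-2}$ for $1\le q\le t+3$. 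Everything is extracted from Lemma \ref{Lemma Center and glMs ideals}, Lemma \ref{Lemma Ideal of varphi} and Corollary \ref{Coro no intermed ideals}.

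First I would compute the socle by classifying the minimal ideals. A minimal ideal $\mk m$ either centralizes $\mk{gl}(V,V_*)$, and then $\mk m\subseteq\CC{\rm id}_V$ (a direct computation, as in the proof of Lemma \ref{Lemma Center and glMs ideals}(i), identifies the centralizer of $\mk{gl}(V,V_*)$ in $\mk{gl}^M$ with $\CC{\rm id}_V$), so $\mk m=\CC{\rm id}_V$; or $[\mk m,\mk{gl}(V,V_*)]$ is a nonzero ideal contained in $\mk m\cap\mk{gl}(V,V_*)\subseteq\mk{gl}^M_0$, which by minimality forces $\mk m\subseteq\mk{gl}^M_0$, and Corollary \ref{Coro no intermed ideals} (the only ideals inside $\mk{gl}^M_0=\mk{gl}(V,V_*)$ are $0$, $\mk{sl}(V,V_*)$, $\mk{gl}(V,V_*)$) gives $\mk m=\mk{sl}(V,V_*)$; the same observation shows $\mk{sl}(V,V_*)$ is simple. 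Since $\CC{\rm id}_V$ consists of infinite-rank operators, $\CC{\rm id}_V\cap\mk{sl}(V,V_*)=0$, so ${\rm soc}\,\mk{gl}^M=\mk b_{-1}$, semisimple of length $2$.

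The core of the argument is the one-step claim: for $-1\le j\le t$, every ideal $\mk a\supsetneq\mk b_j$ contains $\mk b_{j+1}$; consequently $\mk b_j\subsetneq\mk b_{j+1}$ with no ideal strictly between, i.e.\ $\mk b_{j+1}/\mk b_j$ is simple and is the socle of $\mk{gl}^M/\mk b_j$. The case $j=t$ is the last assertion of Corollary \ref{Coro no intermed ideals}. For $-1\le j\le t-1$, use ${\rm id}_V\in\mk b_j\subseteq\mk a$ and pick $\varphi\in\mk a\setminus\mk b_j$. If $\varphi\notin\mk b_t=\CC{\rm id}_V\oplus\mk{gl}^M_t$, then $\varphi\notin\mk{gl}^M_t$; every element of $\mk{gl}^M$ outside $\mk{gl}^M_t$ has support of cardinality $\aleph_t$ (since some $\varphi^*(\xi)$, $\xi\in V^*$, must have support of size $\aleph_t$, while $|{\rm supp}\,\varphi|\le|\B|^2=\aleph_t$), so Lemma \ref{Lemma Ideal of varphi} makes $\varphi$ generate all of $\mk{gl}^M$ and $\mk a=\mk{gl}^M$. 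Otherwise $\varphi=c\,{\rm id}_V+\psi$ with $\psi\in\mk{gl}^M_t\cap\mk a$, and $\psi\notin\mk{gl}^M_j$ (if $j\ge0$) resp.\ $\psi\notin\mk{sl}(V,V_*)$ (if $j=-1$); when $j=-1$ and $\psi\in\mk{gl}^M_0$ one gets $\mk{gl}^M_0=\mk{sl}(V,V_*)+\CC\psi\subseteq\mk a$, hence $\mk a\supseteq\mk b_0$, and otherwise $\psi\in\mk{gl}^M_{j_0}\setminus\mk{gl}^M_{j_0-1}$ for some $\max(1,j+1)\le j_0\le t$. Here one needs the auxiliary fact that the ideal generated by any element of $\mk{gl}^M_{j_0}\setminus\mk{gl}^M_{j_0-1}$ equals $\mk{gl}^M_{j_0}$: that ideal is nonzero, lies inside $\mk{gl}^M_{j_0}$, is not inside $\mk{gl}^M_{j_0-1}$, and by Corollary \ref{Coro no intermed ideals} the only ideal inside $\mk{gl}^M_{j_0}\subseteq\mk{gl}^M_t$ with these properties is $\mk{gl}^M_{j_0}$ (in particular $\mk{gl}^M_{j_0}/\mk{gl}^M_{j_0-1}$ is simple for $1\le j_0\le t$). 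Thus $\mk{gl}^M_{j_0}\subseteq\mk a$, so $\mk a\supseteq\CC{\rm id}_V\oplus\mk{gl}^M_{j+1}=\mk b_{j+1}$. I expect this last bookkeeping to be the main obstacle: Lemma \ref{Lemma Ideal of varphi} is not directly applicable to $\psi$ — its hypothesis excludes elements of $\CC{\rm id}_V\oplus\mk{gl}^M_t$ of support-cardinality $\aleph_t$ — so one must route through the explicit finite list of ideals furnished by Corollary \ref{Coro no intermed ideals}.

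Granting this, induction on ${\rm soc}^{q+1}/{\rm soc}^q={\rm soc}(\mk{gl}^M/{\rm soc}^q)$ starting from ${\rm soc}^1=\mk b_{-1}$ yields ${\rm soc}^q=\mk b_{q-2}$ for all $q\le t+3$, and since $\mk b_{t+1}=\mk{gl}^M$ the filtration terminates; the layers are $\ul{\rm soc}^1=\mk b_{-1}$ (two composition factors, the trivial module and $\mk{sl}(V,V_*)$), $\ul{\rm soc}^2=\mk{gl}(V,V_*)/\mk{sl}(V,V_*)=\mk q\cong\CC$, $\ul{\rm soc}^{s+3}=\mk{gl}^M_{s+1}/\mk{gl}^M_s$ for $0\le s\le t-1$, and $\ul{\rm soc}^{t+3}=\mk{gl}^M/(\CC{\rm id}_V\oplus\mk{gl}^M_t)$, all simple except $\ul{\rm soc}^1$ — which is the final ``$s\ge1$'' assertion — and the composition-factor count is $2+1+t+1=t+4$. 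For indecomposability, $\mk b_t=\CC{\rm id}_V\oplus\mk{gl}^M_t$ is the unique maximal ideal: a maximal ideal not contained in it would contain an element outside $\mk{gl}^M_t$, hence (support $\aleph_t$ and Lemma \ref{Lemma Ideal of varphi}) all of $\mk{gl}^M$, which is impossible; thus $\mk{gl}^M$ has simple cosocle, and a module of finite length with a unique maximal submodule is indecomposable, since a nontrivial direct-sum decomposition would produce two distinct maximal submodules. This completes the proof.
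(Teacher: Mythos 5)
Your proof is correct, and for the socle filtration it runs the same basic route as the paper's: extract everything from the ideal classification given in Lemma \ref{Lemma Ideal of varphi} and Corollary \ref{Coro no intermed ideals}. You are noticeably more explicit than the paper at the inductive step (the paper only records that the inclusions in (\ref{For chain of ideals}) are essential, that $\CC{\rm id}_V\subset{\rm soc}^1\mk{gl}^M$, and then declares that the socle filtration ``follows''), and your observation that Lemma \ref{Lemma Ideal of varphi} cannot be applied directly to the component $\psi\in\mk{gl}^M_{j_0}\setminus\mk{gl}^M_{j_0-1}$ --- so that one must fall back on the explicit finite list of ideals from Corollary \ref{Coro no intermed ideals} --- is exactly the right move. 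The genuine divergence is in the indecomposability argument. The paper reduces the question to whether $\CC{\rm id}_V$ can split off and then invokes the Heisenberg relation $[x,y]={\rm id}_V$, realized by three-diagonal block matrices in $\mk{gl}^M$, to conclude that ${\rm id}_V\in[\mk{gl}^M,\mk{gl}^M]$. You instead show that $\CC{\rm id}_V\oplus\mk{gl}^M_t$ is the unique maximal ideal --- any ideal containing an element outside it generates the whole Lie algebra, by Lemma \ref{Lemma Ideal of varphi} together with your (correct) observation that every $\varphi\notin\mk{gl}^M_t$ has ${\rm supp}(\varphi)$ of cardinality $\aleph_t$ --- and then apply the standard fact that a module of finite length with a unique maximal submodule is indecomposable. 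Your version avoids the explicit Heisenberg matrix construction at the cost of leaning a bit more on Lemma \ref{Lemma Ideal of varphi}; both arguments are valid.
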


\begin{proof}
The submodules of the adjoint $\mk{gl}^M$-module are the ideals of the Lie algebra $\mk{gl}^M$. We begin with the chain of ideals obtained in Lemma \ref{Lemma Center and glMs ideals}, with added initial term $\mk{sl}(V,V_*)$, i.e.,
\begin{gather}\label{For chain of ideals}
\mk{sl}(V,V_*)\subset\mk{gl}(V,V_*)=\mk{gl}^M_0\subset\mk{gl}^M_1\subset ...\subset\mk{gl}^M_{t}\subset \mk{gl}^M_{t+1}=\mk{gl}^M\;.
\end{gather}
The Lie algebra $\mk{sl}(V,V_*)$ is simple, being a direct limit of simple Lie algebras, and there are no ideals of $\mk{gl}^M$ between $\mk{sl}(V,V_*)$ and $\mk{gl}(V,V_*)$ because the quotient is 1-dimensional. Moreover, Corollary \ref{Coro no intermed ideals} implies that all  inclusions in (\ref{For chain of ideals}) are essential, and that all quotients $\mk{gl}^M_{s+1}/\mk{gl}^M_{s}$ for $s=0,...,t-1$, as well as the quotient $\mk{gl}^M/(\CC{\rm id}_V\oplus\mk{gl}^M_{t})$, are simple. Since $\CC{\rm id}_V\subset {\rm soc}^1\mk{gl}^M$, the statement about the socle filtration follows.

The fact that all inclusions in (\ref{For chain of ideals}) are essential implies that in order to establish the indecomposability of $\mk{gl}^M$ it suffices to show that the ideal $\CC{\rm id}_V$ does not split off. This is a direct corollary of the famous assertion of Heisenberg that the equation $[x,y]={\rm id}_V$ has a solution in infinite three-diagonal matrices. Classically this statement is known for $t=0$, but it holds for any $t$ since one easily constructs block-diagonal matrices $x,y$ in $\mk{gl}^M\setminus\mk{gl}^M_{t}$ such that $[x,y]={\rm id}_V$. It is essential that each diagonal block, being a three-diagonal Heisenberg matrix of countable size, has finite rows and columns, which ensures that $x,y$ lie in $\mk{gl}^M$ and not just in $\mk{gl}(V)$.
\end{proof}

\begin{coro}
The radical filtration of the adjoint $\mk{gl}^M$-module is the following modification of filtration (\ref{For chain of ideals}):
\begin{gather*}
\mk{sl}(V,V_*)\subset\mk{gl}(V,V_*)=\mk{gl}^M_0\subset\mk{gl}^M_1\subset ...\subset\mk{gl}^M_{t-1}\subset \CC{\rm id}_V\oplus\mk{gl}^M_{t} \subset \mk{gl}^M_{t+1}=\mk{gl}^M\;.
\end{gather*}
In other words,
\begin{gather*}
{\rm rad}^1\mk{gl}^M  = \CC{\rm id}_V\oplus\mk{gl}^M_{t} \;,\;  {\rm rad}^{s+1}\mk{gl}^M = \mk{gl}^M_{t-s} \; for\; s=1,...,t,\;
{\rm rad}^{t+2}\mk{gl}^M = \mk{sl}(V,V_*)\;.
\end{gather*}
\end{coro}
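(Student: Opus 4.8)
The plan is to read the radical filtration directly off the lattice of ideals of $\mk{gl}^M$, using that the submodules of the adjoint module are precisely the ideals. The available information is: the chain of ideals (\ref{For chain of ideals}), together with the center $\CC{\rm id}_V$ and the further ideals $\CC{\rm id}_V\oplus\mk{sl}(V,V_*)$ and $\CC{\rm id}_V\oplus\mk{gl}^M_s$ for $0\le s\le t$; moreover, by Lemma \ref{Lemma Ideal of varphi} and Corollary \ref{Coro no intermed ideals}, every ideal not contained in $\CC{\rm id}_V\oplus\mk{gl}^M_t$ equals $\mk{gl}^M$, and all inclusions in (\ref{For chain of ideals}) are essential with simple successive quotients (Theorem \ref{Theo socfil glM}). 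The one arithmetic point used repeatedly is that ${\rm id}_V^*(V^*)=V^*\not\subset V^*_s$, so that ${\rm id}_V\notin\mk{gl}^M_s$ whenever $s\le t$. Since every module that occurs below has finite length, and the submodules of such a module $N$ (itself an ideal) are exactly the ideals of $\mk{gl}^M$ contained in $N$, computing ${\rm rad}\,N$ comes down to intersecting the ideals that are maximal among those properly contained in $N$.

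First I would determine ${\rm rad}^1\mk{gl}^M$. Every proper ideal of $\mk{gl}^M$ is contained in $\CC{\rm id}_V\oplus\mk{gl}^M_t$ (Lemma \ref{Lemma Ideal of varphi}), and $\mk{gl}^M/(\CC{\rm id}_V\oplus\mk{gl}^M_t)$ is simple (Theorem \ref{Theo socfil glM}); hence $\CC{\rm id}_V\oplus\mk{gl}^M_t$ is the unique maximal ideal, and ${\rm rad}^1\mk{gl}^M=\CC{\rm id}_V\oplus\mk{gl}^M_t$. Next, the module $\CC{\rm id}_V\oplus\mk{gl}^M_t$ is the only one in the entire computation possessing two maximal submodules rather than one, namely $\mk{gl}^M_t$, with one-dimensional quotient $\cong\CC{\rm id}_V$, and $\CC{\rm id}_V\oplus\mk{gl}^M_{t-1}$, with quotient $\cong\mk{gl}^M_t/\mk{gl}^M_{t-1}$, which is simple by Theorem \ref{Theo socfil glM}; when $t=0$ the second submodule is $\CC{\rm id}_V\oplus\mk{sl}(V,V_*)$, with quotient $\mk q\cong\CC$. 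Because ${\rm id}_V\notin\mk{gl}^M_t$, the intersection of these two submodules is $\mk{gl}^M_{t-1}$ — to be read as $\mk{sl}(V,V_*)$ in the case $t=0$. Therefore ${\rm rad}^2\mk{gl}^M=\mk{gl}^M_{t-1}$.

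The remaining terms are obtained by descending along the chain (\ref{For chain of ideals}). For $0\le j\le t$ the ideals of $\mk{gl}^M$ contained in $\mk{gl}^M_j$ are $0\subset\mk{sl}(V,V_*)\subset\mk{gl}^M_0\subset\dots\subset\mk{gl}^M_j$, a chain (it contains no central element, since ${\rm id}_V\notin\mk{gl}^M_j$), so $\mk{gl}^M_j$ has a unique maximal submodule; thus ${\rm rad}(\mk{gl}^M_j)=\mk{gl}^M_{j-1}$ for $1\le j\le t$ and ${\rm rad}(\mk{gl}(V,V_*))=\mk{sl}(V,V_*)$. Starting from ${\rm rad}^2\mk{gl}^M=\mk{gl}^M_{t-1}$ and applying this repeatedly gives ${\rm rad}^{s+1}\mk{gl}^M=\mk{gl}^M_{t-s}$ for $s=1,\dots,t$, then ${\rm rad}^{t+2}\mk{gl}^M={\rm rad}(\mk{gl}(V,V_*))=\mk{sl}(V,V_*)$, and finally ${\rm rad}^{t+3}\mk{gl}^M=0$ because $\mk{sl}(V,V_*)$ is simple, so the filtration terminates. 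Collecting these terms, one sees that they form exactly the asserted modification of (\ref{For chain of ideals}).

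I do not expect a genuine obstacle, only careful bookkeeping. The single point carrying real content is the passage from ${\rm rad}^1\mk{gl}^M$ to ${\rm rad}^2\mk{gl}^M$, where the module $\CC{\rm id}_V\oplus\mk{gl}^M_t$ has two distinct maximal submodules — the sole failure of the ideal lattice to be a chain — and one has to use ${\rm id}_V\notin\mk{gl}^M_t$ to identify their intersection as $\mk{gl}^M_{t-1}$; it is also worth checking the degenerate case $t=0$ separately, where this step already produces $\mk{sl}(V,V_*)$ and the subsequent induction along the chain is vacuous.
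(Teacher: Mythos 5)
Your proof is correct and follows essentially the same line as the paper's (very terse) argument: read the radical filtration directly off the lattice of ideals from Theorem \ref{Theo Ideals List}, with the decisive observation that $\CC{\rm id}_V$ complements $\mk{gl}^M_{t-1}$ (resp.\ $\mk{gl}^M_t$) inside $\CC{\rm id}_V\oplus\mk{gl}^M_{t-1}$ (resp.\ $\CC{\rm id}_V\oplus\mk{gl}^M_t$), so that $(\CC{\rm id}_V\oplus\mk{gl}^M_t)/\mk{gl}^M_{t-1}$ is semisimple of length two, while everything below in the chain (\ref{For chain of ideals}) is uniserial. The paper compresses this to one sentence; you unpack it step by step, which is fine.

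One small inaccuracy worth fixing: you assert that $\CC{\rm id}_V\oplus\mk{gl}^M_t$ has exactly two maximal submodules, ``the sole failure of the ideal lattice to be a chain.'' That is true only for $t>0$; when $t=0$ the ideals of type (iv) in Theorem \ref{Theo Ideals List}, namely $\CC(z{\rm id}_V+e_{b,b})+\mk{sl}(V,V_*)$ for $z\in\CC\setminus\{0\}$, are each maximal in $\CC{\rm id}_V\oplus\mk{gl}(V,V_*)$ (the quotient is one-dimensional), so there is a one-parameter family of maximal submodules, not two. You flag the $t=0$ case for a separate check but do not carry it out. The conclusion ${\rm rad}^2\mk{gl}^M=\mk{sl}(V,V_*)$ for $t=0$ nevertheless holds, since every one of these maximal submodules contains $\mk{sl}(V,V_*)$ and the intersection of the two you did compute already equals $\mk{sl}(V,V_*)$; equivalently, $\mk{sl}(V,V_*)$ is simple, so the radical of $\CC{\rm id}_V\oplus\mk{gl}(V,V_*)$ is either $0$ or $\mk{sl}(V,V_*)$, and it cannot be $0$ because $\mk{gl}(V,V_*)$ is a nonsplit extension (its center is trivial). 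So the proof is salvageable as written, but the count ``exactly two'' should be corrected or restricted to $t>0$.
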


\begin{proof}
The statement follows immediately from the properties of the chain (\ref{For chain of ideals}), and from the fact that the direct sum $\CC{\rm id}_V\oplus\mk{gl}^M_{t-1}$ is a direct sum of ideals.
\end{proof}

\begin{theorem}\label{Theo Ideals List}
The following is a complete list of nonzero proper ideals in the Mackey Lie algebra $\mk{gl}^M$:
\begin{enumerate}
\item[{\rm (i)}] the center $\CC{\rm id}_V$; 
\item[{\rm (ii)}] $\mk{sl}(V,V_*)$;
\item[{\rm (iii)}] $\CC{\rm id}_V\oplus \mk{sl}(V,V_*)={\rm soc}^1\mk{gl}^M$; 
\item[{\rm (iv)}] $\CC(z{\rm id}_V+ e_{b,b})+\mk{sl}(V,V_*)\subset {\rm soc}^2\mk{gl}^M$ for arbitrary $z\in\CC\setminus \{0\}$ and $b\in\B$; this ideal depends only on $z$ and not on $b$;
\item[{\rm (v)}] $\mk{gl}^M_s\subset{\rm soc}^{s+2}\mk{gl}^M$ for $s=0,...,t$ (recall that $\mk{gl}^M_0=\mk{gl}(V,V_*)$);
\item[{\rm (vi)}] $\CC{\rm id}_V\oplus\mk{gl}^M_s ={\rm soc}^{s+2}\mk{gl}^M$ for $s=0,...,t$.
\end{enumerate}
\end{theorem}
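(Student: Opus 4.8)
The plan is to prove both halves of the statement: that each of the subspaces (i)--(vi) is an ideal, and that every nonzero proper ideal occurs among them. The first half is almost immediate --- (i) and (v) are Lemma \ref{Lemma Center and glMs ideals}, (ii) was recorded earlier, and (iii), (vi) are sums of two ideals --- so only (iv) needs a word: for every $\varphi\in\mk{gl}^M$ the commutator $[\varphi,e_{b,b}]$ has finite rank and vanishing trace, so it lies in $\mk{sl}(V,V_*)=\ker{\bf p}$; since ${\rm id}_V$ is central, $\CC(z\,{\rm id}_V+e_{b,b})+\mk{sl}(V,V_*)$ is stable under ${\rm ad}\,\mk{gl}^M$, and it does not depend on $b$ because $e_{b,b}-e_{b',b'}\in\mk{sl}(V,V_*)$. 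One also checks readily that the seven families are pairwise distinct (e.g.\ by their intersections with $\mk{gl}^M_t$, by the smallest ${\rm soc}^j\mk{gl}^M$ containing them, and, within (iv), by the scalar $z$), so that once completeness is proved the list carries no repetition.

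For completeness, let $\mk I$ be a nonzero proper ideal. First I would show $\mk I\subseteq\CC{\rm id}_V\oplus\mk{gl}^M_t$. If some $\varphi\in\mk I$ lay outside $\CC{\rm id}_V\oplus\mk{gl}^M_t$, then in particular $\varphi\notin\mk{gl}^M_t$, hence $\varphi$ is not of finite rank (the finite-rank elements are exactly $\mk{gl}^M_0\subseteq\mk{gl}^M_t$), so $|{\rm supp}(\varphi)|=\aleph_s$ for some $s\in\{0,\dots,t\}$. If $s<t$, Lemma \ref{Lemma Ideal of varphi} gives $\mk J_\varphi=\mk{gl}^M_{s+1}\subseteq\mk{gl}^M_t$, whence $\varphi\in\mk{gl}^M_t$, a contradiction; if $s=t$, the hypothesis $\varphi\notin\CC{\rm id}_V\oplus\mk{gl}^M_t$ lets Lemma \ref{Lemma Ideal of varphi} give $\mk J_\varphi=\mk{gl}^M_{t+1}=\mk{gl}^M\subseteq\mk I$, contradicting properness. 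So $\mk I\subseteq\CC{\rm id}_V\oplus\mk{gl}^M_t$. Now set $\mk I_0:=\mk I\cap\mk{gl}^M_t$; this is an ideal of $\mk{gl}^M$ inside $\mk{gl}^M_t$, so by Corollary \ref{Coro no intermed ideals} it is one of $0,\ \mk{sl}(V,V_*),\ \mk{gl}^M_0,\dots,\mk{gl}^M_t$. If $\mk I=\mk I_0$ we obtain (ii) or (v) and are done.

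The substantive case is $\mk I\supsetneq\mk I_0$. As $(\CC{\rm id}_V\oplus\mk{gl}^M_t)/\mk{gl}^M_t\cong\CC$, we may write $\mk I=\mk I_0+\CC\psi$ with $\psi={\rm id}_V+\chi$, $\chi\in\mk{gl}^M_t$. Since ${\rm id}_V$ is central, $[\mk{gl}^M,\psi]=[\mk{gl}^M,\chi]$, so the ideal $\mk K$ of $\mk{gl}^M$ generated by $[\mk{gl}^M,\chi]$ lies in $\mk I\cap\mk{gl}^M_t=\mk I_0$, while the ideal $\mk J_\chi$ generated by $\chi$ equals $\CC\chi+\mk K$. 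Both $\mk J_\chi$ and $\mk K$ are ideals of $\mk{gl}^M$ contained in $\mk{gl}^M_t$, hence both lie in the chain $0\subset\mk{sl}(V,V_*)\subset\mk{gl}^M_0\subset\mk{gl}^M_1\subset\dots\subset\mk{gl}^M_t$ furnished by Corollary \ref{Coro no intermed ideals}, and $\dim(\mk J_\chi/\mk K)\leq1$. In that chain the only inclusion of finite codimension is $\mk{sl}(V,V_*)\subset\mk{gl}^M_0=\mk{gl}(V,V_*)$, of codimension one (each $\mk{gl}^M_{r+1}/\mk{gl}^M_r$ being infinite-dimensional); therefore either $\mk K=\mk J_\chi$, or $\mk J_\chi=\mk{gl}(V,V_*)$ and $\mk K=\mk{sl}(V,V_*)$. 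In the first case $\chi\in\mk J_\chi=\mk K\subseteq\mk I$, so ${\rm id}_V=\psi-\chi\in\mk I$ and $\mk I=\CC{\rm id}_V\oplus\mk I_0$ with $\mk I_0\supseteq\mk J_\chi$ --- which is (i), (iii) or (vi). In the second case $\mk{sl}(V,V_*)=\mk K\subseteq\mk I_0$ and $\chi\notin\mk{sl}(V,V_*)$, so ${\bf p}(\chi)\neq0$; if $\mk I_0\supsetneq\mk{sl}(V,V_*)$ then $\chi\in\mk{gl}(V,V_*)\subseteq\mk I_0$ and we reach (vi) as above, whereas if $\mk I_0=\mk{sl}(V,V_*)$ then, using $\chi\equiv{\bf p}(\chi)\,e_{b,b}\pmod{\mk{sl}(V,V_*)}$, one gets $\mk I=\CC\big({\bf p}(\chi)^{-1}{\rm id}_V+e_{b,b}\big)+\mk{sl}(V,V_*)$, i.e.\ the ideal in (iv) with $z={\bf p}(\chi)^{-1}$. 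This accounts for all possibilities.

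I expect the last case, $\mk I\supsetneq\mk I_0$, to be the main obstacle: one must keep the ideal $\mk J_\chi$ generated by $\chi$ separate from the ideal $\mk K$ generated by $[\mk{gl}^M,\chi]$ --- they differ by at most the line $\CC\chi$ --- and remember that $\chi$ itself need not belong to $\mk I$. The single place where a genuinely new ideal appears is the unique finite-codimension inclusion $\mk{sl}(V,V_*)\subset\mk{gl}(V,V_*)$ of ideals of $\mk{gl}^M$, which is precisely what produces the one-parameter family (iv); at every other step the analysis collapses to ``$\CC{\rm id}_V$ together with one of the ideals already classified''.
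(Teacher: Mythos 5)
Your proof is correct, but it follows a different route from the paper's. The paper's proof takes a nonzero proper ideal $\mk J$, lets $r$ be minimal with $\mk J\subset {\rm soc}^r\mk{gl}^M$, and argues layer-by-layer through the socle filtration determined in Theorem \ref{Theo socfil glM}: for $r>2$ the simple layer $\mk{gl}^M_{r-2}/\mk{gl}^M_{r-3}$ is nontrivial, so the projection of $\mk J$ is onto and $\mk{gl}^M_{r-2}\subset\mk J$, giving (v),(vi); the case $r=2$, where the layer is the trivial module $\CC$, is handled separately and produces (iv),(v),(vi) with $s=0$. You instead reduce first to $\mk I\subseteq\CC{\rm id}_V\oplus\mk{gl}^M_t$, set $\mk I_0=\mk I\cap\mk{gl}^M_t$, classify $\mk I_0$ via Corollary \ref{Coro no intermed ideals}, and then analyse the possible $1$-dimensional extension by comparing the two ideals $\mk J_\chi$ and $\mk K=\langle[\mk{gl}^M,\chi]\rangle$ for $\psi={\rm id}_V+\chi$. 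That distinction between $\mk J_\chi$ and $\mk K$ is exactly the right way to capture the fact that $\chi$ itself need not lie in $\mk I$, and the observation that the only codimension-one inclusion in the chain of ideals inside $\mk{gl}^M_t$ is $\mk{sl}(V,V_*)\subset\mk{gl}(V,V_*)$ correctly isolates the source of the one-parameter family (iv). Both proofs ultimately rest on Lemma \ref{Lemma Ideal of varphi} and Corollary \ref{Coro no intermed ideals}; yours bypasses the explicit socle-filtration bookkeeping of Theorem \ref{Theo socfil glM} at the cost of a slightly more hands-on extension analysis, and it also supplies the routine verification that each listed subspace is indeed an ideal, which the paper leaves implicit.
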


\begin{proof}
Let $\mk J\subset\mk{gl}^M$ be a nonzero proper ideal of $\mk{gl}^M$ and let $r$ be the minimal integer such that $\mk J\subset{\rm soc}^{r}\mk{gl}^M$. Then $r\leq t+2$ since any $\varphi\in\mk{gl}^M$ which lies in the preimage of a nonzero element of the simple quotient $\mk{gl}^M/{\rm soc}^{t+2}\mk{gl}^M$ generates $\mk{gl}^M$. If $r=1$ then $\mk J$ is one of the ideals (i),(ii),(iii). Assume $2\leq r\leq t+2$. The minimality of $r$ ensures that $\mk J$ projects nontrivially to the layer $\ul{\rm soc}^{r}\mk{gl}^M$, which is a simple module. We consider two cases, $r=2$ and $r>2$. If $r>2$, then the layer $\ul{\rm soc}^{r}\mk{gl}^M=\mk{gl}^M_{r-2}/\mk{gl}^M_{r-3}$ is a nontrivial simple $\mk{gl}^M$-module. Hence, the projection of $\mk J$ to $\mk{gl}^M_{r-2}$ is the entire $\mk{gl}^M_{r-2}$. Since $\CC{\rm id}_V$ is central, we conclude that $\mk{gl}^M_{r-2}\subset\mk J$. So the possibilities are $\mk J=\mk{gl}^M_{r-2}$ or $\mk J=\CC{\rm id}_V\oplus\mk{gl}^M_{r-2}$, which account for items (v) and (vi) in our list with $r-2=s>0$. The case $r=2$ is covered by (v) and (vi) for $s=0$, along with the remaining item (iv). Indeed, if $\mk J\subset \CC{\rm id}\oplus\mk{gl}(V,V_*)$ projects nontrivially to $\ul{\rm soc}^2\mk{gl}^M\cong \CC$, we have either ${\rm soc}\mk J=\CC{\rm id}_V\oplus\mk{sl}(V,V_*)$ or ${\rm soc}\mk J=\mk{sl}(V,V_*)$. In the former case, we get $\mk J= \CC{\rm id}\oplus\mk{gl}(V,V_*)$ covering item (vi), $s=0$. In the latter case, since ${\rm soc}^2\mk{gl}^M/\mk{sl}(V,V_*)\cong\CC\oplus\CC$, we conclude that $\ul{\rm soc}^2\mk J\cong \CC$ and $\mk J$ is generated by an element of the form $\varphi=z{\rm id}_V+e_{b,b}$ with a suitable $z\in\CC$ and any $b\in\B$ (it is clear that all $b\in\B$ yield the same ideal for a fixed $z$). For $z=0$ we obtain $\mk J=\mk{gl}(V,V_*)$ covering item (v) with $s=0$. For $z\ne0$ we obtain item (iv). 
\end{proof}

\begin{coro}
The only ideal of $\mk{gl}^M$ which is not principal, i.e., is not generated by one element, is $\CC{\rm id}_V\oplus\mk{gl}(V,V_*)$.
\end{coro}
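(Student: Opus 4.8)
The strategy is to go through the complete list of ideals of $\mk{gl}^M$ supplied by Theorem \ref{Theo Ideals List} — together with the trivial ideal $0$ and the whole algebra $\mk{gl}^M$ — and to decide principality case by case, the conclusion being that $\CC{\rm id}_V\oplus\mk{gl}(V,V_*)$ is the only exception. Most of the positive assertions are immediate: $0$ is generated by $0$, and $\CC{\rm id}_V$ by ${\rm id}_V$ (central by Lemma \ref{Lemma Center and glMs ideals}(i)). Since $\mk{sl}(V,V_*)$ is a simple $\mk{gl}^M$-module — by Corollary \ref{Coro no intermed ideals} no ideal of $\mk{gl}^M$ lies strictly between $0$ and it — any $e_{a,b}$ with $a\ne b$ generates it; similarly $\mk{gl}(V,V_*)=\mk{gl}^M_0$ is generated by any $e_{b,b}$, the ideal it generates containing all $e_{a,b}=[e_{a,b},e_{b,b}]$, hence all of $\mk{sl}(V,V_*)$, together with $e_{b,b}$, hence every $e_{a,a}=(e_{a,a}-e_{b,b})+e_{b,b}$. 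The ideal $\CC{\rm id}_V\oplus\mk{sl}(V,V_*)$ is generated by ${\rm id}_V+e_{a,b}$ with $a\ne b$, because $[\mk{gl}^M,{\rm id}_V+e_{a,b}]=[\mk{gl}^M,e_{a,b}]$ already generates $\mk{sl}(V,V_*)$, so ${\rm id}_V=({\rm id}_V+e_{a,b})-e_{a,b}$ falls into the ideal. The ideals of type (iv) are principal by the explicit construction in the proof of Theorem \ref{Theo Ideals List}. Finally, for $\mk{gl}^M_s$ with $1\le s\le t$ and for $\mk{gl}^M$ itself I invoke Lemma \ref{Lemma Ideal of varphi}: a matrix $\psi=\sum_{i\in S}e_{a_i,b_i}$ with pairwise distinct indices and $|S|=\aleph_{s-1}$ has support of cardinality exactly $\aleph_{s-1}$, hence generates $\mk{gl}^M_s$; and a non-scalar element of $\mk{gl}^M$ of support cardinality $\aleph_t$ not lying in $\mk{gl}^M_t$ (a direct sum of $\aleph_t$ countable three-diagonal blocks, in the spirit of the proof of Theorem \ref{Theo socfil glM}) generates $\mk{gl}^M_{t+1}=\mk{gl}^M$.

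The one delicate positive case is the family (vi) with $1\le s\le t$, i.e. $\CC{\rm id}_V\oplus\mk{gl}^M_s$. I claim $\varphi:={\rm id}_V+\psi$, with $\psi$ of support cardinality $\aleph_{s-1}$ as above, is a generator. Since ${\rm id}_V$ is central, $[\chi,\varphi]=[\chi,\psi]$ for all $\chi$, so the ideal $\mk K$ generated by $[\mk{gl}^M,\varphi]$ coincides with the one generated by $[\mk{gl}^M,\psi]$, and by Lemma \ref{Lemma Ideal of varphi} one has $\mk{gl}^M_s=\mk J_\psi=\CC\psi+\mk K$. Thus $\mk K$ is an ideal of $\mk{gl}^M$ which, up to the one-dimensional subspace $\CC\psi$, exhausts $\mk{gl}^M_s$. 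By Corollary \ref{Coro no intermed ideals} the proper ideals of $\mk{gl}^M$ inside $\mk{gl}^M_s$ are $\mk{sl}(V,V_*),\mk{gl}^M_0,\dots,\mk{gl}^M_{s-1}$, all contained in $\mk{gl}^M_{s-1}$; but $\mk{gl}^M_s/\mk{gl}^M_{s-1}$ is infinite-dimensional (for $1\le r\le t$ one writes down infinitely many diagonal matrices supported on pairwise disjoint subsets of $\B$ of cardinality $\aleph_{r-1}$, linearly independent modulo $\mk{gl}^M_{r-1}$), so it cannot be spanned by a single coset. Hence $\mk K=\mk{gl}^M_s$, so $\psi\in\mk J_\varphi$ and therefore ${\rm id}_V=\varphi-\psi\in\mk J_\varphi$; since $\varphi\in\CC{\rm id}_V\oplus\mk{gl}^M_s$, we conclude $\mk J_\varphi=\CC{\rm id}_V\oplus\mk{gl}^M_s$.

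It remains to prove that $\CC{\rm id}_V\oplus\mk{gl}(V,V_*)$ is not principal. If $\varphi$ generated it, write $\varphi=z\,{\rm id}_V+\psi$ with $z\in\CC$ and $\psi$ of finite rank. For every $\chi\in\mk{gl}^M$ the bracket $[\chi,\varphi]=[\chi,\psi]$ has finite rank and trace $0$ (the trace of a commutator involving a finite-rank operator vanishes), hence lies in $\mk{sl}(V,V_*)$; as $\mk{sl}(V,V_*)$ is an ideal, every iterated bracket of $\varphi$ of positive length lies there too, whence $\mk J_\varphi\subseteq\CC\varphi+\mk{sl}(V,V_*)$. The image of the right-hand side in $\mk{gl}^M/\mk{sl}(V,V_*)$ is at most one-dimensional, while the image of $\CC{\rm id}_V\oplus\mk{gl}(V,V_*)$ is two-dimensional — it is spanned by the classes of ${\rm id}_V$ and of $e_{b,b}$, which are linearly independent since ${\rm id}_V$ has infinite rank whereas $e_{b,b}$ and $\mk{sl}(V,V_*)$ consist of finite-rank operators. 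Hence $\mk J_\varphi\subsetneq\CC{\rm id}_V\oplus\mk{gl}(V,V_*)$, a contradiction.

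The step I expect to be the main obstacle is the family (vi) with $s\ge1$: one must be sure that ${\rm id}_V$ can be adjoined to a generator of $\mk{gl}^M_s$ ``for free'', which forces the argument that $[\mk{gl}^M,\psi]$ — not merely $\psi$ together with all its iterated brackets — already generates $\mk{gl}^M_s$. This is precisely where the classification of ideals and the non-triviality of the socle layers of $\mk{gl}^M$ enter. Revealingly, for $s=0$ the analogous layer is the one-dimensional module $\mk q\cong\CC$, and this is exactly the reason the construction collapses and $\CC{\rm id}_V\oplus\mk{gl}(V,V_*)$ fails to be principal.
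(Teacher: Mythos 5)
Your proof is correct and is essentially the approach the paper's one-sentence proof invites: enumerate the ideals from Theorem \ref{Theo Ideals List} and check principality case by case via Lemma \ref{Lemma Ideal of varphi}. The genuinely nonobvious positive case — that $\CC{\rm id}_V\oplus\mk{gl}^M_s$ is principal for $1\le s\le t$ — is handled correctly through the observation that the ideal generated by $[\mk{gl}^M,\psi]$ alone must exhaust $\mk{gl}^M_s$, since no proper subideal can cover the infinite-dimensional layer $\mk{gl}^M_s/\mk{gl}^M_{s-1}$ modulo a single coset, and your closing remark that this collapses precisely at $s=0$ (where $\mk{gl}(V,V_*)/\mk{sl}(V,V_*)\cong\CC$) pinpoints the structural reason for the exception. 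One cosmetic note: for $\mk{gl}^M$ itself the hypothesis of Lemma \ref{Lemma Ideal of varphi} at support cardinality $\aleph_t$ is $\varphi\notin\CC{\rm id}_V\oplus\mk{gl}^M_t$, slightly stronger than ``non-scalar and not in $\mk{gl}^M_t$'' as you phrased it, but the block-three-diagonal element you exhibit does satisfy the stronger condition.
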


\begin{proof}
The result follows from the above proof and Lemma \ref{Lemma Ideal of varphi}.
\end{proof}

\begin{rem}
The ideals of item {\rm (iv)} in Theorem \ref{Theo Ideals List} are very similar to certain ideals of the Lie algebra $\mk{gl}(V)$ found in \cite{Stewart}, see also \cite{Polish}.
\end{rem}

\subsection{Tensor algebras and Schur functors}\label{Sec Schur}

Let us recall some general results for decompositions of tensor powers.

We denote by $T(X)$ the tensor algebra generated by a vector space $X$. For any Young diagram $\lambda$ and any vector space $X$, we denote by  $X_\lambda$ the image of $X$ under the Schur functor corresponding to $\lambda$: $X_\lambda\subset X^{\otimes|\lambda|}$. Here $|\lambda|$ denotes the number of boxes in $\lambda$. Also, we denote by $\Lambda$ the set of Young diagrams, by $\emptyset$ the empty diagram, and by $\lambda^\perp$ the transposed Young diagram (the corresponding partition is called conjugate). Standard Schur-Weyl duality yields the following decomposition in our context
$$
X^{\otimes m}=\bigoplus\limits_{|\lambda|=m} \CC^\lambda \otimes X_\lambda \quad,\quad T(X) = \bigoplus\limits_{\lambda} \CC^\lambda \otimes X_\lambda \;,
$$
where $\CC^\lambda$ is the irreducible module of the symmetric group on $|\lambda|$ letters determined by the partition $\lambda$. The $m$-the symmetric and skew-symmetric tensor powers $S^mX$ and $\Lambda^mX$ correspond to $\lambda=(m)$ and $\lambda=(1,...,1)$, respectively.

We denote by $c_\lambda:X^{\otimes|\lambda|}\to X_\lambda$ the projection associated to a standard Young tableau of shape $\lambda$, which we fix once and for all to be the tableau where the numbers $1,...,|\lambda|$ fill the boxes of $\lambda$ in their initial order.

\begin{prop} (\cite[Proposition~2.2]{Chirvasitu-Penkov-UTC}; \cite[\S~4]{Penkov-Serganova-Mackey})

Let $X,Y$ be two objects in a tensor category. Then the following hold.
\begin{enumerate}
\item For $m,n\geq 0$, $X^{\otimes m}\otimes Y^{\otimes n} = \bigoplus\limits_{|\lambda|=m,|\mu|=n} \CC^\lambda\otimes\CC^\mu \otimes X_\lambda\otimes Y_\mu$.
\item For $m\geq 0$, $S^m(X\otimes Y)= \bigoplus\limits_{|\lambda|=m} X_\lambda\otimes Y_\lambda$.
\item For $m\geq 0$, $\Lambda^m(X\otimes Y)= \bigoplus\limits_{|\lambda|=m} X_\lambda\otimes Y_{\lambda^\perp}$.
\end{enumerate}
\end{prop}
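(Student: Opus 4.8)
The plan is to reduce everything to the representation theory of the symmetric group, just as in the classical vector-space case, since a $\CC$-linear symmetric monoidal abelian category carries enough structure to run the argument verbatim. The preliminary fact I would use is that for any object $Z$ and any $m\geq 0$ the group algebra $\CC[S_m]$ acts on $Z^{\otimes m}$ by permuting tensor factors (via the symmetry isomorphisms), and since $\CC[S_m]$ is semisimple, its central idempotents yield a functorial isotypic decomposition $Z^{\otimes m}=\bigoplus_{|\lambda|=m}\CC^\lambda\otimes Z_\lambda$, where $Z_\lambda=c_\lambda(Z^{\otimes m})$ is the image of a Young symmetrizer of shape $\lambda$, with $S_m$ acting through $\CC^\lambda$ and trivially on $Z_\lambda$; images of idempotent endomorphisms exist because the category is abelian. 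This is precisely the Schur--Weyl statement recalled just before the proposition. Part (1) is then immediate: apply this decomposition to $X^{\otimes m}$ and to $Y^{\otimes n}$ separately and distribute $\otimes$ over $\oplus$.

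For (2) and (3) I would first use the coherence of the braiding to fix an isomorphism $(X\otimes Y)^{\otimes m}\cong X^{\otimes m}\otimes Y^{\otimes m}$ under which the $S_m$-action permuting the $m$ copies of $X\otimes Y$ on the left corresponds on the right to the \emph{diagonal} action $\sigma\mapsto\sigma_X\otimes\sigma_Y$, where $\sigma_X,\sigma_Y$ permute the factors of $X^{\otimes m},Y^{\otimes m}$ respectively. Combining this with the decompositions of $X^{\otimes m}$ and $Y^{\otimes m}$ from the previous paragraph gives an $S_m$-equivariant isomorphism
\[
(X\otimes Y)^{\otimes m}\;\cong\;\bigoplus_{|\lambda|=|\mu|=m}\bigl(\CC^\lambda\otimes\CC^\mu\bigr)\otimes\bigl(X_\lambda\otimes Y_\mu\bigr),
\]
with $S_m$ acting diagonally on $\CC^\lambda\otimes\CC^\mu$ and trivially on $X_\lambda\otimes Y_\mu$.

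Now $S^m(X\otimes Y)$ is, by definition, the image of the symmetrizing idempotent $e_+=\frac1{m!}\sum_{\sigma\in S_m}\sigma$ acting on $(X\otimes Y)^{\otimes m}$. Applying $e_+$ summand by summand to the displayed decomposition gives $S^m(X\otimes Y)\cong\bigoplus_{\lambda,\mu}(\CC^\lambda\otimes\CC^\mu)^{S_m}\otimes X_\lambda\otimes Y_\mu$. Since complex irreducible $S_m$-modules are self-dual, $(\CC^\lambda\otimes\CC^\mu)^{S_m}\cong\mathrm{Hom}_{S_m}(\CC^\lambda,\CC^\mu)$, which by Schur's lemma is $\CC$ if $\lambda=\mu$ and $0$ otherwise; this yields (2). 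For (3) I would replace $e_+$ by the antisymmetrizer $e_-=\frac1{m!}\sum_{\sigma}\mathrm{sgn}(\sigma)\,\sigma$, whose image is the $\mathrm{sgn}$-isotypic part; the relevant multiplicity space is $\mathrm{Hom}_{S_m}(\mathrm{sgn}\otimes\CC^\lambda,\CC^\mu)=\mathrm{Hom}_{S_m}(\CC^{\lambda^\perp},\CC^\mu)$, using $\mathrm{sgn}\otimes\CC^\lambda\cong\CC^{\lambda^\perp}$, and this is $\CC$ exactly when $\mu=\lambda^\perp$, giving (3).

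The argument is routine, and the only points requiring a little care are purely formal: that the permutation action on $(X\otimes Y)^{\otimes m}$ really becomes the diagonal action after rearranging the tensor factors (a consequence of the coherence theorem for symmetric monoidal categories), and that idempotent decompositions in $\CC[S_m]$ transport to the ambient category because images of idempotent endomorphisms exist in any abelian category. There is no substantial obstacle here, which is why the statement is quoted with references rather than reproved in full.
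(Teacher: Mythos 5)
Your argument is correct and is the standard Schur--Weyl/Cauchy-type decomposition argument; the paper itself gives no proof, simply citing \cite[Proposition~2.2]{Chirvasitu-Penkov-UTC} and \cite[\S~4]{Penkov-Serganova-Mackey}, and your reasoning is exactly what those references carry out.

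The only places that warrant a sentence of care, and which you already flag, are the two formal ingredients: (a) the rearrangement $(X\otimes Y)^{\otimes m}\cong X^{\otimes m}\otimes Y^{\otimes m}$ is $S_m$-equivariant for the diagonal action, which is an instance of coherence for the symmetry, and (b) images of idempotents in $\CC[S_m]$ split in the abelian category. For part (3), note that the identification $\Hom_{S_m}(\mathrm{sgn},\CC^\lambda\otimes\CC^\mu)\cong\Hom_{S_m}(\CC^{\lambda^\perp},\CC^\mu)$ uses self-duality of $\CC^\lambda$ in addition to $\mathrm{sgn}\otimes\CC^\lambda\cong\CC^{\lambda^\perp}$; you invoke both, so the step is sound.
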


\subsection{Dense subalgebras}

\begin{definition}\label{Def Dense}
Let $\mk G$ be a Lie algebra, $R$ be a $\mk G$-module, and $\mk H\subset\mk G$ a subalgebra. The subalgebra $\mk H$ is said to act densely on $R$, if for any finite subset of vectors $r_1,...,r_n\in R$ and any $g\in\mk G$, there exists $h\in\mk H$ such that $g\cdot r_j=h\cdot r_j$ for  $j=1,...,n$.
\end{definition}

\begin{prop}\label{Prop DenseProperties}
Let $\mk G$ be a Lie algebra, $\mk J\subset \mk G$ an ideal and $R$ a $\mk G$-module.
\begin{enumerate}
\item[{\rm (a)}] If $\mk G$ acts densely on $R$, then for any partition $\lambda$ the $\mk G$-module $R_\lambda$ is simple with ${\rm End}_{\mk G}R_\lambda\cong \CC$.
\item[{\rm (b)}] If $\mk J$ acts densely and irreducibly on $R$ with ${\rm End}_{\mk J}R=\CC$, then the functor
$$
\bullet\otimes R : \mk G/\mk J{\rm -mod} \lw \mk G{\rm -mod}
$$
is fully faithful; it sends simple modules to simple modules and essential inclusions to essential inclusions.
\end{enumerate}
\end{prop}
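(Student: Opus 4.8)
For part (a), the plan is to use double-centralizer/Jacobson-density type reasoning. Since $\mk G$ acts densely on $R$, for any finite set of vectors in $R^{\otimes|\lambda|}$ and any element of $\mk{gl}(R)$ acting diagonally on the tensor power, we can approximate by the action of a single element of $\mk G$; in particular the image of $U(\mk G)$ in $\End_\CC(R^{\otimes|\lambda|})$ contains, on every finite-dimensional subspace, the full image of $U(\mk{gl}(R))$ acting diagonally. The Schur functor projector $c_\lambda$ commutes with the diagonal $\mk{gl}(R)$-action, so $R_\lambda=c_\lambda(R^{\otimes|\lambda|})$ is stable and, by classical Schur--Weyl duality applied on finite-dimensional subspaces, any nonzero $\mk G$-submodule of $R_\lambda$ that is closed under this approximate action must be everything; hence $R_\lambda$ is simple. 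The same density argument shows any $\mk G$-endomorphism of $R_\lambda$ is scalar on each finite-dimensional subspace, hence globally scalar, giving $\End_{\mk G}R_\lambda\cong\CC$. Here I would want to be careful to phrase ``density'' correctly: Definition~\ref{Def Dense} is about matching the action of a \emph{given} $g\in\mk G$ on finitely many vectors, so the relevant statement is that $\mk G$ and $\mk{gl}(R)$ have the same orbit-closures on finite tuples, which is exactly what is needed for the finite-dimensional Schur--Weyl input.

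For part (b), I would proceed in three steps. \textbf{Full faithfulness.} Given $\mk G/\mk J$-modules $M,N$, I must show $\Hom_{\mk G/\mk J}(M,N)\to\Hom_{\mk G}(M\otimes R,N\otimes R)$ is bijective. Injectivity is clear since $R\ne0$. For surjectivity, take $f\in\Hom_{\mk G}(M\otimes R,N\otimes R)$; because $\mk J$ acts only on the $R$-factor, $f$ is $\mk J$-linear for the action on $R$, and since $R$ is simple as a $\mk J$-module with $\End_{\mk J}R=\CC$, a Schur-lemma / tensor-identity argument (using that $M\otimes R\cong R^{\oplus\dim M}$ as $\mk J$-modules, carefully with cardinalities, or better: $\Hom_{\mk J}(M\otimes R,N\otimes R)\cong\Hom_\CC(M,N)\otimes\End_{\mk J}R\cong\Hom_\CC(M,N)$) shows $f=g\otimes\mathrm{id}_R$ for a unique linear map $g:M\to N$. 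Then $\mk G$-linearity of $f$ together with density of $\mk J$ on $R$ forces $g$ to be $\mk G$-linear, and $\mk J$-triviality on the $M,N$-side forces $g$ to factor through $\mk G/\mk J$. \textbf{Simples to simples.} If $M$ is simple over $\mk G/\mk J$, then a $\mk G$-submodule $0\ne P\subset M\otimes R$ is in particular a $\mk J$-submodule; using that $R$ is irreducible over $\mk J$ and density, one shows $P$ must be of the form $M'\otimes R$ for a $\mk G/\mk J$-submodule $M'\subset M$, hence $P=M\otimes R$. \textbf{Essential inclusions.} If $M'\subset M$ is essential over $\mk G/\mk J$ and $0\ne P\subset M\otimes R$ is a $\mk G$-submodule, by the previous step $P=M''\otimes R$ for some nonzero $M''\subset M$, so $M''\cap M'\ne0$, whence $P\cap(M'\otimes R)\supset(M''\cap M')\otimes R\ne0$.

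The main obstacle, and the step I would spend the most care on, is the reduction ``any $\mk J$-submodule of $M\otimes R$ has the form $M''\otimes R$.'' The density hypothesis is what makes this work: given $0\ne p\in P$, write it in a finite ``basis'' form $p=\sum_{i=1}^k m_i\otimes r_i$ with the $m_i$ linearly independent and the $r_i$ nonzero; by irreducibility of $R$ over $\mk J$ one can act to isolate each $m_i\otimes(\text{something})$, and density lets one match the action of elements of $\mk G$ (not just $\mk J$) on the finitely many vectors $r_i$, so that the $\mk G$-span of such $p$'s closes up to a genuine tensor $M''\otimes R$. The standard tool here is a ``Fitting/density'' lemma of the type used for tensoring with a simple module over an ideal (cf. \cite{Penkov-Serganova-Mackey}); I would isolate it as a sublemma: if $\mk J$ acts densely and irreducibly on $R$ with $\End_{\mk J}R=\CC$, then every $\mk J$-submodule of $M\otimes R$ is $N\otimes R$ for a subspace $N\subseteq M$, and it is a $\mk G$-submodule iff $N$ is a $\mk G/\mk J$-submodule. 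Everything else is then bookkeeping.
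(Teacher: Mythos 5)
The paper itself does not prove this proposition; it simply cites earlier references, so there is no in-paper argument to compare against. Your outline is correct and follows the standard route, but let me flag the two places where the details merit care.

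In part (a), the point you gesture at but should spell out is that density of $\mk G$ on $R$ (approximating $\mk{gl}(R)$ on finite tuples) must be upgraded to a statement about $U(\mk G)$ on tensor powers. Concretely, for $0\neq r\in R_\lambda$ take a finite-dimensional $W\subset R$ with $r\in W_\lambda$; for any $s\in W_\lambda$ classical Schur--Weyl gives $v=\sum_j x_{j,1}\cdots x_{j,k_j}\in U(\mk{gl}(W))$ with $v\cdot r=s$. One then applies density once per Lie element $x_{j,i}$ against the \emph{finite} set of intermediate vectors $(x_{j,i+1}\cdots x_{j,k_j})\cdot r$ to produce $h_{j,i}\in\mk G$; the element $u=\sum_j h_{j,1}\cdots h_{j,k_j}\in U(\mk G)$ then satisfies $u\cdot r=s$. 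Running $W$ up a directed system gives simplicity, and ${\rm End}_{\mk G}R_\lambda\cong\CC$ follows by the same mechanism together with the fact that $W_\lambda$ occurs with multiplicity one in $W'_\lambda|_{\mk{gl}(W)}$ for $W\subset W'$.

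In part (b), you present density as the key ingredient of the sublemma, but it in fact plays no role there. Since $\mk J$ acts trivially on $M$ and $R$ is a simple, hence cyclic, $\mk J$-module with ${\rm End}_{\mk J}R=\CC$, the assignment $m\mapsto(r\mapsto m\otimes r)$ identifies $M$ with ${\rm Hom}_{\mk J}(R,M\otimes R)$ (cyclicity of $R$ ensures any $\mk J$-map into a direct sum lands in a finite sub-sum), and evaluation gives an order isomorphism between subspaces $N\subseteq M$ and $\mk J$-submodules $N\otimes R$ of $M\otimes R$. The direct computation $x(m\otimes r)=xm\otimes r+m\otimes xr$ then shows $N\otimes R$ is $\mk G$-stable iff $N$ is $\mk G/\mk J$-stable, and full faithfulness, preservation of simples, and preservation of essential inclusions all fall out as you describe, with no appeal to density at any stage. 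So your proof is right, but density is a red herring in (b): it is a stronger hypothesis that is convenient to verify in the applications but not needed for the conclusion once irreducibility and ${\rm End}_{\mk J}R=\CC$ are in hand.
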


Proofs are given in \cite[Proposition~4.5]{Chirvasitu-Penkov-RC} for part (a), \cite[Lemma~4.4]{Chirvasitu-Penkov-RC} and \cite[Lemma~3.3]{Chirvasitu-Penkov-UTC} for part (b).

\section{The module $I$}\label{Sec Module I}

Consider the canonical projection
\begin{gather}\label{For ptilde Q}
\tilde{\bf p}:\bar V \otimes V^*\lw \bar V\otimes V^*/(\mk{sl}(V,V^*)+\mk{sl}(\bar V,V_*)) =:Q \;,
\end{gather}
where $\mk{sl}(V,V^*):={\rm ker}(V\otimes V^*\to \CC)$ and $\mk{sl}(\bar V,V_*):={\rm ker}(\bar V\otimes V_*\to \CC)$.
Recall that $\mk{gl}(V,V_*)=V\otimes V_*$ and $\mk{sl}(V,V_*)={\rm ker}({\bf p}:V\otimes V_*\to\CC)$ are ideals of $\mk{gl}^M$. Hence
$$
\mk q:=\tilde{\bf p}(V\otimes V_*)=\tilde{\bf p}(V\otimes V^*)=\tilde{\bf p}(\bar V\otimes V_*) \subset Q 
$$
is a 1-dimensional trivial $\mk{gl}^M$-module, generated by $\tilde{\bf p}(v_b\otimes x_b)$ for an arbitrary $b\in\B$. Consequently, there is a short exact sequence of $\mk{gl}^M$-modules
\begin{gather}\label{For CQF}
0 \lw \CC \stackrel{\iota}{\lw} Q \stackrel{\pi}{\lw} F \lw 0 \;,
\end{gather}
where $\iota(\CC)=\mk q$ with $\iota(1)=\tilde{\bf p}(v_b\otimes x_b)$ for any $b\in\B$ and $F:=\bar V/V \otimes V^*/V_*$.

We define a $\mk{gl}^M$-module by setting
\begin{gather}\label{For I}
I:=\lim\limits_{\lw} S^kQ ,
\end{gather}
where $\iota_k:S^kQ\hookrightarrow S^{k+1}Q$ is the map generalizing $\iota_0=\iota$, given by
$$
S^kQ\cong S^kQ\otimes\CC \stackrel{{\rm id}\otimes \iota}{\lw} S^kQ \otimes Q \stackrel{multiply}{\lw} S^{k+1}Q \;.
$$

The exact sequence (\ref{For CQF}) generalizes, for $k\in\NN$, to
\begin{gather}\label{For SkCQF}
0 \lw S^kQ \stackrel{\iota_k}{\lw} S^{k+1}Q \stackrel{\pi_k}{\lw} S^{k+1}F \lw 0 \;.
\end{gather}
It follows that the successive quotients (or layers) of the defining filtration of $I$ are $S^{k}F$ for $k=0,1,2,...$.

\begin{prop}\label{Prop I is algebra} (\cite{Chirvasitu-Penkov-UTC})

The module $I$ carries a commutative algebra structure, made evident by the isomorphism of $\mk{gl}^M$-modules $I\cong S^\bullet Q/\langle 1-\iota(1)\rangle$, where $\langle 1-\iota(1)\rangle$ denotes the ideal of $S^\bullet Q$ generated by $1-\iota(1)$.
\end{prop}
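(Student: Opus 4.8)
The plan is to exhibit $I$ as the quotient $S^\bullet Q/\langle 1-\iota(1)\rangle$ and to check that the colimit defining $I$ realizes exactly this quotient, so that the commutative algebra structure on the symmetric algebra $S^\bullet Q$ descends. First I would recall that $S^\bullet Q=\bigoplus_{k\geq 0}S^kQ$ is a commutative associative $\mk{gl}^M$-algebra in the obvious way, since $Q$ is a $\mk{gl}^M$-module and the symmetric algebra functor is monoidal; the element $\iota(1)\in S^1Q=Q$ is the image of the canonical generator $\tilde{\bf p}(v_b\otimes x_b)$ of the trivial submodule $\mk q\subset Q$, and because $\mk q$ is $\mk{gl}^M$-stable and $1$-dimensional, the principal ideal $\langle 1-\iota(1)\rangle\subset S^\bullet Q$ is a $\mk{gl}^M$-submodule. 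Hence the quotient $S^\bullet Q/\langle 1-\iota(1)\rangle$ is a $\mk{gl}^M$-module and a commutative associative algebra, and the two structures are compatible (the Lie algebra acts by derivations).

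Next I would identify this quotient with $I=\varinjlim S^kQ$. The key observation is that the transition map $\iota_k\colon S^kQ\hookrightarrow S^{k+1}Q$ is precisely multiplication by $\iota(1)$ inside $S^\bullet Q$: indeed $\iota_k$ is the composite $S^kQ\cong S^kQ\otimes\CC\xrightarrow{{\rm id}\otimes\iota}S^kQ\otimes Q\xrightarrow{\text{mult}}S^{k+1}Q$, which sends $w\mapsto w\cdot\iota(1)$. Therefore the colimit of the system $S^0Q\xrightarrow{\cdot\iota(1)}S^1Q\xrightarrow{\cdot\iota(1)}S^2Q\xrightarrow{\cdot\iota(1)}\cdots$ is by definition the localization $(S^\bullet Q)[\iota(1)^{-1}]$ restricted to degree-zero... more precisely, the colimit of multiplication-by-$f$ on a graded ring's graded pieces, suitably interpreted, is the quotient by $1-f$. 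I would make this precise: the canonical algebra map $S^\bullet Q\to I$ (sending $S^kQ$ to the $k$-th term of the colimit) is surjective, sends $\iota(1)$ to an invertible element equal to the image of $1$, hence factors through $S^\bullet Q/\langle 1-\iota(1)\rangle$; and conversely one checks injectivity of the induced map using that $1-\iota(1)$ is a nonzerodivisor in $S^\bullet Q$ (this follows because $S^\bullet Q$ is an integral domain, $Q$ being nonzero, or more elementarily by a degree/leading-term argument on homogeneous components), so that $\langle 1-\iota(1)\rangle\cap S^kQ$ is exactly the kernel of $S^kQ\to I$. This is essentially the argument already used in \cite{Chirvasitu-Penkov-UTC} in the case $t=0$, and it goes through verbatim here since it uses only the formal properties of $Q$, $\iota$, and the colimit.

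The main obstacle, such as it is, is the bookkeeping in the injectivity step: one must verify that the graded ideal $\langle 1-\iota(1)\rangle$ meets $S^kQ$ (viewed inside $S^\bullet Q$ via the natural inclusion of the $k$-th graded piece, but note $1-\iota(1)$ is not homogeneous, so ``$S^kQ$'' here must be read as the image of the $k$-th stage of the colimit system) in precisely the right subspace, and that no collapsing beyond this occurs. Concretely, an element of $\langle 1-\iota(1)\rangle$ has the form $(1-\iota(1))\cdot h$ for $h\in S^\bullet Q$, and writing $h=\sum_j h_j$ with $h_j\in S^jQ$ homogeneous, one tracks how such an element maps under $S^\bullet Q\to I$ and checks it lands in zero iff it is of this form. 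Once the nonzerodivisor property of $1-\iota(1)$ is in hand, this is a routine verification. All $\mk{gl}^M$-equivariance assertions are automatic because every map in sight — the inclusions $\iota_k$, the multiplications, the quotient projection — is a morphism of $\mk{gl}^M$-modules, the Lie algebra acting on $S^\bullet Q$ by derivations extending its action on $Q$.
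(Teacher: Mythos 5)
Your argument is correct and is, as you note, exactly the argument from \cite{Chirvasitu-Penkov-UTC} (which the paper cites in lieu of a proof): the transition maps $\iota_k$ are multiplication by $\iota(1)\in S^1Q$, the direct-limit module is therefore $S^\bullet Q/\langle 1-\iota(1)\rangle$ by the standard identification of $\varinjlim(R_0\xrightarrow{\cdot f}R_1\xrightarrow{\cdot f}\cdots)$ with $R/(1-f)$ for a graded ring $R$ and nonzerodivisor $f\in R_1$, and $\mk{gl}^M$-equivariance is automatic since $\mk{gl}^M$ annihilates both $1\in S^0Q$ and $\iota(1)\in\mk q$ and acts by derivations on $S^\bullet Q$. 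One small remark on the bookkeeping: since $\iota(1)$ is already a nonzerodivisor, each $\iota_k$ is injective, so both $\ker(S^kQ\to I)$ and $\langle 1-\iota(1)\rangle\cap S^kQ$ vanish; the real content of the injectivity step is that if $\sum_j h_j\in S^\bullet Q$ maps to $0$ in $I$ (equivalently $\sum_j\iota(1)^{N-j}h_j=0$ in $S^NQ$ for $N$ large), then $\sum_j h_j=\sum_j\bigl(1-\iota(1)^{N-j}\bigr)h_j\in\langle 1-\iota(1)\rangle$, which is what your leading-term remark is getting at.
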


We observe that for every pair of natural numbers $r,s\leq t+1$, we have a $\mk{gl}^M$-submodule of $Q$ defined as
$$
Q^{r,s}:=\tilde{\bf p}(\bar V_{r}\otimes V^*_{s}) \subseteq Q \;.
$$
Since $\mk q\subset Q^{r,s}$, the construction of $I$ can be repeated with $Q^{r,s}$ instead of $Q$, yielding a $\mk{gl}^M$-module
\begin{gather}\label{For Irs}
I^{r,s}:= \lim\limits_{k\to\infty} S^kQ^{r,s}
\end{gather}
which is an essential extension of the trivial module $\mk q\cong\CC$. Thus we obtain a family of $\mk{gl}^M$-submodules and commutative subalgebras of $I$:
$$
\mk q\subset I^{r,s} \subset I^{t+1,t+1}=I \;, \; r,s\leq t+1\;.
$$
Note that $I^{r,s}\subset I^{r',s'}$ if and only if $r\leq r'$ and $s\leq s'$.\\

Next, we define a morphism of $\mk{gl}^M$-modules $\psi:I\to F\otimes I$.

Let $S^\bullet Q=\bigoplus\limits_{k=0}^\infty S^kQ$ be the symmetric algebra over $Q$. Let
$$
\Delta:S^\bullet Q \lw S^\bullet Q\otimes S^\bullet Q \quad,\quad \Delta(v)=v\otimes 1 + 1\otimes v \;{\rm for}\; v\in Q
$$
be the comultiplication which defines a Hopf algebra structure on $S^\bullet Q$. The comultiplication is a morphism of $\mk{gl}^M$-modules. We denote by
$$
\Delta^k_j:S^k Q \to S^j Q \otimes S^{k-j} Q
$$
the composition of the restriction $\Delta:S^k Q \to \bigoplus\limits_{j=0}^k S^j Q \otimes S^{k-j} Q$ with the projection to the $j$-th summand.

For $k\in\NN$ we have a morphism $\psi^k=(\pi_k\otimes{\rm id})\circ\Delta^k_1$:
$$
\psi^k:S^kQ \stackrel{\Delta^k_1}\lw Q \otimes S^{k-1} Q \stackrel{\pi_k\otimes{\rm id}}{\lw} F\otimes S^{k-1} Q \;.
$$
We define
$$
\psi = \lim\limits_{\lw} \psi^k : I \lw F\otimes I \;.
$$

\begin{lemma}
We have $\psi^{k+1}\circ \iota_k = ({\rm id}\otimes \iota_{k-1}) \circ \psi^k$.
\end{lemma}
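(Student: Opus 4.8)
The plan is to verify the identity $\psi^{k+1}\circ\iota_k=(\mathrm{id}\otimes\iota_{k-1})\circ\psi^k$ directly on the level of the symmetric algebra $S^\bullet Q$, by unwinding the definitions of $\iota_k$, $\psi^k$, and the coproduct $\Delta$. Recall $\iota_k$ is the composite $S^kQ\cong S^kQ\otimes\CC\xrightarrow{\mathrm{id}\otimes\iota}S^kQ\otimes Q\xrightarrow{\mathrm{mult}}S^{k+1}Q$, i.e.\ it is multiplication by the fixed element $\iota(1)=\tilde{\bf p}(v_b\otimes x_b)\in\mk q\subset Q$. And $\psi^k=(\pi_k\otimes\mathrm{id})\circ\Delta^k_1$, where $\Delta^k_1:S^kQ\to Q\otimes S^{k-1}Q$ is the $(1,k-1)$-component of the Hopf coproduct. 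So what must be checked is that applying ``multiply by $\iota(1)$, then take the degree-$(1,k)$ component of the coproduct, then apply $\pi_{k+1}\otimes\mathrm{id}$'' agrees with ``take the degree-$(1,k-1)$ component of the coproduct, apply $\pi_k\otimes\mathrm{id}$, then multiply the second tensor factor by $\iota(1)$''.

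First I would use the fact that $\Delta$ is an algebra homomorphism on $S^\bullet Q$, so $\Delta(\iota(1)\cdot u)=\Delta(\iota(1))\cdot\Delta(u)=(\iota(1)\otimes 1+1\otimes\iota(1))\cdot\Delta(u)$ for $u\in S^kQ$. Extracting the $Q\otimes S^kQ$-component of the right-hand side: the term $(\iota(1)\otimes 1)\cdot\Delta(u)$ contributes $\iota(1)\otimes(\text{degree-}k\text{ part of }\Delta(u))=\iota(1)\otimes u$ to the $(1,k)$-component (using the counit property that the $(0,k)$-component of $\Delta(u)$ is $1\otimes u$); the term $(1\otimes\iota(1))\cdot\Delta(u)$ contributes $(\mathrm{id}\otimes(\iota(1)\cdot\,-))$ applied to the $(1,k-1)$-component $\Delta^k_1(u)$. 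Thus $\Delta^{k+1}_1(\iota_k(u))=\iota(1)\otimes u+(\mathrm{id}\otimes\iota_{k-1})(\Delta^k_1(u))$, where in the second summand I identify the multiplication-by-$\iota(1)$ map $S^{k-1}Q\to S^kQ$ with $\iota_{k-1}$.

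Then I would apply $\pi_{k+1}\otimes\mathrm{id}$ to this equation. On the first summand $\iota(1)\otimes u$, the map $\pi_{k+1}:Q\to F$ kills $\iota(1)$ since $\iota(1)\in\mk q=\iota(\CC)=\ker\pi$ by the exact sequence (\ref{For CQF}); hence this term dies. On the second summand, $\pi_{k+1}\otimes\mathrm{id}$ commutes with $\mathrm{id}\otimes\iota_{k-1}$ (they act on disjoint tensor factors), so $(\pi_{k+1}\otimes\mathrm{id})\circ(\mathrm{id}\otimes\iota_{k-1})=(\mathrm{id}\otimes\iota_{k-1})\circ(\pi_k\otimes\mathrm{id})$, and applied to $\Delta^k_1(u)$ this gives exactly $(\mathrm{id}\otimes\iota_{k-1})(\psi^k(u))$. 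Combining, $\psi^{k+1}(\iota_k(u))=(\mathrm{id}\otimes\iota_{k-1})(\psi^k(u))$, as required; and since everything in sight ($\Delta$, multiplication, the $\pi$'s) is a morphism of $\mk{gl}^M$-modules, the identity holds as $\mk{gl}^M$-module maps, which legitimizes passing to the direct limit to define $\psi$.

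The only genuinely delicate point is bookkeeping the graded components of $\Delta$ correctly — in particular making sure that ``multiply by the degree-one element $\iota(1)$'' shifts the relevant $(j,k-j)$-components by one in the right factor, and that the stray term produced in the $Q\otimes S^kQ$-slot is precisely the one annihilated by $\pi$. I expect this to be entirely routine once the algebra-homomorphism property of $\Delta$ and the counit identity are invoked; there is no real obstacle, only the need to be careful that the identification of ``multiplication by $\iota(1)$ on $S^{k-1}Q$'' with $\iota_{k-1}$ is the one used in the definition of the direct system $I=\varinjlim S^kQ$.
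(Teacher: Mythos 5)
Your proof is correct: the key steps — that $\iota_k$ is multiplication by the degree-one element $\iota(1)$, that $\Delta$ is an algebra homomorphism so $\Delta(\iota(1)\cdot u)=(\iota(1)\otimes 1+1\otimes\iota(1))\cdot\Delta(u)$, and that extracting the $(1,k)$-component yields $\iota(1)\otimes u + (\mathrm{id}\otimes\iota_{k-1})(\Delta^k_1(u))$ with the first summand killed by $\pi\otimes\mathrm{id}$ — assemble exactly into the claimed identity. The paper itself defers to the argument of Chirvasitu--Penkov \S 3.1, and your computation is precisely that direct verification, so this is the same route the authors intend.
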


\begin{proof}
The argument in \cite[\S~3.1]{Chirvasitu-Penkov-UTC} can be repeated in our context without alteration.
\end{proof}

\begin{lemma}
The kernel of $\psi$ is one dimensional, given by ${\rm ker}(\psi)=\mk q\cong \CC$.
\end{lemma}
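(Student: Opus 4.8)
The plan is to work level by level along the defining filtrations of $I$ and $F\otimes I$, reducing everything to the comultiplication $\Delta$ on the symmetric algebra $S^\bullet Q$. First I would identify the kernel filtration: since $I=\lim\limits_\to S^kQ$ and $\psi=\lim\limits_\to\psi^k$, with the compatibility $\psi^{k+1}\circ\iota_k=({\rm id}\otimes\iota_{k-1})\circ\psi^k$ from the preceding lemma, it suffices to compute $\ker\psi^k$ for each $k$ and check that these kernels stabilize to $\mk q$. For $k=0$, $\psi^0:S^0Q=\CC\to F\otimes S^{-1}Q$ is the zero map (the target is $0$), so $\ker\psi^0=\CC\cong\mk q$, and $\mk q\subset\ker\psi$ is clear. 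The content is to show $\ker\psi^k\subseteq\iota_{k-1}(\ker\psi^{k-1})$, which inductively forces $\ker\psi=\mk q$.

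The key computation is therefore: for $v\in S^kQ$ with $\psi^k(v)=(\pi_k\otimes{\rm id})\Delta^k_1(v)=0$, show $v\in\iota_{k-1}(S^{k-1}Q)$. Here I would use the short exact sequence (\ref{For SkCQF}), $0\to S^{k-1}Q\xrightarrow{\iota_{k-1}}S^kQ\xrightarrow{\pi_{k-1}}S^kF\to 0$, together with the analogous decomposition of $\Delta^k_1$ on $S^kF$: applying $\pi_k\otimes{\rm id}$ to $\Delta^k_1(v)$ lands in $F\otimes S^{k-1}Q$, and one checks that the composite $S^kQ\xrightarrow{\psi^k}F\otimes S^{k-1}Q\xrightarrow{{\rm id}\otimes\pi_{k-2}}F\otimes S^{k-1}F$ agrees with $\Delta^k_1$ computed after projecting to $S^kF$, i.e. equals the coalgebra comultiplication component $\Delta^{k,F}_1\circ\pi_{k-1}$. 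Since $\Delta_1^{k,F}$ on the symmetric coalgebra $S^\bullet F$ is injective on $S^kF$ for $k\geq 1$ (the degree-$1$-in-first-factor component of the coproduct recovers the element up to the scalar $k$), this shows $\pi_{k-1}(v)=0$, hence $v=\iota_{k-1}(w)$ for some $w\in S^{k-1}Q$. Then the compatibility lemma gives $({\rm id}\otimes\iota_{k-2})\psi^{k-1}(w)=\psi^k(\iota_{k-1}(w))=\psi^k(v)=0$, and since $\iota_{k-2}$ is injective, $\psi^{k-1}(w)=0$; by induction $w\in\iota_{k-2}(\ker\psi^{k-2})=\cdots$, so in the limit $v$ maps into $\mk q$.

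The main obstacle I anticipate is the bookkeeping in the middle step: making precise the claim that $\psi^k$ followed by projection to $F\otimes S^{k-1}F$ factors through $\pi_{k-1}$ and equals the intrinsic comultiplication component on $S^\bullet F$, i.e. that the two natural ways of extracting "one factor in $F$, the rest in $S^{k-1}F$" from an element of $S^kQ$ agree under the projections $\pi_k,\pi_{k-2}$. This is a diagram chase using coassociativity of $\Delta$ and the fact that $\pi_k:S^{k+1}Q\to S^{k+1}F$ is an algebra/coalgebra map up to the identification of (\ref{For SkCQF}); once this commutativity is established, injectivity of $\Delta^{k,F}_1$ is elementary. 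Alternatively, and perhaps more cleanly, I would phrase the whole argument inside $S^\bullet Q/\langle 1-\iota(1)\rangle\cong I$ (Proposition \ref{Prop I is algebra}): there $\psi$ becomes the map induced by $(\pi\otimes{\rm id})\circ\bar\Delta$ where $\bar\Delta$ is the induced comultiplication, and $\ker\psi$ is spanned by the images of elements $v\in S^\bullet Q$ with $\Delta(v)-v\otimes 1-1\otimes v\equiv 0$ modulo $\langle 1-\iota(1)\rangle\otimes S^\bullet Q + (\mk{sl}(V,V^*)+\mk{sl}(\bar V,V_*))$ in the first tensor factor — which, since the only primitive-type contributions surviving are multiples of $\iota(1)\equiv 1$, leaves exactly $\mk q$. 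I expect the filtration argument to be the safer route to write out in detail.
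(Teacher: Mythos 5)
Your proof is correct, and it fills in a step the paper leaves implicit. The paper's argument is a direct computation for each $k$: since $\ker(\pi\otimes{\rm id})=\mk q\otimes S^{k-1}Q$, one has $\ker\psi^k=(\Delta^k_1)^{-1}(\mk q\otimes S^{k-1}Q)$, and the paper simply asserts this equals the image of $\mk q$ in $S^kQ$. Justifying that last equality requires precisely the kind of injectivity fact you isolate. You organize the argument instead as an induction on $k$: project $\psi^k(v)$ further to $F\otimes S^{k-1}F$, identify the composite (by commutation of the Hopf-algebra quotient $S^\bullet Q\to S^\bullet F$ with comultiplication) with $\Delta^{k,F}_1\circ\pi_{k-1}$, use injectivity of $\Delta^{k,F}_1$ on $S^kF$ to conclude $\pi_{k-1}(v)=0$, then lift to $w\in S^{k-1}Q$ and apply the compatibility lemma to descend. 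The two routes apply the injectivity of the $(1,k-1)$-component of the symmetric comultiplication to different spaces (the paper's version implicitly to a complement of $\mk q$ inside $Q$, yours explicitly to $F$), and your inductive packaging makes the compatibility lemma $\psi^{k+1}\circ\iota_k=({\rm id}\otimes\iota_{k-1})\circ\psi^k$ do visible work. The recursion buys transparency at the cost of length; the paper's direct computation is shorter once one supplies the omitted injectivity argument. Both are valid; your alternative phrasing inside $I\cong S^\bullet Q/\langle 1-\iota(1)\rangle$ is also a reasonable sketch, though the filtration version you elected to develop is indeed the safer one to write out.
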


\begin{proof}
Since ${\rm ker}(\pi_k\otimes{\rm id})=(\iota_k\circ...\circ \iota_1)(\mk q)\otimes S^{k-1} Q$, we have
\begin{align*}
{\rm ker}(\psi^k) & = (\Delta^k_1)^{-1}((\iota_k\circ...\circ \iota_1)(\mk q)\otimes S^{k-1} Q) \\
                  & = (\iota_k\circ...\circ \iota_1)(\mk q) \;.
\end{align*}
\end{proof}

It follows that the map $\psi$ factors through the projection $I\to I/\mk q$. Abusing notation, we denote the resulting monomorphism $I/\mk q\to F\otimes I$ by $\psi$ as well.

The constructions of this subsection can be carried out for $I^{r,s}$ (see formula (\ref{For Irs})) instead of $I$. One only needs to replace $Q$ by $Q^{r,s}$ and $F$ by $F^{r,s}=\bar V_{r}/V \otimes V^*_{s}/V_*$. The restricted morphism $\psi$ yields $\psi:I^{r,s}\to F^{r,s} \otimes I^{r,s}$, factoring through $I^{r,s}\to I^{r,s}/\mk q$.

\section{Background from category theory}

\subsection{Ordered Grothendieck categories}

\begin{definition}\label{Def OrdGrothCat} (\cite[Def.~2.3]{Chirvasitu-Penkov-UTC})
Let $(\mc P,\preceq)$ be a poset. An ordered Grothendieck category with underlying order $(\mc P,\preceq)$ is a Grothendieck category $\mc C$ with a given set of objects $X_i$, $i\in\mc P$ with the following properties.

\begin{enumerate}
\item[(a)] The objects $X_i$ are semi-artinian, in the sense that all their nonzero quotients have nonzero socles.
\item[(b)] Every object in $\mc C$ is a subquotient of a direct sum of copies of various $X_i$.
\item[(c)] For every isomorphism type of simple objects in $\mc C$ there exists a unique $i\in\mc P$ such that this type occurs in
$$\mc S_i:=\{\textrm{isomorphism types of simples in soc}X_i\}\;.$$
\item[(d)] Simple subquotients of $X_i$ outside ${\rm soc}X_i$ are in the socle of some $X_j$ with $j\prec i$.
\item[(e)] Each $X_i$ is a direct sum of objects with simple socle.
\item[(f)] For $j\prec i$, the maximal subobject $X_{i\succ j}\subset X_i$ whose simple constituents belong to various $\mc S_k$ for $i\succeq k\npreceq j$ is the common kernel of a family of morphisms $X_i\to X_j$.
\end{enumerate}
We refer to $X_i$, $i\in\mc P$ as the order-defining objects of the ordered Grothendieck category $\mc C$.
\end{definition}

\begin{prop}\label{Prop InjHullInGrothCat} (\cite[Proposition~2.5, Corollary~2.6]{Chirvasitu-Penkov-RC})

Let $U$ be a simple subobject in ${\rm soc}X_i$ for some $i\in\mc P$ and let $\hat U$ be the direct summand of $X_i$ such that $U={\rm soc}\hat U$. Then $\hat U$ is an injective hull of $U$.

The indecomposable injective objects in $\mc C$ are, up to isomorphism, precisely the indecomposable summands of the objects $X_i$, $i\in\mc P$.
\end{prop}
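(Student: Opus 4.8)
The plan is to reduce both parts of the proposition to one assertion --- that every direct summand of an order-defining object $X_i$ having simple socle, in particular $\hat U$, is injective --- and then to prove that assertion by an induction along $\preceq$ powered by axioms (c), (d), (f). The following preliminary remarks will be used throughout. From (a) and (b) it follows that \emph{every} object of $\mc C$ is semi-artinian: this class is closed under subobjects, quotients and direct sums, and by (b) every object is a subquotient of a direct sum of various $X_i$. Hence every nonzero object has a nonzero, and in fact essential, socle; and for a semi-artinian object $M$ one has $M$ injective $\Leftrightarrow$ $M$ has no proper essential extension $\Leftrightarrow$ ${\rm Ext}^1(S,M)=0$ for all simple $S$ (the nontrivial implication uses that a non-split one-step extension of $M$ by a simple is automatically essential). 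Also, a semi-artinian object with simple socle is an essential extension of its socle.

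Granting the key assertion, both statements follow formally. For the injective-hull statement: $\hat U$ is semi-artinian with ${\rm soc}\,\hat U=U$, hence essential over $U$, and being injective it is an injective hull of $U$ (the inclusion $U\hookrightarrow\hat U$ extends to a monomorphism from the injective hull of $U$ into $\hat U$, whose image is a direct summand of $\hat U$ containing $U$, hence all of $\hat U$, as a complement would be a nonzero semi-artinian object with zero socle). For the classification of indecomposable injectives: if $E$ is indecomposable injective, it is semi-artinian, so ${\rm soc}\,E$ is essential and nonzero, and indecomposability forces it simple, ${\rm soc}\,E=U$; thus $E$ is the injective hull of $U$, and by (c) there is a unique $i$ with $U\in\mc S_i$, so by the previous point $E\cong\hat U$, an indecomposable summand of $X_i$. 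Conversely, (e) writes $X_i=\bigoplus_\alpha\hat U_\alpha$ with each $\hat U_\alpha$ of simple socle, hence injective and indecomposable; since these have local endomorphism rings, Krull--Remak--Schmidt--Azumaya identifies every indecomposable summand of $X_i$ with one of the $\hat U_\alpha$.

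The key assertion I would prove for all $i$ at once, by induction on $i$ along $\preceq$ (well-founded in the relevant direction since, by (d), the composition factors of $X_i$ outside ${\rm soc}\,X_i$ lie in $\mc S_j$ with $j\prec i$). It suffices, by the preliminary remarks, to show ${\rm Ext}^1(S,\hat U)=0$ for every simple $S$ and every summand $\hat U$ of $X_i$ with simple socle. Given a non-split extension $0\to\hat U\to N\to S\to 0$, essentiality gives ${\rm soc}\,N={\rm soc}\,\hat U$, so $S$ is a composition factor of $N$ not in its socle; (c) and (d) reduce matters to the case $S\in\mc S_j$ with $j\prec i$ (the case where the type of $S$ lies in $\mc S_i$ being disposed of by a short separate argument using that, by (c) and (d), the $\mc S_i$-typical composition factors of $\hat U$ all lie in its socle). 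Now (f) supplies a family of morphisms $X_i\to X_j$ with common kernel $X_{i\succ j}$; since $X_j$ is a direct sum of injectives by the inductive hypothesis, the push-forward of the extension class along each such morphism vanishes in ${\rm Ext}^1(S,X_j)$, so each morphism extends along $\hat U\hookrightarrow N$. Assembling these extensions over all $j\prec i$ gives a morphism $N\to\prod_j X_j$ whose restriction to $\hat U$ has kernel contained in $\bigcap_{j\prec i}X_{i\succ j}$, which by (c) and (d) sits inside the semisimple socle ${\rm soc}\,X_i$; a diagram chase then produces a copy of $S$ inside $N$ splitting off $\hat U$, so the extension splits.

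The induction step just outlined is what I expect to be the main obstacle: extracting the splitting from the morphisms of (f) and the inductive hypothesis, dealing cleanly with the part of the extension supported on the semisimple socle of $X_i$, and checking the technical inputs it relies on (well-foundedness of $\preceq$; enough smallness of simple objects for ${\rm Ext}^1(S,-)$ to commute with the relevant direct sums). Everything else is routine Grothendieck-category bookkeeping. The result is in any event quoted here from \cite{Chirvasitu-Penkov-RC}, where all of this is carried out.
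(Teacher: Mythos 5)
The paper does not prove this proposition; it is quoted from \cite{Chirvasitu-Penkov-RC}, so there is no in-paper argument to compare against. Your reduction of both conclusions to a single key assertion is clean, and testing injectivity of the semi-artinian object $\hat U$ via ${\rm Ext}^1(S,\hat U)=0$ against simples is the right move. However, the inductive step as you describe it has two genuine gaps.

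First, the reduction ``(c) and (d) reduce matters to the case $S\in\mc S_j$ with $j\prec i$'' is unjustified. Axiom (d) constrains the simple subquotients of $X_i$ itself, not of an arbitrary essential extension $N$ of a summand $\hat U\subseteq X_i$: nothing you have cited precludes $S$ from having type in $\mc S_k$ with $k$ incomparable to $i$, or with $k\succ i$. Bounding which types can occur in an essential extension of $\hat U$ is exactly the nontrivial content here and cannot be taken as automatic. Second, the induction ``on $i$ along $\preceq$'' is not well-founded in the paper's own main example: the poset $\mc P$ underlying $\TT_t$ has infinite strictly descending chains below every element (iterate alteration (iv) of Lemma~\ref{Lemma P1 and IP1}, adding $1$ to both $l_0$ and $m_0$; cf.~(\ref{For bfPl and mcPl disjU}), where the decomposition $\mc P(\bm l)=\bigsqcup_{q\in\NN}\mc P^q(\bm l)$ runs over all $q$). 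The parenthetical appeal to (d) does not supply well-foundedness --- (d) bounds the location of subquotients of $X_i$, not the descending-chain height of $\mc P$. A correct argument would have to induct on something finitary, most plausibly the grade $q$ in this decomposition, and that change is not cosmetic. Finally, the case $S\in\mc S_i$, the concluding ``diagram chase,'' and the smallness hypothesis needed for ${\rm Ext}^1(S,-)$ to commute with the direct sums in (e) are all deferred; each is a place the argument could genuinely break, so what you have is a plan rather than a proof.
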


\subsection{Tensor categories}

Let $\mc C$ be a tensor category.

\begin{rem}
Let $0\to x'\to x\to x''\to 0$ be a short exact sequence in a tensor category. Then the symmetric power $S^kx$ has a filtration $0=F_{-1}\subset F_0\subset ...\subset F_n=S^kx$ with $F/F_{j-1}\cong S^{k-j}x'\otimes S^jx''$ for $0\leq j\leq k$.
\end{rem}

\begin{lemma}\label{Lemma TensInjRes}
Suppose that the tensor product of any two injective objects in $\mc C$ is again an injective object. Let
\begin{align*}
& 0 \to U_1 \to M^0 \to M^1 \to M^2 \to ...\to M^m \to 0 \\
& 0 \to U_2 \to N^0 \to N^1 \to N^2 \to ...\to N^n \to 0 
\end{align*}
be injective resolutions of two objects $U_1,U_2$. Then an injective resolution of $U_1\otimes U_2$ is given by
$$
0 \to U_1\otimes U_2 \to R^0 \to R^1 \to R^2 \to ...\to R^{m+n} \to 0  \;,
$$
where $R^k=\bigoplus\limits_{j=0}^k M^{k-j}\otimes N^j$ for $k=0,1,...,m+n$, and the differential of this complex, restricted to $M^{k-j}\otimes N^j$, equals the tensor product of the respective differential of the initial two complexes.
\end{lemma}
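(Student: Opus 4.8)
The statement is the standard fact that the total complex of a tensor product of two injective resolutions computes the Ext of the tensor product, specialized to the situation at hand; the only two things to verify are exactness and injectivity of the terms $R^k$. Injectivity of each $R^k=\bigoplus_{j=0}^k M^{k-j}\otimes N^j$ is immediate from the standing hypothesis that a tensor product of two injectives is injective, together with the fact that a finite direct sum of injectives is injective. So the crux is exactness of
$$
0 \to U_1\otimes U_2 \to R^0 \to R^1 \to \cdots \to R^{m+n} \to 0 .
$$

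The plan is to obtain this by a double-complex / spectral-sequence argument, but carried out by hand since the complexes are finite. First I would form the first-quadrant double complex $C^{p,q}=M^p\otimes N^q$ with horizontal differential $d_M\otimes\mathrm{id}$ and vertical differential $(-1)^p\,\mathrm{id}\otimes d_N$ (the sign making it a genuine bicomplex), truncated so that $p\le m$, $q\le n$; its total complex in degree $k$ is exactly $R^k$ with the differential described in the statement. The key input is that tensoring by a fixed object is exact in a tensor category: by definition a tensor category here is an abelian symmetric monoidal $\CC$-linear category, and in a symmetric monoidal abelian category the bifunctor $\otimes$ is additive in each variable; exactness of $-\otimes N^q$ and of $M^p\otimes-$ is what we need, and this holds because each functor $-\otimes Z$ on an abelian tensor category preserves finite limits and colimits — concretely, because $-\otimes Z$ is right exact being a left adjoint-like additive functor and, symmetrically via the braiding, left exact as well; more simply one invokes that in the tensor categories under consideration $\otimes$ is exact in each argument (this is true in $\TT_t$ and ${\bf T}_t$, where $\otimes$ is $\otimes_\CC$ over $\mk{gl}^M$-modules, resp.\ $\otimes_I$ over free $I$-modules). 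Granting exactness of $-\otimes N^q$, each row $M^\bullet\otimes N^q$ of the bicomplex is a resolution: augmenting by $U_1\otimes N^q$ on the left, it is exact because it is obtained from the exact complex $0\to U_1\to M^\bullet\to 0$ by applying the exact functor $-\otimes N^q$.

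With all rows exact (after augmentation by the column $U_1\otimes N^\bullet$), the standard ``exact rows $\Rightarrow$ total complex quasi-isomorphic to the augmentation column'' lemma applies: the total complex of the bicomplex $M^\bullet\otimes N^\bullet$ has cohomology concentrated in the augmentation, i.e.\ is quasi-isomorphic to $U_1\otimes N^\bullet$. But $0\to U_2\to N^\bullet\to 0$ is exact and $U_1\otimes-$ is exact, so $U_1\otimes N^\bullet$ is a resolution of $U_1\otimes U_2$. Chaining these, $R^\bullet$ is quasi-isomorphic to $U_1\otimes U_2$ placed in degree $0$; together with injectivity of each $R^k$ this says precisely that
$$
0 \to U_1\otimes U_2 \to R^0 \to R^1 \to \cdots \to R^{m+n} \to 0
$$
is an injective resolution, as claimed. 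For a self-contained write-up I would replace the spectral-sequence black box by the explicit finite induction: filter $R^\bullet$ by the number of tensor factors coming from $N^\bullet$ (equivalently, the column filtration on the bicomplex), or simply induct on $n$ using the short exact sequences $0\to S^{?}\to\cdots$ — more precisely, use the short exact sequences of complexes $0\to M^\bullet\otimes B^\bullet\to M^\bullet\otimes N^\bullet\to M^\bullet\otimes (N^\bullet/B^\bullet)\to\cdots$ splitting off one $N$-term at a time, and the long exact sequence in cohomology, where exactness of $-\otimes(\text{term})$ keeps everything under control. The only genuinely load-bearing hypothesis beyond the stated one is exactness of $\otimes$ in each variable; since that is a structural feature of the specific tensor categories $\TT_t$ and ${\bf T}_t$ to which this lemma is applied (and is anyway automatic in any abelian symmetric monoidal category where the tensor is biadditive and one has the braiding), I expect the main ``obstacle'' to be purely bookkeeping: pinning down signs in the bicomplex differential so that the claimed formula for the differential of $R^\bullet$ is literally correct, and making the finite induction airtight without invoking spectral sequences. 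This mirrors the argument of \cite[Lemma~3.?]{Chirvasitu-Penkov-UTC} in the $t=0$ case and goes through here verbatim.
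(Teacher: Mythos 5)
Your double-complex / K\"unneth argument is exactly the content of the paper's one-line proof, which simply invokes the K\"unneth formula for exactness and the stated hypothesis for injectivity. One small correction: your side remark that $-\otimes Z$ is automatically left exact ``symmetrically via the braiding'' is not right --- the braiding only identifies $-\otimes Z$ with $Z\otimes -$, which has the same exactness behaviour --- so the genuinely load-bearing input is the exactness of $\otimes$ in each variable in the specific categories $\TT_t$ and ${\bf T}_t$, which you correctly fall back on; this is in fact an implicit hypothesis of the lemma, since the K\"unneth formula requires it and it does not follow from the paper's general definition of a tensor category.
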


\begin{proof}
The exactness of the resulting sequence follows from the K\"unneth formula. The modules $R^j$ are injective, by hypothesis, and hence we have and injective resolution.
\end{proof}

\begin{definition}\label{Def pure}
A simple object in a tensor category is called pure, if it is not isomorphic to the tensor product of two nontrivial simple objects.
\end{definition}

\section{Categories of tensor modules for Mackey Lie algebras}

We denote by $\TT_{t}$ the smallest full tensor Grothendieck subcategory of $\mk{gl}^M$-mod that contains $V^*$ and $\bar V$ and is closed under taking subquotients. For any set of objects $X,Y,...$ in $\TT_{t}$, we denote by $\TT(X,Y,...)$ the smallest full tensor Grothendieck subcategory of $\TT_t$ containing these objects and closed under taking subquotients. In particular, $\TT_t=\TT(V^*,\bar V)$. Since $t$ is fixed in the discussion, we abbreviate the notation to $\TT=\TT_t$ most of the time.

\subsection{The category $\TT(V_*,V)$}\label{Sec Vstar l po V m}

Here we recollect some known results on the category $\TT(V_*,V)$ that will serve as building blocks for some subsequent constructions.

For any pair of nonnegative integers $l,m$ we have a $\mk{gl}^M$-module decomposition
$$
V_*^{\otimes l}\otimes V^{\otimes m} = \bigoplus\limits_{|\lambda|=l,|\mu|=m} \CC^\lambda\otimes \CC^\mu\otimes (V_*)_\lambda\otimes V_\mu \;.
$$
For $(i,j)\in\{1,...,|\mu|\}\times\{1,...,|\nu|\}$ we denote by ${\bf p}_{i,j}:V_*^{\otimes l}\otimes V^{\otimes m}\to V_*^{\otimes(l-1)}\otimes V^{\otimes(m-1)}$ the contraction obtained by applying ${\bf p}:V_*\otimes V\to \CC$ to the $i$-th tensorand of $V_*^{\otimes l}$ and the $j$-th tensorand of $V^{\otimes m}$. The submodule annihilated by all these contractions is
$$
V_{l;m} := \bigcap\limits_{i,j} {\rm ker}({\bf p}_{i,j}) \subset V_*^{\otimes l}\otimes V^{\otimes m} \;.
$$
For any pair of Young diagrams $\lambda,\mu$ with $|\lambda|=l$ and $|\mu|=m$, and any fixed copy of $(V_*)_\lambda\otimes V_\mu$ inside $V_*^{\otimes l}\otimes V^{\otimes m}$, we denote
$$
V_{\lambda;\mu} := \bigcap\limits_{i,j} {\rm ker}({{\bf p}_{i,j}}_{\vert (V_*)_\lambda\otimes V_\mu}) \;.
$$
More generally, for a pair of nonnegative integers $m,n$ and a pair of multiindices of the same size $\ul{i}=\{1\leq i_1<...<i_k\leq m\}$, $\ul{j}=\{1\leq j_1<...<j_k\leq n\}$, we have a morphism of $\mk{gl}^M$-modules
\begin{align*}
{\bf p}_{\ul{i},\ul{j}}: & V_*^{\otimes m}\otimes V^{\otimes n}\to V_*^{\otimes (m-k)}\otimes V^{\otimes (n-k)} \\
 & (x_1\otimes ... \otimes x_k) \otimes (v_1\otimes ...\otimes v_k) \mapsto (\prod\limits_{l=1}^k{\bf p}(x_{i_l}\otimes v_{j_l})) (\otimes_{i\notin \ul{i}} x_i)\otimes (\otimes_{j\notin\ul{j}} v_j) \;.
\end{align*}

\begin{prop}\label{Prop DenseVlambdamu PS} \cite{Penkov-Serganova-Mackey}

For any pair of Young diagrams $\lambda,\mu$, the representation $V_{\lambda;\mu}$ of $\mk{gl}^M$ is irreducible and the action of $\mk{gl}^M$ on $V_{\lambda;\mu}$ is dense in the sense of Definition \ref{Def Dense}. The same holds for the restriction of this representation to $\mk{gl}(V,V_*)$.
\end{prop}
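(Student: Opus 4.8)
The plan is to reduce both assertions to finite-dimensional representation theory, using that $V_{\lambda;\mu}$ is an exhausting, increasing union of finite-dimensional submodules of the finitary ideal $\mk{gl}(V,V_*)=\bigcup_A\mk g^A$, the union being over the directed family of finite subsets $A\subset\B$, with $\mk g^A=V_A\otimes(V_*)_A\cong\mk{gl}_{|A|}$, where $V_A:={\rm span}\{v_a:a\in A\}$ and $(V_*)_A:={\rm span}\{x_a:a\in A\}$. Put $V_{\lambda;\mu}^A:=V_{\lambda;\mu}\cap\big(((V_*)_A)^{\otimes|\lambda|}\otimes V_A^{\otimes|\mu|}\big)$. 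Since every vector of $V_*^{\otimes|\lambda|}\otimes V^{\otimes|\mu|}$ has finite support, $V_{\lambda;\mu}=\bigcup_A V_{\lambda;\mu}^A$. The first routine point I would record is that the Schur functor construction commutes with passage to $A$-supported tensors (the symmetric group permutes tensor positions, not basis vectors) and that each contraction ${\bf p}_{i,j}$ restricts to the corresponding contraction of $V_A$ against $(V_*)_A$; hence $V_{\lambda;\mu}^A$ is exactly the trace-free part of $((V_*)_A)_\lambda\otimes(V_A)_\mu$, i.e.\ the mixed-tensor $\mk g^A\cong\mk{gl}_{|A|}$-module attached to the pair $(\lambda,\mu)$.

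Next I would invoke the classical fact that, as soon as $|A|\geq|\lambda|+|\mu|$, this mixed-tensor module is nonzero, irreducible as a $\mk g^A$-module, and occurs with multiplicity one in $((V_*)_A)^{\otimes|\lambda|}\otimes V_A^{\otimes|\mu|}$; a self-contained derivation can be given by exhibiting an explicit highest-weight vector with respect to a Borel of $\mk{gl}_{|A|}$ adapted to a linear order on $A$ and checking that it generates. Granting this, irreducibility of $V_{\lambda;\mu}$ over $\mk{gl}^M$ follows from irreducibility over the ideal $\mk{gl}(V,V_*)$, which I would prove by the usual direct-limit argument: given a nonzero submodule $U\subseteq V_{\lambda;\mu}$, choose $0\ne u\in U$ and a finite $A_0\supseteq{\rm supp}(u)$ with $|A_0|\geq|\lambda|+|\mu|$; then $U\cap V_{\lambda;\mu}^{A_0}$ is a nonzero $\mk g^{A_0}$-submodule of the simple module $V_{\lambda;\mu}^{A_0}$, hence equals it, and for every finite $A\supseteq A_0$ the subspace $U\cap V_{\lambda;\mu}^{A}$ is a $\mk g^{A}$-submodule of the simple module $V_{\lambda;\mu}^{A}$ containing $V_{\lambda;\mu}^{A_0}\ne0$, hence equals $V_{\lambda;\mu}^{A}$; taking the union over all such $A$ yields $U=V_{\lambda;\mu}$. (When $\lambda=\mu=\emptyset$ the module is the trivial one and there is nothing to prove.)

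For the density statement it is enough, by Definition \ref{Def Dense} and the inclusion $\mk{gl}(V,V_*)\subseteq\mk{gl}^M$, to treat the ideal $\mk{gl}(V,V_*)$. Let $r_1,\dots,r_n\in V_{\lambda;\mu}$ and $g\in\mk{gl}^M$ be given and let $A$ be the (finite) union of the supports of $r_1,\dots,r_n$. Since $g$ acts on $v_b$ through its $b$-th column and on $x_b$ through its $b$-th row, and these are finite because $g\in\mk{gl}^M$, each vector $g\cdot r_j$ is supported on the finite set $A':=A\cup\bigcup_{b\in A}\left({\rm supp}(g_{.,b})\cup{\rm supp}(g_{b,.})\right)$. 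Set $h:=\sum_{a,b\in A'}g_{a,b}e_{a,b}$, a finite sum, so $h\in\mk{gl}(V,V_*)$. By construction $h$ agrees with $g$ on each $v_b$ and each $x_b$ with $b\in A$, hence $h$ and $g$ act identically on every tensor supported on $A$, so $h\cdot r_j=g\cdot r_j$ for $j=1,\dots,n$. This proves density of $\mk{gl}(V,V_*)$, and therefore of $\mk{gl}^M$, on $V_{\lambda;\mu}$.

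I expect the only genuinely non-elementary ingredient to be the irreducibility and multiplicity-one property of the finite-dimensional mixed-tensor modules $V_{\lambda;\mu}^A$ over $\mk{gl}_{|A|}$ for large $|A|$; everything else is the direct-limit passage together with elementary support estimates. A secondary point worth stating carefully is the identification of $V_{\lambda;\mu}^A$ as literally the intersection of $V_{\lambda;\mu}$ with the $A$-supported tensors, and not some a priori larger submodule---this is precisely what makes $\bigcup_A V_{\lambda;\mu}^A$ equal to $V_{\lambda;\mu}$, and it is where the finiteness of supports is used.
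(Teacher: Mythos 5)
The paper quotes Proposition \ref{Prop DenseVlambdamu PS} from \cite{Penkov-Serganova-Mackey} and gives no proof of its own, so there is no in-paper argument to compare your proposal against. On the substance, your proof is correct, and the overall strategy (reduce to the finitary ideal via the exhaustion $V_{\lambda;\mu}=\bigcup_A V_{\lambda;\mu}^A$ by mixed-tensor $\mk{gl}_{|A|}$-modules, then apply the classical irreducibility of those plus a direct-limit ladder) is the standard one for statements of this type.

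Two small points worth making explicit. First, your reading of ``dense'' is the right one: the ambient Lie algebra $\mk G$ in Definition \ref{Def Dense} has to be $\mk{gl}^M$ (so that the assertion for $\mk{gl}^M$ is formally trivial and the content lies in the assertion for the ideal $\mk{gl}(V,V_*)$). It cannot be $\mk{gl}(V_{\lambda;\mu})$: already for $\lambda=(2)$, $\mu=\emptyset$ and $r_1=x_1^2\in S^2V_*$, every $h\in\mk{gl}^M$ sends $r_1$ into the proper subspace $V_*\cdot x_1$ by the Leibniz rule, whereas an arbitrary endomorphism of $S^2V_*$ does not. The $\mk G=\mk{gl}^M$ reading is also exactly what the paper needs when it invokes Proposition \ref{Prop DenseProperties}(b) with $\mk J=\mk{gl}(V,V_*)$, $\mk G=\mk{gl}^M$, $R=V_{\lambda;\mu}$ in the proof of Theorem \ref{Theo Simples and Injhulls}. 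Second, in the direct-limit step it is worth saying out loud that you are taking $|A_0|\geq|\lambda|+|\mu|$ so that $V_{\lambda;\mu}^{A_0}\neq 0$ and is simple over $\mk g^{A_0}$; you do state the bound, and the argument then goes through without gaps. The density argument itself (truncating the matrix of $g\in\mk{gl}^M$ to the finite square $A'\times A'$ built from the finitely many nonzero rows and columns meeting $A$, and checking that the truncation agrees with $g$ on all basis vectors indexed by $A$) is exactly the expected elementary argument and is correct.
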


\begin{prop} (\cite[Theorem~4.1]{Penkov-Serganova-Mackey})

Let $l,m$ be nonnegative integers. The socle filtration of the $\mk{sl}(V, V_*)$-module $V_*^{\otimes l}\otimes V^{\otimes m}$ is given by
$$
{\rm soc}^k(V_*^{\otimes l}\otimes V^{\otimes m}) = \bigcap\limits_{\#\ul{i}=\#\ul{j}=k} {\rm ker}({\bf p}_{\ul{i},\ul{j}}) \quad,\quad k=1,...,\min\{l,m\}\;. 
$$
In particular,
$$
{\rm soc}(V_*^{\otimes l}\otimes V^{\otimes m}) = V_{l;m} = \bigoplus\limits_{|\lambda|=l,|\mu|=m} \CC^\lambda\otimes \CC^\mu\otimes V_{\lambda,\mu} \;.
$$
\end{prop}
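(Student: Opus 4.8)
The plan is to argue by induction on $d:=\min\{l,m\}$, extracting the socle filtration one layer at a time from the contraction maps. First note that every contraction ${\bf p}_{\ul i,\ul j}$ is a morphism of $\mk{gl}^M$-modules, hence in particular of $\mk{sl}(V,V_*)$-modules. I set $W_k:=\bigcap_{\#\ul i=\#\ul j=k}{\rm ker}({\bf p}_{\ul i,\ul j})$ for $k\geq 1$, with the convention $W_k:=V_*^{\otimes l}\otimes V^{\otimes m}$ for $k>d$ (empty intersection, since there are then no admissible multi-indices). Because every $(k+1)$-fold contraction factors as a single contraction applied after a $k$-fold one, and conversely every such composite is again a $(k+1)$-fold contraction, the $W_k$ form an increasing chain $W_1\subseteq W_2\subseteq\dots\subseteq W_d\subseteq W_{d+1}=V_*^{\otimes l}\otimes V^{\otimes m}$; the goal is to prove that this chain coincides with the socle filtration.

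The base case $d=0$ is immediate: the module is then $V^{\otimes m}$ or $V_*^{\otimes l}$, which by Schur--Weyl duality decomposes as $\bigoplus_{|\mu|=m}\CC^\mu\otimes V_\mu$ (respectively its analogue with $V_*$), and this is semisimple because each $V_\mu$ is irreducible by Proposition \ref{Prop DenseVlambdamu PS}; so the module equals its own socle, which is the content of the claim for $d=0$. Assume now $d\geq 1$ and put $M:=V_*^{\otimes l}\otimes V^{\otimes m}$. I would first check that $W_1=V_{l;m}$ is semisimple: the total single contraction $M\to\bigoplus_{i,j}V_*^{\otimes(l-1)}\otimes V^{\otimes(m-1)}$ is $S_l\times S_m$-equivariant (the two symmetric groups permute its components), so its kernel $W_1$ is compatible with the Schur--Weyl decomposition of $M$ and therefore equals $\bigoplus_{|\lambda|=l,|\mu|=m}\CC^\lambda\otimes\CC^\mu\otimes V_{\lambda;\mu}$, which is semisimple again by Proposition \ref{Prop DenseVlambdamu PS}. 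This gives the inclusion $V_{l;m}\subseteq{\rm soc}\,M$.

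The main obstacle is the reverse inclusion ${\rm soc}\,M\subseteq W_1$. If $U\subseteq M$ is a simple submodule with $U\not\subseteq W_1$, then ${\bf p}_{i,j}(U)\neq 0$ for some $i,j$, so ${\bf p}_{i,j}$ restricts to an isomorphism of $U$ onto a simple submodule of $M':=V_*^{\otimes(l-1)}\otimes V^{\otimes(m-1)}$, and the inductive hypothesis ${\rm soc}\,M'=V_{l-1;m-1}$ then forces $U\cong V_{\lambda';\mu'}$ with $|\lambda'|=l-1$ and $|\mu'|=m-1$; the real content is to exclude this possibility. I expect this to be the genuinely hard step: it is not a weight-theoretic obstruction --- such $V_{\lambda';\mu'}$ do occur as subquotients of $M$ deeper in the filtration, and (as one can check) the restriction of $V_{\lambda';\mu'}$ to each finite subalgebra $\mk{gl}_n$ also embeds into $M$ --- so the argument must exploit the structure of $\mk{sl}(V,V_*)$ as a direct limit, e.g. by tracking a generating vector of a hypothetical such $U$ sitting inside $M$ and deriving a contradiction from the way the contractions ${\bf p}_{i,j}$ act on it (the density of $\mk{gl}(V,V_*)$ on $V_{\lambda';\mu'}$ from Proposition \ref{Prop DenseVlambdamu PS} is a natural tool here, allowing one to pass to the Mackey algebra). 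Granting this, ${\rm soc}\,M=W_1=V_{l;m}$.

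For the deeper layers, I would, for each $1\leq k\leq d$, introduce the total $k$-fold contraction $\Psi_k\colon M\to\bigoplus_{\#\ul i=\#\ul j=k}V_*^{\otimes(l-k)}\otimes V^{\otimes(m-k)}$, whose kernel is exactly $W_k$, so that $\Psi_k$ induces an embedding of $\mk{sl}(V,V_*)$-modules $M/W_k\hookrightarrow\bigoplus V_*^{\otimes(l-k)}\otimes V^{\otimes(m-k)}$. Using the general facts that ${\rm soc}(A)=A\cap{\rm soc}(B)$ for $A\subseteq B$ and that socle commutes with the (finite) direct sum appearing here, the inductive hypothesis ${\rm soc}(V_*^{\otimes(l-k)}\otimes V^{\otimes(m-k)})=V_{l-k;m-k}$ yields ${\rm soc}(M/W_k)=(M/W_k)\cap\bigoplus V_{l-k;m-k}$. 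Tracing this through $\Psi_k$, a class $[v]$ lies in the right-hand side precisely when every single contraction of every $k$-fold contraction of $v$ vanishes, that is, precisely when every $(k+1)$-fold contraction of $v$ vanishes --- which is exactly $W_{k+1}/W_k$. Hence ${\rm soc}(M/W_k)=W_{k+1}/W_k$, and iterating from ${\rm soc}\,M=W_1$ gives ${\rm soc}^k M=W_k$ for $k=1,\dots,d$ together with ${\rm soc}^{d+1}M=M$. The displayed decomposition of ${\rm soc}\,M=V_{l;m}$ is then the case $k=1$ combined with Schur--Weyl duality.
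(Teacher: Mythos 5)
The paper does not prove this proposition; it is quoted verbatim from \cite[Theorem~4.1]{Penkov-Serganova-Mackey}, so there is no internal argument to compare against. Judged on its own merits, your reduction machinery is correct: the identity $\text{soc}(A)=A\cap\text{soc}(B)$ for $A\subseteq B$, the embedding $M/W_k\hookrightarrow\bigoplus V_*^{\otimes(l-k)}\otimes V^{\otimes(m-k)}$ induced by the total $k$-fold contraction $\Psi_k$, and the observation that a single contraction of a $k$-fold contraction is a $(k+1)$-fold contraction (and conversely) do combine, via a double induction on $\min\{l,m\}$ and on $k$, to reduce the entire socle filtration to the single statement $\text{soc}\,M\subseteq W_1$. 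The base case $\min\{l,m\}=0$ and the semisimplicity of $W_1=V_{l;m}$ via Schur--Weyl and Proposition~\ref{Prop DenseVlambdamu PS} are also fine.

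But $\text{soc}\,M\subseteq W_1$ is left as an acknowledged gap (``Granting this,\ldots''), and this is not peripheral: it \emph{is} the theorem. Your reduction correctly shows that a hypothetical simple submodule $U\not\subseteq W_1$ would have to be isomorphic to some $V_{\lambda';\mu'}$ with $|\lambda'|=l-1$, $|\mu'|=m-1$, and you accurately observe that the obstruction is neither a weight count nor visible after restriction to finite-rank subalgebras. That diagnosis is good, but the proof still needs the actual argument ruling out such an embedding, and you only gesture toward density of $\mathfrak{gl}(V,V_*)$ without executing it. Since the hard content of the statement sits exactly there, what you have reproduces the formal scaffolding of the theorem but not its substance; you would need to either supply the missing argument (e.g.\ as in \cite{Penkov-Styrkas} or \cite{Penkov-Serganova-Mackey}, where it rests on a concrete analysis of annihilators of vectors in $M$ under finitary subalgebras) or simply cite the source, as the paper does.
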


\begin{theorem}\label{Theo PS soc filt Vlambda TENS Vmu} (\cite[Th.~2.3]{Penkov-Styrkas}, \cite[\S~4]{Penkov-Serganova-Mackey})

Let $\lambda,\mu\in\Lambda$ be Young diagrams. Then the layers of the socle filtration of the $\mk{sl}(V,V_*)$-module $(V_*)_\lambda\otimes V_\mu$ have the following isotypic decompositions
\begin{gather}\label{For h lambdamuxieta}
\ul{\rm soc}^{k+1}((V_*)_\lambda\otimes V_\mu) \cong \bigoplus\limits_{\xi,\eta\in\Lambda:|\lambda|-|\xi|=k} h^{\lambda;\mu}_{\xi;\eta}\cdot V_{\xi;\eta} \quad,\quad where\quad h^{\lambda;\mu}_{\xi;\eta}:=\sum\limits_{\nu\in\Lambda} N^\lambda_{\xi\nu} N^\mu_{\nu\eta} \;.
\end{gather}
The same applies for the Mackey Lie algebra $\mk{gl}^M$ instead of $\mk{sl}(V,V_*)$.
\end{theorem}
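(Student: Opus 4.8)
The plan is to reduce the statement to the classical decomposition of $(V_*)_\lambda\otimes V_\mu$ as a module over $\mathfrak{gl}_\infty$ (or equivalently $\mathfrak{sl}(V,V_*)$ in the case $t=0$), and then bootstrap to arbitrary $t$ and to $\mathfrak{gl}^M$ using density. Concretely, the multiplicities $h^{\lambda;\mu}_{\xi;\eta}=\sum_\nu N^\lambda_{\xi\nu}N^\mu_{\nu\eta}$ are exactly the ones appearing in the theory of the category $\mathrm{Rep}(\mathfrak{gl}_\infty)$ of tensor modules (Penkov--Styrkas), so the main content is an identification of socle layers, not a new combinatorial computation.

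**First I would** set up the contraction maps ${\bf p}_{\ul i,\ul j}$ restricted to a fixed copy of $(V_*)_\lambda\otimes V_\mu\subset V_*^{\otimes l}\otimes V^{\otimes m}$, and observe that applying a $k$-fold contraction lands in $V_*^{\otimes(l-k)}\otimes V^{\otimes(m-k)}$; the Schur functors intertwine with these contractions up to Littlewood--Richardson branching. The composite $(V_*)_\lambda\otimes V_\mu\to\bigoplus(V_*)_\xi\otimes V_\eta$ obtained by contracting $k$ pairs of indices and re-Schur-functoring is, by the Littlewood--Richardson rule for the $\mathfrak{gl}_\infty$-action on tensor powers, precisely a sum over $\nu$ with $|\nu|=k$ with multiplicity $N^\lambda_{\xi\nu}N^\mu_{\nu\eta}$ — this is where the intermediate diagram $\nu$ enters. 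I would then invoke the preceding Proposition (socle filtration of $V_*^{\otimes l}\otimes V^{\otimes m}$ via joint kernels of $k$-fold contractions) to conclude that the $(k{+}1)$-st socle layer of $(V_*)_\lambda\otimes V_\mu$ is exactly the image of the $k$-fold contraction map modulo the $(k{-}1)$-fold one, giving the claimed isotypic decomposition with the $V_{\xi;\eta}$ irreducible by Proposition \ref{Prop DenseVlambdamu PS}.

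**The passage from $\mathfrak{sl}(V,V_*)$ to $\mathfrak{gl}^M$** is the routine final step: since $\mathfrak{sl}(V,V_*)$ is an ideal of $\mathfrak{gl}^M$ that already acts densely on each $V_{\xi;\eta}$ (Proposition \ref{Prop DenseVlambdamu PS}), and each $V_{\xi;\eta}$ is simple over $\mathfrak{sl}(V,V_*)$, a filtration by $\mathfrak{sl}(V,V_*)$-submodules with those layers is automatically a filtration by $\mathfrak{gl}^M$-submodules, and the socle filtration can only coarsen when passing to the smaller algebra — but density forces the two socle filtrations to coincide, because $\mathrm{End}_{\mathfrak{gl}^M}V_{\xi;\eta}=\mathrm{End}_{\mathfrak{sl}(V,V_*)}V_{\xi;\eta}=\CC$ and no new extensions can be split. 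For $t>0$ one notes that the Schur functors, contractions, and the density statement are all insensitive to the cardinality $\aleph_t$, so the same argument carries over verbatim; alternatively one cites that this is exactly \cite[Th.~2.3]{Penkov-Styrkas} together with \cite[\S~4]{Penkov-Serganova-Mackey}.

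**The step I expect to be the main obstacle** is making precise the claim that iterated contraction followed by re-projection onto Schur components realizes the multiplicity $\sum_\nu N^\lambda_{\xi\nu}N^\mu_{\nu\eta}$ rather than some a priori smaller or larger number — i.e., checking surjectivity of the contraction map onto the appropriate $V_{\xi;\eta}$-isotypic piece and the absence of collapse. This is a diagram-chase through Schur--Weyl duality and the Littlewood--Richardson rule applied twice (once to decompose $(V_*)_\lambda$ inside $V_*^{\otimes l}$ under contraction, once for $V_\mu$), and one must be careful that the $\nu$ being contracted out of $\lambda$ and the $\nu$ being contracted out of $\mu$ are forced to match because a single contraction pairs one $V_*$-tensorand with one $V$-tensorand. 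Since the statement is quoted from \cite{Penkov-Styrkas} and \cite{Penkov-Serganova-Mackey}, I would keep this part brief, citing those sources for the combinatorial identity and only sketching the reduction.
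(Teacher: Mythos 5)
This theorem is stated in the paper purely as a citation to \cite{Penkov-Styrkas} and \cite{Penkov-Serganova-Mackey}; the paper gives no proof of its own. Your sketch correctly reconstructs the strategy those references use: identifying socle layers via the contraction maps $\mathbf{p}_{\underline{i},\underline{j}}$ (the paper's preceding Proposition already records that ${\rm soc}^k$ is the joint kernel of $k$-fold contractions), tracking isotypic multiplicities through Schur--Weyl duality and a double application of the Littlewood--Richardson rule to produce $\sum_\nu N^\lambda_{\xi\nu}N^\mu_{\nu\eta}$, and then transferring the result to $\mathfrak{gl}^M$ by density (Proposition \ref{Prop DenseVlambdamu PS} together with Proposition \ref{Prop DenseProperties}). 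Your remark that the same diagram $\nu$ is forced on both factors because each contraction pairs exactly one $V_*$-slot with one $V$-slot is indeed the crux of the combinatorial identity, and you are right that the answer is independent of the cardinality $\aleph_t$.

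One small slip: you assert that when passing from $\mathfrak{gl}^M$ to the smaller ideal $\mathfrak{sl}(V,V_*)$ the socle filtration ``can only coarsen,'' and then invoke density to force equality. The direction is actually the reverse --- restricting to a subalgebra can only \emph{refine} the socle filtration (more things become semisimple) --- but the density argument you then invoke is the correct mechanism for equality either way: $\mathfrak{sl}(V,V_*)$ acts densely on each $V_{\xi;\eta}$, so every $\mathfrak{sl}(V,V_*)$-submodule of a tensor module is automatically a $\mathfrak{gl}^M$-submodule, and simples remain simple. This is precisely what Proposition \ref{Prop DenseProperties}(b) packages. The slip does not affect the validity of your outline.
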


It is an elementary but essential observation that $h^{\lambda;\mu}_{\xi;\eta}\ne 0$ implies that there exists a unique $k=k^{\lambda;\mu}_{\xi;\eta}:=|\lambda|-|\xi|=|\mu|-|\eta|$ such that $h^{\lambda;\mu}_{\xi;\eta}={\rm Hom}(V_{\xi;\eta},\ul{\rm soc}^{k+1}((V_*)_\lambda\otimes V_\mu))\ne 0$.

\begin{definition}\label{Def Poset 0 0}
We consider the poset $\mc P_{0,0}=\NN^{2}$ with elements indexed as $(l;m)$ with the following relation:
$$
(l;m)\leq (l';m') \quad \tst \quad \begin{array}{|l} l - m= l' - m' \\ l\leq l' \;,\; m\leq m' \end{array}\;.
$$
\end{definition}

\begin{theorem}\label{Theo Tsmall is ord Groth} (\cite[\S~4.2]{Chirvasitu-Penkov-RC})

The category $\TT(V_*,V)$ is an ordered Grothendieck category with order-defining objects $(V_*)^{\otimes l} \otimes V^{\otimes m}$ parametrized by the poset $\mc P_{0,0}$. The socles of the order-defining objects are
$$
{\rm soc}((V_*)^{\otimes l} \otimes V^{\otimes m}) = V_{l;m} \;.
$$
The simple objects and the indecomposable injectives of $\TT(V_*,V)$ are, up to isomorphism, respectively, $V_{\lambda;\mu}$ and $(V_*)_{\lambda}\otimes V_{\mu}$ with $\lambda,\mu\in\Lambda$.
\end{theorem}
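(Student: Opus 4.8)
\textbf{Proof proposal for Theorem \ref{Theo Tsmall is ord Groth}.}

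The plan is to verify the six axioms (a)--(f) of Definition \ref{Def OrdGrothCat} for the category $\TT(V_*,V)$ with the proposed order-defining objects $X_{(l;m)}=(V_*)^{\otimes l}\otimes V^{\otimes m}$ indexed by $\mc P_{0,0}$, and then read off the classification of simples and indecomposable injectives from Proposition \ref{Prop InjHullInGrothCat}. The essential input is the already-recalled socle structure of tensor powers: by the socle-filtration theorem, ${\rm soc}(X_{(l;m)})=V_{l;m}=\bigoplus_{|\lambda|=l,|\mu|=m}\CC^\lambda\otimes\CC^\mu\otimes V_{\lambda;\mu}$, and each $V_{\lambda;\mu}$ is simple (Proposition \ref{Prop DenseVlambdamu PS}); moreover Theorem \ref{Theo PS soc filt Vlambda TENS Vmu} tells us exactly which simples $V_{\xi;\eta}$ occur as subquotients of $X_{(l;m)}$ — precisely those with $|\lambda|-|\xi|=|\mu|-|\eta|$ for some $|\lambda|=l$, $|\mu|=m$, equivalently those with $|\xi|\le l$, $|\eta|\le m$, and $l-m=|\xi|-|\eta|$ when the extremal diagrams are reached, but in general those $(\xi;\eta)$ with $|\xi|\le l$, $|\eta|\le m$ and $l-|\xi|=m-|\eta|$. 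This ``conservation of $l-m$'' is exactly what is built into the order on $\mc P_{0,0}$.

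First I would check (a): every $X_{(l;m)}$ has finite length (it is a finite direct sum of $(V_*)_\lambda\otimes V_\mu$, each of which has a finite socle filtration with simple-isotypic layers by Theorem \ref{Theo PS soc filt Vlambda TENS Vmu}), so in particular it is semi-artinian. Axiom (b) holds because $\TT(V_*,V)$ is by definition generated under subquotients and direct sums by $V_*$ and $V$, hence every object is a subquotient of a direct sum of tensor powers $X_{(l;m)}$. For (c), I would observe that a given simple $V_{\xi;\eta}$ lies in ${\rm soc}(X_{(l;m)})$ iff $l=|\xi|$ and $m=|\eta|$; thus the unique poset element ``carrying'' the isomorphism class of $V_{\xi;\eta}$ in its socle is $(l;m)=(|\xi|;|\eta|)$, giving $\mc S_{(l;m)}=\{V_{\lambda;\mu}:|\lambda|=l,|\mu|=m\}$ and the required uniqueness. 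Axiom (d) is the content of the ``conservation'' remark: a simple subquotient $V_{\xi;\eta}$ of $X_{(l;m)}$ outside its socle satisfies $|\xi|<l$, $|\eta|<m$ and $l-|\xi|=m-|\eta|=k\ge1$ by Theorem \ref{Theo PS soc filt Vlambda TENS Vmu}; then $(|\xi|;|\eta|)\prec(l;m)$ in $\mc P_{0,0}$ since the differences $l-m$ and $|\xi|-|\eta|$ coincide and $|\xi|<l$, $|\eta|<m$. Axiom (e) follows by decomposing each $(V_*)_\lambda\otimes V_\mu$: its socle is the single simple $V_{\lambda;\mu}$ (not merely isotypic — this needs $h^{\lambda;\mu}_{\lambda;\mu}=1$, which holds since the only $\nu$ contributing to $h^{\lambda;\mu}_{\lambda;\mu}$ is $\nu=\emptyset$), so $X_{(l;m)}=\bigoplus_{\lambda,\mu}(\CC^\lambda\otimes\CC^\mu\otimes(V_*)_\lambda\otimes V_\mu)$ is a direct sum of objects with simple socle.

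The main obstacle, and the step I would spend the most care on, is axiom (f): for $(l';m')\prec(l;m)$ one must produce the maximal subobject $X_{(l;m)\succ(l';m')}$ whose constituents lie in $\bigcup_{(l;m)\succeq k\npreceq(l';m')}\mc S_k$ as a common kernel of morphisms $X_{(l;m)}\to X_{(l';m')}$. The candidate morphisms are the contractions ${\bf p}_{\ul i,\ul j}:X_{(l;m)}\to X_{(l-r;m-r)}$ for multiindices of size $r=l-l'=m-m'$ (composed with suitable permutations/embeddings to land in $X_{(l';m')}$ after choosing the right number of contractions), and I would argue that their common kernel is exactly $\bigcap_{\#\ul i=\#\ul j=r+1}{\rm ker}({\bf p}_{\ul i,\ul j})={\rm soc}^{r}(X_{(l;m)})$ by the recalled formula for the socle filtration of tensor powers — more precisely, one identifies which layers are killed by all such contractions and checks via Theorem \ref{Theo PS soc filt Vlambda TENS Vmu} that these are precisely the constituents $V_{\xi;\eta}$ with $l-|\xi|=m-|\eta|<r$, i.e. those ``above'' $(l';m')$ are the ones removed. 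Matching the combinatorial bookkeeping here (the relation between the number of contractions, the drop $r$ in both indices, and which simples survive) is the crux; once (a)--(f) are in place, Proposition \ref{Prop InjHullInGrothCat} immediately gives that the indecomposable injectives are the indecomposable summands of the $X_{(l;m)}$, namely the $(V_*)_\lambda\otimes V_\mu$ (each has simple socle $V_{\lambda;\mu}$, hence is indecomposable), and the simples are the $V_{\lambda;\mu}$, completing the theorem.
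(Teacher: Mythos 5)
The paper states this theorem purely as a citation of \cite[\S~4.2]{Chirvasitu-Penkov-RC} and gives no proof of its own, so there is no internal argument to compare against; your strategy of verifying the six axioms of Definition~\ref{Def OrdGrothCat} for $X_{(l;m)}=(V_*)^{\otimes l}\otimes V^{\otimes m}$ over $\mc P_{0,0}$ and then invoking Proposition~\ref{Prop InjHullInGrothCat} is the natural and presumably intended route. Your checks of (a)--(e) are correct: finite length (hence semi-artinian) follows from Theorem~\ref{Theo PS soc filt Vlambda TENS Vmu} since the multiplicities $h^{\lambda;\mu}_{\xi;\eta}$ are finite and vanish for all but finitely many $(\xi,\eta)$; (b) is by definition of $\TT(V_*,V)$; (c) and (d) use that $V_{\xi;\eta}$ lies in the socle of $X_{(l;m)}$ iff $(l;m)=(|\xi|;|\eta|)$ and that nonsocle constituents satisfy $l-|\xi|=m-|\eta|\ge1$; and (e) uses $h^{\lambda;\mu}_{\lambda;\mu}=1$.

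The one thing to tidy is the bookkeeping in axiom~(f), which you flag yourself as the crux. With $r:=l-l'=m-m'\ge 1$, the recalled Penkov--Serganova formula reads
$${\rm soc}^k\bigl((V_*)^{\otimes l}\otimes V^{\otimes m}\bigr)=\bigcap\limits_{\#\ul i=\#\ul j=k}{\rm ker}\,{\bf p}_{\ul i,\ul j}\,,$$
so the common kernel of the contractions of \emph{size $r$} (which are exactly morphisms $X_{(l;m)}\to X_{(l-r;m-r)}=X_{(l';m')}$, no further permutations needed) equals ${\rm soc}^{r}(X_{(l;m)})$; your displayed formula $\bigcap_{\#\ul i=\#\ul j=r+1}{\rm ker}{\bf p}_{\ul i,\ul j}={\rm soc}^{r}$ is off by one. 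The simple constituents of ${\rm soc}^{r}(X_{(l;m)})$ are precisely the $V_{\xi;\eta}$ with $l-|\xi|=m-|\eta|\in\{0,\dots,r-1\}$, i.e.\ $l'<|\xi|\le l$; since $|\xi|-|\eta|=l-m=l'-m'$, the condition $|\xi|>l'$ is equivalent to $|\eta|>m'$, hence to $(|\xi|;|\eta|)\npreceq(l';m')$. Thus ${\rm soc}^{r}(X_{(l;m)})=X_{(l;m)\succ(l';m')}$ as required, confirming the identification you were after; only the displayed index needed adjusting, and the rest of the argument is sound.
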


The next theorem describes injective resolutions of the simple objects in $\TT(V_*,V)$.

\begin{theorem}\label{Theo smallTT Inj n Ext to Hom} (\cite{Penkov-Styrkas}; \cite{Penkov-Serganova-Mackey})
For any pair of Young diagrams $\lambda,\mu$, the simple $\mk{gl}^M$-module $V_{\lambda;\mu}$ admits the following injective resolution in $\TT(V_*,V)$ of length $|\lambda\cap\mu^{\perp}|$:
$$
0\to V_{\lambda;\mu} \to \mc I^0(V_{\lambda;\mu}) \to \mc I^1(V_{\lambda;\mu}) \to ... \to \mc I^{|\lambda\cap\mu^{\perp}|}(V_{\lambda;\mu}) \to 0 \;,
$$
with $\mc I^k(V_{\lambda;\mu}) = \bigoplus\limits_{\xi,\eta\in\Lambda:|\lambda|-|\xi|=k} m^{\lambda;\mu}_{\xi;\eta}\cdot (V_*)_{\xi}\otimes V_{\eta}$ where $m^{\lambda;\mu}_{\xi;\eta}:=\sum\limits_{\nu\in\Lambda} N^{\lambda}_{\xi\nu}N^{\mu}_{\nu^\perp\eta}$.

Consequently, for $k\geq 0$
$$
{\rm Ext}^k_{\TT(V_*,V)}(V_{\xi;\eta},V_{\lambda;\mu})\cong {\rm Hom}(V_{\xi;\eta^\perp},\ul{\rm soc}^{k+1}((V_*)_{\lambda} \otimes V_{\mu^\perp}))\;,
$$
and ${\rm Ext}^k_{\TT(V_*,V)}(V_{\xi;\eta},V_{\lambda;\mu})\ne 0$ implies $k=k^{\lambda;\mu}_{\xi;\eta}=|\lambda|-|\xi|=|\mu|-|\eta|$.
\end{theorem}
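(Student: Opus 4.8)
The plan is to produce an explicit injective resolution of $V_{\lambda;\mu}$ in $\TT(V_*,V)$ and then read off the Ext-groups from it. The starting point is Theorem~\ref{Theo Tsmall is ord Groth}, which tells us that the indecomposable injectives are exactly the objects $(V_*)_\xi\otimes V_\eta$, with socle $V_{\xi;\eta}$, and that $\TT(V_*,V)$ is an ordered Grothendieck category. Thus an injective resolution of $V_{\lambda;\mu}$ is built by the standard procedure: take the injective hull $(V_*)_\lambda\otimes V_\mu$ of $V_{\lambda;\mu}$, form the cokernel, take its injective hull, and iterate. The key input that makes this computable is Theorem~\ref{Theo PS soc filt Vlambda TENS Vmu}: the layers of the socle filtration of $(V_*)_\lambda\otimes V_\mu$ have isotypic decomposition governed by the numbers $h^{\lambda;\mu}_{\xi;\eta}=\sum_\nu N^\lambda_{\xi\nu}N^\mu_{\nu\eta}$. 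So I would first establish, by an inductive/bookkeeping argument on the socle filtration, that the cokernel of $V_{\lambda;\mu}\hookrightarrow (V_*)_\lambda\otimes V_\mu$ has a simple-composition-factor count that forces its injective hull to be $\bigoplus_{|\lambda|-|\xi|=1} m^{\lambda;\mu}_{\xi;\eta}\cdot (V_*)_\xi\otimes V_\eta$, and more generally that the $k$-th term of the minimal injective resolution is $\mc I^k(V_{\lambda;\mu})$ as stated.

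The role of the conjugation $\mu\mapsto\mu^\perp$ (which converts $h$ into $m$, i.e.\ replaces $N^\mu_{\nu\eta}$ by $N^\mu_{\nu^\perp\eta}$) is the subtle combinatorial point. I expect it arises because the contraction maps ${\bf p}_{i,j}$ — which cut down tensor powers and hence govern how cokernels of socle inclusions decompose — interact with the Schur functors via the identity $\Lambda^m(X\otimes Y)=\bigoplus_{|\lambda|=m}X_\lambda\otimes Y_{\lambda^\perp}$ and its relatives from Proposition in \S\ref{Sec Schur}; equivalently, the pairing $V_*\otimes V\to\CC$ pairs $(V_*)_\lambda$ not with $V_\lambda$ but in a way that introduces a transpose when passing to the "dual" side. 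Concretely, I would track how $\ker({\bf p}_{i,j})$ restricted to a Schur component decomposes, using the branching/Pieri rule encoded in the $N^\mu_{\nu\eta}$, and verify that the conjugate appears precisely once, on the $\mu$-factor, when identifying the injective hull of the relevant cokernel. This should already be implicit in \cite{Penkov-Styrkas} and \cite{Penkov-Serganova-Mackey}, so the task is largely to assemble their results into the stated clean resolution.

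Once the resolution $0\to V_{\lambda;\mu}\to \mc I^0\to\cdots\to\mc I^{|\lambda\cap\mu^\perp|}\to 0$ is in hand, the Ext computation is routine: apply $\Hom_{\mk{gl}^M}(V_{\xi;\eta},-)$ to the resolution, use that the $(V_*)_\xi\otimes V_\eta$ are injective so the cohomology of the resulting complex computes $\Ext^k$, and use that $\Hom_{\mk{gl}^M}(V_{\xi;\eta},(V_*)_{\xi'}\otimes V_{\eta'})$ is nonzero (one-dimensional) iff $\xi=\xi'$, $\eta=\eta'$, since $V_{\xi';\eta'}$ is the socle of $(V_*)_{\xi'}\otimes V_{\eta'}$ and these socles are simple with $\End\cong\CC$ by Proposition~\ref{Prop DenseVlambdamu PS}. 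Actually, because the resolution is the \emph{minimal} injective resolution (each $\mc I^k$ is the injective hull of the $k$-th cokernel, as the ordered-Grothendieck structure guarantees the maps have no injective summands splitting off), all differentials become zero after applying $\Hom(V_{\xi;\eta},-)$, so $\Ext^k_{\TT(V_*,V)}(V_{\xi;\eta},V_{\lambda;\mu})$ is simply the multiplicity of $(V_*)_\xi\otimes V_\eta$ in $\mc I^k(V_{\lambda;\mu})$, namely $m^{\lambda;\mu}_{\xi;\eta^?}$ — and rewriting this multiplicity via Theorem~\ref{Theo PS soc filt Vlambda TENS Vmu} as $\Hom(V_{\xi;\eta^\perp},\ul{\rm soc}^{k+1}((V_*)_\lambda\otimes V_{\mu^\perp}))$ gives the stated formula, with the $h\leftrightarrow m$ translation again supplied by conjugating $\mu$.

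The main obstacle is the first step: proving that the naive term-by-term injective hull construction really yields $\mc I^k(V_{\lambda;\mu})$ with exactly the multiplicities $m^{\lambda;\mu}_{\xi;\eta}$, and in particular that the resolution terminates at length $|\lambda\cap\mu^\perp|$. This requires an honest induction showing that at each stage the cokernel's socle is precisely $\bigoplus m^{\lambda;\mu}_{\xi;\eta}\cdot V_{\xi;\eta}$ over $|\lambda|-|\xi|=k$, which amounts to controlling the maps ${\bf p}_{\ul i,\ul j}$ on Schur components and invoking the Littlewood--Richardson combinatorics governing $N^\lambda_{\xi\nu}N^\mu_{\nu^\perp\eta}$; the length bound then follows because $m^{\lambda;\mu}_{\xi;\eta}$ vanishes once $k>|\lambda\cap\mu^\perp|$. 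Everything after that is formal homological algebra in an ordered Grothendieck category.
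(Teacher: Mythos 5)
Your reduction of the Ext computation to multiplicities in a minimal injective resolution is correct: since ${\rm ker}(d^k)$ is essential in $\mc I^k$, applying ${\rm Hom}(V_{\xi;\eta},-)$ to the resolution yields a complex with zero differentials, so ${\rm Ext}^k$ equals the multiplicity of $(V_*)_\xi\otimes V_\eta$ in $\mc I^k$. Your translation $m^{\lambda;\mu}_{\xi;\eta} = h^{\lambda;\mu^\perp}_{\xi;\eta^\perp}$, which converts these multiplicities into $\dim{\rm Hom}(V_{\xi;\eta^\perp},\ul{\rm soc}^{k+1}((V_*)_\lambda\otimes V_{\mu^\perp}))$, is likewise correct via the Littlewood--Richardson symmetry $N^{\mu^\perp}_{\nu\eta^\perp}=N^\mu_{\nu^\perp\eta}$, and the length bound $|\lambda\cap\mu^\perp|$ follows since a nonzero product $N^\lambda_{\xi\nu}N^\mu_{\nu^\perp\eta}$ forces $\nu\subseteq\lambda\cap\mu^\perp$.

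The gap is precisely the step you yourself flag as the obstacle, and the route you sketch will not close it. For $k\geq 2$ the socle of the $k$-th cokernel is \emph{not} $\ul{\rm soc}^{k+1}((V_*)_\lambda\otimes V_\mu)$, whose multiplicities are $h^{\lambda;\mu}_{\xi;\eta}$: those differ from $m^{\lambda;\mu}_{\xi;\eta}$ as soon as $|\nu|\geq 2$, which is exactly the regime where $\nu^\perp\ne\nu$. Determining ${\rm soc}(\mc I^k/{\rm ker}\,d^k)$ requires tracking the image of $\mc I^{k-1}$ inside $\mc I^k$, a genuinely harder inductive problem than reading off socle layers of a single injective hull, and ``branching/Pieri bookkeeping'' on the contractions ${\bf p}_{\ul{i},\ul{j}}$ does not by itself resolve it. The cited sources \cite{Penkov-Styrkas}, \cite{Penkov-Serganova-Mackey} do not proceed this way: they construct an explicit BGG/Koszul-type complex from exterior powers of the pairing ${\bf p}$ (this is exactly where $\Lambda^m(X\otimes Y)=\bigoplus_{|\nu|=m}X_\nu\otimes Y_{\nu^\perp}$ enters and where the conjugate on $\mu$ originates) and then prove its exactness directly. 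The present paper states the theorem as a citation without proof, so there is no internal argument to compare against; but the construction of the resolution is the real content of the statement and it remains unsupplied in your proposal.
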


In addition we observe that $m^{\lambda;\mu}_{\xi;\eta}=h^{\lambda;\mu^{\perp}}_{\xi;\eta^{\perp}}$.

\subsection{Some families of tensor modules}

In this section, generalizing constructions of \cite{Chirvasitu-Penkov-RC},\cite{Chirvasitu-Penkov-UTC}, we determine the simple $\mk{gl}^M$-subquotients of the tensor algebra $T(V^*\oplus \bar V)$. We also define and study several families of $\mk{gl}^M$-modules relevant for the structure of $\TT_t$ as an ordered Grothendieck category.

We let
$$
\bm\Lambda:=\Lambda^{t+1}\times\Lambda\times\Lambda\times\Lambda^{t+1}
$$
be the set of $2(t+2)$-tuples of diagrams. We view its elements $\bm\lambda\in\bm\Lambda$ as pairs of sequences of length $(t+2)$, notationwise separated by ``;'', with indices increasing outwards, and unindexed initial entries, i.e.,
$$
\bm\lambda=(\lambda_\bullet,\lambda;\mu,\mu_\bullet)=(\lambda_t,...,\lambda_0,\lambda;\mu,\mu_0,...,\mu_t)\;.
$$
If the tail of a sequence $\nu_\bullet=(\nu_0,...,\nu_t)$ consists of empty diagrams, we often omit it if the number $t$ is fixed in the context. The sequence of empty diagrams is denoted by $\emptyset_\bullet$.

We define the following three families of modules indexed by the set $\bm\Lambda$:
\begin{gather}\label{For LJITlambda}
\begin{array}{rl}
L_{\lambda_\bullet,\lambda;\mu,\mu_\bullet} :=& \left(\bigotimes\limits_{\alpha=0}^t (V^*_{\alpha+1}/V^*_{\alpha})_{\lambda_\alpha}\right)\otimes V_{\lambda;\mu}\otimes \left(\bigotimes\limits_{\beta=0}^t (\bar V_{\beta+1}/\bar V_{\beta})_{\mu_\beta}\right)\;, \\
J_{\lambda_\bullet,\lambda;\mu,\mu_\bullet} :=& \left(\bigotimes\limits_{\alpha=0}^t (V^*/V^*_{\alpha})_{\lambda_\alpha}\right)\otimes V^*_{\lambda}\otimes \bar V_{\mu}\otimes \left(\bigotimes\limits_{\beta=0}^t (\bar V/\bar V_{\beta})_{\mu_\beta}\right) \;,\\
I_{\lambda_\bullet,\lambda;\mu,\mu_\bullet} :=& I\otimes J_{\lambda_\bullet,\lambda;\mu,\mu_\bullet} \;,\\
K_{\lambda_\bullet,\lambda;\mu,\mu_\bullet} :=& I\otimes L_{\lambda_\bullet,\lambda;\mu,\mu_\bullet} \;.
\end{array}
\end{gather}

Further, let
\begin{gather}\label{For mcP}
\mc P := \NN^{t+1}\times \NN\times \NN\times\NN^{t+1}
\end{gather}
be the set of $(2t+4)$-tuple of nonnegative integers, which we convene notation-wise to split into two sequences of equal length and write as
$$
\bm l= (l_\bullet,l;m,m_\bullet)=(l_{t},...,l_0,l;m,m_0,...,m_{t}) \;,
$$
similarly to the elements of $\bm\Lambda$. We define the following families of modules parametrized by $\mc P$:
\begin{gather}\label{For LJITlm}
\begin{array}{rl}
L_{l_\bullet,l;m,m_\bullet} :=& \left(\bigotimes\limits_{\alpha=0}^t (V^*_{\alpha+1}/V^*_{\alpha})^{\otimes l_\alpha}\right)\otimes V_{l;m}\otimes \left(\bigotimes\limits_{\beta=0}^t (\bar V_{\beta+1}/\bar V_{\beta})^{\otimes m_\beta}\right)\;, \\
J_{l_\bullet,l;m,m_\bullet} := & \left(\bigotimes\limits_{\alpha=0}^t (V^*/V^*_{\alpha})^{\otimes l_\alpha}\right)\otimes (V^*)^{\otimes l} \otimes (\bar V)^{\otimes m} \otimes \left(\bigotimes\limits_{\beta=0}^t (\bar V/\bar V_{\beta})^{\otimes m_\beta}\right) \;,\\
I_{l_\bullet,l;m,m_\bullet} :=& I\otimes J_{l_\bullet,l;m,m_\bullet} \;,\\
K_{l_\bullet,l;m,m_\bullet} :=& I\otimes L_{l_\bullet,l;m,m_\bullet} \;.
\end{array}
\end{gather}

\begin{rem}\label{Rem index lm}
Notation-wise, it will be sometimes be convenient to include $l$ and $m$ into the indexed sequences $l_\bullet$ and $m_\bullet$, respectively, as $l=l_{-1}$ and $m=m_{-1}$. In other words, $\mc P$ will be interpreted alternatively as $\NN^{t+1}\times\NN\times\NN\times \NN^{t+1}$ or as $\NN^{t+2}\times\NN^{t+2}$. Correspondingly, we set $V^*_{-1}=0\subset V^*$ and $\bar V_{-1}$. The range of the index will be made clear in the context, with the initial convention as default. We do similarly for the elements of $\bm\Lambda$.
\end{rem}

There is a natural map from $\bm\Lambda$ to $\mc P$ given by component-wise size:
$$
|\cdot|:\bm\Lambda \to \mc P \;,\; (\lambda_\bullet,\lambda;\mu,\mu_\bullet)\mapsto (|\lambda_\bullet|,|\lambda|;|\mu|,|\mu_\bullet|) \;.
$$
We use the same notation for the map
$$
|\cdot|:\mc P \to \NN \;,\; (l_\bullet,l;m,m_\bullet)\mapsto l+m+\sum\limits_{\alpha=0}^t(l_\alpha+m_\alpha) \;,
$$
and we denote the composition of these two maps by
$$
||\cdot|| :\bm\Lambda\to \NN \;,\; (\lambda_\bullet,\lambda;\mu,\mu_\bullet) \mapsto |\lambda|+|\mu|+ \sum\limits_{\alpha=0}^t |\lambda_\alpha|+|\mu_\alpha| \;.
$$

We denote the symmetric group on $n$ letters by $\mk S_n$, and for $\bm l=(l_\bullet,l;m,m_\bullet)\in{\bf P}$ we let $\mk S_{\bm l}$ be the product of $2t$ symmetric groups of sizes corresponding to the entries of $\bm l$: 
$$
\mk S_{\bm l}=\mk S_{l_t}\times... \times\mk S_{l_0}\times \mk S_l\times \mk S_m\times\mk S_{l_0}\times...\times \mk S_{m_t} \;.
$$
Note that $\mk S_{\bm l}$ acts naturally on each of the modules (\ref{For LJITlm}), and
\begin{gather}\label{For LJITlm is sum LJITlambdamu}
L_{\bm l} = \bigoplus\limits_{|\bm\lambda|=\bm l} \CC^{\bm \lambda} \otimes L_{\bm\lambda}\;, \; J_{\bm l} = \bigoplus\limits_{|\bm\lambda|=\bm l} \CC^{\bm \lambda} \otimes J_{\bm\lambda} \;,\;
I_{\bm l} = \bigoplus\limits_{|\bm\lambda|=\bm l} \CC^{\bm \lambda} \otimes I_{\bm\lambda} \;,\; K_{\bm l} = \bigoplus\limits_{|\bm\lambda|=\bm l} \CC^{\bm \lambda} \otimes K_{\bm\lambda} \;,
\end{gather}
where $\CC^{\bm\lambda}$ denotes the irreducible representation of $\mk S_{\bm l}$ determined by $\bm\lambda\in\bm\Lambda$.

\subsubsection{Simple tensor modules}

\begin{theorem}\label{Theo Simples and Injhulls}
Let $\bm\lambda=(\lambda_\bullet,\lambda;\mu,\mu_\bullet)\in\bm\Lambda$ and let $L_{\bm\lambda},J_{\bm\lambda}$ be as in (\ref{For LJITlambda}). Then the $\mk{gl}^M$-module $L_{\bm\lambda}$ is simple and $L_{\bm\lambda}={\rm soc}J_{\bm\lambda}$. In particular, the inclusion $L_{\bm\lambda}\subset J_{\bm\lambda}$ is essential.
\end{theorem}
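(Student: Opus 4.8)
The plan is to leverage the density machinery of Proposition~\ref{Prop DenseProperties} together with the known socle structure from \S\ref{Sec Vstar l po V m}, reducing the statement to a computation about the ``middle'' factor $V_{\lambda;\mu}$ and then controlling the tensor factors coming from the layers of the filtrations of $V^*$ and $\bar V$. First I would treat $V_{\lambda;\mu}$: by Proposition~\ref{Prop DenseVlambdamu PS} the action of $\mk{gl}(V,V_*)$ on $V_{\lambda;\mu}$ is dense and irreducible with ${\rm End}_{\mk{gl}(V,V_*)}V_{\lambda;\mu}=\CC$, so Proposition~\ref{Prop DenseProperties}(b) says that tensoring with $V_{\lambda;\mu}$ is a fully faithful functor $\mk{gl}^M/\mk{gl}(V,V_*)\textrm{-mod}\to\mk{gl}^M\textrm{-mod}$ that preserves simplicity and essential inclusions. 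Since all the outer tensor factors $(V^*_{\alpha+1}/V^*_\alpha)_{\lambda_\alpha}$, $(\bar V_{\beta+1}/\bar V_\beta)_{\mu_\beta}$ (for $L_{\bm\lambda}$) and $(V^*/V^*_\alpha)_{\lambda_\alpha}$, $V^*_\lambda$, $\bar V_\mu$, $(\bar V/\bar V_\beta)_{\mu_\beta}$ (for $J_{\bm\lambda}$) are annihilated by $\mk{gl}(V,V_*)$ — being subquotients of the ``associated graded'' modules on which finite-rank operators act trivially — it suffices to prove the analogous statement for the outer factor alone, i.e.\ for the module $\bigl(\bigotimes_\alpha(V^*_{\alpha+1}/V^*_\alpha)_{\lambda_\alpha}\bigr)\otimes\bigl(\bigotimes_\beta(\bar V_{\beta+1}/\bar V_\beta)_{\mu_\beta}\bigr)$ sitting inside $\bigl(\bigotimes_\alpha(V^*/V^*_\alpha)_{\lambda_\alpha}\bigr)\otimes\bigl(\bigotimes_\beta(\bar V/\bar V_\beta)_{\mu_\beta}\bigr)$, as a module over $\mk{gl}^M/\mk{gl}(V,V_*)$ (or equivalently $\mk{gl}^M$).

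Next I would handle the outer factors one ``level'' at a time. For a single quotient $V^*/V^*_\alpha$, Proposition~\ref{Prop DenseVlambdamu PS}-type density (the $\mk{gl}^M$-action on each simple quotient $V^*_{\alpha+1}/V^*_\alpha$ is dense, by the same arguments as in \cite{Penkov-Serganova-Mackey}, \cite{Chirvasitu-Penkov-RC}) gives that $(V^*_{\alpha+1}/V^*_\alpha)_{\lambda_\alpha}$ is simple via Proposition~\ref{Prop DenseProperties}(a). The key point is to iterate: I want to apply Proposition~\ref{Prop DenseProperties}(b) repeatedly, peeling off one tensor factor at a time, each time using that the ideal acting densely and irreducibly on that factor (with trivial endomorphisms) kills all the other factors. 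Concretely, $V^*_{\alpha+1}/V^*_\alpha$ is a module over $\mk{gl}^M/\mk{gl}^M_\alpha$ on which the ideal $\mk{gl}^M_{\alpha+1}/\mk{gl}^M_\alpha$ acts densely and irreducibly, while this ideal annihilates every factor indexed by $\alpha'\neq\alpha$ and the whole middle part. So an induction on the number of nonempty outer factors, ordered suitably by the index, gives simplicity of $L_{\bm\lambda}$ and — applying the ``essential inclusions'' half of Proposition~\ref{Prop DenseProperties}(b) together with the fact that $(V^*_{\alpha+1}/V^*_\alpha)_{\lambda_\alpha}={\rm soc}\,(V^*/V^*_\alpha)_{\lambda_\alpha}$ (which itself follows from Proposition~\ref{Theo PS soc filt Vlambda TENS Vmu}-type socle computations applied inside each level, or from Proposition~\ref{Prop InjHullInGrothCat} applied to $\TT(V^*/V^*_\alpha)$) — the essential inclusion $L_{\bm\lambda}\subset J_{\bm\lambda}$.

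The main obstacle is the bookkeeping in the iterated application of Proposition~\ref{Prop DenseProperties}(b): one must check that at each stage the relevant ideal of $\mk{gl}^M$ really does act densely and irreducibly with $\CC$-endomorphisms on the factor being peeled off, \emph{and} that it annihilates all the remaining factors, so that the ``$\bullet\otimes R$'' functor applies cleanly with $R$ the factor in question and $\mk G/\mk J$-mod housing the rest. The ideals $\mk{gl}^M_s$ from Lemma~\ref{Lemma Center and glMs ideals} are exactly what is needed here: $\mk{gl}^M_{\alpha+1}$ acts on $V^*_{\alpha+1}/V^*_\alpha$ through its quotient, densely (essentially by the $t=0$ case of \cite{Penkov-Serganova-Mackey} applied to the Mackey algebra $\mk g^A$ for $|A|=\aleph_{\alpha+1}$), and it kills $V^*_\alpha$ hence all lower levels and the middle factor $V_{\lambda;\mu}$; on the $\bar V$-side one uses the description of $\mk{gl}^M_s$ via $\bar V$ from the Remark after Lemma~\ref{Lemma Center and glMs ideals}. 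A secondary, purely combinatorial point is the identity $L_{\bm\lambda}={\rm soc}\,J_{\bm\lambda}$ rather than merely ``$L_{\bm\lambda}\subset{\rm soc}\,J_{\bm\lambda}$'': for this I would note that ${\rm soc}$ commutes with the fully faithful functors $\bullet\otimes R$ of Proposition~\ref{Prop DenseProperties}(b) (they send essential inclusions to essential inclusions and simples to simples, hence socles to socles), reducing to the single-factor statements ${\rm soc}\,(V^*/V^*_\alpha)_{\lambda_\alpha}=(V^*_{\alpha+1}/V^*_\alpha)_{\lambda_\alpha}$, which in turn follow from Proposition~\ref{Prop DenseProperties}(a) applied to the dense action on $V^*_{\alpha+1}/V^*_\alpha$ combined with the fact that $(V^*/V^*_\alpha)_{\lambda_\alpha}$ has an exhaustive socle filtration whose bottom layer is the image of the Schur functor on the socle.
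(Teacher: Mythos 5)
The student's first step, peeling off the middle factor $V_{\lambda;\mu}$ via Proposition \ref{Prop DenseProperties}(b) with $\mk J=\mk{gl}(V,V_*)$, is exactly the paper's final step and is correct. The problem is with the proposed iterated peeling of the outer tensorands. You claim that $\mk{gl}^M_{\alpha+1}$ (or $\mk{gl}^M_{\alpha+1}/\mk{gl}^M_\alpha$) acts densely on $V^*_{\alpha+1}/V^*_\alpha$ while annihilating all factors at indices $\alpha'\ne\alpha$. Two things go wrong here. First, the direction is reversed: since $\mk{gl}^M_{\alpha+1}$ sends $V^*$ into $V^*_{\alpha+1}$, it annihilates the \emph{higher} layers $V^*_{\alpha'+1}/V^*_{\alpha'}$ ($\alpha'\geq \alpha+1$) but acts nontrivially on $V^*_\alpha$ and on all lower layers (already $\mk{gl}^M_1$ acts nontrivially on $V_*$, e.g.\ by a diagonal matrix of countable support). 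Second and more seriously, by the Remark after Lemma \ref{Lemma Center and glMs ideals}, $\mk{gl}^M_{\alpha+1}$ acts densely on \emph{both} sides of level $\alpha$, i.e.\ on $\bar V_{\alpha+1}/\bar V_{\alpha}$ as well as on $V^*_{\alpha+1}/V^*_{\alpha}$. Consequently no ideal of the form $\mk{gl}^M_s$ isolates a single outer tensorand, and the module $R$ fed into the functor $\bullet\otimes R$ must be the pair $(V^*_{\alpha+1}/V^*_{\alpha})_{\lambda_\alpha}\otimes(\bar V_{\alpha+1}/\bar V_{\alpha})_{\mu_\alpha}$. To invoke Proposition \ref{Prop DenseProperties}(b) you would then need to know that this pair is already irreducible with $\CC$-endomorphisms over $\mk{gl}^M_{\alpha+1}/\mk{gl}^M_{\alpha}$ — which is a special case of the theorem you are proving. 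The iteration is circular.

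The paper breaks this circularity by a different mechanism, which your sketch does not use: Lemma \ref{Lemma simple lambda00mu} shows that $L_{\lambda_\bullet,\emptyset;\emptyset,\mu_\bullet}=L_{\lambda_\bullet,\emptyset;\emptyset,\emptyset_\bullet}\otimes L_{\emptyset_\bullet,\emptyset;\emptyset,\mu_\bullet}$ is simple by a \emph{support-separation} argument. Given an arbitrary $w=\sum_j x^j\otimes v^j$ in that tensor product, one uses the $\mk{gl}^M$-action (via \cite{Chirvasitu-3Mackey}) to move the supports of the $x^j$'s and of the $v^j$'s into disjoint subsets $B_1,B_2\subset\B$ with $|B_1|=|B_2|=\aleph_t$, then works inside the block-diagonal subalgebra $\mk g^{B_1}\oplus\mk g^{B_2}\cong\mk{gl}^M\oplus\mk{gl}^M$ and appeals to the simplicity of each ``one-sided'' module separately. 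The other essential ingredient, entirely absent from your proposal, is Lemma \ref{Lemma LinIndepToAny} (built on Lemma \ref{Lemma First LinIndepToAny}, which is proved by transfinite recursion over a well-ordered $\B$ of regular cardinality $\aleph_t$): it produces an element $\varphi\in\mk{gl}^M$ simultaneously prescribing the images of linearly independent vectors in several layers $\bar V_{m+1}/\bar V_m$ and $V^*_{m+1}/V^*_m$. This lemma is what lets one act simultaneously on all outer tensorands at once rather than level by level, and it also delivers the essentiality $L_{\bm\lambda}\subset J_{\bm\lambda}$. Your sketch would need to supply a substitute for both of these before the ``peel one layer at a time'' strategy could work; the density functor of Proposition \ref{Prop DenseProperties}(b) cannot replace them, because it presupposes irreducibility of the factor being peeled off.
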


The proof will be given after some technical preparation. Let us note that the case $\mu_\bullet=\emptyset_\bullet$ is settled in \cite[Lemma~4.9]{Chirvasitu-Penkov-RC}. We shall combine this fact with a suitable generalization of \cite[Proposition~4.1 and Corollary~4.3]{Chirvasitu-Penkov-OTC} to obtain the complete result.

\begin{lemma}\label{Lemma First LinIndepToAny}
Let $v_1,...,v_p,w_1,...,w_p\in \bar V$ with $v_1,...,v_p$ linearly independent modulo $\bar V_{t}$, and $x_1,...,x_q,y_1,...,y_q\in V^*$ with $x_1,...,x_q$ linearly independent modulo $V^*_{t}$. Then there exists $\varphi\in\mk{gl}^M$ such that $\varphi(v_i)=w_i$ and $\varphi^*(x_j)=y_j$ for all $i=1,...,p$ and $j=1,...,q$.
\end{lemma}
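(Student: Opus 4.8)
The plan is to reduce the statement to finding a single $\varphi\in\mk{gl}^M$ sending a prescribed finite set of vectors in $V$ to prescribed images, while simultaneously satisfying the dual constraint on $V^*$. First I would observe that the two halves are essentially symmetric: $\varphi\in\mk{gl}^M$ means $\varphi^*$ preserves $V_*$, and the condition $\varphi^*(x_j)=y_j$ is a condition on how $\varphi$ pairs against $V_*$-directions, so I will try to handle both at once by working with a large-but-finite-codimension splitting of $V$ adapted to all the data. Concretely, since $v_1,\dots,v_p$ are linearly independent modulo $\bar V_t$ and $x_1,\dots,x_q$ are linearly independent modulo $V^*_t$, I can enlarge the finite-dimensional picture: choose a finite subset $A\subset\B$ large enough that, modulo the ``tail'' subspaces, the relevant vectors are controlled by coordinates in $A$, and choose finitely many elements $b_1,\dots,b_p\in\B$ so that $v_i$ and $b_i$ (viewed in $\bar V$) are linearly independent modulo $\bar V_t$ together, and similarly finitely many $c_1,\dots,c_q$ on the $V^*$ side.

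The key technical step is an extension-of-basis argument modulo the socle-tail. Because $v_1,\dots,v_p$ are linearly independent in $\bar V/\bar V_t$, I can complete $\{v_i\ \mathrm{mod}\ \bar V_t\}$ to a basis of $\bar V/\bar V_t$; dually, completing $\{x_j\ \mathrm{mod}\ V^*_t\}$ gives compatible coordinates. I then define $\varphi$ on a spanning set by declaring $\varphi(v_i)=w_i$, extending by zero (or by any bounded rule) on a complement, and checking that the resulting operator lies in $\mk{gl}^M$ — i.e.\ that $\varphi^*$ maps each basis covector $x_b$ to something with finite support. The point is that the images $w_i$ lie in $\bar V$, which is huge, but each $w_i$ has a \emph{fixed} coordinate expansion, so only finitely many basis covectors $x_b$ are ``hit'' by each, and since there are only finitely many $v_i$, the total column-finiteness and row-finiteness conditions defining $\mk{gl}^M$ can be arranged. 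The dual constraints $\varphi^*(x_j)=y_j$ are imposed analogously, and I must check the two prescriptions are consistent — this is where linear independence modulo the tails is used: it guarantees the ``$V$-side'' and ``$V^*$-side'' constraints live on independent coordinate blocks, so they do not conflict.

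I would organize this as: (1) reduce to the case where $v_1,\dots,v_p$ are part of a basis of $\bar V$ and $x_1,\dots,x_q$ part of a basis of $V^*$, by first solving an auxiliary problem on the quotients $\bar V/\bar V_t$, $V^*/V^*_t$ and then lifting (using Lemma~\ref{Lemma Levivarphi}-type block structure or a direct argument that the tail subspaces $\bar V_t$, $V^*_t$ are handled separately); (2) on the finite-dimensional-modulo-tail part, linear algebra produces a matrix; (3) verify column- and row-finiteness so that the matrix represents an element of $\mk{gl}^M$ rather than just $\mk{gl}(V)$, using that $p,q$ are finite and each $w_i$, $y_j$ has prescribed finite-support-modulo-tail coordinates. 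I expect the main obstacle to be step (3) together with the consistency check in step (1): one must be careful that imposing $\varphi(v_i)=w_i$ does not force infinitely many nonzero entries in some row of $\varphi^*$, and that the $V$-side and $V^*$-side data can be realized by a \emph{single} operator — this is exactly where the hypothesis of linear independence modulo $\bar V_t$ (resp.\ $V^*_t$), rather than mere linear independence, is essential, since it provides enough ``room'' in the top layer of the socle filtration to decouple the two sets of conditions.
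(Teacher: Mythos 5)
Your proposal does not contain a working argument for the central difficulty, and the central idea it does offer (extend to a basis of $\bar V$ and then ``extend by zero'') cannot succeed.

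The key obstruction is one you correctly name as the ``main obstacle'' but never resolve: the matrix of $\varphi$ in the fixed basis of $V$ must be \emph{both row-finite and column-finite}. An element of $\mk{gl}^M$ is not an arbitrary linear map on $\bar V$; it is an operator on $V$ (with column-finite, row-finite matrix), and it acts on $\bar V=(V_*)^*$ only by duality. If you complete $\{v_i\}$ to a basis of $\bar V$ and declare $\varphi(v_i)=w_i$ and $\varphi=0$ on a complement, there is in general \emph{no} induced $\mk{gl}^M$-element at all: the standard basis vector $v_c\in V\subset\bar V$ will be a combination $\sum_i\alpha_i^c v_i + (\text{complement})$ and the resulting $\varphi(v_c)=\sum_i\alpha_i^c w_i$ will typically lie in $\bar V\setminus V$ (the $w_i$ can have support of size $\aleph_t$), so $\varphi$ does not even preserve $V$, let alone have a column-finite matrix. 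The assertion that ``each $w_i$ has a fixed coordinate expansion, so only finitely many basis covectors $x_b$ are hit'' is simply false: the $v_i$ and $x_j$ are \emph{forced} by the hypothesis of independence modulo the tail to have support of full cardinality $\aleph_t$, and the $w_i,y_j$ may too, so $\aleph_t$-many basis covectors are ``hit.'' Working in the quotients $\bar V/\bar V_t$, $V^*/V^*_t$ doesn't help, since the operator $\varphi$ is an endomorphism of $V$, not of these quotients, and all the row- and column-finiteness data lives precisely in what those quotients discard.

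The paper's proof avoids this by building the \emph{matrix} of $\varphi$ directly via transfinite recursion on a well-ordering of $\B$ of order type $\aleph_t$. The hypothesis that $v_1,\dots,v_p$ (resp.\ $x_1,\dots,x_q$) are independent modulo $\bar V_t$ (resp.\ $V^*_t$) is used in a very specific combinatorial way: it guarantees that the $\B\times\{1,\dots,p\}$-matrix $M$ with columns $v_i$ has a family of $\aleph_t$ pairwise-disjoint nonsingular $p\times p$ minors $\{M_b\}_{b\in\B}$, each supported above $b$ in the well-order, and similarly for the matrix $N$ with rows $x_j$. At stage $b$ one fills in the $b$-th row of $\varphi$ by placing its nonzero entries only on $\mathrm{supp}(M_b)$ and solving the $p\times p$ system $M_b^T(\text{entries})=(w_i(b))$ (adjusted for previously-determined entries with index $<b$); and analogously the $b$-th column using $N_b$. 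Disjointness of the minor supports, together with their placement above $b$, yields (a) that each row and column of $\varphi$ has at most $p+1$ (resp.\ $q+1$) nonzero entries, hence $\varphi\in\mk{gl}^M$, and (b) that the row-steps and column-steps never overwrite one another. None of this mechanism is present in your proposal; without it, there is no reason the operator you describe should lie in $\mk{gl}^M$, nor even in $\mk{gl}(V)$.
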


\begin{proof}
The argument is analogous to the proof of \cite[Lemma~4.2]{Chirvasitu-Penkov-OTC}, but with transfinite recursion. Recall that $\B$ is the index set for a basis of $V$, and assume that $\B$ is ordered by the initial ordinal number with cardinality $|\B|=\aleph_t$. In particular $\B$ is well-ordered and, since the cardinals $\aleph_t$ for $t\in\NN$ are regular, for every $b\in \B$ the set $\B_{>b}:=\{c\in\B:b<c\}$ has the same cardinality $\aleph_t$ as $\B$ while $\B_{\leq b}:=\{c\in\B:c\leq b\}$ has strictly smaller cardinality. Let $M$ be the matrix with columns $v_1,...,v_p$ and $N$ be the matrix with rows $x_1,...,x_q$. For $b\in\B$, let $M(b)$ and $N(b)$ denote the corresponding row of $M$ and, respectively, column of $N$. The set of indices of the rows appearing in a given $p\times p$-minor of $M$, respectively $q\times q$-minor of $N$, will be called the support of that minor. The set of nonsingular $p\times p$-minors of $M$ has cardinality $\aleph_t$. Furthermore, it contains a subset, say $\mc M$, such that $|\mc M|=\aleph_t$ and distinct elements of $\mc M$ are supported on disjoint sets of rows of $M$. Let $\B\to\mc M, b\mapsto M_b$ be any injective map. Let $\tilde M_b$ be the $\aleph_t\times p$-matrix obtained from $M$ by replacing the minor $M_b$ by its inverse and setting all other rows equal to $0$. Similarly, there exists an injection $\mc B\to\mc N, b\mapsto N_b$ defined using the matrix $N$ and its $q\times q$-minors. We also use the analogous notation $\tilde N_b$ for the resulting $q\times \aleph_t$ matrices. The assumption on the order of $\B$ guaranties that the assignments $b\mapsto M_b$ and $b\mapsto N_b$ can be made so that for every $b\in\B$ and every $c\in {\rm supp}(M_b)\cup{\rm supp}(N_b)$ we have $b<c$.

Now we are ready to give a recursive definition of $\varphi$ as a matrix with respect to the chosen order of $\B$. Let $b_0$ be the minimal element of $\B$. Define the first row of $\varphi$ by setting $\varphi_{(b_0,a)}:=0$ for $a\notin{\rm supp}(M_{b_0})$ and 
$$
(\varphi_{(b_0,a_1)},...,\varphi_{(b_0,a_p)}) := (w_1(b_0),...,w_p(b_0))M_{b_0}^{-1}
$$
if ${\rm supp}(M_{b_0})=\{a_1,...,a_p\}$.

The first column has now its first entry $\varphi_{(b_0,b_0)}$ fixed. Put $\varphi_{(c,b_0)}:=0$ for $c\notin \{b_0\}\cup{\rm supp}(N_{b_0})$ and
$$
\begin{pmatrix} \varphi_{(c_1,b_0)}\\ \vdots\\ \varphi_{(c_q,b_0)}\end{pmatrix} :=-N_{b_0}^{-1}\left(\begin{pmatrix} y_1(b_0)\\ \vdots\\ y_q(b_0)\end{pmatrix}+ \varphi_{(b_0,b_0)} \begin{pmatrix} x_1(b_0)\\ \vdots\\ x_q(b_0)\end{pmatrix}\right) 
$$
if ${\rm supp}(N_{b_0})=\{c_1,...,c_q\}$.

Let $b\in \B$ and assume that the rows and columns of $\varphi$ are given for indices strictly smaller than $b$. To define the $b$-th row $\varphi_{(b,\cdot)}$ we extend the given data by setting $\varphi_{(b,d)}:=0$ if $d\geq b$ and $d\notin{\rm supp}(M_b)$, and
$$
(\varphi_{(b,d_1)},...,\varphi_{(b,d_p)}) :=((w_1(b),...,w_p(b))-\sum\limits_{a<b} \varphi_{(b,a)} (v_1(a),...,v_p(a))) M_b^{-1}
$$
if ${\rm supp}(M_b)=\{d_1,...,d_p\}$. Similarly, we extend the $b$-th column by $0$ outside ${\rm supp}(N_b)$ and put
$$
\begin{pmatrix} \varphi_{(e_1,b)}\\ \vdots\\ \varphi_{(e_q,b)}\end{pmatrix} :=-N_b^{-1}\left(\begin{pmatrix} y_1(b)\\ \vdots\\ y_q(b)\end{pmatrix}+\sum\limits_{a\leq b} \varphi_{(b,a)} \begin{pmatrix} x_1(a)\\ \vdots\\ x_q(a)\end{pmatrix}\right)
$$
if ${\rm supp}(N_b)=\{e_1,...,e_q\}$. The resulting matrix $\varphi$ determines an element of $\mk{gl}^M$ which satisfies the required properties by construction.
\end{proof}

\begin{lemma}\label{Lemma LinIndepToAny}

Let $0\leq\alpha_1<...<\alpha_n\leq t$ be $n$ natural numbers. For $m\in\{1,...,n\}$, let $v_1^m,...,v^m_{p_m},w_1^m,...,w_{p_m}^m\in\bar V_{m+1}/\bar V_{m}$ and $x_1^m,...,x_{q_m}^m,y_1^m,...,y_{q_m}^m\in V^*_{m+1}/V^*_{m}$ be arbitrary pairs of tuples of vectors in the respective spaces.

If the tuples $\{v^m_1,...,v^m_{p_m}\}$ and $\{x^m_1,...,x^m_{q_m}\}$ are linearly independent for every $m$, then there exists a transformation $\varphi\in\mk{gl}^M$ such that
$$
\varphi(v^m_j)=w^m_j \;\; and\;\; \varphi^*(x^m_j)=y^m_j \quad \forall j,\forall m \;.
$$
\end{lemma}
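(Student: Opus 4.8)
The plan is to reduce this multi-graded statement to the single-step Lemma \ref{Lemma First LinIndepToAny} by lifting everything to $\bar V$ and $V^*$ and carefully handling the quotient spaces. First I would choose, for each $m\in\{1,\dots,n\}$, lifts $\tilde v^m_j,\tilde w^m_j\in\bar V_{\alpha_m+1}$ of the vectors $v^m_j,w^m_j\in\bar V_{\alpha_m+1}/\bar V_{\alpha_m}$, and similarly lifts $\tilde x^m_j,\tilde y^m_j\in V^*_{\alpha_m+1}$ of $x^m_j,y^m_j$. The hypothesis says the tuples $\{v^m_1,\dots,v^m_{p_m}\}$ are linearly independent in $\bar V_{\alpha_m+1}/\bar V_{\alpha_m}$; since $\alpha_m\le t$, the subspace $\bar V_{\alpha_m}$ is contained in $\bar V_t$, so the lifts $\{\tilde v^m_1,\dots,\tilde v^m_{p_m}\}$ for a \emph{single} $m$ are linearly independent modulo $\bar V_t$. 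The key point, however, is that I need the \emph{combined} tuple $\bigcup_m\{\tilde v^m_1,\dots,\tilde v^m_{p_m}\}$ to be linearly independent modulo $\bar V_t$ in order to apply Lemma \ref{Lemma First LinIndepToAny} in one stroke; and that is where the distinctness of the indices $\alpha_1<\dots<\alpha_n$ enters.

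The main obstacle is precisely ensuring this global linear independence of the lifts modulo $\bar V_t$ (and the analogous statement modulo $V^*_t$ for the $\tilde x$'s). This is not automatic from the per-$m$ hypotheses: a priori a lift of a vector living in the layer $\bar V_{\alpha_m+1}/\bar V_{\alpha_m}$ could, after adding elements of $\bar V_{\alpha_m}$, interact with lifts coming from other layers. The resolution is to choose the lifts \emph{compatibly with the filtration}: since $\alpha_1<\alpha_2<\dots<\alpha_n\le t$, one can choose, using a basis of $\bar V$ adapted to the filtration $\bar V_0\subset\bar V_1\subset\dots\subset\bar V_{t+1}$, lifts $\tilde v^m_j$ whose supports (relative to the fixed dual basis) lie in pairwise disjoint "bands" of cardinalities $\aleph_{\alpha_m}\le \aleph_{\alpha_m+1}$, so that no nontrivial linear combination across different $m$ can land in $\bar V_t$ unless each per-$m$ combination already does — and the latter forces all coefficients to vanish by the hypothesis. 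Alternatively, one argues by a downward induction on the largest index $\alpha_n$: split off the top layer, handle it via Lemma \ref{Lemma First LinIndepToAny} relative to $\bar V_t$ and $V^*_t$, and recurse on the smaller Mackey Lie algebra $\mk{gl}^M_t$ acting on the filtered pieces of lower index, using the Remark after Lemma \ref{Lemma Center and glMs ideals} identifying how $\mk{gl}^M_s$ sits inside.

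Once the combined linear independence modulo $\bar V_t$ and modulo $V^*_t$ is secured, Lemma \ref{Lemma First LinIndepToAny} produces a single $\varphi\in\mk{gl}^M$ with $\varphi(\tilde v^m_j)=\tilde w^m_j$ and $\varphi^*(\tilde x^m_j)=\tilde y^m_j$ for all $j$ and all $m$. It remains only to check that this $\varphi$ descends correctly to the quotients, i.e.\ that $\varphi$ preserves each $\bar V_{\alpha_m}$ and $\varphi^*$ preserves each $V^*_{\alpha_m}$; but every element of $\mk{gl}^M$ preserves the entire filtration $\{\bar V_s\}$ and $\{V^*_s\}$ by Lemma \ref{Lemma Center and glMs ideals} together with the definition of the filtration via support cardinality, so the induced maps on $\bar V_{\alpha_m+1}/\bar V_{\alpha_m}$ and $V^*_{\alpha_m+1}/V^*_{\alpha_m}$ send $v^m_j\mapsto w^m_j$ and $x^m_j\mapsto y^m_j$ as required. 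I expect the bulk of the writing to be in the bookkeeping of the first step; the descent is immediate.
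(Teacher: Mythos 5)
There is a genuine gap, and it appears already in your first step. You want the combined lifted tuple $\bigcup_m\{\tilde v^m_1,\dots,\tilde v^m_{p_m}\}$ to be linearly independent modulo $\bar V_t$ so that Lemma~\ref{Lemma First LinIndepToAny} can be applied in one stroke. But if $\alpha_m<t$, then \emph{every} lift $\tilde v^m_j$ of a vector $v^m_j\in\bar V_{\alpha_m+1}/\bar V_{\alpha_m}$ lies in $\bar V_{\alpha_m+1}\subseteq\bar V_t$, hence is zero modulo $\bar V_t$. No choice of lift or ``band'' can avoid this: the coset $v^m_j+\bar V_{\alpha_m}$ lives entirely inside $\bar V_t$. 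So the global linear independence modulo $\bar V_t$ that you need is unattainable whenever any $\alpha_m<t$, and the single-stroke application of Lemma~\ref{Lemma First LinIndepToAny} fails on its hypothesis. Your alternative recursion on the largest index does not repair this either: $\mk{gl}^M_t$ is an ideal in the \emph{same} $\aleph_t$-dimensional Mackey Lie algebra, so the ambient cardinality, and hence the meaning of ``top layer,'' does not change along the recursion.

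The idea that does work --- and it is close to your ``disjoint bands'' intuition, but used differently --- is to partition $\B$ into pairwise disjoint sets $A_1,\dots,A_n$ with $|A_m|=\aleph_{\alpha_m}$, choose representatives $\tilde v^m_j,\tilde w^m_j,\tilde x^m_j,\tilde y^m_j$ supported on $A_m$, and then work inside the block-diagonal subalgebra $\mk g^{A_1}\oplus\cdots\oplus\mk g^{A_n}\subset\mk{gl}^M$. The decisive point is that $\mk g^{A_m}$ is itself a Mackey Lie algebra over spaces of dimension $\aleph_{\alpha_m}$, so in the restricted filtration of $\VV^{A_m}$ (which has length $\alpha_m+2$) the quotient $\VV^{A_m}/\VV^{A_m}_{\alpha_m}$ \emph{is} the top layer. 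Now Lemma~\ref{Lemma First LinIndepToAny} applies separately in each block, with $t$ replaced by $\alpha_m$, producing $\varphi_m\in\mk g^{A_m}$; their sum is the required $\varphi$. In short, you should not try to realize the hypothesis of Lemma~\ref{Lemma First LinIndepToAny} inside the ambient $\aleph_t$-dimensional space, but rather change the ambient cardinality per layer so that each layer becomes a top layer.
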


\begin{proof}
We shall reduce the statement to the case of Lemma \ref{Lemma First LinIndepToAny}. Let $\tilde v^m_j$, $\tilde w^m_j\in\bar V$ and $\tilde x^m_j,\tilde y^m_j\in V^*$ be representatives of the respective elements. We define subsets $A_1,...,A_n$ of $\B$ by setting
$$
A_m:=\cup_j({\rm supp}(\tilde v^m_j)\cap{\rm supp}(\tilde w^m_j)) \cup (\cup_j({\rm supp}(\tilde x^m_j)\cap{\rm supp}(\tilde y^m_j)))
$$
Note that $|A_m|=\aleph_{\alpha_m}$ and, by the hypothesis of linear independence, the above representatives can be chosen so that $A_m\cap A_l=\emptyset$ if $m\ne l$. As in the previous lemma, we fix a well-order of $\B$ and work with $\mk{gl}^M$ as a matrix algebra. If necessary, we change the order so that $A_1<A_2<...<A_n$. Let $\mk g^{A_m}\subset\mk{gl}^M$ be the subalgebra consisting of elements with supports in $A_m\times A_m$. Note that $\mk g^{A_m}$ is isomorphic to the Mackey Lie algebra of the restriction of the pairing ${\bf p}$ to $\CC_{A_m}\otimes \VV_{A_m}$, where $\CC_{A_m}:={\rm span}\{x_a:a\in A_m\}\subset \CC_\B:=V_*$ and $\VV_{A_m}:={\rm span}\{v_a:a\in A_m\}\subset \VV_\B:=V$. Let $\mk l =\mk g^{A_1}\oplus...\oplus\mk g^{A_n}$ be the resulting block-diagonal subalgebra of $\mk{gl}^M$, which is clearly contained in the ideal $\mk{gl}^M_{\alpha_n+1}$. Thus it suffices to show that, for every $m\in\{1,...,n\}$, there exists $\varphi_m\in\mk g^{A_m}$ such that $\varphi(v^m_j)=w^m_j$ and $\varphi^*(x^m_j)=y^m_j$ for all $j$. Furthermore, the elements $v^m_j,w^m_j$ can be seen as elements of the quotient $\VV^{A_m}/\VV^{A_m}_{\alpha_m}$ and similarly $x^m_j,y^m_j$ in $\CC^{A_m}/\CC^{A_m}_{\alpha_m}$. Thus we have $n$ occurrences of the situation of Lemma \ref{Lemma First LinIndepToAny} in distinct dimensions $\aleph_{\alpha_1},...,\aleph_{\alpha_n}$, which is the claimed reduction.
\end{proof}


\begin{lemma}\label{Lemma simple lambda00mu}
Let $\lambda_\bullet,\mu_\bullet\in\Lambda^{t+1}$. Then the $\mk{gl}^M$-module $L_{\lambda_\bullet,\emptyset;\emptyset,\mu_\bullet}$ is simple.
\end{lemma}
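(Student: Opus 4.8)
The module in question is
$$
L_{\lambda_\bullet,\emptyset;\emptyset,\mu_\bullet} = \left(\bigotimes_{\alpha=0}^t (V^*_{\alpha+1}/V^*_\alpha)_{\lambda_\alpha}\right)\otimes\left(\bigotimes_{\beta=0}^t (\bar V_{\beta+1}/\bar V_\beta)_{\mu_\beta}\right),
$$
a tensor product of Schur functors applied to the irreducible layers of the socle filtrations of $V^*$ and $\bar V$ (Proposition in \S2.1). The plan is to show that $\mk{gl}^M$ acts densely on this module in the sense of Definition \ref{Def Dense}, and then invoke Proposition \ref{Prop DenseProperties}(a): if $\mk G=\mk{gl}^M$ acts densely on a module $R$, then every Schur component $R_\nu$ is simple. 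Here, however, the module is already a tensor product of \emph{distinct} Schur functors on \emph{several} summands, so I would phrase density directly for $R:=\bigoplus_\alpha (V^*_{\alpha+1}/V^*_\alpha)\oplus\bigoplus_\beta(\bar V_{\beta+1}/\bar V_\beta)$ and observe that $L_{\lambda_\bullet,\emptyset;\emptyset,\mu_\bullet}$ sits inside $R^{\otimes N}$ (for $N=\|\cdot\|$ applied to the index) as the image of a product of Schur projectors, hence, by the standard argument behind Proposition \ref{Prop DenseProperties}(a), is simple once $\mk{gl}^M$ acts densely on $R$ — more precisely one checks that the commutant of $\mk{gl}^M$ on $R^{\otimes N}$ is generated by the permutations in $\mk S_N$ together with the projectors separating the distinct isotypic summands, and Schur–Weyl duality then isolates $L_{\lambda_\bullet,\emptyset;\emptyset,\mu_\bullet}$ as an isotypic piece carrying the irreducible $\mk S_{\bm l}$-representation $\CC^{\bm\lambda}$.

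\textbf{Establishing density.} This is the step that Lemma \ref{Lemma LinIndepToAny} is designed for. Concretely: given finitely many vectors $r_1,\dots,r_n$ in $R$ — i.e.\ finitely many vectors in each layer $\bar V_{\beta+1}/\bar V_\beta$ and $V^*_{\alpha+1}/V^*_\alpha$ — and a prescribed $g\in\mk{gl}^M$, I want $h\in\mk{gl}^M$ with $h\cdot r_j = g\cdot r_j$ for all $j$. After discarding components that are linearly dependent on the others (density only needs to be checked on a basis of the span of the $r_j$ inside each layer), I may assume the chosen vectors are linearly independent in each layer. Apply Lemma \ref{Lemma LinIndepToAny} with $w^m_j := g(v^m_j)$ on the $\bar V$-side and $y^m_j := g^*(x^m_j)$ on the $V^*$-side: it produces $\varphi=h\in\mk{gl}^M$ acting exactly as $g$ does on all the prescribed vectors simultaneously, which is precisely density. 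One subtlety: Lemma \ref{Lemma LinIndepToAny} handles the indexed layers $\alpha,\beta\in\{0,\dots,t\}$ but not the ``middle'' pieces $V$, $V^*_0=V_*$ directly; since $\mu_\bullet,\lambda_\bullet$ here involve only the indexed layers (the middle diagrams $\lambda,\mu$ are empty), this is not an issue for the present statement, though it is worth a sentence.

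\textbf{Passing from density to simplicity of the tensor product.} Once density is known for $R$, I would argue as in \cite{Penkov-Serganova-Mackey} (the proof of Proposition \ref{Prop DenseVlambdamu PS}) and \cite[Proposition~4.5]{Chirvasitu-Penkov-RC}: density implies ${\rm End}_{\mk{gl}^M}(R^{\otimes N})$ coincides with the algebra generated by $\mk S_N$ and the orthogonal idempotents projecting onto the distinct simple layers — because a $\mk{gl}^M$-endomorphism must preserve each ``layer type'' (the layers live at different levels of the socle filtration and between different levels there can be no nonzero $\mk{gl}^M$-map after one checks ${\rm Hom}$ vanishes, which again follows from density). Restricting attention to the block where exactly $l_\alpha$ tensor factors come from the $\alpha$-th $V^*$-layer and $m_\beta$ from the $\beta$-th $\bar V$-layer, the commutant is exactly $\CC\mk S_{\bm l}$, and classical Schur–Weyl duality identifies $L_{\lambda_\bullet,\emptyset;\emptyset,\mu_\bullet}$ as the $\CC^{\bm\lambda}$-isotypic component — hence simple with ${\rm End}_{\mk{gl}^M}(L_{\lambda_\bullet,\emptyset;\emptyset,\mu_\bullet})\cong\CC$.

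\textbf{Main obstacle.} The genuinely nontrivial input is Lemma \ref{Lemma LinIndepToAny}, which is already proved in the excerpt, so the remaining difficulty is bookkeeping: verifying carefully that a $\mk{gl}^M$-endomorphism of $R^{\otimes N}$ cannot mix factors coming from different layers $V^*_{\alpha+1}/V^*_\alpha$ (resp.\ $\bar V_{\beta+1}/\bar V_\beta$) or cross the two sides $V^*$ versus $\bar V$. I expect this to follow from the fact that these layers are pairwise non-isomorphic simple $\mk{gl}^M$-modules with no nonzero extensions realized inside the relevant tensor product — but pinning that down (rather than just asserting it) is where the care is needed, and is presumably why the authors isolated this as a separate lemma before attacking the general Theorem \ref{Theo Simples and Injhulls}.
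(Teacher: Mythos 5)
Your proposal takes a genuinely different route from the paper. The paper proves the lemma by a cyclic-generation argument: it starts from the known simplicity of the one-sided modules $L_{\lambda_\bullet,\emptyset;\emptyset,\emptyset_\bullet}$ and $L_{\emptyset_\bullet,\emptyset;\emptyset,\mu_\bullet}$ (from \cite{Chirvasitu-Penkov-RC}), takes an arbitrary nonzero $w=\sum_j x^j\otimes v^j$, reduces via \cite{Chirvasitu-3Mackey} to the case where $\bigcup_j\mathrm{supp}(x^j)$ and $\bigcup_j\mathrm{supp}(v^j)$ are disjoint, passes to a block-diagonal subalgebra $\mk g^{B_1}\oplus\mk g^{B_2}\cong\mk{gl}^M\oplus\mk{gl}^M$, and then invokes Lemma~\ref{Lemma LinIndepToAny} to show the submodule generated by $w$ is everything. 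You instead aim at a Schur--Weyl double-commutant argument on $R^{\otimes N}$.

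There are, however, two concrete problems with the execution. First, your density statement is vacuous as written: you ask for $h\in\mk{gl}^M$ agreeing with $g\in\mk{gl}^M$ on finitely many vectors, which is satisfied by $h=g$. Definition~\ref{Def Dense} presupposes a genuinely larger ambient Lie algebra; what you actually need, and what Lemma~\ref{Lemma LinIndepToAny} really delivers, is density of $\mk{gl}^M$ inside the direct sum $\bigoplus_\alpha\mk{gl}(V^*_{\alpha+1}/V^*_\alpha)\oplus\bigoplus_\beta\mk{gl}(\bar V_{\beta+1}/\bar V_\beta)$, but this is never stated explicitly. Second, and more substantively, the passage from density to the identification $\mathrm{End}_{\mk{gl}^M}(R^{\otimes N})\cong$ (algebra generated by $\mk S_N$ and layer-type projectors) is asserted rather than proved. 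You flag this yourself as ``the main obstacle'' --- but the deduction that $\mk{gl}^M$-endomorphisms preserve layer types, i.e.\ that $\mathrm{Hom}_{\mk{gl}^M}(R_{i_1}\otimes\cdots\otimes R_{i_N},R_{j_1}\otimes\cdots\otimes R_{j_N})=0$ when $(j_\bullet)$ is not a permutation of $(i_\bullet)$, is not a trivial consequence of density on $R$; it requires the double-commutant computation you are trying to establish, and as stated there is a risk of circularity (deciding which constituents the two tensor products share is essentially what the lemma itself settles). The argument can probably be straightened out --- density of $\mk{gl}^M$ in $\bigoplus\mk{gl}(R_i)$ implies the submodule lattices of $R^{\otimes N}$ over the two Lie algebras coincide, and over $\bigoplus\mk{gl}(R_i)$ the outer tensor products of Schur components are simple by classical Schur--Weyl --- but as written the proposal stops at the point where the real work begins. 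The paper avoids all of this by feeding the known one-sided simplicity results and the disjoint-support trick directly into a generation argument, which is both shorter and better insulated against circularity.
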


\begin{proof}
We follow the idea of the proof of \cite[Prop.~4.1]{Chirvasitu-Penkov-OTC} and use the simplicity of $L_{\lambda_\bullet,\emptyset;\emptyset,\emptyset_\bullet}$ (and analogously $L_{\emptyset_\bullet,\emptyset;\emptyset,\mu_\bullet}$) established in \cite[Prop.~4.2]{Chirvasitu-Penkov-RC}. We identify $L_{\lambda_\bullet,\emptyset;\emptyset,\mu_\bullet}$ with the submodule of $L_{|\lambda_\bullet|,\emptyset;\emptyset,|\mu_\bullet|}$ obtained as the image of the product of Young symmetrizers $c_{\lambda_\bullet}\otimes c_{\mu_\bullet}=(\otimes_{\alpha} c_{\lambda_\alpha})\otimes (\otimes_\alpha c_{\mu_\alpha})$, where we use the convention for $c_{\nu}$, $\nu\in\Lambda$, from Section \ref{Sec Schur}. We have the decomposition $L_{\lambda_\bullet,\emptyset;\emptyset,\mu_\bullet}=L_{\lambda_\bullet,\emptyset;\emptyset,\emptyset_\bullet}\otimes L_{\emptyset_\bullet,\emptyset;\emptyset,\mu_\bullet}$ allowing to write any $w\in L_{\lambda_\bullet,\emptyset;\emptyset,\mu_\bullet}$ as a finite sum of simple tensors $w=\sum_j x^j\otimes v^j$. By the argument of \cite[Prop.~1]{Chirvasitu-3Mackey}, the $\mk{gl}^M$-action on $L_{\lambda_\bullet,\emptyset;\emptyset,\mu_\bullet}$ allows us to reduce to the case where $A_1=\cup_j{\rm supp}(x^j)$ and $A_2=\cup_j{\rm supp}(v^j)$ are two disjoint infinite subsets of $\mc B$, and are contained in respective subsets $B_1,B_2\subset\B$ with $B_1\cap B_2=\emptyset$ and $|B_1|=|B_2|=\aleph_t$. Let $\mk l=\mk g^{B_1}\oplus\mk g^{B_2}\subset \mk g^\B=\mk{gl}^M$ be the corresponding block-diagonal subalgebra isomorphic to $\mk{gl}^M\oplus\mk{gl}^M$. The Lie algebra $\mk l$ acts on the space of elements of $L_{\lambda_\bullet,\emptyset;\emptyset,\emptyset_\bullet}$ supported on $B_1$, as well as on the space of elements of $L_{\emptyset_\bullet,\emptyset;\emptyset,\mu_\bullet}$ supported on $B_2$. But $L_{\lambda_\bullet,\emptyset;\emptyset,\emptyset_\bullet}$ and $L_{\emptyset_\bullet,\emptyset;\emptyset,\mu_\bullet}$ are simple $\mk{gl}^M$-modules, hence the $\mk{gl}^M$-submodule of $L_{\lambda_\bullet,\emptyset;\emptyset,\emptyset_\bullet}\otimes L_{\emptyset_\bullet,\emptyset;\emptyset,\mu_\bullet}$ containing $w$ also contains all elements of the form $c_{\lambda_\bullet} x \otimes c_{\mu_\bullet} v$, with $x\in L_{|\lambda_\bullet|,0;0,0_\bullet}$ and $v\in L_{0_\bullet,0;0,|\mu_\bullet|}$ supported respectively on $B_1$ and $B_2$.

By repeatedly applying Lemma \ref{Lemma LinIndepToAny} to linearly independent sets consisting of individual tensorands of the elements $c_{\lambda_\bullet} x\in L_{|\lambda_\bullet|,0;0,0_\bullet}$ and $c_{\mu_\bullet} v \in L_{0_\bullet,0;0,|\mu_\bullet|}$, we deduce that the entire module $L_{\lambda_\bullet,\emptyset;\emptyset,\mu_\bullet} = c_{\lambda_\bullet}(L_{|\lambda_\bullet|,0;0,0_\bullet})\otimes c_{\mu_\bullet}(L_{0_\bullet,0;0,|\mu_\bullet|})$ is contained in the submodule generated by $w$.
\end{proof}

\begin{proof}[Proof of Theorem \ref{Theo Simples and Injhulls}]
To verify the simplicity of $L_{\lambda_\bullet,\lambda;\mu,\mu_\bullet}$ we start with the decomposition $L_{\lambda_\bullet,\lambda;\mu,\mu_\bullet}\cong L_{\lambda_\bullet,\emptyset;\emptyset,\mu_\bullet}\otimes L_{\lambda;\mu}$, where both tensorands are simple due to Lemma \ref{Lemma simple lambda00mu} and Theorem \ref{Theo Tsmall is ord Groth}. Now, Proposition \ref{Prop DenseVlambdamu PS} allows us to invoke Proposition \ref{Prop DenseProperties} for $\mk G=\mk{gl}^M$, $\mk J=\mk g_\B$, $R=L_{\lambda;\mu}=V_{\lambda;\mu}$, and apply the functor $\bullet\otimes R$ applied to the simple module $L_{\lambda_\bullet,\emptyset;\emptyset,\mu_\bullet}$. This confirms that $L_{\lambda_\bullet,\lambda;\mu,\mu_\bullet}$ is simple. Finally, the inclusion $L_{\lambda_\bullet,\lambda;\mu,\mu_\bullet}\subset J_{\lambda_\bullet,\lambda;\mu,\mu_\bullet}$ is essential as a consequence of Lemma \ref{Lemma LinIndepToAny} and the fact that $V_{\lambda;\mu}$ is essential in $V^*_{\lambda}\otimes \bar V_{\mu}$.
\end{proof}

\begin{theorem}\label{Theo EndoSimpl}
The simple modules $L_{\lambda_\bullet,\lambda;\mu,\mu_\bullet}$ are pairwise nonisomorphic and have scalar endomorphism algebras, ${\rm End}L_{\lambda_\bullet,\lambda;\mu,\mu_\bullet}\cong\CC$.
\end{theorem}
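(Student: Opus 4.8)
The plan is to prove the two assertions simultaneously by showing that a map $L_{\lambda_\bullet,\lambda;\mu,\mu_\bullet}\to L_{\kappa_\bullet,\kappa;\nu,\nu_\bullet}$ can exist only when the two index tuples agree, and that in that case it is a scalar. The natural strategy is to recover the tuple $(\lambda_\bullet,\lambda;\mu,\mu_\bullet)$ from intrinsic (functorial) data attached to the $\mk{gl}^M$-module $L_{\lambda_\bullet,\lambda;\mu,\mu_\bullet}$, so that any isomorphism forces equality of tuples. First I would exploit the ideal filtration $\mk{gl}(V,V_*)=\mk{gl}^M_0\subset\mk{gl}^M_1\subset\dots\subset\mk{gl}^M_{t+1}=\mk{gl}^M$ from Lemma~\ref{Lemma Center and glMs ideals}: the submodule of a tensor module annihilated by a given ideal $\mk{gl}^M_s$ is a functorial invariant, and on the building blocks $(V^*_{\alpha+1}/V^*_\alpha)$, $(\bar V_{\beta+1}/\bar V_\beta)$, $V_{\lambda;\mu}$ these annihilator conditions behave in a controlled, known way (each layer $V^*_{\alpha+1}/V^*_\alpha$ is a simple module ``living at level $\alpha$'', by Proposition~\ref{Prop DenseVlambdamu PS} and the socle-filtration statement of the first Proposition). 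Intersecting $L_{\bm\lambda}$ with the various $\mk{gl}^M_s$-invariants and passing to successive quotients should isolate, one level at a time, the numbers $|\lambda_\alpha|$ and $|\mu_\beta|$, and then — since the relevant layer is a product of Schur functors applied to simple modules with dense action — the Schur–Weyl/Young-symmetrizer structure pins down the actual diagrams $\lambda_\alpha$, $\mu_\beta$; the ``core'' pair $(\lambda;\mu)$ is recovered from the bottom piece $V_{\lambda;\mu}$ using Theorem~\ref{Theo Tsmall is ord Groth}, which already says the $V_{\lambda;\mu}$ are pairwise nonisomorphic.

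Concretely, the key steps in order: (1) fix $\bm\lambda\ne\bm\kappa$ and a nonzero $\mk{gl}^M$-homomorphism $f:L_{\bm\lambda}\to L_{\bm\kappa}$; since both are simple (Theorem~\ref{Theo Simples and Injhulls}), $f$ is an isomorphism, so it suffices to derive $\bm\lambda=\bm\kappa$. (2) Show the decomposition $L_{\bm\lambda}\cong L_{\lambda_\bullet,\emptyset;\emptyset,\mu_\bullet}\otimes V_{\lambda;\mu}$ is canonical in the sense that $V_{\lambda;\mu}$ is, up to isomorphism, the unique simple $\mk{gl}^M$-module $R$ with dense $\mk g_\B=\mk{gl}(V,V_*)$-action such that $L_{\bm\lambda}\cong N\otimes R$ for some $\mk{gl}^M/\mk{gl}(V,V_*)$-module $N$ — this follows from Proposition~\ref{Prop DenseProperties}(b) (full faithfulness of $\bullet\otimes R$) together with the fact that $L_{\lambda_\bullet,\emptyset;\emptyset,\mu_\bullet}$ is a module over $\mk{gl}^M/\mk{gl}(V,V_*)$ while the only ``$\mk{gl}(V,V_*)$-dense part'' is $V_{\lambda;\mu}$. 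Hence $f$ induces $V_{\lambda;\mu}\cong V_{\kappa;\nu}$, giving $\lambda=\kappa$, $\mu=\nu$ by Theorem~\ref{Theo Tsmall is ord Groth}, and then $L_{\lambda_\bullet,\emptyset;\emptyset,\mu_\bullet}\cong L_{\kappa_\bullet,\emptyset;\emptyset,\nu_\bullet}$ as $\mk{gl}^M$-modules by the same full-faithfulness. (3) For the purely ``filtered'' modules $L_{\lambda_\bullet,\emptyset;\emptyset,\mu_\bullet}$, run an induction on $t$ (equivalently on the top level at which some diagram is nonempty), peeling off the top level using the ideals $\mk{gl}^M_s$: the submodule of $L_{\lambda_\bullet,\emptyset;\emptyset,\mu_\bullet}$ killed by $\mk{gl}^M_{t}$ is exactly the part built from levels $<t$, and the quotient by it is $(V^*/V^*_t)_{\lambda_t}\otimes(\bar V/\bar V_t)_{\mu_t}$ tensored down; matching these quotients with the help of Theorem~\ref{Theo Tsmall is ord Groth}-type rigidity for the top-level Mackey pairing $\mk g^\B$ on $V^*/V^*_t$, $\bar V/\bar V_t$ gives $\lambda_t=\kappa_t$, $\mu_t=\nu_t$, and the induction continues. (4) Finally, for the endomorphism claim, once simplicity is known, $\mathrm{End}_{\mk{gl}^M}L_{\bm\lambda}$ is a division algebra by Schur's lemma; to see it is $\CC$, use Proposition~\ref{Prop DenseProperties}(a) on the tensor factor $V_{\lambda;\mu}$ (which has $\mathrm{End}\cong\CC$ because the $\mk{gl}^M$-action is dense), combined with the full faithfulness of $\bullet\otimes V_{\lambda;\mu}$ from Proposition~\ref{Prop DenseProperties}(b), reducing to $\mathrm{End}_{\mk{gl}^M}L_{\lambda_\bullet,\emptyset;\emptyset,\mu_\bullet}\cong\CC$, which in turn follows from the same peeling/induction together with the known scalar-endomorphism property of the individual layer modules $(V^*_{\alpha+1}/V^*_\alpha)_{\lambda_\alpha}$.

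The main obstacle I expect is step (3): making precise that the ideals $\mk{gl}^M_s$ (or their actions) genuinely separate the ``levels'' inside a tensor product of Schur functors of different layer modules. A priori a single ideal acts on the whole tensor product and one must argue that the joint annihilator $\bigcap$ or the sequence of quotients $L_{\bm\lambda}/(\text{part killed by }\mk{gl}^M_s)$ really has the clean product form described above — this needs the socle-filtration information for $V^*$, $\bar V$ and their tensor powers (the first Proposition and Theorem~\ref{Theo PS soc filt Vlambda TENS Vmu}) and a Künneth/exactness bookkeeping to ensure no cross-terms survive. A cleaner alternative, which I would pursue if the direct computation gets messy, is to avoid peeling levels explicitly and instead characterize each $|\lambda_\alpha|$, $|\mu_\beta|$ as the multiplicity of a suitable simple ``elementary'' module in an appropriate layer of the socle filtration of an injective hull — but that presupposes the injective-hull computations done later in the paper, so for a self-contained proof at this point the ideal-filtration argument is the right one, and its bookkeeping is the part that needs care.
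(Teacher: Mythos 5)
The paper's own proof is essentially a two-line pointer: it cites the one-sided case ($\mu_\bullet=\emptyset_\bullet$) from \cite{Chirvasitu-Penkov-RC} and the $t=0$ case from \cite{Chirvasitu-Penkov-UTC} and asserts that ``a combination of the two methods of proof yields the general result.'' So what you are doing is reconstructing the combination the paper elides, and the overall plan — isolate the tensor factor $V_{\lambda;\mu}$ as the unique $\mk{gl}(V,V_*)$-dense constituent, invoke Proposition~\ref{Prop DenseProperties}(b) for full faithfulness of $\bullet\otimes V_{\lambda;\mu}$, then induct on the remaining layers using the ideal chain $\mk{gl}^M_0\subset\cdots\subset\mk{gl}^M_{t+1}$ — is in the right spirit and is essentially what those two references combine to.

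However, step (3) contains a genuine error. You write that ``the submodule of $L_{\lambda_\bullet,\emptyset;\emptyset,\mu_\bullet}$ killed by $\mk{gl}^M_t$ is exactly the part built from levels $<t$, and the quotient by it is $(V^*/V^*_t)_{\lambda_t}\otimes(\bar V/\bar V_t)_{\mu_t}$.'' This has the direction reversed and also does not type-check. An element $\varphi\in\mk{gl}^M_s$ satisfies $\varphi^*(V^*)\subset V^*_s$, hence $\varphi$ acts by zero on $V^*_{\alpha+1}/V^*_\alpha$ precisely when $\alpha\geq s$; in particular $\mk{gl}^M_t$ annihilates the \emph{top} layer $V^*/V^*_t$ and acts nontrivially on the layers $\alpha<t$. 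Moreover $L_{\lambda_\bullet,\emptyset;\emptyset,\mu_\bullet}$ is a simple $\mk{gl}^M$-module (Lemma~\ref{Lemma simple lambda00mu}), and the subspace annihilated by the ideal $\mk{gl}^M_t$ is a $\mk{gl}^M$-submodule, so it is either $0$ or the whole module; there is no intermediate ``part built from levels $<t$'' sitting inside it as a submodule. The correct device is the one you already use in step (2), transposed from $\mk{gl}(V,V_*)$ to $\mk{gl}^M_t$: restrict $L_{\lambda_\bullet,\emptyset;\emptyset,\mu_\bullet}$ to the ideal $\mk{gl}^M_t$, observe that it is isotypic with type $\bigotimes_{\alpha<t}(V^*_{\alpha+1}/V^*_\alpha)_{\lambda_\alpha}\otimes(\bar V_{\alpha+1}/\bar V_\alpha)_{\mu_\alpha}$ (the factor $(V^*/V^*_t)_{\lambda_t}\otimes(\bar V/\bar V_t)_{\mu_t}$ being $\mk{gl}^M_t$-trivial), recover that isotype — hence $(\lambda_\alpha,\mu_\alpha)_{\alpha<t}$ — as a functorial invariant, and read off the top-level pair $(\lambda_t,\mu_t)$ from the multiplicity space $\mathrm{Hom}_{\mk{gl}^M_t}(\cdot,L_{\lambda_\bullet,\emptyset;\emptyset,\mu_\bullet})$ with its induced $\mk{gl}^M/\mk{gl}^M_t$-action. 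This needs the simplicity, density and scalar-endomorphism properties of the lower-level factor as a $\mk{gl}^M_t$-module (Lemma~\ref{Lemma LinIndepToAny} supplies the density), which is exactly the ``careful bookkeeping'' you correctly flagged. Once the annihilator language is replaced by isotypic decomposition, the argument — and your treatment of $\mathrm{End}$ in step (4) — goes through.
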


\begin{proof}
There are known cases of the theorem, as follows. The case $\mu_\bullet=\emptyset_\bullet$ (and by analogy the case $\lambda_\bullet=\emptyset_\bullet$) is proven in \cite[Proposition~4.2]{Chirvasitu-Penkov-RC}. The case $t=0$ is proven in \cite[Theorem~3.6]{Chirvasitu-Penkov-UTC}. A combination of the two methods of proof yields the general result.
\end{proof}

\begin{prop}\label{Prop TT tensor prod simples}
Let $(\lambda_\bullet,\lambda;\mu,\mu_\bullet),(\lambda_\bullet',\lambda';\mu',\mu'_\bullet)\in\bm\Lambda$.
\begin{enumerate}
\item[{\rm (a)}] The simple module $L_{\lambda_\bullet,\lambda;\mu,\mu_\bullet}$ is pure if and only if either just the two inner diagrams $\lambda,\mu$ are nonempty, or all diagrams except at most one are empty.
\item[{\rm (b)}] For the layers of the socle filtration of the tensor product $L_{\lambda_\bullet,\lambda;\mu,\mu_\bullet}\otimes L_{\lambda_\bullet',\lambda';\mu',\mu'_\bullet}$ we have
\begin{align*}
& \ul{\rm soc}^{k+1}(L_{\lambda_\bullet,\lambda;\mu,\mu_\bullet}\otimes L_{\lambda_\bullet',\lambda';\mu',\mu'_\bullet}) \cong L_{\lambda_\bullet,\emptyset;\emptyset,\mu_\bullet}\otimes L_{\lambda_\bullet',\emptyset;\emptyset,\mu'_\bullet} \otimes \ul{\rm soc}^k(V_{\lambda;\mu}\otimes V_{\lambda';\mu'}) \\
& \qquad\quad\qquad \cong \bigoplus\limits_{(\kappa_\bullet,\kappa,\nu,\nu_\bullet)\in\bm\Lambda:|\lambda|+|\lambda'|-|\kappa|=k} \left({\bf N}^{\kappa_\bullet}_{\lambda_\bullet\lambda'_\bullet} {\bf N}^{\nu_\bullet}_{\mu_\bullet\mu'_\bullet}\sum\limits_{\xi,\eta\in\Lambda} N^{\xi}_{\lambda\lambda'}h^{\xi;\eta}_{\kappa;\nu} N^{\eta}_{\mu\mu'} \right) \cdot L_{\kappa_\bullet,\kappa;\nu,\nu_\bullet}\;,
\end{align*}
where ${\bf N}^{\zeta_\bullet}_{\xi_\bullet\eta_\bullet} := \prod\limits_{\alpha=0}^t N^{\zeta_\alpha}_{\xi_\alpha\eta_\alpha}$ for $\xi_\bullet,\eta_\bullet,\zeta_\bullet\in\Lambda^{t+1}$ and the numbers $h^{\lambda;\mu}_{\kappa;\nu}$ are given in (\ref{For h lambdamuxieta}).
\item[{\rm (c)}] The tensor product $L_{\lambda_\bullet,\lambda;\mu,\mu_\bullet}\otimes L_{\lambda_\bullet',\lambda';\mu',\mu'_\bullet}$ is a semisimple module if and only if at least one of the following four conditions holds:
$$
\lambda=\mu=\emptyset ; \lambda=\lambda'=\emptyset ; \lambda'=\mu'=\emptyset ; \mu=\mu'=\emptyset \;.
$$
\item[{\rm (d)}] If the tensor product $L_{\lambda_\bullet,\lambda;\mu,\mu_\bullet}\otimes L_{\lambda_\bullet',\lambda';\mu',\mu'_\bullet}$ is semisimple, then
$$
L_{\lambda_\bullet,\lambda;\mu,\mu_\bullet}\otimes L_{\lambda_\bullet',\lambda';\mu',\mu'_\bullet} \cong \bigoplus\limits_{(\kappa_\bullet,\kappa;\nu,\nu_\bullet)\in\bm\Lambda} {\bf N}^{\kappa_\bullet}_{\lambda_\bullet\lambda'_\bullet} N^{\kappa}_{\lambda\lambda'} N^{\nu}_{\mu\mu'} {\bf N}^{\nu_\bullet}_{\mu_\bullet\mu'_\bullet} \cdot L_{\kappa_\bullet,\kappa;\nu,\nu_\bullet} \;.
$$
\end{enumerate}
\end{prop}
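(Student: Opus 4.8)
The plan is to reduce all four statements to (b). Writing $L_{\bm\lambda}\cong L_{\lambda_\bullet,\emptyset;\emptyset,\mu_\bullet}\otimes V_{\lambda;\mu}$ as in (\ref{For LJITlambda}) we obtain
$$
L_{\bm\lambda}\otimes L_{\bm\lambda'}\cong W\otimes N,\qquad W:=L_{\lambda_\bullet,\emptyset;\emptyset,\mu_\bullet}\otimes L_{\lambda'_\bullet,\emptyset;\emptyset,\mu'_\bullet},\qquad N:=V_{\lambda;\mu}\otimes V_{\lambda';\mu'}.
$$
The ideal $\mk{gl}(V,V_*)$ maps $V^*$ into $V_*$ and $\bar V$ into $V$, so it kills every layer $V^*_{\alpha+1}/V^*_\alpha$ and $\bar V_{\beta+1}/\bar V_\beta$ and hence acts trivially on $W$; on the other hand it acts densely and irreducibly, with scalar endomorphisms, on every simple constituent $V_{\xi;\eta}$ of $N$ by Proposition \ref{Prop DenseVlambdamu PS}, and $N\in\TT(V_*,V)$. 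Two ingredients are needed. First, that $W$ is \emph{semisimple}, with
$$
W\cong\bigoplus_{\kappa_\bullet,\nu_\bullet}{\bf N}^{\kappa_\bullet}_{\lambda_\bullet\lambda'_\bullet}\,{\bf N}^{\nu_\bullet}_{\mu_\bullet\mu'_\bullet}\,L_{\kappa_\bullet,\emptyset;\emptyset,\nu_\bullet};
$$
this follows by splitting $W$ into the tensor factors $(V^*_{\alpha+1}/V^*_\alpha)_{\lambda_\alpha}\otimes(V^*_{\alpha+1}/V^*_\alpha)_{\lambda'_\alpha}$ and $(\bar V_{\beta+1}/\bar V_\beta)_{\mu_\beta}\otimes(\bar V_{\beta+1}/\bar V_\beta)_{\mu'_\beta}$ and using that no ${\bf p}$-contraction is available between layer modules, so that tensor products of Schur functors decompose by the Littlewood--Richardson rule exactly as in the semisimple setting; this is \cite[Prop.~4.2]{Chirvasitu-Penkov-RC} for $\mu_\bullet=\emptyset_\bullet$ and \cite[Th.~3.6]{Chirvasitu-Penkov-UTC} for $t=0$, and the general case is obtained by merging the two arguments as in the proofs of Lemma \ref{Lemma simple lambda00mu} and Theorem \ref{Theo EndoSimpl}, together with the simplicity and pairwise non-isomorphism of the summands (Lemma \ref{Lemma simple lambda00mu}, Theorem \ref{Theo EndoSimpl}). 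Second, the socle filtration of $N$ as a $\mk{gl}^M$-module: the layer of $V_{\lambda;\mu}\otimes V_{\lambda';\mu'}$ picked out by $|\lambda|+|\lambda'|-|\kappa|=k$ equals $\bigoplus_{\kappa,\nu}\big(\sum_{\xi,\eta\in\Lambda}N^{\xi}_{\lambda\lambda'}h^{\xi;\eta}_{\kappa;\nu}N^{\eta}_{\mu\mu'}\big)\,V_{\kappa;\nu}$, which is the socle filtration of a tensor product of two simple tensor modules and follows from Theorem \ref{Theo PS soc filt Vlambda TENS Vmu} (equivalently \cite[Th.~2.3]{Penkov-Styrkas}, \cite[\S4]{Penkov-Serganova-Mackey}) upon expanding $(V_*)_\lambda\otimes(V_*)_{\lambda'}$ and $V_\mu\otimes V_{\mu'}$ by the Littlewood--Richardson rule and intersecting with the subspace traceless within each of the two blocks.

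\medskip

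The key step is to show that the functor $W\otimes(-):\TT(V_*,V)\to\TT$ preserves socle filtrations. It is exact; by Proposition \ref{Prop DenseProperties}(b) each $W_m\otimes V_{\xi;\eta}$ (with $W_m$ a simple summand of $W$) is simple, being an $L_{\kappa_\bullet,\xi;\eta,\nu_\bullet}$, so $W\otimes(-)$ sends semisimple objects to semisimple objects; and after restriction to $\mk{gl}(V,V_*)$ an inclusion $W\otimes A\subset W\otimes B$ is a direct sum of $\dim W$ copies of $A\subset B$, so $W\otimes(-)$ sends essential inclusions to essential inclusions. Since $N$ is of finite length, ${\rm soc}(N)\subset N$ is essential; combining the three properties and inducting on the socle length of $N$ (passing from $N$ to $N/{\rm soc}(N)$ by exactness) gives ${\rm soc}^j(W\otimes N)=W\otimes{\rm soc}^j(N)$ for all $j$. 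Substituting the two formulas of the previous paragraph, using $L_{\kappa_\bullet,\emptyset;\emptyset,\nu_\bullet}\otimes V_{\kappa;\nu}\cong L_{\kappa_\bullet,\kappa;\nu,\nu_\bullet}$ and collecting multiplicities, yields the description of the socle layers asserted in (b). The main obstacle, I expect, is the first ingredient above --- obtaining the full semisimplicity and Littlewood--Richardson decomposition of $W$ for all $t$ and with both sequences $\lambda_\bullet,\mu_\bullet$ present --- since this carries the new combinatorial content and forces the partial results of \cite{Chirvasitu-Penkov-RC} and \cite{Chirvasitu-Penkov-UTC} to be fused; putting the inner socle formula into the precise $h^{\xi;\eta}_{\kappa;\nu}$-form is a secondary technical point.

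\medskip

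Parts (c), (d) and (a) then follow formally. By (b), $L_{\bm\lambda}\otimes L_{\bm\lambda'}$ is semisimple iff $V_{\lambda;\mu}\otimes V_{\lambda';\mu'}$ is, and the latter fails exactly when a ${\bf p}$-contraction between the two blocks is available, i.e. when $\lambda,\mu'$ are both nonempty or $\lambda',\mu$ are both nonempty; negating this disjunction and distributing gives precisely the four conditions of (c). In the semisimple case the $k=0$ layer of the inner formula reads $V_{\lambda;\mu}\otimes V_{\lambda';\mu'}\cong\bigoplus_{\kappa,\nu}N^{\kappa}_{\lambda\lambda'}N^{\nu}_{\mu\mu'}V_{\kappa;\nu}$, and combining with the decomposition of $W$ yields (d). For (a): using the classification of the simple objects of $\TT$ as the $L_{\bm\lambda}$ together with (c)--(d), $L_{\bm\lambda}$ is a tensor product of two nontrivial simples iff $L_{\bm\lambda}\cong L_{\bm\mu}\otimes L_{\bm\nu}$ with $\bm\mu$ and $\bm\nu$ each having a nonempty entry; being simple, such a product is semisimple of length one, hence by (c)--(d) a single summand of multiplicity one, which, since $\sum_\rho N^{\rho}_{\alpha\beta}=1$ holds iff $\alpha=\emptyset$ or $\beta=\emptyset$, forces the nonempty entries of $\bm\mu$ and $\bm\nu$ to occupy disjoint slots and to assemble $\bm\lambda$, and additionally forces one of the conditions of (c). A short case check --- using that $(V_*)_\lambda\otimes V_\mu\not\cong V_{\lambda;\mu}$ for $\lambda,\mu\ne\emptyset$ (Theorem \ref{Theo PS soc filt Vlambda TENS Vmu}), so that a split into the two inner diagrams alone is never a genuine factorization --- shows that such $\bm\mu,\bm\nu$ exist precisely when $\bm\lambda$ has at least two nonempty outer entries, or exactly one nonempty outer entry together with a nonempty inner diagram. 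The complement of this family of $\bm\lambda$ is exactly the one described in (a).
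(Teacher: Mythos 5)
Your proposal is correct and follows essentially the same route as the paper: decompose $L_{\bm\lambda}\otimes L_{\bm\lambda'}\cong W\otimes N$, obtain the Littlewood--Richardson decomposition of the ``outer'' factor $W$, import the socle filtration of the ``inner'' factor $N=V_{\lambda;\mu}\otimes V_{\lambda';\mu'}$ from Theorem \ref{Theo PS soc filt Vlambda TENS Vmu}, and combine using Proposition \ref{Prop DenseProperties} together with the fact that $\mk{gl}(V,V_*)$ acts trivially on $W$, which propagates simplicity and essentiality. The one step you flag as the ``main obstacle'' is handled more crisply in the paper: instead of a ``no contraction available'' heuristic and a merger of arguments from \cite{Chirvasitu-Penkov-RC} and \cite{Chirvasitu-Penkov-UTC}, the paper simply notes that the LR-decomposition of $W$ holds over the larger Lie algebra $\bigl(\bigoplus_{\alpha}\mk{gl}(V^*_{\alpha+1}/V^*_\alpha)\bigr)\oplus\bigl(\bigoplus_{\beta}\mk{gl}(\bar V_{\beta+1}/\bar V_\beta)\bigr)$ and stays a decomposition into simples after restriction to $\mk{gl}^M$, by Theorem \ref{Theo Simples and Injhulls}.
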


\begin{proof}
Part (a) follows immediately from the classification of simple modules. For the rest of the statements, we note that there is a decomposition
$$
L_{\lambda_\bullet,\emptyset;\emptyset,\mu_{\bullet}}\otimes L_{\lambda_\bullet',\emptyset;\emptyset,\mu'_{\bullet}} \cong \bigoplus\limits_{(\kappa_\bullet;\nu_\bullet)\in\Lambda^{t+1}\times\Lambda^{t+1}} {\bf N}^{\kappa_\bullet}_{\lambda_\bullet\lambda_\bullet'}{\bf N}^{\nu_\bullet}_{\mu_{\bullet}\mu'_{\bullet}} \cdot L_{\kappa_\bullet,\emptyset;\emptyset,\nu_\bullet} \;.
$$
Indeed, this decomposition holds over the Lie algebra
$$
(\bigoplus\limits_{\alpha=0}^t \mk{gl}(V^*_{\alpha+1}/V^*_\alpha))\oplus(\bigoplus\limits_{\beta=0}^t \mk{gl}(\bar V_{\beta+1}/\bar V_\beta))
$$
and, by Theorem \ref{Theo Simples and Injhulls}, remains unchanged after restriction to $\mk{gl}^M$. Hence the module $L_{\lambda_\bullet,\emptyset;\emptyset,\mu_{\bullet}}\otimes L_{\lambda_\bullet',\emptyset;\emptyset,\mu'_{\bullet}}$ is semisimple. Furthermore, again by Theorem \ref{Theo Simples and Injhulls}, the tensor product $L_{\kappa_\bullet,\emptyset;\emptyset,\nu_\bullet}\otimes V_{\xi;\eta}$ is isomorphic to $L_{\kappa_\bullet,\xi;\eta,\nu_\bullet}$ and remains simple.

Next, observe that essential extensions between submodules of $V_*^{\otimes l}\otimes V^{\otimes m}$ remain essential after tensoring by $L_{\kappa_\bullet,\emptyset;\emptyset,\nu_\bullet}$, because this holds for the restriction of these representations to $\mk{gl}(V,V_*)$ which acts trivially on $L_{\kappa_\bullet,\emptyset;\emptyset,\nu_\bullet}$. With these observations, and the results of \S~\ref{Sec Vstar l po V m}, the deduction of statements of the proposition is immediate.
\end{proof}

The semisimplicity of the tensor products of ``one-sided'' simple modules, i.e., $L_{\lambda_\bullet,\lambda;\emptyset}\otimes L_{{_\bullet}\mu,\mu;\emptyset}$ or $L_{\emptyset;\lambda,\lambda_\bullet}\otimes L_{\emptyset;\mu,\mu_\bullet}$, as well as the obvious symmetry between the two cases, prompts us to introduce the following notation for any $\lambda_\bullet,\mu^{(1)}_\bullet,...,\mu^{(m)}_\bullet\in\Lambda^{t+1}$:
\begin{gather}\label{For bfN many}
\begin{array}{rl}
{\bf N}^{\lambda_\bullet}_{\mu^{(1)}_\bullet...\mu^{(m)}_\bullet} :=& \dim{\rm Hom}(V_{\lambda_\bullet,\emptyset;\emptyset},V_{\mu^{(1)}_\bullet,\emptyset;\emptyset}\otimes\dots\otimes V_{\mu^{(m)}_\bullet,\emptyset;\emptyset}) \\ =& \dim{\rm Hom}(V_{\emptyset;\emptyset,\lambda_\bullet},V_{\emptyset;\emptyset,\mu^{(1)}_\bullet}\otimes\dots\otimes V_{\emptyset;\emptyset,\mu^{(m)}_\bullet}) \\ =& \sum\limits_{\sigma^{(1)}_\bullet,...,\sigma^{(m-2)}_\bullet\in\Lambda^{t+1}} \prod\limits_{\alpha} N^{\lambda_\alpha}_{\mu^{(1)}_\alpha\sigma^{(1)}_\alpha} ( \prod\limits_{r=2}^{m-2} N^{\sigma^{(r-1)}_\alpha}_{\mu^{(r)}_\alpha\sigma^{(r)}_\alpha} ) N^{\sigma^{(m-2)}_\alpha}_{\mu^{(m-1)}_\alpha\mu^{(m)}_\alpha} \;.
\end{array}
\end{gather}

\subsubsection{Two orders and a family of morphisms}

Here we introduce two partial orders on the set $\mc P$ defined in (\ref{For mcP}). For $l_\bullet=(l_0,...,l_t)\in\NN^{t+1}$, we denote $|l_\bullet|=\sum\limits_{\alpha=0}^t l_\alpha$ and, for $\beta\leq t$, $|l_{\bullet_{\geq\beta}}|=\sum\limits_{\alpha=\beta}^t l_\alpha$.

\begin{definition}\label{Def Poset P}
Let $\preceq$ be the partial order on $\mc P$ defined by
\begin{gather*}
(l_\bullet,l;m,m_\bullet)\preceq (l'_\bullet,l;m,m'_\bullet) \quad \tst \quad \begin{array}{|l} l-m+|l_\bullet| - |m_\bullet|=l'-m'+|l'_\bullet| - |m'_\bullet| \\ l\leq l' \;,\; m\leq m' \\ |l_{\bullet_{\geq\beta}}| \geq |l'_{\bullet_{\geq\beta}}| \quad for \quad \beta\in\{0,...,t\} \\ |m_{\bullet_{\geq\beta}}| \geq |m'_{\bullet_{\geq\beta}}|  \end{array}\;.
\end{gather*}
From now on, $(\mc P,\preceq)$ denotes the resulting poset.
\end{definition}

\begin{definition}\label{Def Poset IP}
We define a partial order $\stackrel{\bf P}{\preceq}$ on the set $\mc P$ by strengthening the relation $\preceq$ with the additional requirements $l+|l_\bullet| \leq l'+|l'_\bullet|$, $m+|m_\bullet|\leq m'+|m'_\bullet|$, and denote the resulting poset by ${\bf P}$.
\end{definition}

Next, we define several attributes of a fixed element $\bm l=(l_\bullet,l;m,m_\bullet)$ of the set $\mc P$. There are two parallel constructions corresponding to the partial orders $\preceq$ and $\stackrel{\bf P}{\preceq}$. We begin with the notation
\begin{gather*}
\mc P(\bm l):= \{\bm k\in\mc P: \bm k\preceq \bm l\} \;,\quad 
{\bf P}(\bm l):= \{\bm k\in{\bf P}: \bm k\stackrel{\bf P}{\preceq} \bm l\}\;.
\end{gather*}

\begin{rem}\label{Rem mcPl has finite assend chains}
\begin{enumerate}
\item Both posets $\mc P(\bm l)$ and ${\bf P}(\bm l)$ have the following property: every strictly ascending sequence is finite.
\item The common underlying set $\NN^{2(t+2)}$ of the posets $\mc P$ and ${\bf P}$ is a monoid under component-wise addition. If $\bm l\preceq \bm k$ and $\bm l'\preceq\bm k'$ then $\bm l+\bm l'\preceq\bm k+\bm k'$. The same property holds for $\stackrel{\bf P}{\preceq}$.
\end{enumerate}
\end{rem}

\begin{lemma}\label{Lemma P1 and IP1}
For $\bm l= (l_\bullet,l;m,m_\bullet)\in\mc P$ and $\mc P^1(\bm l)$ be the set of elements obtained from $\bm l$ by one of the following elementary alterations:
\begin{enumerate}
\item[{\rm (i)}] if $l>0$ (resp., $l_\alpha>0$), subtract $1$ from $l$ (resp., from $l_\alpha$) and add $1$ to $l_0$ (resp., $l_{\alpha+1})$;
\item[{\rm (ii)}] if $m>0$ (resp., $m_\alpha>0$), subtract $1$ from $m$ (resp., from $m_\alpha$) and add $1$ to $m_0$ (resp., $m_{\alpha+1})$;
\item[{\rm (iii)}] if both $l$ and $m$ are positive, subtract $1$ from each of them; 
\item[{\rm (iv)}] add $1$ to both $l_0$ and $m_0$.
\end{enumerate}
Then $\mc P^1(\bm l)$ is the set of maximal elements of the poset $\mc P(\bm l)\setminus\{\bm l\}$.

Let ${\bf P}^1(\bm l)$ be the subset of $\mc P^1(\bm l)$ obtained using only ${\rm(i)},{\rm(ii)}$ and ${\rm(iii)}$. Then ${\bf P}^1(\bm l)$ is the set of maximal elements of the poset ${\bf P}(\bm l)\setminus\{\bm l\}$.
\end{lemma}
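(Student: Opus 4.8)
The plan is to verify two things for each of the two posets: (1) every element of $\mc P^1(\bm l)$ (resp. ${\bf P}^1(\bm l)$) is genuinely below $\bm l$ in the order and is a \emph{maximal} element of $\mc P(\bm l)\setminus\{\bm l\}$ (resp. ${\bf P}(\bm l)\setminus\{\bm l\}$); and (2) every element strictly below $\bm l$ lies below some element of $\mc P^1(\bm l)$ (resp. ${\bf P}^1(\bm l)$), so the listed alterations exhaust all maximal elements. Together these give the claim.

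First I would check that each of the four operations (i)--(iv) produces an element $\bm k$ with $\bm k\preceq\bm l$ and $\bm k\neq\bm l$. This is a direct inspection against Definition \ref{Def Poset P}: operations (i) and (ii) preserve $l-m+|l_\bullet|-|m_\bullet|$ (we move a box from $l$ to $l_0$, or from $l_\alpha$ to $l_{\alpha+1}$, and in each case $l+|l_\bullet|$ is unchanged, likewise for the $m$-side) and they only decrease some partial sum $|l_{\bullet_{\geq\beta}}|$ by one while keeping $l,m$ and the total fixed, hence all the ``$\geq$'' constraints of the definition are met in the correct direction; operation (iii) decreases $l$ and $m$ each by one, again preserving $l-m+|l_\bullet|-|m_\bullet|$ and respecting $l\leq l'$, $m\leq m'$ with equality of all partial sums; operation (iv) adds one to $l_0$ and $m_0$, which raises $|l_{\bullet_{\geq 0}}|$ and $|m_{\bullet_{\geq 0}}|$ by one, i.e. makes $|l_{\bullet_{\geq\beta}}|\geq|l'_{\bullet_{\geq\beta}}|$ hold with $\bm l$ playing the role of the larger element for $\beta=0$, while $l,m$ are unchanged. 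For the poset $\bf P$ one additionally checks that (i),(ii),(iii) satisfy the two extra inequalities $l+|l_\bullet|\leq l'+|l'_\bullet|$ and $m+|m_\bullet|\leq m'+|m'_\bullet|$: (i) and (ii) keep those quantities equal, (iii) strictly decreases them; whereas (iv) \emph{increases} $l+|l_\bullet|$, which is exactly why (iv) is excluded from ${\bf P}^1(\bm l)$.

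Next, for maximality, suppose $\bm k\prec \bm k' \prec \bm l$ with $\bm k\in\mc P^1(\bm l)$; I would derive a contradiction by observing that $\bm k$ differs from $\bm l$ by exactly one ``unit'' of change in the appropriate grading — either one unit of the quantity $l$ (resp. $m$), or one unit of the partial-sum vector, or one box removed from $l$ and from $m$, or one box added to $l_0$ and $m_0$ — and the order $\preceq$, being generated (via its covering relations) by precisely these elementary moves, does not admit anything strictly between such an element and $\bm l$. The cleanest way to formalize this is to introduce a suitable $\NN$-valued height/rank function on $\mc P(\bm l)$ that decreases by exactly $1$ under each of the moves (i)--(iv) and is strictly monotone for $\prec$, so that an element one step below $\bm l$ has no room below it; I would define such a rank as a positive integer combination of the differences $l'-l$, $m'-m$, $|l'_{\bullet_{\geq\beta}}|-|l_{\bullet_{\geq\beta}}|$, $|m'_{\bullet_{\geq\beta}}|-|m_{\bullet_{\geq\beta}}|$ (with $\bm l$ fixed as the top), chosen with large enough coefficients that it is strictly positive on every nontrivial step, and observe each of (i)--(iv) drops it by exactly one. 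The analogous rank on ${\bf P}(\bm l)$ additionally incorporates $(l'+|l'_\bullet|)-(l+|l_\bullet|)$ and $(m'+|m'_\bullet|)-(m+|m_\bullet|)$.

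Finally, for exhaustiveness, I would take an arbitrary $\bm k\prec\bm l$ and show $\bm k\preceq \bm k''$ for some $\bm k''\in\mc P^1(\bm l)$. The idea is to reverse-engineer the difference: compare $\bm k$ and $\bm l$ componentwise and pick whichever discrepancy is ``closest to $\bm l$.'' Concretely, if $l<l'$ (writing $\bm l=(l'_\bullet,l';m',m'_\bullet)$, $\bm k$ with unprimed entries) then applying move (i) to $\bm l$ still dominates $\bm k$; if $l=l'$ but some $|l_{\bullet_{\geq\beta}}|<|l'_{\bullet_{\geq\beta}}|$, take the \emph{largest} such $\beta$ and check that this forces $l'_{\beta}>0$ or (if $\beta=0$) that a move (iv) or a move (i) with $l$ applies — here one uses that the partial sums are nested, so the largest $\beta$ where the inequality is strict is where a box can legitimately be ``pushed in.'' One does the symmetric analysis on the $m$-side, and the joint moves (iii),(iv) handle the case where $l=l'$, $m=m'$ and all partial sums already agree on one side but the grading constraint $l-m+|l_\bullet|-|m_\bullet|=l'-m'+|l'_\bullet|-|m'_\bullet|$ forces simultaneous adjustment. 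For $\bf P$, the two extra inequalities in Definition \ref{Def Poset IP} rule out move (iv) precisely when $\bm k\neq\bm l$ forces $l+|l_\bullet|<l'+|l'_\bullet|$ strictly, which is what makes ${\bf P}^1(\bm l)$ the correct list there.

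I expect the main obstacle to be the exhaustiveness step in the case analysis for $\preceq$: one must argue carefully that, given an arbitrary element below $\bm l$, at least one of the four very specific moves applied \emph{to $\bm l$} still lies above it, and the bookkeeping with the nested partial sums $|l_{\bullet_{\geq\beta}}|$ — in particular identifying the right index $\beta$ at which to push a box inward and confirming the relevant coordinate of $\bm l$ is actually positive there — is where the combinatorial subtlety lies. Everything else is a mechanical check against the two definitions.
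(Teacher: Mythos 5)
Your outline — verify the moves land strictly below $\bm l$, then prove maximality, then prove exhaustiveness — is a more explicit version of what the paper does (the paper's entire argument is the assertion that, for any $\bm k\prec\bm l$, it is straightforward to produce $\bm k'\in\mc P^1(\bm l)$ with $\bm k\preceq\bm k'\prec\bm l$), and adding a maximality step is a reasonable way to close what is otherwise left implicit. However, two of your ingredients do not work as written. The rank function you propose cannot exist in the form stated: writing $r$ as a linear combination of $l'-l$, $m'-m$ and the partial-sum differences with coefficients $p,q,u_\beta,v_\beta$, moves (i) with $l\to l_0$, (iii) and (iv) change $r$ by $p+u_0$, $p+q$ and $u_0+v_0$ respectively, and forcing all moves to change $r$ by exactly $1$ yields $p=q=u_0=v_0=\tfrac12$ and $u_\alpha=v_\alpha=1$ for $\alpha\geq1$. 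With \emph{positive integer} coefficients the system is insoluble, and the requirement of ``large enough coefficients'' is flatly incompatible with ``drops by exactly one.'' The half-integer solution \emph{is} a valid rank function — it is integer-valued on $\mc P(\bm l)$ because of the grading constraint $l-m+|l_\bullet|-|m_\bullet|=\mathrm{const}$ — but that integrality observation is the actual content, and your plan does not supply it. (Alternatively, exhaustiveness together with a direct check that the elements of $\mc P^1(\bm l)$ are pairwise incomparable also gives maximality, avoiding rank functions entirely.)

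Second, the exhaustiveness case analysis opens with a false step. If $l<l'$ but $|l_\bullet|=|l'_\bullet|$ (possible; the constraint then forces $m<m'$), move (i) applied to $\bm l$ produces an element with $|l''_\bullet|=|l'_\bullet|+1>|l_\bullet|$, which does \emph{not} dominate $\bm k$; move (iii) is the correct choice there. You also have the partial-sum inequality backwards throughout: by Definition~\ref{Def Poset P}, $\bm k\preceq\bm l$ means $|l_{\bullet_{\geq\beta}}|\geq|l'_{\bullet_{\geq\beta}}|$, so the smaller element has the \emph{larger} partial sums, and moves (i), (ii) \emph{increase} the affected partial sum of the new element (your first paragraph says they decrease it, and your case ``$|l_{\bullet_{\geq\beta}}|<|l'_{\bullet_{\geq\beta}}|$'' never occurs). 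The strategy is salvageable, but the sign errors propagate through steps one and three and the case analysis must be redone with them fixed.
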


\begin{proof}
For any $\bm k\prec\bm l$ it is straightforward to construct an element $\bm k'$ obtained from $\bm l$ by using one of the alterations (i)-(iv) and satisfying $\bm k\preceq\bm k'\prec\bm l$. This proves the statement for $\mc P^1(\bm l)$. The statement for ${\bf P}^1(\bm l)$ is proven analogously.
\end{proof}

\begin{definition}\label{Def Pqp}
More generally, for $q\geq 1$ let $\mc P^q(\bm l)\subset\mc P(\bm l)$ be the set of maximal elements of the set 
$$
\{\bm k\in\mc P:\bm k\prec\bm l\}\setminus \left(\bigcup\limits_{j< q} \mc P^{j}(\bm l)\right) \;,
$$
with the convention $\mc P^0(\bm l)=\{\bm l\}$. We define ${\bf P}^q(\bm l)$ analogously.
\end{definition}

To any element $\bm l= (l_\bullet,l;m,m_\bullet)$ we associate the number
\begin{gather}\label{For qlm}
q^{(\bm l)}:=(l+m)(t+1)+\sum\limits_{j=0}^{t} (l_j+m_j)(t-j) . 
\end{gather}
Note that $q^{(\bm l)}=0$ if and only if $\bm l$ has the form $\bm l=(l_{t},0,...,0;0,...0,m_t)$. Lemma \ref{Lemma P1 and IP1} and Remark \ref{Rem mcPl has finite assend chains} imply that ${\bf P}(\bm l)$ and $\mc P(\bm l)$ split as disjoint unions: 
\begin{gather}\label{For bfPl and mcPl disjU}
{\bf P}(\bm l)=\bigsqcup\limits_{0\leq q\leq q^{(\bm l)}}{\bf P}^q(\bm l) \;,\quad {\mc P}(\bm l)=\bigsqcup\limits_{q\in\NN}{\mc P}^q(\bm l)\;.
\end{gather}
This structure behaves well with respect to addition in the underlying monoid, i.e.,
\begin{gather}\label{For Pq additive}
{\bf P}^{q}(\bm l)+{\bf P}^{q'}(\bm l') \subset {\bf P}^{q+q'}(\bm l+\bm l')\;,\;\; {\rm for}\;\;\bm l,\bm l'\in{\bf P}, q,q'\in\NN.
\end{gather}
Furthermore, if $\bm l=(l_\bullet,l;m,m_\bullet)$ then
\begin{gather}\label{For Pql compute}
\begin{array}{rl}
{\bf P}^{q}(\bm l) & = \bigcup\limits_{i+j+k=q} {\bf P}^{i}(l_\bullet,0;0_\bullet)+ {\bf P}^{j}(l;m) + {\bf P}^{k}(0_\bullet;0,m_\bullet) \\
& =\bigcup\limits_{|i_\bullet|+j+|k_\bullet|=q} \left( {\bf P}^{j}(l;m) + \sum\limits_{\alpha=0}^t ({\bf P}^{i_\alpha l_\alpha}(1_\alpha,0;0_\bullet)+ {\bf P}^{k_\alpha m_\alpha}(0_\bullet;0,1_\alpha)) \right) \;.
\end{array}
\end{gather}
Properties (\ref{For Pq additive}) and (\ref{For Pql compute}) hold as well for the poset $\mc P$ instead of ${\bf P}$.

We now move our attention to morphisms $f:I_{\bm l}\to I_{\bm k}$. The notation below refers to subtraction of elements in the monoid $\mc P$, and we automatically assume that the parameters satisfy the inequalities necessary for the results to be in $\mc P$.

\begin{definition}\label{Def Xibml}
For $\bm l=(l_\bullet,l;m,n_\bullet)\in\mc P$, we let $\Xi^1(I_{\bm l})$ be the set of morphisms $I_{\bm l}\to I_{\bm k}$ with $\bm k\in\mc P^{1}(\bm l)$, associated with the four types of elements of $\mc P^{1}(\bm l)$ according to Lemma \ref{Lemma P1 and IP1}, as follows:
\begin{enumerate}
\item[{\rm (i)}] $f^{(\alpha)}_{j}:I_{\bm l}\to I_{\bm l+(1_\alpha,-1_{\alpha-1};0)}$ is the projection $V^*/V^*_{\alpha-1}\to V^*/V^*_{\alpha}$ applied to the $j$-th tensorand in $(V^*/V^*_{\alpha-1})^{\otimes l_{\alpha-1}}$, extended by identity on all other tensorands in $I_{\bm l}$, for $0\leq\alpha\leq t$ and $0\leq j\leq l_{\alpha-1}$;
\item[{\rm (ii)}] $\bar f^{(\alpha)}_{j}:I_{\bm l}\to I_{\bm l +(0;-1_{\alpha-1},1_{\alpha})}$ is the projection $\bar V/\bar V_{\alpha-1}\to \bar V/\bar V_{\alpha}$ applied to the $j$-th tensorand in $(\bar V/\bar V_{\alpha-1})^{\otimes m_{\alpha-1}}$, extended by identity on all other tensorands in $I_{\bm l}$, for $0\leq\alpha\leq t$ and $0\leq j\leq m_{\alpha-1}$; 
\item[{\rm (iii)}] $\tilde{\bf p}_{i,j}:I_{\bm l}\to I_{\bm l-(1;1)}$ is the morphism $\tilde{\bf p}:V^*\otimes\bar V \to Q \subset I$ applied to the relevant pair of tensorands $V^*$ and $\bar V$ in $I_{\bm l}$, extended by identity on all other tensorands, for $0\leq i\leq l$ and $0\leq j\leq m$;
\item[{\rm (iv)}] $\psi_{\bm l}:I_{\bm l}\to I_{\bm l+(1,0;0,1)}$ is the morphism $\psi:I \to (V^*/V_*)\otimes(\bar V/V)\otimes I=I_{1,0;0,1}$ applied to the tensorand $I$, extended by the identity to all other tensorands in $I_{\bm l}$.
\end{enumerate}
We let $\Xi^q(I_{\bm l})$ be the set of morphisms $I_{\bm l}\to I_{\bm k}$ with $\bm k\in\mc P^{q}(\bm l)$ obtained as compositions $f_q\circ...\circ f_1$, where $f_j\in\Xi^1(I_{\bm k_j})$ for some decreasing sequence $\bm l=\bm k_0\succ\bm k_1\succ...\succ\bm k_q=\mk k$ satisfying $\bm k_j\in\mc P^1(\bm k_{j-1})$ for $j=1,...,q$.
\end{definition}

We also introduce a family of morphisms with domain $J_{\bm l}$. Since ${\rm soc}I=\CC$ there is a canonical embedding $J_{\bm l}\subset I\otimes J_{\bm l}= I_{\bm l}$. Let $\Xi^q(J_{\bm l})$ be the set of restrictions to $J_{\bm l}$ of morphisms from $\Xi^q(I_{\bm l})$ which are obtained as compositions of morphisms of type (i),(ii) and (iii). The codomains of these morphisms are of the form $I_{\bm k}$ with $\bm k\in{\bf P}^q(\bm l)$.

\subsubsection{The socle filtrations of the modules $J_{l_\bullet,l;m,m_\bullet}$ and $J_{\lambda_\bullet,\lambda;\mu,\mu_\bullet}$}\label{Sec Jlm}

Here we study the families of modules $J_{\bm l}$ and $J_{\bm\lambda}$ defined in (\ref{For LJITlm}). We begin with the former family, and the observation that, for $\bm l,\bm l'\in\mc P$, there is an isomorphism
$$
J_{\bm{l}}\otimes J_{\bm l'} \cong J_{\bm l+\bm l'} \;.
$$

\begin{example}\label{Exa socJ11}
Let us consider $J_{1;1}=V^*\otimes \bar V$ and describe its socle filtration. From Theorem \ref{Theo Simples and Injhulls} we get
$$
{\rm soc}(V^*\otimes \bar V)= {\rm soc}(V_*\otimes V) = L_{1;1} ={\rm ker}{\bf p}= \mk{sl}(V,V_*) \;,
$$
and observe that
$$
{\rm soc}(V^*\otimes \bar V) = \bigcap\limits_{f\in\Xi^1(J_{1;1})} {\rm ker}f \;.
$$
Further, we have ${\rm soc}^{j+1}(V^*)=V^*_j$ and ${\rm soc}^{j+1}(\bar V)=\bar V_j$ for $j=0,...,t+1$, and consequently
\begin{gather}\label{For socqJ11}
{\rm soc}^{q+1}(V^*\otimes \bar V)\subset \sum\limits_{i+j=q} {\rm soc}^{i+1}(V^*) \otimes {\rm soc}^{j+1}(\bar V)= \sum\limits_{i+j=q} V^*_i\otimes\bar V_j
\end{gather}
for $q\geq 0$. For $q\geq 1$, the containment in (\ref{For socqJ11}) is an equality, as can be shown by induction. Thus the length of the socle filtration of $V^*\otimes \bar V$ is $2(t+1)+1$, and
$$
{\rm soc}^{q+1}(V^*\otimes \bar V) = \bigcap\limits_{f\in\Xi^{q+1}(J_{1;1})} {\rm ker}f \;.
$$
For the higher layers of the socle filtration we obtain
$$
\ul{\rm soc}^2(V^*\otimes \bar V)= ((V^*_1/V^*_0)\otimes \bar V_0) \oplus \CC \oplus (V^*_0\otimes (\bar V_1/\bar V_0)) \cong L_{1,0;1}\oplus L_{0;0}\oplus L_{1;0,1} \;,
$$
and
$$
\ul{\rm soc}^{q+1}(V^*\otimes\bar V)= \bigoplus\limits_{i+j=q} \ul{\rm soc}^{i+1}(V^*)\otimes \ul{\rm soc}^{j+1}(\bar V) = \bigoplus\limits_{i+j=q} L_{1_{i-1};1_{j-1}}
$$
for $q\geq 2$. 
\end{example}

\begin{prop}\label{Prop socqJlm}
Let $\bm l=(l_\bullet,l;m,m_\bullet)\in{\bf P}$ and $p=\min\{l,m\}$. The socle filtration of $J_{\bm l}$ has length $1+q^{(\bm l)}$, see (\ref{For qlm}). For $0\leq q\leq q^{(\bm l)}$, we have
\begin{gather*}
\begin{array}{rl}
{\rm soc}^{q+1}J_{\bm l}& = \bigcap\limits_{f\in\Xi^{q+1}(J_{\bm l})} {\rm ker}f \\
\ul{\rm soc}^{q+1}J_{\bm l}& \cong \bigoplus\limits_{i+j+k=q,k\leq p} \binom{l}{k}\binom{m}{k} {\rm soc}(\ul{\rm soc}^{i+1}J_{l_\bullet,l-k;0}\otimes \ul{\rm soc}^{j+1}J_{0;m-k,m_\bullet}) \\
& \cong \bigoplus\limits_{i+j+|i_\bullet|+|j_\bullet|+k=q} \binom{l}{k}\binom{m}{k} ( {\rm soc}(\ul{\rm soc}^{i+1}((V^*)^{\otimes(l-k)})\otimes\ul{\rm soc}^{j+1}(\bar V^{\otimes(m-k)})) \otimes \\
& \qquad\quad {^{k\leq p}} \qquad\qquad\qquad \otimes (\bigotimes\limits_{\alpha,\beta=0}^t\ul{\rm soc}^{i_{\alpha}+1}J_{l_\alpha,0;0}\otimes \ul{\rm soc}^{j_\beta+1}J_{0;0,m_\beta}) ) \\
& \cong \bigoplus\limits_{\bm k\in{\bf P}^q(\bm l)} {\bf b}^{\bm l}_{\bm k}  L_{\bm k} \;,
\end{array}
\end{gather*}
where ${\bf P}^q(\bm l)$ is as in Definition \ref{Def Pqp} and, for $\bm k\in{\bf P}^q(\bm l)$,
$$
{\bf b}^{\bm l}_{\bm k}:=\sum\limits_{\begin{array}{c}k+\sum\limits_{\alpha=-1}^t (q_\alpha+\bar q_\alpha)=q \\
\sum\limits_{-1\leq\alpha\leq t} (r^{(\alpha)}_\bullet;s^{(\alpha)}_\bullet)=\bm k\end{array}} \binom{l}{k}\binom{m}{k} \frac{(l-k)!}{\prod\limits_{-1\leq \beta\leq t}r^{(-1)}_\beta!}\frac{(m-k)!}{\prod\limits_{-1\leq \beta\leq t}s^{(-1)}_\beta!} \prod\limits_{0\leq\alpha\leq t} \frac{l_\alpha!}{\prod\limits_{0\leq \beta\leq t}r^{(\alpha)}_\beta!} \frac{m_\alpha!}{\prod\limits_{0\leq \beta\leq t}s^{(\alpha)}_\beta!} \;,
$$
the sum running over all sets of integers $k,q_{-1},...,q_t,\bar q_{-1},...,\bar q_t\in\NN$, $k\leq p$, and all sets of elements $(r^{(-1)}_\bullet;s^{(-1)}_\bullet),...,(r^{(t)}_\bullet;s^{(t)}_\bullet)\in{\bf P}$ satisfying, in addition to the above equalities, $(r^{(-1)}_\bullet;0_\bullet)\in{\bf P}^{q_{-1}}(l-k;0_\bullet)$, $(0_\bullet;s^{(-1)}_\bullet)\in{\bf P}^{\bar q_{-1}}(0_\bullet;m-k)$ and $(r^{(\alpha)}_\bullet;0_\bullet)\in{\bf P}^{q_\alpha}(l_\alpha;0_\bullet),(0_\bullet;s^{(\alpha)}_\bullet)\in{\bf P}^{\bar q_\alpha}(0_\bullet;m_\alpha)$ for $0\leq \alpha\leq t$.
\end{prop}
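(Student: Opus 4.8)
\emph{Plan of proof.} The plan is to compute the socle filtration of $J_{\bm l}$ by first identifying it with the common kernel of an explicit family of morphisms and then decomposing the resulting layers. Two mechanisms are at play and must be disentangled: the ``filtration spreading'' coming from the socle filtrations $V_*=V^*_0\subsetneq\cdots\subsetneq V^*_{t+1}=V^*$ and its analogue for $\bar V$ (together with the induced socle filtrations of the outer factors $V^*/V^*_\alpha$ and $\bar V/\bar V_\beta$, whose layers are the single-box simples $V^*_{\gamma+1}/V^*_\gamma$, $\bar V_{\delta+1}/\bar V_\delta$), and the ``contraction'' mechanism inside the inner block $(V^*)^{\otimes l}\otimes\bar V^{\otimes m}$, which is governed by the socle filtration of $V_*^{\otimes l}\otimes V^{\otimes m}$ over $\mk{sl}(V,V_*)$ recalled above from \cite{Penkov-Serganova-Mackey}, and in the rank-one case by Example \ref{Exa socJ11}. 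Throughout one uses the isomorphism $J_{\bm l}\otimes J_{\bm l'}\cong J_{\bm l+\bm l'}$ and the additivity of the stratification $\mc P(\bm l)=\bigsqcup_q\mc P^q(\bm l)$ of Lemma \ref{Lemma P1 and IP1}, in the forms (\ref{For Pq additive}) and (\ref{For Pql compute}).

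First I would prove, by induction on $q$, that ${\rm soc}^{q+1}J_{\bm l}=\bigcap_{f\in\Xi^{q+1}(J_{\bm l})}{\rm ker}f$. For $q=0$ this is the identification ${\rm soc}J_{\bm l}=L_{\bm l}$ of Theorem \ref{Theo Simples and Injhulls}, together with the observation that the common kernel of $\Xi^1(J_{\bm l})$ forces each outer tensorand $V^*/V^*_\alpha$ (resp. $\bar V/\bar V_\beta$) into its socle $V^*_{\alpha+1}/V^*_\alpha$ (resp. $\bar V_{\beta+1}/\bar V_\beta$), via the projections of type (i), (ii), and forces the inner part into the traceless submodule of $V_*^{\otimes l}\otimes V^{\otimes m}$, via the morphisms $f^{(0)}_j$, $\bar f^{(0)}_j$ and $\tilde{\bf p}_{i,j}$ of Definition \ref{Def Xibml}; this common kernel is exactly $L_{\bm l}$. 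The inductive step reduces to the statement that a vector lies in ${\rm soc}^{q+1}J_{\bm l}$ if and only if every $f\in\Xi^1(J_{\bm l})$ sends it into ${\rm soc}^q$ of its codomain; the ``only if'' is immediate since each $f\in\Xi^1$ strictly lowers socle degree on the tensorand on which it acts, while the ``if'' is the substantive point, which I would establish by showing that ${\rm soc}^{q+1}J_{\bm l}/{\rm soc}^qJ_{\bm l}$ is semisimple, using Lemma \ref{Lemma LinIndepToAny} (transitivity of $\mk{gl}^M$ on tuples of vectors linearly independent modulo the filtration), the density statements of Proposition \ref{Prop DenseProperties}, and the classification of simples.

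Given the kernel description, the layers are computed by combining the three socle filtrations. Since $\mk{gl}(V,V_*)=V\otimes V_*$ acts trivially on each outer factor, tensoring by $\bigotimes_\alpha(V^*/V^*_\alpha)^{\otimes l_\alpha}$ and by $\bigotimes_\beta(\bar V/\bar V_\beta)^{\otimes m_\beta}$ preserves essential extensions, exactly as in the proof of Proposition \ref{Prop TT tensor prod simples}, and all relevant tensor products of layers are semisimple by Proposition \ref{Prop TT tensor prod simples}(c),(d); hence the socle filtration of $J_{\bm l}$ is the convolution of those of $\bigotimes_\alpha(V^*/V^*_\alpha)^{\otimes l_\alpha}$, of $(V^*)^{\otimes l}\otimes\bar V^{\otimes m}$, and of $\bigotimes_\beta(\bar V/\bar V_\beta)^{\otimes m_\beta}$, while the socle filtrations of the two outer blocks are in turn the convolutions of the socle filtrations of the individual factors $V^*/V^*_\alpha$ and $\bar V/\bar V_\beta$ (no contractions occur among outer factors, nor among the $V^*$'s alone). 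The socle filtration of the inner block $(V^*)^{\otimes l}\otimes\bar V^{\otimes m}$ comes from the kernel description: selecting the $k\le p$ pairs that are fully contracted (the $\binom lk\binom mk$ choices, the matching among them being immaterial for the isomorphism type of the resulting $\mk{gl}^M$-module, which is just $\CC$) and the shifts $i$, $j$ of the remaining $l-k$ factors $V^*$ and $m-k$ factors $\bar V$, one obtains the summands ${\rm soc}\big(\ul{\rm soc}^{i+1}((V^*)^{\otimes(l-k)})\otimes\ul{\rm soc}^{j+1}(\bar V^{\otimes(m-k)})\big)$ with $i+j+k=q$. Assembling the three convolutions yields the first two displayed isomorphisms of the statement verbatim (the first being the regrouping of the second in which $\ul{\rm soc}^{a+1}((V^*)^{\otimes(l-k)})\otimes\bigotimes_\alpha\ul{\rm soc}^{i_\alpha+1}J_{l_\alpha,0;0}$, summed over $a+|i_\bullet|=i$, is recognized as $\ul{\rm soc}^{i+1}J_{l_\bullet,l-k;0}$), while the passage to the last line $\bigoplus_{\bm k\in{\bf P}^q(\bm l)}{\bf b}^{\bm l}_{\bm k}L_{\bm k}$ is the bookkeeping of re-indexing ``distribute the $l,m,l_\alpha,m_\beta$ boxes among their admissible filtration levels and select $k$ contracted pairs, with total shift $q$'' through the decomposition (\ref{For Pql compute}) of ${\bf P}^q(\bm l)$; the products of factorials in ${\bf b}^{\bm l}_{\bm k}$ are precisely the sizes of the orbits of $\mk S_{\bm l}$ on the sets of level assignments producing a given $\bm k$. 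Finally, the length $1+q^{(\bm l)}$ is read off from the top layer of the convolution, which is attained when every box sits in the highest available filtration level and no contraction is performed; the resulting shift equals $q^{(\bm l)}$ as defined in (\ref{For qlm}).

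\textbf{Main obstacle.} The crux is the ``if'' direction above, i.e.\ showing that tensoring does not produce simple submodules above the predicted socle layer --- for instance a stray copy of $\CC$ arising from a double contraction, or an outer single-box module appearing too low. Pinning the filtration down as the common kernel of the concrete morphisms of Definition \ref{Def Xibml} converts this into a tractable transitivity question, solved with Lemma \ref{Lemma LinIndepToAny} and Proposition \ref{Prop DenseProperties}; the accompanying combinatorial identities --- matching the multinomial coefficients with the $\mk S_{\bm l}$-orbit sizes and with the poset strata ${\bf P}^q(\bm l)$ --- are routine but lengthy.
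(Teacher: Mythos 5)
Your proposal follows essentially the same approach as the paper's proof: both characterize ${\rm soc}^{q+1}J_{\bm l}$ as the common kernel of the morphisms in $\Xi^{q+1}(J_{\bm l})$, decompose $J_{\bm l}$ as a tensor product of the pure blocks $\bigotimes_\alpha J_{l_\alpha,0;0_\bullet}$, $J_{l;m}$, $\bigotimes_\beta J_{0_\bullet;0,m_\beta}$, compute the socle filtration of each block (using Example \ref{Exa socJ11} for the contraction mechanism in $J_{l;m}$), and combine them by a K\"unneth-type convolution, with Lemma \ref{Lemma LinIndepToAny} and Proposition \ref{Prop DenseProperties} guaranteeing that no simple constituent drops below the expected layer. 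The multiplicity formula is then the same multinomial bookkeeping via the stratification \eqref{For Pql compute} of ${\bf P}^q(\bm l)$, so your proof is correct and essentially identical in strategy to the one in the paper.
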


\begin{proof}
From (\ref{For LJITlm is sum LJITlambdamu}) and Theorem \ref{Theo Simples and Injhulls} we obtain
$$
{\rm soc}J_{\bm l} = L_{\bm l} = \bigcap\limits_{f\in\Xi^{1}(J_{\bm l})} {\rm ker}f = \left(\bigotimes\limits_{\alpha=0}^t (V^*_{\alpha+1}/V^*_\alpha)^{\otimes l_\alpha} \right) \otimes V_{l;m} \otimes \left( \bigotimes\limits_{\alpha=0}^t (\bar V_{\alpha+1}/\bar V_\alpha)^{\otimes m_\alpha} \right).
$$
We have a corresponding decomposition of $J_{l_\bullet,l;m,m_\bullet}$, whose tensorands are essential extensions of the respective tensorands of ${\rm soc}J_{l_\bullet,l;m,m_\bullet}$:
\begin{gather}\label{For Jlm is a product of pures}
J_{l_\bullet,l;m,m_\bullet} \cong \left(\bigotimes\limits_{\alpha=0}^t J_{l_\alpha,0;0_\bullet} \right) \otimes J_{l;m} \otimes \left( \bigotimes\limits_{\alpha=0}^t J_{0_\bullet;0,m_\alpha} \right).
\end{gather}
We can split the proof of the proposition into two steps: first, verify the statement for each of the above tensorands, and second, show that the tensor product of the resulting filtrations yields the claimed socle filtration of $J_{\bm l}$. For both steps, we use K\"unneth-type products of the socle fitrations of the relevant tensorands. The key observation is that no simple constituent descends to a lower layer than expected. This follows from the density statement of Lemma \ref{Lemma LinIndepToAny} which allows us to apply Proposition \ref{Prop DenseProperties} to the ideals $\mk{gl}^M_\alpha$ and to the relevant extensions.

The three types of elements in ${\bf P}^{1}(\bm l)$, given as (i),(ii),(iii) in Lemma \ref{Lemma P1 and IP1}, correspond to the types (i),(ii),(iii) of morphisms in $\Xi^{1}(J_{\bm l})$ (see the discussion under Definition \ref{Def Xibml}). The modules of type $J_{l_\alpha,0;0_\bullet}$ can only be the domain of morphisms of type (i). By (\ref{For Pql compute}), the set ${\bf P}^{q}(l_\alpha,0;0_\bullet)$ consists of elements of the form $\sum\limits_{i=1}^{l_\alpha}(1_{\alpha+q_i},0;0_\bullet)$ with $q_1,...,q_{l_\alpha}\in\NN$ satisfying $\sum_j q_j=q$. We have
\begin{align*}
{\rm soc}^{q+1}J_{l_\alpha,0;0_\bullet} & = \sum\limits_{q_1+...+q_{l_\alpha}=q}\bigotimes\limits_{i=1}^{l_\alpha} {\rm soc}^{q_i+1}(V^*/V^*_\alpha) = \sum\limits_{q_1+...+q_{l_\alpha}=q}\bigotimes\limits_{i=1}^{l_\alpha} V^*_{\alpha+q_i+1}/V^*_\alpha \\ & = \bigcap\limits_{f\in\Xi^{q+1}(J_{l_\alpha,0;0_\bullet})} {\rm ker}f \;,\\
\ul{\rm soc}^{q+1}J_{l_\alpha,0;0_\bullet} & \cong \bigoplus\limits_{q_1+...+q_{l_\alpha}=q}\bigotimes\limits_{i=1}^{l_\alpha} \ul{\rm soc}^{q_i+1}(V^*/V^*_\alpha) \cong \bigoplus\limits_{q_1+...+q_{l_\alpha}=q}\bigotimes\limits_{i=1}^{l_\alpha} V^*_{\alpha+q_i+1}/V^*_{\alpha+q_i} \\ & \cong \bigoplus\limits_{(k_\bullet,0;0_\bullet)\in{\bf P}^q(l_\alpha,0;0_\bullet)} \left( \frac{l_\alpha!}{\prod\limits_{\alpha\leq \beta\leq t}k_\beta!} \right)  \cdot L_{k_\bullet,0;0_\bullet} \;.
\end{align*}
The situation with the modules $J_{0_\bullet;0,m_\alpha}$ is completely analogous, with $\Xi^q(J_{0_\bullet;0,m_\alpha})$ consisting of superpositions of morphisms of type (ii). The tensorand $J_{l;m}$ can be the domain of all three types (i),(ii),(iii) of morphisms in $\Xi^1(J_{l;m})$, as long as both $l,m$ are nonzero. We handle the morphisms of type (iii) involving $V^*\otimes \bar V$ using Example \ref{Exa socJ11}. 

As indicated above, Proposition \ref{Prop DenseProperties} implies that the socle filtration of $J_{l_\bullet,l;m,m_\bullet}$ is obtained as the K\"unneth product of the socle filtrations of the three modules $J_{l_\bullet,0;0_\bullet}$, $J_{l;m}$, $J_{0_\bullet;0,m_\bullet}$. The formula for the multiplicities follows by a standard counting argument.
\end{proof}

Next we turn our attention to the socle filtrations of the modules $J_{\bm\lambda}$ for $\bm\lambda=(\lambda_\bullet,\lambda;\mu,\mu_\bullet)\in\bm\Lambda$ defined in (\ref{For LJITlambda}). It was shown in Theorem \ref{Theo Simples and Injhulls} that $J_{\bm\lambda}$ is an essential extension of the simple module $L_{\bm\lambda}$. We observe that $J_{\bm\lambda}$ splits as a tensor product along the components of $\bm\lambda$:
$$
J_{\bm\lambda} = J_{\lambda;\emptyset}\otimes J_{\emptyset;\mu}\otimes (\bigotimes\limits_{\alpha=0}^t J_{\lambda_\alpha,\emptyset;\emptyset}\otimes J_{\emptyset;\emptyset,\mu_\alpha})\;.
$$
With this in mind, we shall successively compute the socle filtrations of $J_{\lambda_\alpha,\emptyset;\emptyset}$, $J_{\lambda_\bullet,\lambda;\emptyset}$ and $J_{\lambda_\bullet,\lambda;\mu,\mu_\bullet}$.

\begin{lemma}\label{Lemma soc q Jlambda}
Let $\lambda_\bullet=(\lambda_t,...,\lambda_{-1})\in\Lambda^{t+2}$. Then the length of the socle filtration of $J_{\lambda_\bullet;\emptyset_\bullet}$ is $1+q^{(|\lambda_\bullet|;0_\bullet)}$ and the layers are
\begin{gather*}
\ul{\rm soc}^{q+1}J_{\lambda_\bullet;\emptyset_\bullet} \cong \bigoplus\limits_{\kappa_\bullet\in\Lambda^{t+2}:(|\kappa_\bullet|;0_\bullet)\in{\bf P}^q(|\lambda_\bullet|;0_\bullet)} z^{\lambda_{\bullet}}_{\kappa_\bullet} \cdot L_{\kappa_\bullet;\emptyset_\bullet} \;,
\end{gather*}
where
$$
z^{\lambda_\bullet}_{\kappa_\bullet} := \sum\limits_{\rho^{(-1)}_\bullet,...,\rho^{(t)}_\bullet\in\Lambda^{t+2}:(|\rho^{(\beta)}_\bullet|;0_\bullet)\in{\bf P}^{j_\beta}(|\lambda_\beta|;0_\bullet),\sum j_\beta=q} \left( \prod\limits_{\beta=-1}^t N^{\lambda_{\beta}}_{\rho^{(\beta)}_{\beta}\rho^{(\beta)}_{\beta+1}...\rho^{(\beta)}_t} \right) {\bf N}^{\kappa_\bullet}_{\rho^{(-1)}_\bullet...\rho^{(t)}_\bullet} \;.
$$
\end{lemma}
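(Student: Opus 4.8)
The plan is to mimic the proof of Proposition~\ref{Prop socqJlm}: split $J_{\lambda_\bullet;\emptyset_\bullet}$ into Schur-functor tensorands, read off the socle filtration of each from the Littlewood (Cauchy) decomposition of a Schur functor of a direct sum, and then assemble the answer as a Künneth product of socle filtrations. Using the convention $V^*_{-1}=0$ of Remark~\ref{Rem index lm}, I would first write $J_{\lambda_\bullet;\emptyset_\bullet}\cong\bigotimes_{\beta=-1}^{t}M_\beta$ with $M_\beta:=(V^*/V^*_\beta)_{\lambda_\beta}$, noting that $\mk{gl}^M_\beta$ annihilates $M_\beta$. By the Proposition quoted from \cite{Chirvasitu-Penkov-RC}, $V^*/V^*_\beta$ has an exhaustive socle filtration of length $t+1-\beta$ whose layer $\ul{\rm soc}^{k+1}(V^*/V^*_\beta)$ is the simple module $W^\beta_k:=V^*_{\beta+k+1}/V^*_{\beta+k}$, $0\le k\le t-\beta$.

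\emph{One tensorand.} Over $\CC$, a Schur functor is a direct summand of a tensor power cut out by an idempotent of the group algebra of $\mk S_{|\lambda_\beta|}$, which commutes with $\mk{gl}^M$; moreover the socle filtration of $(V^*/V^*_\beta)^{\otimes|\lambda_\beta|}$ is, as an $\mk S_{|\lambda_\beta|}\times\mk{gl}^M$-module, the Künneth product of the socle filtrations of its tensor factors — this is precisely the computation of $\ul{\rm soc}^\bullet$ of such tensor powers carried out in the proof of Proposition~\ref{Prop socqJlm}, which is legitimate because no contraction $V^*\otimes V^*\to\mathbbm{1}$ exists. Consequently the associated graded of $M_\beta$ for its socle filtration is the Schur functor $\bigl(\bigoplus_{k=0}^{t-\beta}W^\beta_k\bigr)_{\lambda_\beta}$ with $W^\beta_k$ placed in degree $k$, and the iterated Cauchy identity $(A\oplus B)_\nu\cong\bigoplus_{\mu,\mu'}N^\nu_{\mu\mu'}A_\mu\otimes B_{\mu'}$ gives
$$\ul{\rm soc}^{j+1}M_\beta\;\cong\;\bigoplus_{\rho^{(\beta)}_\bullet}N^{\lambda_\beta}_{\rho^{(\beta)}_\beta\rho^{(\beta)}_{\beta+1}\cdots\rho^{(\beta)}_t}\cdot\bigotimes_{k=0}^{t-\beta}\bigl(W^\beta_k\bigr)_{\rho^{(\beta)}_{\beta+k}}\,,$$
the sum over $\rho^{(\beta)}_\bullet\in\Lambda^{t+2}$ supported on slots $\ge\beta$ with $\sum_k k\,|\rho^{(\beta)}_{\beta+k}|=j$. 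Each factor $\bigl(W^\beta_k\bigr)_{\rho^{(\beta)}_{\beta+k}}=(V^*_{\beta+k+1}/V^*_{\beta+k})_{\rho^{(\beta)}_{\beta+k}}$ is simple (Lemma~\ref{Lemma simple lambda00mu}, and Proposition~\ref{Prop DenseVlambdamu PS} when $\beta=-1$, $k=0$, where it is $(V_*)_{\rho^{(-1)}_{-1}}$), and since these factors sit in pairwise distinct slots their tensor product is the simple module $L_{\rho^{(\beta)}_\bullet;\emptyset_\bullet}$, with $\rho^{(\beta)}_\bullet$ carrying $\rho^{(\beta)}_{\beta+k}$ in slot $\beta+k$, by Theorem~\ref{Theo Simples and Injhulls}. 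A one-line evaluation of the weight (\ref{For qlm}) — a box in slot $\ell\ge0$ counting $t-\ell$, a box in slot $-1$ counting $t+1$ — shows that $\sum_k k\,|\rho^{(\beta)}_{\beta+k}|=j$ is equivalent to $(|\rho^{(\beta)}_\bullet|;0_\bullet)\in{\bf P}^{j}(|\lambda_\beta|;0_\bullet)$, where $(|\lambda_\beta|;0_\bullet)$ denotes the element of $\mc P$ with $|\lambda_\beta|$ in slot $\beta$; in particular the socle filtration of $M_\beta$ has length $1+q^{(|\lambda_\beta|;0_\bullet)}$.

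\emph{Assembly.} By Lemma~\ref{Lemma LinIndepToAny} and Proposition~\ref{Prop DenseProperties}, applied to the ideals $\mk{gl}^M_\alpha$ just as in the proof of Proposition~\ref{Prop socqJlm}, the socle filtration of $\bigotimes_\beta M_\beta$ is the Künneth product of those of the $M_\beta$, i.e.\ $\ul{\rm soc}^{q+1}J_{\lambda_\bullet;\emptyset_\bullet}\cong\bigoplus_{\sum_\beta j_\beta=q}\bigotimes_{\beta=-1}^{t}\ul{\rm soc}^{j_\beta+1}M_\beta$. Substituting the formula above and decomposing each resulting tensor product $\bigotimes_\beta L_{\rho^{(\beta)}_\bullet;\emptyset_\bullet}$ of simples whose $\mu$- and $\mu_\bullet$-parts vanish — it is semisimple by Proposition~\ref{Prop TT tensor prod simples}(c), and its multiplicities are the iterated Littlewood--Richardson products ${\bf N}^{\kappa_\bullet}_{\rho^{(-1)}_\bullet\cdots\rho^{(t)}_\bullet}=\prod_\alpha N^{\kappa_\alpha}_{\rho^{(-1)}_\alpha\cdots\rho^{(t)}_\alpha}$ of (\ref{For bfN many}) by part~(d) of that proposition — and collecting terms produces precisely $\bigoplus_{\kappa_\bullet}z^{\lambda_\bullet}_{\kappa_\bullet}L_{\kappa_\bullet;\emptyset_\bullet}$ with $z^{\lambda_\bullet}_{\kappa_\bullet}$ as in the statement. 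The index range $(|\kappa_\bullet|;0_\bullet)\in{\bf P}^q(|\lambda_\bullet|;0_\bullet)$ and the total length $1+q^{(|\lambda_\bullet|;0_\bullet)}$ then follow from the additivity (\ref{For Pq additive}), the computation (\ref{For Pql compute}) and the splitting (\ref{For bfPl and mcPl disjU}), together with the additivity of the weight $q^{(\cdot)}$.

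\emph{Main obstacle.} The one genuinely delicate step is the two appeals to ``Künneth product equals socle filtration'' — for the tensor powers $(V^*/V^*_\beta)^{\otimes|\lambda_\beta|}$ and for the tensor product over the slots $\beta$ — namely the claim that no simple constituent sinks below the layer prescribed by the product filtration. This is the phenomenon already handled in Proposition~\ref{Prop socqJlm}, controlled by the density of the $\mk{gl}^M_\alpha$-actions furnished by Lemma~\ref{Lemma LinIndepToAny} and fed through Proposition~\ref{Prop DenseProperties}, in an induction over slots at each step of which the ideal being exploited acts trivially on the factor already assembled and densely with scalar endomorphisms on the factor being adjoined. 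Here the argument is lighter than in Proposition~\ref{Prop socqJlm}: with only $V^*$-type tensorands no contraction morphism is available, so the socle of a tensor product is literally the tensor product of the socles and there are no extra kernels to intersect (contrast Example~\ref{Exa socJ11}). The companion fact that forming the associated graded commutes with applying a Schur functor is routine, since the Schur functor is an idempotent-cut summand of a tensor power and the idempotent commutes with the $\mk{gl}^M$-action that defines the socle filtration.
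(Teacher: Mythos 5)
Your proof is correct and takes essentially the same approach as the paper's, which decomposes $J_{\lambda_\bullet;\emptyset_\bullet}$ into the single-slot tensorands $J_{\lambda_\beta;\emptyset_\bullet}=(V^*/V^*_\beta)_{\lambda_\beta}$, computes their socle filtrations (the paper simply cites \cite[Proposition~4.30]{Chirvasitu-Penkov-RC}, whereas you derive the formula from the Cauchy identity), and reassembles via the K\"unneth product justified by the density arguments of Lemma~\ref{Lemma LinIndepToAny} and Proposition~\ref{Prop DenseProperties}. Your version fills in details the paper delegates to the cited reference and to the proof of Proposition~\ref{Prop socqJlm}, but the underlying strategy is identical.
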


\begin{proof}
The lemma is a reformulation of \cite[Proposition~4.30]{Chirvasitu-Penkov-RC} in our notation. The proof is done in steps, first observing that for every $\beta\in\{-1,...,t\}$
\begin{gather*}
\ul{\rm soc}^{q+1}J_{\lambda_\beta;\emptyset_\bullet} \cong \bigoplus\limits_{\rho_\bullet\in\Lambda^{t+2}:(|\rho_\bullet|;0_\bullet)\in{\bf P}^q(|\lambda_\beta|;0_\bullet)} N^{\lambda_{\beta}}_{\rho_\beta...\rho_t} \cdot L_{\rho_\bullet;\emptyset_\bullet} \;,\\
\end{gather*}
and then using the decomposition
$$
\ul{\rm soc}^{q+1}J_{\lambda_\bullet;\emptyset_\bullet} \cong \bigoplus\limits_{j_{-1}+...j_t=q} \bigotimes\limits_{\beta=-1}^t \ul{\rm soc}^{j_\beta+1}J_{\lambda_\beta;\emptyset_\bullet} \;.
$$
Note that the sets ${\bf P}^{j_\beta}(|\lambda_\beta|;0_\bullet)$ and ${\bf P}^{q}(|\lambda_\bullet|;0_\bullet)$ are described in the proof of Proposition \ref{Prop socqJlm}.
\end{proof}

Working towards the socle filtration of $J_{\lambda_\bullet,\lambda;\mu,\mu_\bullet}$, the decomposition $J_{\lambda_\bullet,\lambda;\mu,\mu_\bullet}=J_{\lambda_\bullet,\lambda;\emptyset}\otimes J_{\emptyset;\mu,\mu_\bullet}$ leads us to consider, for $k\in\ZZ_{\geq0}$, the semisimple $\mk{gl}^M$-module
\begin{gather}\label{For module Sk}
\begin{array}{rl}
Z^{k+1}_{\lambda_\bullet,\lambda;\mu,\mu_\bullet} :=& \bigoplus\limits_{i+j=k} {\rm soc}(\ul{\rm soc}^{i+1}J_{\lambda_\bullet,\lambda;\emptyset}\otimes\ul{\rm soc}^{j+1}J_{\emptyset;\mu,\mu_\bullet}) \\
\cong & \bigoplus\limits_{(\kappa_\bullet,\kappa;\nu,\nu_\bullet)\in\bm\Lambda:(|\kappa_\bullet|,|\kappa|;|\nu|,|\nu_\bullet|)\in{\bf P}^i(|\lambda_\bullet|,|\lambda|;0,0_\bullet)\times{\bf P}^j(0_\bullet,0;|\mu|,|\mu_\bullet|),i+j=k} z^{\lambda,\lambda_\bullet}_{\kappa,\kappa_\bullet} z^{\mu,\mu_\bullet}_{\nu,\nu_\bullet} \cdot L_{\kappa_\bullet,\kappa;\nu,\nu_\bullet}
\end{array} 
\end{gather}
whose decomposition is derived from Lemma \ref{Lemma soc q Jlambda} and Proposition \ref{Prop TT tensor prod simples}.

\begin{prop}\label{Prop soc q Jlambdamu}
Let $\bm\lambda=(\lambda_\bullet,\lambda;\mu,\mu_\bullet)\in \bm\Lambda$. Then
\begin{align*}
\ul{\rm soc}^{q+1}J_{\lambda_\bullet,\lambda;\mu,\mu_\bullet} & \cong \bigoplus\limits_{j+k=q} \bigoplus\limits_{\xi,\eta\in\Lambda} {\rm Hom}(V_{\xi;\eta},\ul{\rm soc}^{j+1}(V_{*\lambda}\otimes V_{\mu}))\otimes Z^{k+1}_{\lambda_{\bullet},\xi;\eta,\mu_{\bullet}} \\
& \cong \bigoplus\limits_{\bm\kappa=(\kappa_\bullet,\kappa;\nu,\nu_\bullet)\in\bm\Lambda:|\bm\kappa|\in{\bf P}^q(|\bm\lambda|)} \left( \sum\limits_{\xi,\eta\in\Lambda} z^{\xi,\lambda_{\bullet}}_{\kappa,\kappa_\bullet} h^{\lambda;\mu}_{\xi;\eta} z^{\eta,\mu_{\bullet}}_{\nu,\nu_\bullet} \right) \cdot L_{\kappa_\bullet,\kappa;\nu,\nu_\bullet} \;.
\end{align*}
\end{prop}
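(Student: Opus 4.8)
The plan is to read off the socle filtration of $J_{\lambda_\bullet,\lambda;\mu,\mu_\bullet}$ from the decomposition
$$J_{\lambda_\bullet,\lambda;\mu,\mu_\bullet}\cong J_{\lambda_\bullet,\lambda;\emptyset}\otimes J_{\emptyset;\mu,\mu_\bullet}\,,$$
observing that, in the extended indexing of Remark~\ref{Rem index lm} (with $\lambda$, resp.\ $\mu$, placed in the $(-1)$-slot), both tensorands are instances of the modules treated in Lemma~\ref{Lemma soc q Jlambda}. Thus the layers of the socle filtration of $J_{\lambda_\bullet,\lambda;\emptyset}$ are the semisimple modules $\bigoplus_{\kappa_\bullet,\kappa}z^{\lambda,\lambda_\bullet}_{\kappa,\kappa_\bullet}L_{\kappa_\bullet,\kappa;\emptyset}$, and those of $J_{\emptyset;\mu,\mu_\bullet}$ are $\bigoplus_{\nu,\nu_\bullet}z^{\mu,\mu_\bullet}_{\nu,\nu_\bullet}L_{\emptyset;\nu,\nu_\bullet}$. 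I would also record, via Proposition~\ref{Prop TT tensor prod simples}(b), Theorem~\ref{Theo PS soc filt Vlambda TENS Vmu} and Example~\ref{Exa socJ11}, that the tensor product $L_{\kappa_\bullet,\kappa;\emptyset}\otimes L_{\emptyset;\nu,\nu_\bullet}$ of two such simple constituents has socle filtration with layers $L_{\kappa_\bullet,\emptyset;\emptyset,\nu_\bullet}\otimes\ul{\rm soc}^{m+1}((V_*)_\kappa\otimes V_\nu)$, and in particular socle $L_{\kappa_\bullet,\kappa;\nu,\nu_\bullet}$.

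The central step is to identify the socle filtration of $J_{\lambda_\bullet,\lambda;\mu,\mu_\bullet}$ with the ``total'' filtration $H^\bullet$ obtained by first forming the K\"unneth product of the socle filtrations of the two tensorands and then refining each term $\ul{\rm soc}^{a+1}J_{\lambda_\bullet,\lambda;\emptyset}\otimes\ul{\rm soc}^{b+1}J_{\emptyset;\mu,\mu_\bullet}$ by the socle filtrations of its summands $L_{\kappa_\bullet,\kappa;\emptyset}\otimes L_{\emptyset;\nu,\nu_\bullet}$ described above. The inclusion $H^q\subseteq{\rm soc}^qJ_{\lambda_\bullet,\lambda;\mu,\mu_\bullet}$ is immediate because the layers of $H^\bullet$ are semisimple; the reverse inclusion --- equivalently, that no simple constituent occurs in an earlier socle layer than $H^\bullet$ predicts --- is the crux. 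I would establish it as in the proof of Proposition~\ref{Prop socqJlm}: by Lemma~\ref{Lemma LinIndepToAny} the ideals $\mk{gl}^M_\alpha$ and $\mk{sl}(V,V_*)$ act densely on the relevant subquotients, so Proposition~\ref{Prop DenseProperties} applies with $R$ ranging over the simple modules $V_{\xi;\eta}$ (Proposition~\ref{Prop DenseVlambdamu PS}) and over the modules $L_{\kappa_\bullet,\emptyset;\emptyset,\nu_\bullet}$ (Theorems~\ref{Theo Simples and Injhulls} and~\ref{Theo EndoSimpl}), and forces each simple constituent to occur exactly where $H^\bullet$ puts it; one checks the hypotheses of Proposition~\ref{Prop DenseProperties}(a)--(b) at each stage. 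As in the proof of Theorem~\ref{Theo Simples and Injhulls}, the already known special cases (from \cite{Chirvasitu-Penkov-RC} and \cite{Chirvasitu-Penkov-UTC}) can be combined with this density input to handle the cross-contractions.

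Granting the central step, the first displayed isomorphism follows by reorganizing the graded direct sum $H^{q+1}/H^q$. In $H^\bullet$ the inner diagrams $\lambda,\mu$ are spread across the filtration slots while their residues are contracted last; I would instead contract $\lambda$ against $\mu$ first. For each contraction depth $j$ this produces the semisimple module $\bigoplus_{\xi,\eta}{\rm Hom}(V_{\xi;\eta},\ul{\rm soc}^{j+1}((V_*)_\lambda\otimes V_\mu))\otimes V_{\xi;\eta}$ of Theorem~\ref{Theo PS soc filt Vlambda TENS Vmu}, and spreading the surviving diagrams $\xi,\eta$ together with the outer tensorands then produces precisely $Z^{k+1}_{\lambda_\bullet,\xi;\eta,\mu_\bullet}$ at remaining depth $k$. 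This exchange of contraction and spreading is a Littlewood--Richardson rearrangement, equivalent to the depth-graded identity
$$\sum_{\kappa',\nu'}z^{\lambda,\lambda_\bullet}_{\kappa',\kappa_\bullet}z^{\mu,\mu_\bullet}_{\nu',\nu_\bullet}\,h^{\kappa';\nu'}_{\kappa;\nu}=\sum_{\xi,\eta}h^{\lambda;\mu}_{\xi;\eta}\,z^{\xi,\lambda_\bullet}_{\kappa,\kappa_\bullet}z^{\eta,\mu_\bullet}_{\nu,\nu_\bullet}\,,$$
which in turn follows from the associativity and commutativity of Littlewood--Richardson coefficients, i.e.\ from the commutation of the spreading operation of Lemma~\ref{Lemma soc q Jlambda} with contraction. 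Summing over $j+k=q$ gives the first isomorphism, and the support condition $|\bm\kappa|\in{\bf P}^q(|\bm\lambda|)$ is then automatic from (\ref{For Pq additive})--(\ref{For Pql compute}). The second isomorphism is obtained by substituting the explicit decomposition of $Z^{k+1}$ (from Lemma~\ref{Lemma soc q Jlambda} and Proposition~\ref{Prop TT tensor prod simples}) and the values $h^{\lambda;\mu}_{\xi;\eta}$ from (\ref{For h lambdamuxieta}), and collecting multiplicities.

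The hard part is the central step --- ruling out any collapse of the socle filtration, i.e.\ that some $L_{\kappa_\bullet,\kappa;\nu,\nu_\bullet}$ predicted at layer $q+1$ already appears earlier --- together with the combinatorial rearrangement. The new feature over Proposition~\ref{Prop socqJlm} is that the inner contraction now carries a nontrivial Littlewood--Richardson structure governed by Theorem~\ref{Theo PS soc filt Vlambda TENS Vmu}, and that the inner tensorand $V^*_\lambda$ is itself non-semisimple, with socle filtration of length $t+2$, so its interaction with $\bar V_\mu$ must be disentangled by first isolating the contraction on the socles $(V_*)_\lambda$ and $V_\mu$ and only then convolving with the outer filtrations; the density statements of Lemma~\ref{Lemma LinIndepToAny} and Proposition~\ref{Prop DenseProperties} are exactly what legitimize this disentangling. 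The bookkeeping for the multiplicities in the second isomorphism is lengthy but routine.
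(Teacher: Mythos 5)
Your proposal is correct and overlaps with the paper's proof in the multiplicity computation (K\"unneth product of $J_{\lambda_\bullet,\lambda;\emptyset}\otimes J_{\emptyset;\mu,\mu_\bullet}$, Lemma~\ref{Lemma soc q Jlambda}, Proposition~\ref{Prop TT tensor prod simples}), but it takes a genuinely heavier route in the step you rightly identify as the crux. To pin down the grading of the socle filtration --- i.e.\ that no simple constituent sinks lower than the K\"unneth/total filtration $H^\bullet$ predicts --- you propose to re-run the density machinery (Lemma~\ref{Lemma LinIndepToAny}, Proposition~\ref{Prop DenseProperties}) as in the proof of Proposition~\ref{Prop socqJlm}. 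The paper instead observes that, by (\ref{For LJITlm is sum LJITlambdamu}), $J_{\bm\lambda}$ is a direct summand of $J_{|\bm\lambda|}$, so its socle filtration is the restriction of the one already computed in Proposition~\ref{Prop socqJlm}; from that result alone one reads off that $L_{\bm\kappa}$ sits in layer $q+1$ precisely when $|\bm\kappa|\in{\bf P}^q(|\bm\lambda|)$, and the only thing left is to count multiplicities, which K\"unneth plus Proposition~\ref{Prop TT tensor prod simples} does. The summand observation is free and avoids redoing the density analysis, so the paper's version is shorter; your version is self-contained at the cost of redundancy. One further point in your favor: you make explicit the Littlewood--Richardson rearrangement
$$
\sum_{\kappa',\nu'}z^{\lambda,\lambda_\bullet}_{\kappa',\kappa_\bullet}z^{\mu,\mu_\bullet}_{\nu',\nu_\bullet}\,h^{\kappa';\nu'}_{\kappa;\nu}=\sum_{\xi,\eta}h^{\lambda;\mu}_{\xi;\eta}\,z^{\xi,\lambda_\bullet}_{\kappa,\kappa_\bullet}z^{\eta,\mu_\bullet}_{\nu,\nu_\bullet}
$$
needed to pass from the ``spread-then-contract'' form delivered by the K\"unneth product to the ``contract-then-spread'' form of the stated proposition; this identity is indeed true (it boils down to associativity and commutativity of Littlewood--Richardson coefficients) and is only implicit in the paper's phrase that the formula ``follows immediately.''
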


\begin{proof}
From Theorem \ref{Theo Simples and Injhulls} we know that $J_{\bm\lambda}$ is indecomposable with simple socle $L_{\bm\lambda}$. By (\ref{For LJITlm is sum LJITlambdamu}), $J_{\bm\lambda}$ appears as a direct summand in the module $J_{|\bm\lambda|}$. Furthermore, by Proposition \ref{Prop socqJlm}, the layer $\ul{\rm soc}^{q+1}J_{|\bm\lambda|}$ is a direct sum of modules of the form $L_{\bm k}$ with $\bm k\in {\bf P}^q(|\bm\lambda|)$. In turn, each $L_{\bm k}$ decomposes as a direct sum of modules of the form $L_{\bm\kappa}$ with $\bm\kappa\in\bm\Lambda$, $|\bm\kappa|=\bm k$. Hence, $\ul{\rm soc}^{q+1}J_{\bm\lambda}$ consists exactly of the simple subquotients of $J_{\bm\lambda}$ of isomorphic to some $L_{\bm\kappa}$ with $|\bm\kappa|\in{\bf P}^q(|\bm\lambda|)$. It remains to compute the multiplicity with which $L_{\bm\kappa}$ occurs as a subquotient of $J_{\bm\lambda}$. To this end, we start with the decomposition $J_{\lambda_\bullet,\lambda;\mu,\mu_\bullet}=J_{\lambda_\bullet,\lambda;\emptyset_\bullet}\otimes J_{\emptyset_\bullet;\mu,\mu_\bullet}$. The socle filtrations of the two tensorands are obtained from Lemma \ref{Lemma soc q Jlambda}. The tensor products of simple modules are described in Proposition \ref{Prop TT tensor prod simples}, and the formula for the multiplicities follows immediately.
\end{proof}

\subsubsection{The socle filtrations of the modules $I_{\lambda_\bullet,\lambda;\mu,\mu_\bullet}$ and $I_{l_\bullet,l;m,m_\bullet}$}\label{Sec Ilm Ilambdamu}

\begin{prop}\label{Prop soc q I}
The socle filtration of the module $I=\lim\limits_\to S^kQ$ defined in (\ref{For I}) is infinite and exhaustive: for $q\in\NN$ the $(q+1)$-st layer is given by
$$
\ul{\rm soc}^{q+1}I \cong \bigoplus\limits_{j+|\zeta|=q} \ul{\rm soc}^{j+1}J_{\zeta,\emptyset;\emptyset,\zeta} \cong \bigoplus\limits_{j+|\zeta|=j} Z^{j+1}_{\zeta,\emptyset;\emptyset,\zeta} \;.
$$
\end{prop}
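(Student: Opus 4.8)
The plan is to derive the socle filtration of $I$ from the defining filtration $I=\bigcup_k S^kQ$ together with the socle filtrations of the modules $J_{\zeta,\emptyset;\emptyset,\zeta}$ obtained in Proposition~\ref{Prop soc q Jlambdamu}.

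First I would fix notation: let $G_k\subseteq I$ be the image of $S^kQ$ under the canonical map to the direct limit. Since each $\iota_k$ is injective this map is injective, so $G_k\cong S^kQ$, the chain $\mk q=G_0\subseteq G_1\subseteq\cdots$ is exhaustive, and the sequences (\ref{For SkCQF}) give $G_k/G_{k-1}\cong S^kF$. The module $Q$ has finite length, being an extension of the finite-length module $F=(\bar V/\bar V_0)\otimes(V^*/V^*_0)$ by $\CC$; hence each $G_k=S^kQ$ has finite length. This already shows the socle filtration of $I$ is exhaustive, and it is infinite because $S^kF\ne 0$ for every $k$. Since the socle filtration commutes with the directed union of submodules, ${\rm soc}^nI=\bigcup_k{\rm soc}^n(S^kQ)$, so everything below can equivalently be carried out inside the finite-length modules $S^kQ$. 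Finally, the decomposition of the symmetric power of a tensor product (\S\ref{Sec Schur}) applied to $F$ gives $S^kF\cong\bigoplus_{|\zeta|=k}(\bar V/\bar V_0)_\zeta\otimes(V^*/V^*_0)_\zeta\cong\bigoplus_{|\zeta|=k}J_{\zeta,\emptyset;\emptyset,\zeta}$; consequently $\bigoplus_{j+|\zeta|=q}\ul{\rm soc}^{j+1}J_{\zeta,\emptyset;\emptyset,\zeta}=\bigoplus_{k\geq 0}\ul{\rm soc}^{\,q+1-k}(S^kF)$, and the case $\lambda=\mu=\emptyset$ of Proposition~\ref{Prop soc q Jlambdamu} identifies this with the $Z$-expression in the statement.

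The proposition thus reduces to the compatibility of the two filtrations: for all $n,k$ one should have $\pi_k({\rm soc}^nI\cap G_k)={\rm soc}^{\,n-k}(S^kF)$, where $\pi_k:G_k\twoheadrightarrow S^kF$ is the projection and ${\rm soc}^{\le 0}:=0$; granting this, $\ul{\rm soc}^{q+1}I=\bigoplus_k\ul{\rm soc}^{\,q+1-k}(S^kF)$, which is what we want. I would prove the compatibility by induction on $k$, the case $k=0$ being $G_0=\mk q={\rm soc}\,I$ (recall that $I$ is an essential extension of $\mk q$). The engine is the morphism $\psi:I\to F\otimes I$ of \S\ref{Sec Module I}: its kernel is $\mk q=G_0$, so ${\rm soc}^{n+1}I/\mk q={\rm soc}^n(I/\mk q)$ and $\psi$ embeds $I/\mk q$ as a submodule of $F\otimes I$; moreover $\psi(G_k)\subseteq F\otimes G_{k-1}$, because $\psi$ restricted to $G_k$ is $\psi^k:S^kQ\to F\otimes S^{k-1}Q$ followed by $F\otimes(G_{k-1}\hookrightarrow I)$, the gluing being guaranteed by the identity $\psi^{k+1}\circ\iota_k=({\rm id}\otimes\iota_{k-1})\circ\psi^k$. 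Hence $\psi$ embeds $G_k/\mk q$ into $F\otimes G_{k-1}$, and the socle filtration of $G_k$ is read off from that of $F\otimes G_{k-1}$. The latter is computed by the K\"unneth-plus-density method already used for Propositions~\ref{Prop socqJlm} and \ref{Prop soc q Jlambdamu}: the socle filtration of $F$ is finite and explicit (its $(p+1)$-st layer is $\bigoplus_{a+b=p}(\bar V_{a+1}/\bar V_a)\otimes(V^*_{b+1}/V^*_b)$) and that of $G_{k-1}$ is known by induction, so Proposition~\ref{Prop DenseProperties} applied to the chain of ideals $\mk{gl}^M_\alpha$ — with the density input Lemma~\ref{Lemma LinIndepToAny} — shows that in $F\otimes G_{k-1}$ no simple constituent of $\ul{\rm soc}^{p+1}F\otimes\ul{\rm soc}^{r+1}G_{k-1}$ descends below socle layer $p+r+1$; feeding this back through $\psi$ yields the desired layer of $G_k$.

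The main obstacle is exactly this control of the socle filtration of the tensor products $F\otimes G_{k-1}$ (equivalently $F\otimes I$): one must show that the K\"unneth filtration is genuinely the socle filtration, i.e. that no constituent collapses into a lower layer. In contrast to Proposition~\ref{Prop socqJlm}, neither tensor factor is semisimple and the layers of $F$ involve all the subspaces $V^*_\alpha,\bar V_\beta$, so the non-collapse must be verified layer by layer against the full chain $\mk{gl}^M_0\subset\mk{gl}^M_1\subset\cdots$ of ideals; this is the delicate part of the induction. The remaining work — pushing multiplicities through Proposition~\ref{Prop soc q Jlambdamu} and the coefficients $z^{\cdot,\cdot}_{\cdot,\cdot}$, $h^{\cdot;\cdot}_{\cdot;\cdot}$ — is routine, and for $t=0$ the whole argument specializes to the one in \cite{Chirvasitu-Penkov-UTC}.
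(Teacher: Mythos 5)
Your argument and the paper's both start the same way: the direct limit of finite-length modules forces exhaustivity, the layers of the defining filtration are $S^kF\cong\bigoplus_{|\zeta|=k}J_{\zeta,\emptyset;\emptyset,\zeta}$, and Proposition~\ref{Prop soc q Jlambdamu} turns the claimed layer expression into a candidate filtration of $I$ with semisimple layers. Because the candidate filtration has semisimple layers it is automatically contained in the socle filtration; the only real work is the ``no descent'' inequality, i.e.\ ${\rm soc}^nI\subseteq C^n$. Here you and the paper diverge, and the divergence matters.

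You propose to prove no-descent by pushing $G_k/\mathfrak q$ through $\psi$ into $F\otimes G_{k-1}$ and then controlling the socle filtration of $F\otimes G_{k-1}$ by the K\"unneth-plus-density method. This is a genuinely different route from the paper's, and it is not obviously wrong, but you have left precisely the load-bearing step unproved. Proposition~\ref{Prop DenseProperties}(b), the density tool you cite, is set up for the situation where one tensorand $R$ is a \emph{simple} module carrying a dense action of an ideal $\mathfrak J$; it lets you tensor an essential inclusion of $\mathfrak G/\mathfrak J$-modules by $R$ and keep essentiality. In the applications in Propositions~\ref{Prop socqJlm} and~\ref{Prop soc q Jlambdamu} this is used with $R$ a simple constituent of one factor and the other factor built out of the $J_{\bm l}$'s, whose layers and essential inclusions are already under tight control. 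In your $F\otimes G_{k-1}$, neither factor is simple, $G_{k-1}=S^{k-1}Q$ is not one of the $J_{\bm l}$'s, and the constituents of $F$ range over all of the spaces $V^*_\alpha, \bar V_\beta$ — so which ideal $\mathfrak{gl}^M_\alpha$ you invoke depends on which pair of layers you are in, and you would have to check, constituent by constituent, that the ideal that is supposed to see the essentiality in $G_{k-1}$ acts trivially on the relevant piece of $F$ (and vice versa). You flag this as ``the delicate part of the induction'' and do not carry it out. That is a genuine gap, not just omitted routine bookkeeping: the verification is exactly the content of the proposition, and nothing in what you have written forces it through.

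The paper sidesteps this tensor-product analysis entirely. It produces the lower bound on the socle filtration of $I$ from \emph{inside} $I$: the submodule $I^{1,1}\subset I$ (built from $Q^{1,1}=\tilde{\bf p}(\bar V_1\otimes V^*_1)$) has the property that its defining filtration coincides with its socle filtration, with $\ul{\rm soc}^{k+1}(I^{1,1})\cong\bigoplus_{|\zeta|=k}L_{\zeta,\emptyset;\emptyset,\zeta}$. Since for a submodule $N\subseteq M$ one has ${\rm soc}^n N=N\cap{\rm soc}^n M$, the socle layers of $I^{1,1}$ embed into those of $I$, so the simples $L_{\zeta,\emptyset;\emptyset,\zeta}$ with $|\zeta|=k$ — exactly the socle of the $k$-th defining layer $S^kF$ — cannot sink below layer $k+1$ in $I$; an induction on the layer index $j$ then propagates the bound to the rest of $\ul{\rm soc}^{j+1}(S^kF)$. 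No analysis of $F\otimes G_{k-1}$ is needed. If you want to keep your $\psi$-based route, the thing to supply is a precise statement and proof of the no-descent property for $F\otimes G_{k-1}$ (equivalently a direct computation of ${\rm soc}^\bullet(F\otimes I)$); as it stands, the proposal replaces the theorem with an assertion of comparable difficulty.
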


\begin{proof}
The socle filtration of a direct limit of modules of finite length is always exhaustive. The layers of the defining filtration of $I$ are (see formula (\ref{For SkCQF}))
$$
S^kQ/S^{k-1}Q\cong S^k(V^*/V_*\otimes \bar V/V)=S^k J_{1,0;0,1} \cong \bigoplus\limits_{|\zeta|=k} J_{\zeta,\emptyset;\emptyset,\zeta} \;.
$$
By Proposition \ref{Prop soc q Jlambdamu} we have
$$
\ul{\rm soc}^{j+1}(S^kQ/S^{k-1}Q)\cong\bigoplus\limits_{|\zeta|=k} Z^{j+1}_{\zeta,\emptyset;\emptyset,\zeta} \;.
$$
This yields a filtration of $I$ with layers as indicated in our statement. To show that this is the socle filtration, it remains to show that no simple constituent appears in a lower socle than expected. It suffices to prove the statement for the submodule $S^kQ\subset I$, and we will do this by induction on $k$. The case $k=0$ is trivial. By Theorem \ref{Theo Simples and Injhulls} we have
\begin{gather}\label{For socSkJ1001}
{\rm soc}(S^kQ/S^{k-1}Q) \cong {\rm soc}(S^kJ_{1,0;0,1})\cong \bigoplus\limits_{|\zeta|=k} L_{\zeta,\emptyset;\emptyset,\zeta} \;.
\end{gather}
On the other hand, we have the finite filtration $\CC=I^{0,0}\subset I^{1,1}\subset... \subset I^{t+1,t+1}=I$ following from the definition of $I^{r,s}$ in (\ref{For Irs}). The submodule $I^{1,1}\subset I$ has the module (\ref{For socSkJ1001}) as the $k+1$-st layer of its defining filtration $I^{1,1}=\lim\limits_{k\to\infty} S^{k}Q^{1,1}$. Note that for $I^{1,1}$ the defining filtration coincides with its socle filtration, i.e.,
$$
\ul{\rm soc}^{k+1}(I^{1,1})\cong S^k(V_{1}^*/V_*\otimes \bar V_{1}/V) \cong \bigoplus\limits_{|\zeta|=k} L_{\zeta,\emptyset;\emptyset,\zeta} \;.
$$
It follows that ${\rm soc}(S^kQ/S^{k-1}Q)\subset \ul{\rm soc}^{k+1}I$ and, by induction on $j$, $\ul{\rm soc}^{j+1}(S^kQ/S^{k-1}Q)\subset \ul{\rm soc}^{j+k+1}I$. This completes the proof.
\end{proof}

\begin{prop}\label{Prop soc q Ilambdamu}
For $(\lambda_\bullet,\lambda;\mu,\mu_\bullet)\in \bm\Lambda$ the layers of the socle filtration of $I_{\lambda_\bullet,\lambda;\mu,\mu_\bullet}$ are
\begin{align*}
\ul{\rm soc}^{q+1}(J_{\lambda_\bullet,\lambda;\mu,\mu_\bullet}\otimes I) & \cong \bigoplus\limits_{j+k=q} \ul{\rm soc}^{j+1}J_{\lambda_\bullet,\lambda;\mu,\mu_\bullet} \otimes \ul{\rm soc}^{k+1}I \\
& \cong \bigoplus\limits_{i+j+k+|\zeta|=q} {\rm Hom}(V_{\xi;\eta},\ul{\rm soc}^{i+1}(V_{*\lambda}\otimes V_{\mu}))\otimes Z^{j+1}_{\lambda_{\bullet},\xi;\eta,\mu_{\bullet}} \otimes Z^{k+|\zeta|+1}_{\zeta,\emptyset;\emptyset,\zeta} \\
&\qquad{^{\xi,\eta,\zeta\in\Lambda}} \qquad\qquad\qquad\qquad\qquad \\
& \cong \bigoplus\limits_{|\lambda|-|\xi|+j+k=q} h^{\lambda;\mu}_{\xi;\eta} \cdot Z^{j+1}_{\lambda_{\bullet},\xi;\eta,\mu_{\bullet}} \otimes Z^{k+|\zeta|+1}_{\zeta,\emptyset;\emptyset,\zeta} \;,\\
& \quad {^{\xi,\eta,\zeta\in\Lambda}}
\end{align*}
where the numbers $h^{\lambda;\mu}_{\xi;\eta}$ are defined in (\ref{For h lambdamuxieta}).
\end{prop}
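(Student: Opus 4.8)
The plan is to realize the socle filtration of $I_{\lambda_\bullet,\lambda;\mu,\mu_\bullet}=J_{\lambda_\bullet,\lambda;\mu,\mu_\bullet}\otimes I$ as the K\"unneth product of the socle filtration of $J_{\lambda_\bullet,\lambda;\mu,\mu_\bullet}$, obtained in Proposition~\ref{Prop soc q Jlambdamu}, with the socle filtration of $I$, obtained in Proposition~\ref{Prop soc q I}. Concretely, I would set $F^{q+1}:=\sum_{j+k=q}{\rm soc}^{j+1}J_{\lambda_\bullet,\lambda;\mu,\mu_\bullet}\otimes{\rm soc}^{k+1}I$ and aim to show that $F^\bullet$ is the socle filtration of $I_{\lambda_\bullet,\lambda;\mu,\mu_\bullet}$, so that $\ul{\rm soc}^{q+1}I_{\lambda_\bullet,\lambda;\mu,\mu_\bullet}\cong\bigoplus_{j+k=q}\ul{\rm soc}^{j+1}J_{\lambda_\bullet,\lambda;\mu,\mu_\bullet}\otimes\ul{\rm soc}^{k+1}I$. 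A preliminary point is that each summand on the right is genuinely semisimple: by Proposition~\ref{Prop soc q I} every simple constituent of $\ul{\rm soc}^{k+1}I$ is of the form $L_{\kappa_\bullet,\emptyset;\emptyset,\nu_\bullet}$, so the third condition of Proposition~\ref{Prop TT tensor prod simples}(c) (namely $\lambda'=\mu'=\emptyset$) is satisfied and the tensor product of such a module with any $L_{\bm\kappa}$ occurring in $\ul{\rm soc}^{j+1}J_{\lambda_\bullet,\lambda;\mu,\mu_\bullet}$ is semisimple and decomposes by Proposition~\ref{Prop TT tensor prod simples}(d).

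To prove that $F^\bullet$ is the socle filtration, I would follow the template of the proofs of Propositions~\ref{Prop socqJlm} and~\ref{Prop soc q I}. Since $I=\lim\limits_\to S^kQ$, the module $I_{\lambda_\bullet,\lambda;\mu,\mu_\bullet}=\lim\limits_\to\bigl(J_{\lambda_\bullet,\lambda;\mu,\mu_\bullet}\otimes S^kQ\bigr)$ is a direct limit of modules of finite length, hence its socle filtration is exhaustive and it suffices to verify the claim on each finite-length submodule $J_{\lambda_\bullet,\lambda;\mu,\mu_\bullet}\otimes S^kQ$. The inclusion $F^{q+1}\subseteq{\rm soc}^{q+1}I_{\lambda_\bullet,\lambda;\mu,\mu_\bullet}$ is then automatic, because by the previous paragraph $F^\bullet$ is a finite filtration with semisimple layers. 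For the reverse inclusion one must check that no simple subquotient of $I_{\lambda_\bullet,\lambda;\mu,\mu_\bullet}$ sits in a strictly lower socle layer than $F^\bullet$ predicts. Here I would argue as in Proposition~\ref{Prop soc q I}, bootstrapping along the finite filtration $\CC=I^{0,0}\subset I^{1,1}\subset\dots\subset I^{t+1,t+1}=I$ of (\ref{For Irs}) — on each $I^{r,s}$ the defining filtration is already the socle filtration — and inducting on $k$; the fact that no constituent descends is a consequence of the density statements of Proposition~\ref{Prop DenseVlambdamu PS} and Lemma~\ref{Lemma LinIndepToAny} via Proposition~\ref{Prop DenseProperties}(b), applied to the ideal $\mk{gl}(V,V_*)$ (which acts densely on the inner factor $V_{\lambda;\mu}$ and trivially on $I$ and on the outer tensorands) and to the ideals $\mk{gl}^M_\alpha$ (which act densely on the appropriate quotients $(V^*_{\alpha+1}/V^*_\alpha)$, $(\bar V_{\alpha+1}/\bar V_\alpha)$ and trivially on the complementary tensorands), so that essential extensions among the relevant submodules of tensor powers remain essential after tensoring.

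Once the claim is established, the displayed formulas follow by bookkeeping. Substituting into $\bigoplus_{j+k=q}\ul{\rm soc}^{j+1}J_{\lambda_\bullet,\lambda;\mu,\mu_\bullet}\otimes\ul{\rm soc}^{k+1}I$ the expression for $\ul{\rm soc}^{j+1}J_{\lambda_\bullet,\lambda;\mu,\mu_\bullet}$ from Proposition~\ref{Prop soc q Jlambdamu} and for $\ul{\rm soc}^{k+1}I$ from Proposition~\ref{Prop soc q I}, and re-indexing the resulting sums (which involve the modules $Z^{\bullet}_{\lambda_\bullet,\xi;\eta,\mu_\bullet}$ and $Z^{\bullet}_{\zeta,\emptyset;\emptyset,\zeta}$ of (\ref{For module Sk})), yields the second displayed isomorphism; then Theorem~\ref{Theo PS soc filt Vlambda TENS Vmu} rewrites ${\rm Hom}(V_{\xi;\eta},\ul{\rm soc}^{i+1}((V_*)_\lambda\otimes V_\mu))$ as $h^{\lambda;\mu}_{\xi;\eta}$, which forces $i=|\lambda|-|\xi|$ and collapses the sum over $i$, giving the third isomorphism. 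The main obstacle is the middle step: rigorously excluding a descent of simple constituents in the presence of the infinite-length module $I$, which requires combining the direct-limit reduction and bootstrapping of Proposition~\ref{Prop soc q I} with the density arguments, and verifying ideal by ideal that each of $\mk{gl}(V,V_*)$ and $\mk{gl}^M_\alpha$ acts trivially on the tensor factors complementary to the one on which it is dense, so that Proposition~\ref{Prop DenseProperties}(b) applies layerwise.
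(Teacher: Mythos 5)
Your proposal is correct and follows essentially the same route as the paper. The paper's proof is terse: it cites Propositions \ref{Prop soc q Jlambdamu} and \ref{Prop soc q I} and then states, as a one-sentence key observation, that all simple subquotients of $I$ (which are of the form $L_{\kappa_\bullet,\emptyset;\emptyset,\nu_\bullet}$) have the property that tensoring by them sends semisimples to semisimples and essential inclusions to essential inclusions, after which the K\"unneth-type formula and the substitution are immediate. Your argument spells out exactly these two claims: the semisimplicity of the layers of $F^\bullet$ via Proposition \ref{Prop TT tensor prod simples}(c) with $\lambda'=\mu'=\emptyset$, and the ``no descent'' property via the direct-limit/exhaustive socle filtration argument and Proposition \ref{Prop DenseProperties}(b) applied to $\mk{gl}(V,V_*)$ and the $\mk{gl}^M_\alpha$ (which is precisely the mechanism underlying the paper's ``essential extensions yield essential extensions''). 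The bookkeeping step at the end also matches. So this is a more detailed write-up of the same proof, not a genuinely different one.
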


\begin{proof}
The socle filtrations of $J_{\lambda_\bullet,\lambda;\mu,\mu_\bullet}$ and $I$ are described in Propositions \ref{Prop soc q Jlambdamu} and \ref{Prop soc q I}, respectively. All simple subquotients of $I$ have the property that their tensor products with semisimple tensor modules are semisimple and their tensor products with essential extensions yield essential extensions. The implies the first line in the above formula. The rest follows from the expressions for $\ul{\rm soc}^{j+1}J_{\lambda_\bullet,\lambda;\mu,\mu_\bullet}$ and $\ul{\rm soc}^{k+1}I$ given in Propositions \ref{Prop soc q Jlambdamu} and \ref{Prop soc q I}.
\end{proof}

\begin{coro}\label{Coro soc filt Ilm}
For $\bm l=(l_\bullet,l;m,m_\bullet)\in \mc P$ the socle filtration of $I_{\bm l}$, and its layers, are
\begin{gather*}
{\rm soc}^{q+1}I_{\bm l} =\bigcap\limits_{f\in\Xi^{q+1}(I_{\bm l})} {\rm ker}f \;,\quad \ul{\rm soc}^{q+1}I_{\bm l} \cong \bigoplus\limits_{j+k=q} \ul{\rm soc}^{j+1}J_{\bm l} \otimes \ul{\rm soc}^{k+1}I \;.
\end{gather*}
Consequently, ${\rm Hom}(L_{\bm k},\ul{\rm soc}^{q+1}I_{\bm l})$ implies $\bm k\in\mc P^{q}(\bm l)$.
\end{coro}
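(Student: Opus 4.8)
The plan is to deduce all three assertions from the socle-filtration formulas already proved: Proposition~\ref{Prop soc q Ilambdamu} for the modules $I_{\bm\lambda}$, Proposition~\ref{Prop socqJlm} for the modules $J_{\bm l}$, and Proposition~\ref{Prop soc q I} for $I$, together with the multiplicity-space decompositions~(\ref{For LJITlm is sum LJITlambdamu}). For the layer formula one writes $I_{\bm l}=I\otimes J_{\bm l}$ and uses~(\ref{For LJITlm is sum LJITlambdamu}) to decompose $I_{\bm l}=\bigoplus_{|\bm\lambda|=\bm l}\CC^{\bm\lambda}\otimes I_{\bm\lambda}$ and $J_{\bm l}=\bigoplus_{|\bm\lambda|=\bm l}\CC^{\bm\lambda}\otimes J_{\bm\lambda}$; since the socle filtration commutes with direct sums and with tensoring by a fixed multiplicity space, this gives $\ul{\rm soc}^{q+1}I_{\bm l}\cong\bigoplus_{|\bm\lambda|=\bm l}\CC^{\bm\lambda}\otimes\ul{\rm soc}^{q+1}I_{\bm\lambda}$. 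Substituting $\ul{\rm soc}^{q+1}I_{\bm\lambda}\cong\bigoplus_{j+k=q}\ul{\rm soc}^{j+1}J_{\bm\lambda}\otimes\ul{\rm soc}^{k+1}I$ from Proposition~\ref{Prop soc q Ilambdamu} and regrouping, using $\bigoplus_{|\bm\lambda|=\bm l}\CC^{\bm\lambda}\otimes\ul{\rm soc}^{j+1}J_{\bm\lambda}=\ul{\rm soc}^{j+1}J_{\bm l}$, yields $\ul{\rm soc}^{q+1}I_{\bm l}\cong\bigoplus_{j+k=q}\ul{\rm soc}^{j+1}J_{\bm l}\otimes\ul{\rm soc}^{k+1}I$. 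The same bookkeeping, run inductively on $q$, upgrades this to the equality of filtrations ${\rm soc}^{q+1}I_{\bm l}=\sum_{j+k=q}{\rm soc}^{j+1}J_{\bm l}\otimes{\rm soc}^{k+1}I$, i.e.\ the socle filtration of $I_{\bm l}$ is the K\"unneth product of those of $J_{\bm l}$ and $I$.

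For the kernel description ${\rm soc}^{q+1}I_{\bm l}=\bigcap_{f\in\Xi^{q+1}(I_{\bm l})}\ker f$ I would induct on $q$. In the base case $q=0$ the type-(i),(ii),(iii) morphisms in $\Xi^1(I_{\bm l})$ are precisely the maps ${\rm id}_I\otimes g$ with $g\in\Xi^1(J_{\bm l})$, so their common kernel is $I\otimes{\rm soc}\,J_{\bm l}$ by the $q=0$ case of Proposition~\ref{Prop socqJlm}, while $\psi_{\bm l}=\psi\otimes{\rm id}_{J_{\bm l}}$ has kernel $({\rm ker}\,\psi)\otimes J_{\bm l}=\mk q\otimes J_{\bm l}={\rm soc}\,I\otimes J_{\bm l}$; intersecting these subspaces gives $({\rm soc}\,I)\otimes({\rm soc}\,J_{\bm l})=L_{\bm l}={\rm soc}\,I_{\bm l}$. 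For the inclusion $\subseteq$ in general, the key point is that each elementary morphism $g\colon I_{\bm l}\to I_{\bm k}$ in $\Xi^1(I_{\bm l})$ lowers the socle degree by one, $g({\rm soc}^{r+1}I_{\bm l})\subseteq{\rm soc}^{r}I_{\bm k}$ for every $r$: for types~(i)--(iii) this follows from Proposition~\ref{Prop socqJlm} (if $h\in\Xi^{r}(J_{\bm k})$ then $h\circ g\in\Xi^{r+1}(J_{\bm l})$ annihilates ${\rm soc}^{r+1}J_{\bm l}$) together with the K\"unneth formula for $I_{\bm l}$, and for $g=\psi_{\bm l}$ it follows from ${\rm ker}\,\psi={\rm soc}\,I$ and the definitional identity ${\rm soc}^{r}(M/{\rm soc}\,M)={\rm soc}^{r+1}M/{\rm soc}\,M$ applied to $M=I$, again via the K\"unneth formula; composing $q+1$ such morphisms then annihilates ${\rm soc}^{q+1}I_{\bm l}$. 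For the reverse inclusion one first observes that ${\rm id}_I\otimes h\in\Xi^{q+1}(I_{\bm l})$ whenever $h\in\Xi^{q+1}(J_{\bm l})$, so that $\bigcap_{f}\ker f\subseteq I\otimes{\rm soc}^{q+1}J_{\bm l}$, and then shows that the $\psi$-involving morphisms of $\Xi^{q+1}(I_{\bm l})$ cut this submodule down precisely to $\sum_{j+k=q}{\rm soc}^{j+1}J_{\bm l}\otimes{\rm soc}^{k+1}I$; this last step uses ${\rm ker}\,\psi={\rm soc}\,I$ and the finite filtration $\CC=I^{0,0}\subset I^{1,1}\subset\dots\subset I^{t+1,t+1}=I$ of \S\ref{Sec Module I}, along whose $I^{1,1}$-part the defining and socle filtrations coincide, exactly as in the proof of Proposition~\ref{Prop soc q I} and as done for $t=0$ in \cite[\S~3]{Chirvasitu-Penkov-UTC}.

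The final assertion is then immediate from the layer formula: the simple constituents $L_{\bm k}$ of $\ul{\rm soc}^{q+1}I_{\bm l}$ are those of the semisimple modules $\ul{\rm soc}^{j+1}J_{\bm l}\otimes\ul{\rm soc}^{k+1}I$ with $j+k=q$; by Proposition~\ref{Prop socqJlm} a constituent $L_{\bm a}$ of $\ul{\rm soc}^{j+1}J_{\bm l}$ has $\bm a\in{\bf P}^{j}(\bm l)\subseteq\mc P^{j}(\bm l)$, and by Proposition~\ref{Prop soc q I} a constituent $L_{\bm b}$ of $\ul{\rm soc}^{k+1}I$ has $\bm b\in\mc P^{k}(\bm 0)$; since the constituents of $I$ tensor semisimply and the relevant outer Littlewood--Richardson products preserve total size, every $L_{\bm k}$ occurring in $L_{\bm a}\otimes L_{\bm b}$ has $\bm k=\bm a+\bm b$, whence $\bm k\in\mc P^{j}(\bm l)+\mc P^{k}(\bm 0)\subseteq\mc P^{q}(\bm l)$ by the additivity~(\ref{For Pq additive}), which holds for $\mc P$ as well.

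I expect the genuine obstacle to be the reverse inclusion in the kernel description, i.e.\ verifying that $\Xi^{q+1}(I_{\bm l})$ --- in particular the $\psi$-type morphisms together with the contractions $\tilde{\bf p}_{i,j}$ acting on the $F$-tensorands that $\psi$ creates --- is rich enough to detect, and to separate, all the simple subquotients of $I_{\bm l}$ lying immediately above ${\rm soc}^{q+1}I_{\bm l}$; this has no analogue in the purely ``$J$'' setting of Proposition~\ref{Prop socqJlm} and is where one must import the structure of $I$ as a direct limit of the $S^{k}Q$ and the internal filtration by the $I^{r,s}$. Everything else is K\"unneth-type bookkeeping on top of the socle formulas already established, and the combinatorial facts ${\bf P}^{j}(\bm l)\subseteq\mc P^{j}(\bm l)$ and $\mc P^{j}(\bm l)+\mc P^{k}(\bm 0)\subseteq\mc P^{q}(\bm l)$ used at the end follow routinely from Lemma~\ref{Lemma P1 and IP1} and~(\ref{For Pq additive}).
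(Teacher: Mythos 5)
Your proposal is correct and follows essentially the same route as the paper: the layer formula is deduced from Proposition~\ref{Prop soc q Ilambdamu} together with the multiplicity-space decompositions (\ref{For LJITlm is sum LJITlambdamu}), the kernel description is obtained by induction on $q$ using the same ingredients as in Proposition~\ref{Prop socqJlm} and the combinatorics of $\mc P(\bm l)$ via (\ref{For Pql compute}), and the final assertion is read off from the layer formula. The paper's proof is very terse; you have correctly supplied the details it leaves implicit, and you have rightly identified the reverse inclusion $\bigcap_{f\in\Xi^{q+1}(I_{\bm l})}\ker f\subseteq{\rm soc}^{q+1}I_{\bm l}$ (i.e.\ that the $\psi$-type morphisms detect everything above the $q+1$-st socle) as the step that carries real content beyond the purely $J$-based case.
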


\begin{proof}
The statement on the layers follows from \ref{Prop soc q Ilambdamu} and the decompositions of $I_{\bm l}$ and $J_{\bm l}$ given in (\ref{For LJITlm is sum LJITlambdamu}). The last statement of the corollary follows immediately. The expression for ${\rm soc}^{q+1}I_{\bm l} =\bigcap\limits_{f\in\Xi^{q+1}(I_{\bm l})} {\rm ker}f$ is then deduced by induction using (\ref{For Pql compute}) (applied for the poset $\mc P(\bm l)$), by an argument similar to the one applied for ${\rm soc}^{q+1}J_{\bm l}$ in Proposition \ref{Prop socqJlm}.
\end{proof}

\subsection{Order on the category $\TT_t$}\label{Sect Groth order}

\begin{theorem}\label{Theo TT Grothe Order}
The category $\TT_t$ is an ordered Grothendieck category with order-defining objects
\begin{gather*}
I_{\bm l} = I \otimes J_{\bm l} \;, \quad \bm l\in\mc P ,
\end{gather*}
parametrized by the poset $\mc P$ of Definition \ref{Def Poset P}, see (\ref{For LJITlm}). The socles of the order-defining objects are given by
\begin{gather*}
{\rm soc}I_{\bm l} = {\rm soc}J_{\bm l} = L_{\bm l} = \bigoplus\limits_{\bm\lambda\in\mc S_{\bm l}} \CC^{\bm\lambda}\otimes L_{\bm\lambda} \;,
\end{gather*}
where $\mc S_{\bm l}:=\{\bm\lambda\in\bm\Lambda:|\bm\lambda|=\bm l\}$.
\end{theorem}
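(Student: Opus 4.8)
The plan is to verify the six axioms (a)--(f) of Definition~\ref{Def OrdGrothCat} for $\TT_t$ equipped with the objects $I_{\bm l}$, $\bm l\in\mc P$, feeding in the socle computations of \S\ref{Sec Ilm Ilambdamu}. The socle identification itself is immediate from what has already been proved: ${\rm soc}\,I_{\bm l}=L_{\bm l}$ is the case $q=0$ of Corollary~\ref{Coro soc filt Ilm}, and ${\rm soc}\,J_{\bm l}=\bigoplus_{|\bm\lambda|=\bm l}\CC^{\bm\lambda}\otimes{\rm soc}\,J_{\bm\lambda}=\bigoplus_{|\bm\lambda|=\bm l}\CC^{\bm\lambda}\otimes L_{\bm\lambda}=L_{\bm l}$ by Theorem~\ref{Theo Simples and Injhulls} and (\ref{For LJITlm is sum LJITlambdamu}); in particular the set $\mc S_{\bm l}$ of simple types in the socle of $I_{\bm l}$ is $\{\bm\lambda\in\bm\Lambda:|\bm\lambda|=\bm l\}$, as asserted. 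So most of the work is organizational.

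Axioms (a)--(e) follow by routine arguments. For (a), each $I_{\bm l}=J_{\bm l}\otimes I=\varinjlim_k(J_{\bm l}\otimes S^kQ)$ is a directed union of submodules of finite length, so its socle filtration is exhaustive with layers of finite length (Corollary~\ref{Coro soc filt Ilm}); hence a nonzero quotient of $I_{\bm l}$ receives the nonzero image of some ${\rm soc}^qI_{\bm l}$, a nonzero module of finite length, and therefore has nonzero socle. For (b), $\TT_t$ is generated under subquotients and direct sums by $V^*$ and $\bar V$, so every object is a subquotient of a direct sum of modules $(V^*)^{\otimes a}\otimes\bar V^{\otimes b}=J_{0_\bullet,a;b,0_\bullet}$, and ${\rm soc}\,I=\CC$ yields an embedding $J_{0_\bullet,a;b,0_\bullet}\hookrightarrow I\otimes J_{0_\bullet,a;b,0_\bullet}=I_{0_\bullet,a;b,0_\bullet}$. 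For (c), the simple objects of $\TT_t$ are exactly the $L_{\bm\lambda}$, $\bm\lambda\in\bm\Lambda$, by Theorems~\ref{Theo Simples and Injhulls} and \ref{Theo EndoSimpl} together with (b); since the fibres of $|\cdot|\colon\bm\Lambda\to\mc P$ are precisely the sets $\mc S_{\bm l}$ and partition $\bm\Lambda$, each simple type occurs in one and only one $\mc S_{\bm l}$. For (d), a simple subquotient $L_{\bm\kappa}$ of $I_{\bm l}$ not lying in ${\rm soc}\,I_{\bm l}$ occurs in $\ul{\rm soc}^{q+1}I_{\bm l}$ for some $q\geq 1$, which by the last assertion of Corollary~\ref{Coro soc filt Ilm} forces $|\bm\kappa|\in\mc P^q(\bm l)$, hence $|\bm\kappa|\prec\bm l$; then $L_{\bm\kappa}\in\mc S_{|\bm\kappa|}$ with $|\bm\kappa|\prec\bm l$, as required. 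For (e), $I_{\bm l}\cong\bigoplus_{|\bm\lambda|=\bm l}I_{\bm\lambda}^{\oplus\dim\CC^{\bm\lambda}}$ by (\ref{For LJITlm is sum LJITlambdamu}), and each $I_{\bm\lambda}=J_{\bm\lambda}\otimes I$ has simple socle $L_{\bm\lambda}$ by the case $q=0$ of Proposition~\ref{Prop soc q Ilambdamu}.

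The remaining axiom (f) is the technical heart, and the step I expect to be the main obstacle. Fix $\bm k\prec\bm l$. One inclusion is formal: every morphism $g\colon I_{\bm l}\to I_{\bm k}$ annihilates $X_{\bm l\succ\bm k}$, since $g(X_{\bm l\succ\bm k})$ is at once a quotient of $X_{\bm l\succ\bm k}$, all of whose simple constituents $L_{\bm\kappa}$ satisfy $|\bm\kappa|\npreceq\bm k$, and a submodule of $I_{\bm k}$, all of whose constituents satisfy $|\bm\kappa|\preceq\bm k$ by (d); thus $g(X_{\bm l\succ\bm k})=0$ and $X_{\bm l\succ\bm k}\subseteq\bigcap_g\ker g$. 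For the reverse inclusion one must exhibit enough morphisms $I_{\bm l}\to I_{\bm k}$: concretely, these are to be assembled from the elementary projections, contractions and the maps $\psi$ of Definition~\ref{Def Xibml}, a member of $\Xi^q(I_{\bm l})$ reaching $I_{|\bm\kappa|}$ being available whenever $|\bm\kappa|\in\mc P^q(\bm l)$ by Lemma~\ref{Lemma P1 and IP1}. The substance of the verification is that the common kernel is no larger than $X_{\bm l\succ\bm k}$, equivalently that no simple constituent $L_{\bm\kappa}$ of $I_{\bm l}$ with $|\bm\kappa|\prec\bm k$ survives into the socle of $I_{\bm l}/X_{\bm l\succ\bm k}$; this reduces, via the explicit description of the layers $\ul{\rm soc}^{q+1}I_{\bm l}$ in Proposition~\ref{Prop soc q Ilambdamu} and the codomain-refined form of the identity ${\rm soc}^{q+1}I_{\bm l}=\bigcap_{f\in\Xi^{q+1}(I_{\bm l})}\ker f$ of Corollary~\ref{Coro soc filt Ilm}, to the non-vanishing of suitable extension multiplicities among consecutive layers (a constituent with $|\bm\kappa|\prec\bm k$ must admit a nonsplit extension by a constituent whose $|\cdot|$ is still $\preceq\bm k$, such an intermediate element of $\mc P(\bm l)$ existing because $\mc P(\bm l)$ carries a chain from $|\bm\kappa|$ up to $\bm k$ through every intermediate level). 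These multiplicities are governed by the Littlewood--Richardson-type numbers of \S\ref{Sec Jlm}, and the key input making the argument go through --- that no constituent lands in an unexpected layer --- is the density statement of Lemma~\ref{Lemma LinIndepToAny} applied to the ideals $\mk{gl}^M_\alpha$, exactly as in the proofs of Propositions~\ref{Prop socqJlm}, \ref{Prop soc q Jlambdamu} and \ref{Prop soc q Ilambdamu}. This combinatorial--homological bookkeeping, generalizing the treatment of the case $t=0$ in \cite{Chirvasitu-Penkov-UTC}, is where the real difficulty lies.
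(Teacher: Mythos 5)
Your proposal is correct and follows essentially the same route as the paper: verify axioms (a)--(f) of Definition~\ref{Def OrdGrothCat} using the socle filtration of $I_{\bm l}$ from Corollary~\ref{Coro soc filt Ilm}, the non-isomorphy and scalar endomorphisms of the $L_{\bm\lambda}$ from Theorem~\ref{Theo EndoSimpl}, the decomposition~(\ref{For LJITlm is sum LJITlambdamu}), and for axiom~(f) the family of morphisms $\Xi^{q}(I_{\bm l})$ of Definition~\ref{Def Xibml} with $\bm k\in\mc P^{q}(\bm l)$. The paper's proof is terser, asserting axiom~(f) directly from Corollary~\ref{Coro soc filt Ilm} without the extra discussion of the reverse inclusion, but the machinery invoked is identical to yours.
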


\begin{proof}
We need to check the axioms (a)-(f) of Definition \ref{Def OrdGrothCat}. Let $\bm l=(l_\bullet,l;m,m_\bullet)\in\mc P$. The socle filtration of $I_{\bm l}$ is determined in Corollary \ref{Coro soc filt Ilm}. In particular, we obtain claimed expressions for ${\rm soc}I_{\bm l}$ (see also (\ref{For LJITlm is sum LJITlambdamu})). Therefore, axioms (a) and (e) are satisfied. Axiom (b) holds by the definition of $\TT$. Axiom (c) holds with the above set $\mc S_{\bm l}$, in view of Theorem \ref{Theo EndoSimpl}. Axiom (d) holds because of Corollary \ref{Coro soc filt Ilm}. The family of morphisms required in axiom (f), for $\bm k\prec\bm l$, consists of $f:I_{\bm l}\to I_{\bm k}$ such that $f\in\Xi^{q}(I_{\bm l})$, where $q$ is the unique integer such that $\bm k\in\mc P^q(\bm l)$.
\end{proof}

\begin{coro}\label{Coro TT simples and inj hulls}
The map $\bm\lambda\mapsto L_{\bm\lambda}$ induces a bijection of $\bm\Lambda$ with the set of isomorphism classes of simple objects in the category $\TT$. Furthermore, $I_{\bm\lambda}$ is an injective hull of $L_{\bm\lambda}$, and the modules $I_{\bm\lambda}$, $\bm\lambda\in\bm\Lambda$, exhaust (up to isomorphism) the indecomposable injectives of $\TT$.
\end{coro}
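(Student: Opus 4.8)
The plan is to derive the corollary formally from Theorem~\ref{Theo TT Grothe Order}, which presents $\TT$ as an ordered Grothendieck category, together with the general results recorded in Proposition~\ref{Prop InjHullInGrothCat} and the decomposition (\ref{For LJITlm is sum LJITlambdamu}). No new input beyond these should be needed: once the ordered Grothendieck structure is in place, everything is bookkeeping within the framework of Definition~\ref{Def OrdGrothCat}.

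For the bijection I would argue as follows. Surjectivity of $\bm\lambda\mapsto L_{\bm\lambda}$ onto the isomorphism classes of simple objects follows from axiom (c) of Definition~\ref{Def OrdGrothCat}: every isomorphism type of simple object in $\TT$ occurs in ${\rm soc}\,I_{\bm l}$ for some $\bm l\in\mc P$, and Theorem~\ref{Theo TT Grothe Order} identifies the simple constituents of ${\rm soc}\,I_{\bm l}$ as precisely the $L_{\bm\lambda}$ with $|\bm\lambda|=\bm l$; since the map $|\cdot|$ is defined on all of $\bm\Lambda$, this accounts for every simple object. Injectivity, i.e.\ that the $L_{\bm\lambda}$ are pairwise non-isomorphic, is exactly the content of Theorem~\ref{Theo EndoSimpl}.

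For the statement about injective hulls, the first step is to record that $I_{\bm\lambda}$ has simple socle $L_{\bm\lambda}$: taking the bottom layer $q=0$ in Proposition~\ref{Prop soc q Ilambdamu} (equivalently, the $q=0$ case of Corollary~\ref{Coro soc filt Ilm} in the integer-indexed version) gives ${\rm soc}\,I_{\bm\lambda}={\rm soc}\,J_{\bm\lambda}\otimes{\rm soc}\,I=L_{\bm\lambda}\otimes\CC=L_{\bm\lambda}$, so in particular $I_{\bm\lambda}$ is indecomposable. Next, the decomposition (\ref{For LJITlm is sum LJITlambdamu}) exhibits each order-defining object $I_{\bm l}=\bigoplus_{|\bm\lambda|=\bm l}\CC^{\bm\lambda}\otimes I_{\bm\lambda}$ as a direct sum of copies of the modules $I_{\bm\lambda}$, each having simple socle $L_{\bm\lambda}$ --- this is the concrete realization of the decomposition postulated in axiom (e). Hence, for a simple subobject $L_{\bm\lambda}\subset{\rm soc}\,I_{|\bm\lambda|}$, the summand $\hat U$ of $I_{|\bm\lambda|}$ with ${\rm soc}\,\hat U=L_{\bm\lambda}$ is a copy of $I_{\bm\lambda}$, and Proposition~\ref{Prop InjHullInGrothCat} then yields that $I_{\bm\lambda}$ is an injective hull of $L_{\bm\lambda}$. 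Finally, the same proposition asserts that the indecomposable injectives of $\TT$ are, up to isomorphism, exactly the indecomposable summands of the objects $I_{\bm l}$, $\bm l\in\mc P$; by (\ref{For LJITlm is sum LJITlambdamu}) these summands are precisely the modules $I_{\bm\lambda}$, $\bm\lambda\in\bm\Lambda$, so the list is complete. The only point that calls for a modicum of care is matching the abstract summand decomposition required by axiom (e) with the concrete one in (\ref{For LJITlm is sum LJITlambdamu}) and isolating the $q=0$ socle computation so as to see that $I_{\bm\lambda}$ is indecomposable; I do not anticipate any genuine obstacle.
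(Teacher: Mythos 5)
Your proposal is correct and follows essentially the same route as the paper, which simply states that the corollary follows immediately from Theorem~\ref{Theo TT Grothe Order} and Proposition~\ref{Prop InjHullInGrothCat}. You have merely spelled out the bookkeeping (matching axiom (e) of Definition~\ref{Def OrdGrothCat} with the concrete decomposition (\ref{For LJITlm is sum LJITlambdamu}), invoking Theorem~\ref{Theo EndoSimpl} for injectivity, and isolating the socle computation) that the paper leaves implicit.
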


\begin{proof}
The statement follows immediately from Theorem \ref{Theo TT Grothe Order} and Proposition \ref{Prop InjHullInGrothCat}.
\end{proof}

Our next goal is to determine injective resolutions of the simple objects $L_{\bm\lambda}=L_{\lambda_\bullet,\lambda;\mu,\mu_\bullet}$ in the category $\TT$. As an intermediate step we shall solve the same problem for $L_{\lambda_\bullet,\lambda;\emptyset}$ in the category $\TT(V^*)$, or, analogously, $L_{\emptyset_\bullet;\mu,\mu_\bullet}$ in $\TT(\bar V)$. The general solution will be constructed in \S\ref{Sec InjResTT} with these ingredients, along with the resolutions of $L_{\emptyset_\bullet,\lambda;\mu,\emptyset_\bullet}=V_{\lambda,\mu}$ given in Theorem \ref{Theo smallTT Inj n Ext to Hom}.

\subsubsection{An involution on $\bm\Lambda$}\label{Sec InvolutLambdaBeth}

We introduce here some symmetries of the set $\bm\Lambda$ that will be useful in the descriptions of injective resolutions of simple objects. For any sequence $\lambda_\bullet$ of Young diagrams, we denote by $\lambda_\bullet^\perp$ the sequence whose terms are the conjugates $\lambda_\alpha^\perp$ of the terms of $\lambda_\bullet$. We denote by $\lambda_\bullet^{e\perp}$ and $\lambda_\bullet^{o\perp}$ the sequences, where only the diagrams with even, respectively odd, index $\alpha$ are replaced by their conjugates, while the odd, respectively even, terms remain unchanged. For an element $\bm\lambda=(\lambda_\bullet,\lambda;\mu,\mu_\bullet)\in\bm\Lambda$, we denote $\bm\lambda^{e\perp o}=(\lambda_\bullet^{e\perp},\lambda;\mu^\perp,\mu_\bullet^{o\perp})$. Clearly this defines an involution on $\bm\Lambda$.

\subsection{The category $\TT(V^*)$}

Recall that $\TT(V^*)$ is the smallest full tensor Grothendieck subcategory of $\TT_t$ containing $V^*$ and closed under taking subquotients. In this section we use the notation $l_\bullet=(l_{-1},...,l_{t})\in\NN^{t+2}$, as in Remark \ref{Rem index lm}.

\begin{definition}\label{Def Poset n}
We consider the poset $\mc P_{\rm left}= \NN^{t+2}$ with the following partial order:
$$
l_\bullet\preceq l'_\bullet \quad \tst \quad \begin{array}{|l} |l_\bullet|=|l'_\bullet| \\ \sum_{\alpha\geq\beta} l_\alpha \geq\sum_{\alpha\geq\beta} l'_\alpha \quad \forall \beta \end{array}\;.
$$
\end{definition}

\begin{rem}
The underlying set of $\mc P_{\rm left}$ is included as the set of elements $(l_\bullet;0_\bullet)$ with $l_\bullet\in\NN^{t+2}$ in the underlying set $\NN^{2(t+2)}$ of both posets $\mc P$ and ${\bf P}$. The partial order on $\mc P_{\rm left}$ coincides with the restrictions of both these partial orders. Analogously, we define a poset $\mc P_{\rm right}$ (isomorphic to $\mc P_{\rm left}$) as the set of elements of $\mc P$ of the form $(0_\bullet;m_\bullet)$ with $m_\bullet\in\NN^{t+2}$, with the restricted order from $\mc P$ or ${\bf P}$.
\end{rem}

\begin{theorem} (\cite[\S~4.2]{Chirvasitu-Penkov-RC})

The category $\TT(V^*)$ is an ordered Grothendieck category with order-defining objects
$$
J_{l_\bullet;\emptyset} = \bigotimes\limits_{\alpha=-1}^t (V^*/V^*_\alpha)^{\otimes l_\alpha}\;,
$$
parametrized by the poset $\mc P_{\rm left}$. Moreover,
$$
{\rm soc}J_{l_\bullet;\emptyset} = L_{l_\bullet;\emptyset} \cong \bigoplus\limits_{\lambda_\bullet\in\bm\Lambda_{\rm left}:|\lambda_\bullet|=l_\bullet} \CC^{\lambda_\bullet}\otimes L_{\lambda_\bullet;\emptyset} \;.
$$
The simple objects and the indecomposable injectives of $\TT(V^*)$ are, up to isomorphism, the modules $L_{\lambda_\bullet;\emptyset}$ and $J_{\lambda_\bullet;\emptyset}$ with $\lambda_\bullet\in\bm\Lambda_{\rm left}$, respectively.
\end{theorem}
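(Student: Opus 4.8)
The statement is a reformulation of \cite[\S4.2]{Chirvasitu-Penkov-RC} in the present notation, so one option is simply to cite it; otherwise I would prove it directly by verifying the axioms (a)--(f) of Definition~\ref{Def OrdGrothCat} for $\mc C=\TT(V^*)$ with the objects $J_{l_\bullet;\emptyset}=\bigotimes_{\alpha=-1}^{t}(V^*/V^*_\alpha)^{\otimes l_\alpha}$, $l_\bullet\in\mc P_{\rm left}$, exactly along the lines of the proof of Theorem~\ref{Theo TT Grothe Order}. The single essential input is Lemma~\ref{Lemma soc q Jlambda}: it gives the full (finite) socle filtration of each $J_{\lambda_\bullet;\emptyset}$, and hence, via the $\mk S_{l_\bullet}$-isotypic decomposition (\ref{For LJITlm is sum LJITlambdamu}), of each $J_{l_\bullet;\emptyset}$. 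From (\ref{For LJITlm is sum LJITlambdamu}) and Theorem~\ref{Theo Simples and Injhulls} one reads off that $J_{l_\bullet;\emptyset}=\bigoplus_{|\lambda_\bullet|=l_\bullet}\CC^{\lambda_\bullet}\otimes J_{\lambda_\bullet;\emptyset}$ with each $J_{\lambda_\bullet;\emptyset}$ an indecomposable essential extension of the simple module $L_{\lambda_\bullet;\emptyset}$; this yields at once ${\rm soc}J_{l_\bullet;\emptyset}=L_{l_\bullet;\emptyset}=\bigoplus_{|\lambda_\bullet|=l_\bullet}\CC^{\lambda_\bullet}\otimes L_{\lambda_\bullet;\emptyset}$ and axiom~(e). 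Since the socle filtration of $J_{\lambda_\bullet;\emptyset}$ is finite with semisimple layers (Lemma~\ref{Lemma soc q Jlambda}), each $J_{l_\bullet;\emptyset}$ has finite length, so $\TT(V^*)$ is locally finite (every object is the union of its finite-length subobjects) and hence a Grothendieck category, and axiom~(a) holds; axiom~(b) holds because $(V^*)^{\otimes n}=J_{l_\bullet;\emptyset}$ for $l_{-1}=n$, $l_0=\dots=l_t=0$, while $\TT(V^*)$ is generated by $V^*$ under subquotients and direct sums.

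For the combinatorial axioms I would use Lemma~\ref{Lemma soc q Jlambda} again. It shows that every simple subquotient of every $J_{l_\bullet;\emptyset}$, and hence every simple object of $\TT(V^*)$, is isomorphic to some $L_{\kappa_\bullet;\emptyset}$; such an $L_{\kappa_\bullet;\emptyset}$ lies in ${\rm soc}J_{l'_\bullet;\emptyset}$ if and only if $l'_\bullet=|\kappa_\bullet|$, and together with the pairwise non-isomorphism of the $L_{\kappa_\bullet;\emptyset}$ from Theorem~\ref{Theo EndoSimpl} this gives axiom~(c) with $\mc S_{l_\bullet}=\{\lambda_\bullet\in\bm\Lambda_{\rm left}:|\lambda_\bullet|=l_\bullet\}$. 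For axiom~(d): by Lemma~\ref{Lemma soc q Jlambda} a constituent $L_{\kappa_\bullet;\emptyset}$ of $\ul{\rm soc}^{q+1}J_{\lambda_\bullet;\emptyset}$ with $q\geq1$ satisfies $|\kappa_\bullet|\in{\bf P}^q(|\lambda_\bullet|;0_\bullet)$, hence $|\kappa_\bullet|\prec|\lambda_\bullet|$ in $\mc P_{\rm left}$, and $L_{\kappa_\bullet;\emptyset}\subset{\rm soc}J_{|\kappa_\bullet|;\emptyset}$, as required.

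Axiom~(f) is the crux. For $k_\bullet\prec l_\bullet$ the family of morphisms $J_{l_\bullet;\emptyset}\to J_{k_\bullet;\emptyset}$ is built from the elementary projections $V^*/V^*_\alpha\twoheadrightarrow V^*/V^*_{\alpha+1}$ applied to single tensorands --- the type~(i) morphisms of Definition~\ref{Def Xibml}, organized into the sets $\Xi^{q}(J_{l_\bullet;\emptyset})$. One has ${\rm soc}^{q+1}J_{l_\bullet;\emptyset}=\bigcap_{f\in\Xi^{q+1}(J_{l_\bullet;\emptyset})}{\rm ker}f$ --- the case $l=m=0$, $m_\bullet=0_\bullet$ of Proposition~\ref{Prop socqJlm} --- and, tracking which constituent is annihilated by which composite of projections, the common kernel of the subfamily with codomain $J_{k_\bullet;\emptyset}$ is precisely the maximal subobject of $J_{l_\bullet;\emptyset}$ all of whose simple constituents lie in $\mc S_{j_\bullet}$ for $l_\bullet\succeq j_\bullet\npreceq k_\bullet$. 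I expect this bookkeeping --- matching the order $\preceq$ on $\mc P_{\rm left}$ with the iterated-projection combinatorics encoded in Lemmas~\ref{Lemma soc q Jlambda} and~\ref{Lemma P1 and IP1} --- to be the main obstacle, just as in \cite{Chirvasitu-Penkov-RC}. Once (a)--(f) are in place, Proposition~\ref{Prop InjHullInGrothCat} identifies the indecomposable injectives of $\TT(V^*)$ with the indecomposable summands of the $J_{l_\bullet;\emptyset}$, which by (\ref{For LJITlm is sum LJITlambdamu}) and Theorem~\ref{Theo Simples and Injhulls} are exactly the $J_{\lambda_\bullet;\emptyset}$, $\lambda_\bullet\in\bm\Lambda_{\rm left}$; and the classification of simples recorded above shows the simple objects are exactly the $L_{\lambda_\bullet;\emptyset}$, $\lambda_\bullet\in\bm\Lambda_{\rm left}$.
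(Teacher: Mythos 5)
The paper gives no proof of this statement beyond the citation to \cite[\S~4.2]{Chirvasitu-Penkov-RC}, and your proposal correctly identifies both valid routes: accepting the citation or verifying axioms (a)--(f) of Definition~\ref{Def OrdGrothCat} in direct parallel with the proof of Theorem~\ref{Theo TT Grothe Order}. Your sketch of the axiom verification is sound and uses exactly the right ingredients (Lemma~\ref{Lemma soc q Jlambda} for the socle filtration and layer combinatorics, Theorem~\ref{Theo Simples and Injhulls} and Theorem~\ref{Theo EndoSimpl} for simplicity and pairwise non-isomorphism, the type~(i) morphisms of $\Xi^q$ for axiom~(f), and Proposition~\ref{Prop InjHullInGrothCat} to conclude), so it is a faithful elaboration of what the citation encapsulates.
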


\begin{rem}
The following properties hold in the category $\TT(V^*)$:
\begin{enumerate}
\item Any tensor product of semisimple modules is semisimple.
\item Any tensor product of injective modules is injective.
\item The pure simple modules are, up to isomorphism, exactly the modules of the form $(V^*_{\alpha+1}/V^*_{\alpha})_\lambda$ with $\lambda\in\Lambda$ and $\alpha\in\{-1,0,...,t\}$.
\end{enumerate}
\end{rem}

\subsubsection{Injective resolutions of simple objects in $\TT(V^*)$}

\begin{prop}\label{Prop InjResLlambda}
For any Young diagram $\lambda$ and $\alpha\in\{-1,0,...,t\}$, there is an injective resolution in $\TT(V^*)$ of the simple $\mk{gl}^M$-module $(V^*_{\alpha+1}/V^*_{\alpha})_{\lambda}$ of length $0$ if $\alpha=t$, and length $|\lambda|$ if $\alpha<t$. In the latter case, this resolution is
$$
0\to (V^*_{\alpha+1}/V^*_{\alpha})_{\lambda} \to \mc I^0((V^*_{\alpha+1}/V^*_{\alpha})_{\lambda}) \to \mc I^1((V^*_{\alpha+1}/V^*_{\alpha})_{\lambda}) \to ... \to \mc I^{|\lambda|}((V^*_{\alpha+1}/V^*_{\alpha})_{\lambda}) \to 0 \;,
$$
with
\begin{align*}
& \mc I^0((V^*_{\alpha+1}/V^*_{\alpha})_{\lambda}) = (V^*/V^*_{\alpha})_{\lambda} \\
& \mc I^1((V^*_{\alpha+1}/V^*_{\alpha})_{\lambda}) = (V^*/V^*_{\alpha+1})\otimes (\bigoplus\limits_{\sigma\in\lambda:N^{\lambda}_{\sigma(1)}=1}(V^*/V^*_{\alpha})_{\sigma}) \\
& \mc I^j((V^*_{\alpha+1}/V^*_{\alpha})_{\lambda}) = \bigoplus\limits_{\sigma,\tau\in\Lambda:|\tau|=j}N_{\sigma\tau^\perp}^{\lambda} (V^*/V^*_{\alpha+1})_{\tau}\otimes (V^*/V^*_{\alpha})_{\sigma} \\
& \mc I^{|\lambda|}((V^*_{\alpha+1}/V^*_{\alpha})_{\lambda}) = (V^*/V^*_{\alpha+1})_{\lambda^\perp} \;.
\end{align*}
\end{prop}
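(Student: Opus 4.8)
The plan is to deduce the general case from the one‑box case $|\lambda|=1$ by a Künneth‑plus‑Schur‑functor argument. First I would dispose of $\alpha=t$: here $V^*/V^*_{t+1}=0$, so $V^*_{t+1}/V^*_t=V^*/V^*_t$ is one of the order‑defining objects of $\TT(V^*)$ and has simple socle, hence is an indecomposable injective; since tensor products of injectives in $\TT(V^*)$ are injective, every $(V^*/V^*_t)^{\otimes n}$ is injective, and therefore so is the direct summand $(V^*/V^*_t)_\lambda=c_\lambda\big((V^*/V^*_t)^{\otimes|\lambda|}\big)$. So from now on assume $\alpha<t$.

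The case $|\lambda|=1$ is the short exact sequence of $\mk{gl}^M$-modules $0\to V^*_{\alpha+1}/V^*_\alpha\to V^*/V^*_\alpha\to V^*/V^*_{\alpha+1}\to 0$, whose middle and right terms are both order‑defining objects of $\TT(V^*)$ with simple socle, hence indecomposable injectives; this is an injective resolution of $S:=V^*_{\alpha+1}/V^*_\alpha$ of length $1$, which is exactly the asserted $\mc I^\bullet$ for $|\lambda|=1$. For general $\lambda$, set $m:=|\lambda|$ and let $C^\bullet$ be this length‑$1$ complex, with $A:=V^*/V^*_\alpha$ in cohomological degree $0$ and $\bar A:=V^*/V^*_{\alpha+1}$ in degree $1$. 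By Lemma \ref{Lemma TensInjRes} (applicable since tensor products of injectives in $\TT(V^*)$ are injective), the $m$‑fold tensor product complex $(C^\bullet)^{\otimes m}$ is an injective resolution of $S^{\otimes m}$; its degree‑$k$ term is
\[
R^k=\bigoplus_{T\subseteq\{1,\dots,m\},\ |T|=k}\big(\bar A\text{ in the slots of }T,\ A\text{ elsewhere}\big),
\]
with the differential of Lemma \ref{Lemma TensInjRes}.

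Next I would exploit the natural $\mk S_m$-action on $(C^\bullet)^{\otimes m}$ permuting the tensor factors and commuting with the differential. Because $\bar A$ sits in cohomological degree $1$, the Koszul sign rule makes a permutation act on the $\bar A$-slots of each summand of $R^k$ with an extra sign equal to its signature on those slots; concretely $R^k\cong{\rm Ind}_{\mk S_k\times\mk S_{m-k}}^{\mk S_m}\big((\mathrm{sgn}\otimes\bar A^{\otimes k})\boxtimes A^{\otimes(m-k)}\big)$ as $\mk S_m$-modules. Applying the exact functor ${\rm Hom}_{\mk S_m}(\CC^\lambda,-)$ to the equivariant complex $(C^\bullet)^{\otimes m}$ then gives an injective resolution of $S_\lambda=(V^*_{\alpha+1}/V^*_\alpha)_\lambda$ in $\TT(V^*)$: indeed ${\rm Hom}_{\mk S_m}(\CC^\lambda,S^{\otimes m})=S_\lambda$, while each ${\rm Hom}_{\mk S_m}(\CC^\lambda,R^k)$ is a $\mk{gl}^M$-direct summand of the injective $R^k$ and lies in $\TT(V^*)$. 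It remains to identify the terms: by Frobenius reciprocity, the Littlewood–Richardson branching rule ${\rm Res}^{\mk S_m}_{\mk S_k\times\mk S_{m-k}}\CC^\lambda\cong\bigoplus_{|\tau|=k,\,|\sigma|=m-k}N^\lambda_{\tau\sigma}\,\CC^\tau\boxtimes\CC^\sigma$, the decompositions $\bar A^{\otimes k}=\bigoplus_{|\tau|=k}\CC^\tau\otimes\bar A_\tau$ and $A^{\otimes(m-k)}=\bigoplus_{|\sigma|=m-k}\CC^\sigma\otimes A_\sigma$, the identity $\mathrm{sgn}\otimes\CC^\tau=\CC^{\tau^\perp}$, and the symmetry $N^\lambda_{\tau^\perp\sigma}=N^\lambda_{\sigma\tau^\perp}$, one obtains
\[
{\rm Hom}_{\mk S_m}(\CC^\lambda,R^k)\cong\bigoplus_{\sigma,\tau\in\Lambda:\ |\tau|=k}N^\lambda_{\sigma\,\tau^\perp}\,(V^*/V^*_{\alpha+1})_\tau\otimes(V^*/V^*_\alpha)_\sigma=\mc I^k\big((V^*_{\alpha+1}/V^*_\alpha)_\lambda\big).
\]
Specializing $k=0$ (forcing $\sigma=\lambda$), $k=1$ (Pieri's rule, which is multiplicity‑free), and $k=|\lambda|$ (forcing $\tau=\lambda^\perp$, $\sigma=\emptyset$) reproduces the displayed $\mc I^0,\mc I^1,\mc I^{|\lambda|}$; and since $\mc I^{|\lambda|}=(V^*/V^*_{\alpha+1})_{\lambda^\perp}\ne 0$ while $R^k=0$ for $k>m$, the resolution has length exactly $|\lambda|$.

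The main obstacle is the sign bookkeeping in the third paragraph: one must carefully verify that $\mk S_m$ acts on the Künneth complex with a sign twist on the degree‑$1$ slots, since this is precisely what turns $\tau$ into its conjugate after the isotypic projection. Everything else — injectivity and $\TT(V^*)$-membership of the $R^k$, exactness via the Künneth formula, and the Frobenius / Littlewood–Richardson computation — is routine.
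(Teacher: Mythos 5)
Your proof is correct, and it fills in what the paper leaves to a citation: the paper's ``proof'' of Proposition~\ref{Prop InjResLlambda} consists entirely of the sentence ``The result is proven in \cite{Chirvasitu-Penkov-OTC} under the assumption that $t=0$, but this assumption is inessential.'' Your argument gives a self-contained derivation, starting from the one-box short exact sequence $0\to V^*_{\alpha+1}/V^*_\alpha\to V^*/V^*_\alpha\to V^*/V^*_{\alpha+1}\to 0$, forming the $m$-fold K\"unneth complex via Lemma~\ref{Lemma TensInjRes}, and then projecting to the $\CC^\lambda$-isotypic component. The one point that genuinely needs care — and that you correctly flag — is that the $\mk S_m$-action on the K\"unneth complex commutes with the differential only when the Koszul sign is included, which places the $\mathrm{sgn}_k$-twist on the $\bar A^{\otimes k}$ block of the stabilizer $\mk S_k\times\mk S_{m-k}$ and is precisely what turns $\tau$ into $\tau^\perp$ after Frobenius reciprocity. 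I verified this sign bookkeeping on the $m=2$ case: on $\bar A\otimes\bar A$ the swap acts with a minus sign, so the $S^2$-invariants are $\Lambda^2\bar A=(V^*/V^*_{\alpha+1})_{(1,1)}$, matching $\mc I^2((V^*_{\alpha+1}/V^*_\alpha)_{(2)})$. The identifications of $\mc I^0$, $\mc I^1$, $\mc I^{|\lambda|}$ via $N^\lambda_{\sigma\emptyset}=\delta_{\sigma\lambda}$, Pieri, and $N^\lambda_{\emptyset\tau^\perp}=\delta_{\tau,\lambda^\perp}$ are all correct, as is the disposal of the $\alpha=t$ case via injectivity of Schur-functor summands of $(V^*/V^*_t)^{\otimes|\lambda|}$. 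What your approach buys over a bare citation is exactly the transparency about where the conjugation in $\tau^\perp$ comes from, which is otherwise easy to take on faith.
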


\begin{proof}
The result is proven in \cite{Chirvasitu-Penkov-OTC} under the assumption that $t=0$, but this assumption is inessential.
\end{proof}

\begin{theorem}\label{Theo InjResLlambdabullet}
Let $\lambda_\bullet\in\bm\Lambda_{\rm left}$. There exists an injective resolution of the simple module $L_{\lambda_\bullet;\emptyset}$ in $\TT(V^*)$ of length equal to $||\lambda_{\bullet_{<t}}||=\sum\limits_{-1\leq\alpha<t} |\lambda_\alpha|$. The decomposition of the $k$-th term of this resolution into indecomposable injective direct summands is
$$
\mc I^k(L_{\lambda_\bullet;\emptyset}) \cong \bigoplus\limits_{\kappa_\bullet\in\bm\Lambda_{\rm left}: k^{\lambda_\bullet}_{\kappa_\bullet}=k} 
p^{\lambda_\bullet}_{\kappa_\bullet} \cdot J_{\kappa_\bullet;\emptyset} \;,
$$
where
\begin{gather}\label{For klambdakappa plambdakappa}
k^{\lambda_\bullet}_{\kappa_\bullet}:=\sum\limits_{0\leq\alpha\leq t} (\alpha+1)(|\kappa_\alpha|-|\lambda_\alpha|)\;,\quad p^{\lambda_\bullet}_{\kappa_\bullet} := \sum\limits_{\sigma_\bullet,\tau_\bullet\in\bm\Lambda_{\rm left}:\sigma_t=\tau_{-1}=\emptyset} N^{\kappa_t}_{\lambda_t \tau_t} \prod\limits_{-1\leq\alpha<t} N^{\lambda_\alpha}_{\sigma_\alpha \tau_{\alpha+1}^\perp} N^{\kappa_\alpha}_{\sigma_\alpha \tau_\alpha}\;.
\end{gather}
The last nonzero term of the resolution is
$$
\mc I^{||\lambda_{\bullet_{<t}}||}(L_{\lambda_\bullet;\emptyset}) \cong (V^*/V^*_{t})_{\lambda_t} \otimes \left( \bigotimes\limits_{-1\leq\alpha<t} (V^*/V^*_{\alpha+1})_{\lambda_{\alpha}^{\perp}} \right) \;,
$$
and this term is an indecomposable $\mk{gl}^M$-module if and only if $\lambda_t=\emptyset$ or $\lambda_{t-1}=\emptyset$.
\end{theorem}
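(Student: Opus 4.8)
The plan is to build the resolution by tensoring together the resolutions of the ``pure'' tensor factors furnished by Proposition~\ref{Prop InjResLlambda}. Using the factorization $L_{\lambda_\bullet;\emptyset}=\bigotimes_{\alpha=-1}^{t}(V^*_{\alpha+1}/V^*_\alpha)_{\lambda_\alpha}$ (the tensorands being simple, by the description of $\TT(V^*)$), I would take for each $\alpha$ the injective resolution $\mc I^\bullet\big((V^*_{\alpha+1}/V^*_\alpha)_{\lambda_\alpha}\big)$ of Proposition~\ref{Prop InjResLlambda}: it has length $|\lambda_\alpha|$ when $\alpha<t$ and length $0$ when $\alpha=t$ (in which case it is just the injective $(V^*/V^*_t)_{\lambda_t}$). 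Since in $\TT(V^*)$ a tensor product of injective objects is injective, Lemma~\ref{Lemma TensInjRes}, applied iteratively in $\alpha$, produces an injective resolution of $L_{\lambda_\bullet;\emptyset}$ of length $\sum_{-1\le\alpha<t}|\lambda_\alpha|=||\lambda_{\bullet_{<t}}||$ (its top term is a tensor product of nonzero modules, hence nonzero, so this is indeed the length), with
\[
\mc I^{k}(L_{\lambda_\bullet;\emptyset})=\bigoplus_{k_{-1}+\dots+k_{t-1}=k}\ \Big(\bigotimes_{-1\le\alpha<t}\mc I^{k_\alpha}\big((V^*_{\alpha+1}/V^*_\alpha)_{\lambda_\alpha}\big)\Big)\otimes (V^*/V^*_t)_{\lambda_t}.
\]
In particular the top term is $\big(\bigotimes_{-1\le\alpha<t}(V^*/V^*_{\alpha+1})_{\lambda_\alpha^\perp}\big)\otimes(V^*/V^*_t)_{\lambda_t}$, which is the claimed last nonzero term.

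Next I would rewrite each $\mc I^{k}(L_{\lambda_\bullet;\emptyset})$ as a direct sum of indecomposable injectives $J_{\kappa_\bullet;\emptyset}$. By Proposition~\ref{Prop InjResLlambda}, every factor $\mc I^{k_\alpha}\big((V^*_{\alpha+1}/V^*_\alpha)_{\lambda_\alpha}\big)$ is a direct sum, with multiplicities $N^{\lambda_\alpha}_{\sigma_\alpha\tau_\alpha^\perp}$ and $|\tau_\alpha|=k_\alpha$, of modules $(V^*/V^*_{\alpha+1})_{\tau_\alpha}\otimes (V^*/V^*_\alpha)_{\sigma_\alpha}$; hence $\mc I^{k}(L_{\lambda_\bullet;\emptyset})$ is a direct sum of tensor products of ``pure injectives'' $(V^*/V^*_\beta)_\nu$. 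The key elementary observation is that any such tensor product decomposes by a simple rule: writing $(V^*/V^*_\beta)_\nu=c_\nu\big((V^*/V^*_\beta)^{\otimes|\nu|}\big)$, the product $\bigotimes_i (V^*/V^*_{\beta_i})_{\nu_i}$ is the image of $\bigotimes_i c_{\nu_i}$ inside $J_{l_\bullet;\emptyset}=\bigoplus_{\mu_\bullet}\CC^{\mu_\bullet}\otimes J_{\mu_\bullet;\emptyset}$ (with $l_\beta=\sum_{i:\beta_i=\beta}|\nu_i|$), and since $\CC^{\mu_\bullet}$ is the external tensor product of the $\CC^{\mu_\beta}$ while $\bigotimes_i c_{\nu_i}$ acts within the corresponding symmetric-group factors, one gets
\[
\bigotimes_i (V^*/V^*_{\beta_i})_{\nu_i}\ \cong\ \bigoplus_{\kappa_\bullet}\Big(\prod_{\beta}N^{\kappa_\beta}_{(\nu_i:\,\beta_i=\beta)}\Big)\,J_{\kappa_\bullet;\emptyset},
\]
i.e.\ factors at a common level $\beta$ combine by the (iterated) Littlewood--Richardson rule, while factors at distinct levels occupy distinct slots of $\kappa_\bullet$. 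In our situation each level receives at most two pure factors. Applying this after grouping the pure factors of $\mc I^{k}(L_{\lambda_\bullet;\emptyset})$ by level gives $\mc I^{k}(L_{\lambda_\bullet;\emptyset})\cong\bigoplus_{\kappa_\bullet}p\cdot J_{\kappa_\bullet;\emptyset}$ for suitable multiplicities $p$.

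It then remains to identify $p$ with $p^{\lambda_\bullet}_{\kappa_\bullet}$ and the cohomological degree with $k^{\lambda_\bullet}_{\kappa_\bullet}$, as in (\ref{For klambdakappa plambdakappa}). Grouping by level, level $\beta$ receives the factor $(V^*/V^*_\beta)_{\sigma_\beta}$ for $-1\le\beta<t$, the factor $(V^*/V^*_\beta)_{\tau_{\beta-1}}$ for $0\le\beta\le t$, and the factor $(V^*/V^*_t)_{\lambda_t}$ at $\beta=t$; collecting the resulting Littlewood--Richardson coefficients and reindexing by the shift $\tau_\alpha\mapsto\tau_{\alpha+1}$ (so that $\tau_{-1}=\emptyset$) turns the multiplicity of $J_{\kappa_\bullet;\emptyset}$ into exactly the sum defining $p^{\lambda_\bullet}_{\kappa_\bullet}$, the factor at level $t$ producing $N^{\kappa_t}_{\lambda_t\tau_t}$ and the factors at levels $-1\le\beta<t$ producing $\prod N^{\kappa_\beta}_{\sigma_\beta\tau_\beta}$. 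For the degree: a contributing term satisfies $|\lambda_\alpha|=|\sigma_\alpha|+|\tau_{\alpha+1}|$ and $|\kappa_\alpha|=|\sigma_\alpha|+|\tau_\alpha|$ for $-1\le\alpha<t$, and $|\kappa_t|=|\lambda_t|+|\tau_t|$; substituting into $\sum_{0\le\alpha\le t}(\alpha+1)(|\kappa_\alpha|-|\lambda_\alpha|)$ and telescoping yields $\sum_{0\le\alpha\le t}|\tau_\alpha|=k$, i.e.\ $k^{\lambda_\bullet}_{\kappa_\bullet}=k$. Finally, in the top term level $\beta$ ($0\le\beta<t$) receives the single factor $(V^*/V^*_\beta)_{\lambda_{\beta-1}^\perp}$ and level $t$ receives $(V^*/V^*_t)_{\lambda_{t-1}^\perp}\otimes(V^*/V^*_t)_{\lambda_t}\cong\bigoplus_\kappa N^{\kappa}_{\lambda_{t-1}^\perp\lambda_t}(V^*/V^*_t)_\kappa$, which is a single indecomposable exactly when the Littlewood--Richardson product $\lambda_{t-1}^\perp\cdot\lambda_t$ consists of one diagram, i.e.\ when $\lambda_{t-1}=\emptyset$ or $\lambda_t=\emptyset$.

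The conceptual content — that tensoring the pure resolutions is again an injective resolution — is immediate from Lemma~\ref{Lemma TensInjRes} and the injectivity of tensor products of injectives in $\TT(V^*)$. The main obstacle is purely combinatorial: keeping track of which Littlewood--Richardson coefficients appear when the tensor factors are regrouped by level, performing the shift of indices on the $\tau$'s so as to match the formula in (\ref{For klambdakappa plambdakappa}), and checking the telescoping identity that pins down the cohomological degree $k^{\lambda_\bullet}_{\kappa_\bullet}$.
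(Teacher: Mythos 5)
Your proposal is correct and takes essentially the same approach as the paper: tensor the pure resolutions of Proposition~\ref{Prop InjResLlambda} via Lemma~\ref{Lemma TensInjRes}, regroup the resulting pure injectives by level, and apply the Littlewood--Richardson rule to decompose into the $J_{\kappa_\bullet;\emptyset}$, with the same shift of the $\tau$-indices and the same telescoping identity to show the cohomological degree is pinned to $k^{\lambda_\bullet}_{\kappa_\bullet}$.
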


\begin{proof}
The category $\TT(V^*)$ has the property that the tensor product of injective modules is injective and the tensor product of semisimple modules is semisimple. Thus we can apply Lemma \ref{Lemma TensInjRes}, which yields the first line in the formula below; the second line follows from Proposition \ref{Prop InjResLlambda}, and the rest follows by standard rules for tensor products:
\begin{align*}
\mc I^{k}(L_{\lambda_\bullet;\emptyset}) &\cong \bigoplus\limits_{j_{-1}+j_0+...+j_{t}=k}\bigotimes\limits_{-1\leq\alpha\leq t} \mc I^{j_\alpha}(L_{\lambda_\alpha;\emptyset}) \\
& \cong (V^*/V^*_{t})_{\lambda_t}\otimes ( \bigoplus\limits_{j_{-1}+...+j_{t-1}=k}\bigotimes\limits_{-1\leq\alpha\leq t-1} \bigoplus\limits_{\sigma,\tau\in\Lambda:|\tau|=j_\alpha}N_{\sigma\tau^\perp}^{\lambda} (V^*/V^*_{\alpha+1})_{\tau}\otimes (V^*/V^*_{\alpha})_{\sigma} ) \\
& \cong (V^*/V^*_{t})_{\lambda_t}\otimes ( \bigoplus\limits_{\sigma_\bullet,\tau_\bullet\in\bm\Lambda_{\rm left}:\sigma_t=\tau_{-1}=\emptyset,||\tau_\bullet||=k} \bigotimes\limits_{-1\leq\alpha\leq t} N_{\sigma_\alpha\tau_{\alpha+1}^\perp}^{\lambda}  (V^*/V^*_{\alpha})_{\tau_{\alpha}}\otimes (V^*/V^*_{\alpha})_{\sigma_\alpha} ) \\
& \cong \bigoplus\limits_{\kappa_\bullet\in\bm\Lambda_{\rm left}} \left(\sum\limits_{\sigma_\bullet,\tau_\bullet\in\bm\Lambda_{\rm left}:\sigma_t=\tau_{-1}=\emptyset,||\tau_\bullet||=k} N^{\kappa_t}_{\lambda_t \tau_t} \prod\limits_{\alpha=-1}^{t-1} N^{\lambda_\alpha}_{\sigma_\alpha \tau_{\alpha+1}^\perp} N^{\kappa_\alpha}_{\sigma_\alpha \tau_\alpha} \right)\cdot J_{\kappa_\bullet;\emptyset} \;.
\end{align*}
To obtain the explicit form of the coefficients $p^{\lambda_\bullet}_{\kappa_\bullet}$ stated in (\ref{For klambdakappa plambdakappa}), it remains to show that the condition $||\tau_\bullet||=k$ appearing above can be substituted by the condition $k=k^{\lambda_\bullet}_{\kappa_\bullet}$. We claim that $p^{\lambda_\bullet}_{\kappa_\bullet}\ne0$ implies the following:
\begin{enumerate}
\item $|\lambda_\bullet|\succeq|\kappa_\bullet|$ in the poset $\mc P_{\rm left}$;
\item ${\rm supp}(\kappa_\bullet)\subset {\rm supp}(\lambda_\bullet)\cup (1+{\rm supp}(\lambda_\bullet))$;
\item every nonvanishing summand in the defining formula for $p^{\lambda_\bullet}_{\kappa_\bullet}$ arises for $\sigma_\bullet,\tau_\bullet$ satisfying $||\tau_\bullet||=k^{\lambda_\bullet}_{\kappa_\bullet}$.
\end{enumerate}
Indeed, the nonvanishing of a summand of $p^{\lambda_\bullet}_{\kappa_\bullet}$ implies $|\tau_t|=|\kappa_t|-|\lambda_t|$, since $N^{\kappa_t}_{\lambda_t \tau_t}\ne0$, and $|\tau_\alpha|=|\kappa_\alpha|-|\lambda_\alpha|+|\tau_{\alpha+1}|$ for $-1\leq\alpha<t$ since $N^{\lambda_\alpha}_{\sigma_\alpha \tau_{\alpha+1}^\perp} N^{\kappa_\alpha}_{\sigma_\alpha \tau_\alpha}\ne 0$. Now part 3 of the claim follows by induction on $t$. Parts 1 and 2 are trivial to verify. The statement on the injective dimension and the last nonzero term of the injective resolution follows immediately. This completes the proof.
\end{proof}

The above theorem allows us to compute the dimensions of the Ext-groups of pairs of simple objects in $\TT(V^*)$.

\begin{coro}\label{Coro dimExtsimples in Tleft}
Let $\kappa_\bullet,\lambda_\bullet\in\bm\Lambda_{\rm left}$. Then
$$
\dim {\rm Ext}_{\TT(V^*)}^k (L_{\kappa_\bullet;\emptyset},L_{\lambda_\bullet;\emptyset}) =\begin{cases} p^{\lambda_\bullet}_{\kappa_\bullet} \quad if \quad k=k^{\lambda_\bullet}_{\kappa_\bullet} \;, \\ 0 \qquad otherwise . \end{cases}
$$
\end{coro}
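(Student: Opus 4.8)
The plan is to read the answer directly off the explicit injective resolution of $L_{\lambda_\bullet;\emptyset}$ produced in Theorem \ref{Theo InjResLlambdabullet}. Since $\TT(V^*)$ is a Grothendieck category with enough injectives, $\dim {\rm Ext}^k_{\TT(V^*)}(L_{\kappa_\bullet;\emptyset},L_{\lambda_\bullet;\emptyset})$ equals the dimension of the $k$-th cohomology of the complex obtained by applying ${\rm Hom}(L_{\kappa_\bullet;\emptyset},-)$ to $0\to \mc I^0(L_{\lambda_\bullet;\emptyset})\to \mc I^1(L_{\lambda_\bullet;\emptyset})\to\cdots$. So the entire computation reduces to understanding ${\rm Hom}$ from the simple object $L_{\kappa_\bullet;\emptyset}$ into the terms of this resolution.

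First I would record the elementary fact that, for the indecomposable injectives, ${\rm Hom}(L_{\kappa_\bullet;\emptyset},J_{\mu_\bullet;\emptyset})\cong\CC$ if $\mu_\bullet=\kappa_\bullet$ and $0$ otherwise: any nonzero homomorphism out of a simple module lands in ${\rm soc}\, J_{\mu_\bullet;\emptyset}=L_{\mu_\bullet;\emptyset}$, and the simple modules $L_{\nu_\bullet;\emptyset}$ are pairwise nonisomorphic with ${\rm End}\,L_{\nu_\bullet;\emptyset}\cong\CC$ by Theorem \ref{Theo EndoSimpl}. Combining this with the description of $\mc I^k(L_{\lambda_\bullet;\emptyset})$ in Theorem \ref{Theo InjResLlambdabullet} gives $\dim {\rm Hom}(L_{\kappa_\bullet;\emptyset},\mc I^k(L_{\lambda_\bullet;\emptyset}))=p^{\lambda_\bullet}_{\kappa_\bullet}$ when $k=k^{\lambda_\bullet}_{\kappa_\bullet}$ and $0$ for every other $k$. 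The vanishing for $k\neq k^{\lambda_\bullet}_{\kappa_\bullet}$ is precisely part 3 of the claim verified inside the proof of Theorem \ref{Theo InjResLlambdabullet}, namely that for fixed $\kappa_\bullet$ the summand $J_{\kappa_\bullet;\emptyset}$ occurs in the resolution only in homological degree $k^{\lambda_\bullet}_{\kappa_\bullet}$.

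Consequently the complex ${\rm Hom}(L_{\kappa_\bullet;\emptyset},\mc I^\bullet(L_{\lambda_\bullet;\emptyset}))$ has at most one nonzero term, located in degree $k^{\lambda_\bullet}_{\kappa_\bullet}$ and of dimension $p^{\lambda_\bullet}_{\kappa_\bullet}$; hence all its differentials vanish, its cohomology coincides with the complex itself, and the asserted formula follows. The only substantive ingredient is the degree-concentration statement just quoted, and that has already been established; granting it, the corollary is a purely formal consequence of Theorem \ref{Theo InjResLlambdabullet} together with the computation of ${\rm Hom}$ from a simple object into an indecomposable injective. So I expect no real obstacle to remain here: the genuine work was done in proving Theorem \ref{Theo InjResLlambdabullet}, and this corollary merely extracts its Ext-theoretic content.
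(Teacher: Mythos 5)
Your argument is correct and is exactly the intended (and essentially only) derivation: the corollary is stated in the paper without proof as an immediate consequence of Theorem \ref{Theo InjResLlambdabullet}, and you have supplied the standard bookkeeping — $\mathrm{Hom}$ from a simple into an indecomposable injective detects only its socle, the resolution concentrates $J_{\kappa_\bullet;\emptyset}$ in the single degree $k^{\lambda_\bullet}_{\kappa_\bullet}$, so the $\mathrm{Hom}$ complex has at most one nonzero term and the differentials vanish trivially. No gaps.
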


In the next corollary we encounter a new family of modules, whose socle filtrations relate to the injective resolutions of simple modules given in Theorem \ref{Theo InjResLlambdabullet}. For $(l_\bullet;m_\bullet)\in\mc P$ and $(\lambda_\bullet;\mu_\bullet)\in\bm\Lambda$, we denote
\begin{gather}\label{For Modules M}
\begin{array}{l}
M_{l_\bullet;m_\bullet}:= (V^*/V^*_{t})^{\otimes l_t} \otimes \left(\bigotimes\limits_{\alpha=-1}^{t-1} (V^*_{\alpha+2}/V^*_{\alpha})^{\otimes l_\alpha}\right) \otimes \left(\bigotimes\limits_{\alpha=-1}^{t-1} (\bar V_{\alpha+2}/\bar V_{\alpha})^{\otimes m_\alpha}\right) \otimes (\bar V/\bar V_t)^{\otimes m_t}\;,\\
M_{\lambda_\bullet;\mu_\bullet}:= (V^*/V^*_{t})_{\lambda_t} \otimes \left(\bigotimes\limits_{\alpha=-1}^{t-1} (V^*_{\alpha+2}/V^*_{\alpha})_{\lambda_\alpha}\right) \otimes \left(\bigotimes\limits_{\alpha=-1}^{t-1} (\bar V_{\alpha+2}/\bar V_{\alpha})_{\mu_\alpha}\right) \otimes (\bar V/\bar V_t)_{\mu_t}\;.
\end{array}
\end{gather}

\begin{coro}\label{Coro Ext to Hom in TVstar}
For $\kappa_\bullet,\lambda_\bullet\in\bm\Lambda_{\rm left}$ and $k\geq0$,
$$
\dim {\rm Ext}_{\TT(V^*)}^k (L_{\kappa_\bullet^{o\perp};\emptyset},L_{\lambda_\bullet^{o\perp};\emptyset}) = \dim {\rm Ext}_{\TT(V^*)}^k (L_{\kappa_\bullet^{e\perp};\emptyset},L_{\lambda_\bullet^{e\perp};\emptyset}) = \dim {\rm Hom}(L_{\kappa_\bullet},\ul{\rm soc}^{k+1}(M_{\lambda_\bullet;\emptyset}))\;,
$$
where $M_{\lambda_\bullet;\emptyset}$ is defined in (\ref{For Modules M}).
\end{coro}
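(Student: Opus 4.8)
The plan is to reduce the two asserted equalities, via the explicit Ext--formula of Corollary~\ref{Coro dimExtsimples in Tleft}, to the comparison of two Littlewood--Richardson sums --- the sum defining $p^{\lambda_\bullet}_{\kappa_\bullet}$ in (\ref{For klambdakappa plambdakappa}) and the sum computing $\dim{\rm Hom}(L_{\kappa_\bullet},\ul{\rm soc}^{k+1}M_{\lambda_\bullet;\emptyset})$ --- and to match them by a diagram--conjugation substitution keyed to the involutions of \S\ref{Sec InvolutLambdaBeth}. First I would note that $k^{\lambda_\bullet}_{\kappa_\bullet}$ and $p^{\lambda_\bullet}_{\kappa_\bullet}$ are invariant under conjugating every diagram: $k^{\lambda_\bullet}_{\kappa_\bullet}$ depends only on the box counts $|\lambda_\alpha|,|\kappa_\alpha|$, while $p^{\lambda_\bullet^\perp}_{\kappa_\bullet^\perp}=p^{\lambda_\bullet}_{\kappa_\bullet}$ follows from $N^{\zeta}_{\xi\eta}=N^{\zeta^\perp}_{\xi^\perp\eta^\perp}$ after the substitution $\sigma_\bullet\mapsto\sigma_\bullet^\perp,\ \tau_\bullet\mapsto\tau_\bullet^\perp$. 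By Corollary~\ref{Coro dimExtsimples in Tleft} the dimensions of the Ext-groups are therefore conjugation-invariant, and since $(\nu_\bullet^{o\perp})^\perp=\nu_\bullet^{e\perp}$ this gives at once the first equality of the corollary; so it suffices to prove the second, i.e. with the involution $e\perp$.

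Next I would compute the socle filtration of $M_{\lambda_\bullet;\emptyset}$. By (\ref{For Modules M}) with $\mu_\bullet=\emptyset_\bullet$, this module is the tensor product of the simple $(V^*/V^*_t)_{\lambda_t}$ with the modules $(V^*_{\alpha+2}/V^*_\alpha)_{\lambda_\alpha}$, $-1\le\alpha\le t-1$. Applying the Schur functor $\lambda_\alpha$ to the short exact sequence $0\to V^*_{\alpha+1}/V^*_\alpha\to V^*_{\alpha+2}/V^*_\alpha\to V^*_{\alpha+2}/V^*_{\alpha+1}\to 0$ (the Littlewood--Richardson filtration of a Schur functor of an extension) produces a filtration of $(V^*_{\alpha+2}/V^*_\alpha)_{\lambda_\alpha}$ whose $(j+1)$-st layer is $\bigoplus_{|\tau|=j}N^{\lambda_\alpha}_{\sigma\tau}\,(V^*_{\alpha+1}/V^*_\alpha)_\sigma\otimes(V^*_{\alpha+2}/V^*_{\alpha+1})_\tau$, and by Proposition~\ref{Prop TT tensor prod simples}(d) each summand is the simple $L_{\kappa_\bullet;\emptyset}$ with $\kappa_\alpha=\sigma$, $\kappa_{\alpha+1}=\tau$ and the remaining components empty. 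I would then check, exactly as in the proof of Proposition~\ref{Prop socqJlm} (density of the ideals $\mk{gl}^M_\alpha$, Lemma~\ref{Lemma LinIndepToAny}, Proposition~\ref{Prop DenseProperties}), that this is the socle filtration and that the socle filtration of the tensor product $M_{\lambda_\bullet;\emptyset}$ is the K\"unneth product of the socle filtrations of its factors. Expanding out the arising tensor products of one-sided pure simples by Proposition~\ref{Prop TT tensor prod simples}(d) and collecting multiplicities position by position then expresses $\dim{\rm Hom}(L_{\kappa_\bullet},\ul{\rm soc}^{k+1}M_{\lambda_\bullet;\emptyset})$ as a Littlewood--Richardson sum of precisely the shape of the sum defining $p^{\lambda_\bullet}_{\kappa_\bullet}$ in (\ref{For klambdakappa plambdakappa}) --- but with each conjugate $\tau_{\alpha+1}^\perp$ there replaced by $\tau_{\alpha+1}$, and with the layer index $k$ in the role of $\|\tau_\bullet\|$.

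Finally I would compute $p^{\lambda_\bullet^{e\perp}}_{\kappa_\bullet^{e\perp}}$ from (\ref{For klambdakappa plambdakappa}) and perform the substitution replacing $\sigma_\alpha,\tau_\alpha$ by $\sigma_\alpha^\perp,\tau_\alpha^\perp$ at every even index $\alpha$ (and leaving the odd-indexed diagrams unchanged); using $N^{\zeta}_{\xi\eta}=N^{\zeta^\perp}_{\xi^\perp\eta^\perp}$ and $\emptyset^\perp=\emptyset$, one verifies that every conjugation produced by $e\perp$ cancels against the conjugate already present on $\tau_{\alpha+1}$, so that this sum becomes exactly the ``$\tau^\perp\mapsto\tau$'' sum of the previous paragraph (after the harmless reindexing $\tau_\bullet\mapsto\tau_{\bullet+1}$), with $\|\tau_\bullet\|=k^{\lambda_\bullet^{e\perp}}_{\kappa_\bullet^{e\perp}}=k^{\lambda_\bullet}_{\kappa_\bullet}$. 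Thus $\dim{\rm Hom}(L_{\kappa_\bullet},\ul{\rm soc}^{k+1}M_{\lambda_\bullet;\emptyset})$ equals $p^{\lambda_\bullet^{e\perp}}_{\kappa_\bullet^{e\perp}}$ for $k=k^{\lambda_\bullet^{e\perp}}_{\kappa_\bullet^{e\perp}}$ and vanishes otherwise, which by Corollary~\ref{Coro dimExtsimples in Tleft} is precisely $\dim{\rm Ext}^k_{\TT(V^*)}(L_{\kappa_\bullet^{e\perp};\emptyset},L_{\lambda_\bullet^{e\perp};\emptyset})$. I expect the main obstacle to be this last combinatorial step --- tracking exactly which $\sigma_\alpha,\tau_\alpha$ pick up a conjugate under $e\perp$ and confirming that all conjugations cancel in pairs --- together with the routine but genuinely necessary verification in the previous paragraph that the K\"unneth product really yields the socle filtration of $M_{\lambda_\bullet;\emptyset}$ rather than a coarsening of it.
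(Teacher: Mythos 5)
Your proposal is correct and follows essentially the same route as the paper: compute $\ul{\rm soc}^{k+1}(M_{\lambda_\bullet;\emptyset})$ as a K\"unneth product of the Littlewood--Richardson filtrations of the factors $(V^*_{\alpha+2}/V^*_\alpha)_{\lambda_\alpha}$, then match the resulting LR sum against $p^{\lambda_\bullet^{e\perp}}_{\kappa_\bullet^{e\perp}}$ from Corollary~\ref{Coro dimExtsimples in Tleft} by conjugating the summation variables $\sigma_\alpha,\tau_\alpha$ at the appropriate parity and invoking $N^{\zeta}_{\xi\eta}=N^{\zeta^\perp}_{\xi^\perp\eta^\perp}$. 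The only cosmetic difference is that you argue via $^{e\perp}$ and dispatch the first equality cleanly through the observation that $p^{\lambda_\bullet^\perp}_{\kappa_\bullet^\perp}=p^{\lambda_\bullet}_{\kappa_\bullet}$ together with $(\nu_\bullet^{o\perp})^\perp=\nu_\bullet^{e\perp}$, whereas the paper works out the $^{o\perp}$ case (for $t$ even) and declares the $^{e\perp}$ case ``analogous''; these are interchangeable.
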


\begin{proof}
First we compute the socle filtration of $M_{\lambda_\bullet;\emptyset}$. Since $M_{\lambda_\bullet;\emptyset}=\otimes_\alpha M_{\lambda_\alpha;\emptyset}$, we have
\begin{align*}
\ul{\rm soc}^{k+1}(M_{\lambda_\bullet;\emptyset})& \cong (V^*/V^*_{t})_{\lambda_t}\otimes \left( \bigoplus\limits_{j_{-1}+...+j_{t-1}=k} \bigotimes\limits_{-1\leq\alpha<t} \ul{\rm soc}^{j_\alpha+1}((V^*_{\alpha+2}/V^*_{\alpha})_{\lambda_\alpha})\right)\\
& \cong L_{\lambda_t;\emptyset}\otimes \left( \bigoplus\limits_{j_{-1}+...+j_{t-1}=k} \bigotimes\limits_{-1\leq\alpha<t} \left( \bigoplus\limits_{\begin{array}{c}\sigma,\tau\in\Lambda\\ |\tau|=j_\alpha\end{array}}  N^{\lambda_\alpha}_{\sigma \tau} \cdot (V^*_{\alpha+2}/V^*_{\alpha+1})_{\tau} \otimes (V^*_{\alpha+1}/V^*_{\alpha})_{\sigma} \right)\right)\\
& \cong \bigoplus\limits_{\kappa_\bullet\in\bm\Lambda_{\rm left}} \left(\sum\limits_{\sigma_\bullet,\tau_\bullet\in\bm\Lambda_{\rm left},\sigma_t=\tau_{-1}=\emptyset,||\tau_\bullet||=k} N^{\kappa_t}_{\lambda_t \tau_t} \prod\limits_{-1\leq\alpha<t} N^{\lambda_\alpha}_{\sigma_\alpha \tau_{\alpha+1}} N^{\kappa_\alpha}_{\sigma_\alpha \tau_\alpha} \right)\cdot L_{\kappa_\bullet;\emptyset}.
\end{align*}
On the other hand, we apply Corollary \ref{Coro dimExtsimples in Tleft} to compute $p^{\lambda_\bullet^{o\perp}}_{\kappa_\bullet^{o\perp}}=\dim{\rm Ext}_{\TT(V^*)}^k (L_{\kappa_\bullet^{o\perp};\emptyset},L_{\lambda_\bullet^{o\perp};\emptyset})$, the case of ${\rm Ext}_{\TT(V^*)}^k (L_{\kappa_\bullet^{e\perp};\emptyset},L_{\lambda_\bullet^{e\perp};\emptyset})$ being analogous. We also assume $t$ to be even, the odd case being similar. In the calculation below, we begin by replacing the product over $\{-1,0,...,t\}$ in the formula for $p^{\lambda_\bullet^{o\perp}}_{\kappa_\bullet^{o\perp}}$ by a product over the even indices of two-fold products of the respective consecutive terms. The subsequent manipulations follow by standard properties of the Littlewood-Richardson numbers. We obtain
\begin{align*}
\dim {\rm Ext}_{\TT(V^*)}^k (L_{\kappa_\bullet^{o\perp};\emptyset},L_{\lambda_\bullet^{o\perp};\emptyset}) & = \sum\limits_{\sigma_\bullet,\tau_\bullet\in\bm\Lambda_{\rm left},\sigma_t=\tau_{-1}=\emptyset,||\tau_\bullet||=k} \prod\limits_{\begin{matrix}-1\leq\alpha<t\\ \alpha\;{\rm odd}\end{matrix}} N^{\lambda_\alpha^\perp}_{\sigma_\alpha \tau_{\alpha+1}^\perp} N^{\kappa_\alpha^\perp}_{\sigma_\alpha \tau_\alpha} N^{\lambda_{\alpha+1}}_{\sigma_{\alpha+1} \tau_{\alpha+2}^\perp} N^{\kappa_{\alpha+1}}_{\sigma_{\alpha+1} \tau_{\alpha+1}} \\
& = \sum\limits_{\sigma_\bullet,\tau_\bullet\in\bm\Lambda_{\rm left},\sigma_t=\tau_{-1}=\emptyset,||\tau_\bullet||=k} \prod\limits_{\begin{matrix}-1\leq\alpha<t\\ \alpha\;{\rm odd}\end{matrix}} N^{\lambda_\alpha}_{\sigma_\alpha^\perp \tau_{\alpha+1}} N^{\kappa_\alpha}_{\sigma_\alpha^\perp \tau_\alpha^\perp} N^{\lambda_{\alpha+1}}_{\sigma_{\alpha+1} \tau_{\alpha+2}^\perp} N^{\kappa_{\alpha+1}}_{\sigma_{\alpha+1} \tau_{\alpha+1}} \\
& = \sum\limits_{\sigma_\bullet,\tau_\bullet\in\bm\Lambda_{\rm left},\sigma_t=\tau_{-1}=\emptyset,||\tau_\bullet||=k} \prod\limits_{\begin{matrix}-1\leq\alpha<t\\ \alpha\;{\rm odd}\end{matrix}} N^{\lambda_\alpha}_{\sigma_\alpha \tau_{\alpha+1}} N^{\kappa_\alpha}_{\sigma_\alpha \tau_\alpha} N^{\lambda_{\alpha+1}}_{\sigma_{\alpha+1} \tau_{\alpha+2}} N^{\kappa_{\alpha+1}}_{\sigma_{\alpha+1} \tau_{\alpha+1}} \\
& = \dim {\rm Hom}(L_{\kappa_\bullet},\ul{\rm soc}^{k+1}(M_{\lambda_\bullet;\emptyset})) \;.
\end{align*}
\end{proof}

\subsection{Injective resolutions of simple objects in $\TT_t$}\label{Sec InjResTT}

\begin{prop}\label{Prop InjRes C in T}
An injective resolution of the trivial module $\CC=L_{\emptyset;\emptyset}$ in the category $\TT$ is given by
$$
0\to \CC \to I \stackrel{\psi_0}{\to} I\otimes F \stackrel{\psi_1}{\to} I\otimes \Lambda^2F \stackrel{\psi_2}{\to} ... \stackrel{\psi_{j-1}}{\to} I\otimes \Lambda^jF \stackrel{\psi_j}{\to} ...,
$$
where $F:=V^*/V_*\otimes \bar V/V=J_{1,0;0,1}$, $\psi_0=\psi$, and the $j$-th map is defined as the direct limit $\psi_j:=\lim\limits_{k\to\infty} \psi_j^k$ of the morphisms
\begin{gather*}
\psi_j^k: S^kQ \otimes \Lambda^jF \stackrel{\Delta^k_1\otimes{\rm id}}{\lw} S^{k-1}Q\otimes Q\otimes \Lambda^jF \stackrel{{\rm id}\otimes\pi \otimes{\rm id}}{\lw} S^{k-1}Q \otimes F \otimes \Lambda^jF \stackrel{{\rm id}\otimes{\rm multiply}_{\Lambda^\bullet F}}{\lw} S^{k-1}Q \otimes \Lambda^{j+1}F \;.
\end{gather*}
The $j$-th term $\mc I^{j}_\TT(\CC):=I\otimes \Lambda^jF$ of this resolution decomposes into a direct sum of indecomposable injectives as
$$
\mc I^j_{\TT}(\CC) \cong \bigoplus\limits_{\zeta\in\Lambda:|\zeta|=j} I_{\zeta,\emptyset;\emptyset,\zeta^\perp}
$$
\end{prop}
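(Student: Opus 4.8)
The plan is to verify three things in turn: (1) the displayed sequence is a complex, (2) it is exact, hence an injective resolution of $\CC$, and (3) the $j$-th term decomposes as claimed. For (1), the identity $\psi_{j+1}^k\circ({\rm id}\otimes\iota_{k-1}^{?})=({\rm id}\otimes\iota_{?})\circ\psi_j^k$ that makes the direct limits $\psi_j=\lim_k\psi_j^k$ well-defined is proved exactly as the compatibility lemma $\psi^{k+1}\circ\iota_k=({\rm id}\otimes\iota_{k-1})\circ\psi^k$ in \S\ref{Sec Module I}; and $\psi_{j+1}\circ\psi_j=0$ follows because two successive applications of $\Delta$ followed by multiplication into $\Lambda^\bullet F$ produce a skew-symmetrization of a symmetric element (the two copies of $Q$ extracted by $\Delta$ get projected to $F$ and then multiplied in the exterior algebra, which kills the symmetric part), together with coassociativity of $\Delta$. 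This is the standard Koszul-type argument and I would phrase it as: the complex $(I\otimes\Lambda^\bullet F,\psi_\bullet)$ is obtained from the exact sequence $0\to\CC\to Q\to F\to 0$ of \S\ref{Sec Module I}, see (\ref{For CQF}), by a Koszul construction over the commutative algebra $I\cong S^\bullet Q/\langle 1-\iota(1)\rangle$ of Proposition \ref{Prop I is algebra}.

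For (2), exactness, I would argue at the level of the finite-stage approximations and then pass to the limit, since $\TT$ is a Grothendieck category and filtered colimits are exact. Concretely, at stage $k$ one has the finite complex $S^kQ\to S^{k-1}Q\otimes F\to S^{k-2}Q\otimes\Lambda^2F\to\cdots$, whose homology can be computed from the filtration of $S^kQ$ by powers of $\mk q$ with layers $S^jF$ (formula (\ref{For SkCQF}) and the remark on symmetric powers of a short exact sequence in \S4.2). The associated graded complex is, degreewise, the Koszul complex of the identity-type map built from $0\to\CC\to Q\to F\to 0$, which is exact except at the ends; chasing the filtration gives that the homology of the stage-$k$ complex is concentrated in degree $0$ and equals $\CC$ (coming from $\mk q\subset S^kQ$), once $k$ is large enough, and the connecting maps $S^kQ\hookrightarrow S^{k+1}Q$ are compatible with this. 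Passing to the colimit, $H^0=\CC$ and $H^{>0}=0$, which is the asserted resolution. The fact that each $I\otimes\Lambda^jF$ is injective in $\TT$ is immediate: $I$ is injective (it is the injective hull of $\CC$, see Corollary \ref{Coro TT simples and inj hulls} with $\bm\lambda=(\emptyset_\bullet,\emptyset;\emptyset,\emptyset_\bullet)$), $\Lambda^jF$ is a direct summand of $F^{\otimes j}=J_{(1,\dots);(1,\dots)}$-type object and hence $I\otimes\Lambda^jF$ is a finite direct sum of the order-defining objects $I_{\bm l}$, which are injective by Theorem \ref{Theo TT Grothe Order} and Proposition \ref{Prop InjHullInGrothCat}; alternatively one invokes Lemma \ref{Lemma TensInjRes} once we know tensor products of injectives are injective in $\TT$.

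For (3), the decomposition $\mc I^j_\TT(\CC)=I\otimes\Lambda^jF\cong\bigoplus_{|\zeta|=j}I_{\zeta,\emptyset;\emptyset,\zeta^\perp}$: by Proposition 2.11(3) (the $\Lambda^m(X\otimes Y)$ formula), with $X=V^*/V_*$ and $Y=\bar V/V$, one gets $\Lambda^jF=\Lambda^j((V^*/V_*)\otimes(\bar V/V))\cong\bigoplus_{|\zeta|=j}(V^*/V_*)_\zeta\otimes(\bar V/V)_{\zeta^\perp}=\bigoplus_{|\zeta|=j}J_{\zeta,\emptyset;\emptyset,\zeta^\perp}$, using that $V^*/V_*=V^*/V^*_0$ and $\bar V/V=\bar V/\bar V_0$ so the indices land in the correct outer slots. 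Tensoring with $I$ and recalling $I_{\bm\lambda}=I\otimes J_{\bm\lambda}$ from (\ref{For LJITlambda}) gives the claim. I expect step (2) — checking exactness of the colimit complex via the $\mk q$-power filtration of $S^kQ$ and identifying the associated graded with a Koszul complex — to be the main obstacle, the rest being formal or a direct citation of \cite[\S~3]{Chirvasitu-Penkov-UTC}, where the analogous statement for $t=0$ is established by the same mechanism.
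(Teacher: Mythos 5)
Your proposal is correct and matches the approach the paper takes: the paper simply cites \cite[\S~3.5]{Chirvasitu-Penkov-UTC} and notes that the $t=0$ assumption made there plays no role, and your Koszul-type argument (complex structure from coassociativity of $\Delta$ plus exterior multiplication, exactness via the $\mk q$-power filtration of $S^kQ$ and passage to the colimit, decomposition of $\Lambda^jF$ via the $\Lambda^m(X\otimes Y)$ formula) is precisely the mechanism of that cited proof. Your verification that $I\otimes\Lambda^jF$ is injective — as a direct summand of $I_{(j,0,\dots);(j,0,\dots)}$, or directly from Corollary \ref{Coro TT simples and inj hulls} applied to the summands $I_{\zeta,\emptyset;\emptyset,\zeta^\perp}$ — is also correct.
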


\begin{proof}
The proposition is proven in \cite[\S~3.5]{Chirvasitu-Penkov-UTC} for $t=0$, but this assumption is not necessary.
\end{proof}

\begin{coro}
For $\bm\lambda\in\bm\Lambda$ and $j\in\NN$ we have
$$
{\rm Ext}^j_{\TT}(L_{\bm\lambda},\CC) = \begin{cases} \CC &\; if\;\bm\lambda=(\zeta,\emptyset;\emptyset,\zeta^\perp)\; with\; |\zeta|=j,\\
0 &\; otherwise. \end{cases}
$$
\end{coro}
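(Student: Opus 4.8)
The plan is to read off the Ext-groups directly from the injective resolution of the trivial module $\CC$ provided by Proposition \ref{Prop InjRes C in T}. Recall that $\mathrm{Ext}^j_{\TT}(L_{\bm\lambda},\CC)$ is computed as the $j$-th cohomology of the complex obtained by applying $\mathrm{Hom}_{\TT}(L_{\bm\lambda},-)$ to the injective resolution
$$
0\to I \stackrel{\psi_0}{\to} I\otimes F \stackrel{\psi_1}{\to} I\otimes\Lambda^2F \stackrel{\psi_2}{\to}\cdots,
$$
with $\mc I^j_\TT(\CC)=I\otimes\Lambda^jF\cong\bigoplus_{|\zeta|=j} I_{\zeta,\emptyset;\emptyset,\zeta^\perp}$. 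The first step is to compute $\mathrm{Hom}_{\TT}(L_{\bm\lambda},\mc I^j_\TT(\CC))$: since each $I_{\zeta,\emptyset;\emptyset,\zeta^\perp}$ is an injective hull of the simple module $L_{\zeta,\emptyset;\emptyset,\zeta^\perp}$ (Corollary \ref{Coro TT simples and inj hulls}) and the simple modules $L_{\bm\kappa}$ are pairwise nonisomorphic with scalar endomorphism rings (Theorem \ref{Theo EndoSimpl}), a nonzero homomorphism $L_{\bm\lambda}\to I_{\zeta,\emptyset;\emptyset,\zeta^\perp}$ exists exactly when $L_{\bm\lambda}\cong L_{\zeta,\emptyset;\emptyset,\zeta^\perp}$, i.e. when $\bm\lambda=(\zeta,\emptyset;\emptyset,\zeta^\perp)$ for some $\zeta$ with $|\zeta|=j$, and in that case the Hom-space is one-dimensional. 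Hence the complex $\mathrm{Hom}_{\TT}(L_{\bm\lambda},\mc I^\bullet_\TT(\CC))$ is zero in all degrees except possibly a single degree $j_0=|\zeta_0|$, where it is $\CC$, provided $\bm\lambda$ has the form $(\zeta_0,\emptyset;\emptyset,\zeta_0^\perp)$; otherwise it is identically zero.

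The second step is trivial bookkeeping: a complex of vector spaces that is zero except in one degree $j_0$, where it equals $\CC$, has cohomology $\CC$ in degree $j_0$ and $0$ elsewhere (the incoming and outgoing differentials are necessarily zero). This immediately gives
$$
\mathrm{Ext}^j_{\TT}(L_{\bm\lambda},\CC)=\begin{cases}\CC & \text{if }\bm\lambda=(\zeta,\emptyset;\emptyset,\zeta^\perp)\text{ with }|\zeta|=j,\\ 0 & \text{otherwise,}\end{cases}
$$
which is exactly the claimed statement. One should remark that $\bm\lambda$ determines $\zeta$ uniquely when it exists (the first entry of $\bm\lambda$ is $\zeta$, and then $|\zeta|=j$ is forced), so there is no ambiguity.

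There is essentially no obstacle here: the entire content sits in Proposition \ref{Prop InjRes C in T} (the existence and explicit form of the injective resolution, in particular the decomposition of each $\mc I^j_\TT(\CC)$ into indecomposable injectives), together with the fact that the simple objects have scalar endomorphism algebras and are pairwise nonisomorphic (Theorem \ref{Theo EndoSimpl}), so that $\mathrm{Hom}$ from a simple into an indecomposable injective is at most one-dimensional and is nonzero precisely on its socle. The only point requiring a word of care is that the differentials in the $\mathrm{Hom}$-complex vanish for degree reasons — but this is automatic since the complex is concentrated in a single degree — so no analysis of the maps $\psi_j$ themselves is needed.
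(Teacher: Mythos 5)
Your proof is correct, and it is precisely the argument the paper intends by leaving the corollary without explicit justification: one reads off the Ext-groups directly from the injective resolution of Proposition \ref{Prop InjRes C in T}, using that $\mathrm{Hom}$ from a simple object into an indecomposable injective is one-dimensional if the simple is the socle and zero otherwise (Corollary \ref{Coro TT simples and inj hulls} and Theorem \ref{Theo EndoSimpl}), and that the resulting $\mathrm{Hom}$-complex is concentrated in a single degree, so the differentials are trivially zero.
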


\begin{theorem}\label{Theo InjResLlambdamu TT}
Let $\bm\lambda=(\lambda_\bullet,\lambda;\mu,\mu_\bullet)\in \bm\Lambda$. There is an injective resolution of the simple module $L_{\bm\lambda}$ in $\TT$, with $k$-th term
\begin{align*}
\mc I_{\TT}^{k}(L_{\lambda_\bullet,\lambda;\mu,\mu_\bullet}) & := \bigoplus\limits_{\begin{array}{c}i+j+l+m=k\\ \xi,\eta\in\Lambda\end{array}} \mc I_{\TT}^i(\CC)\otimes {\rm Ext}_{\TT(V_*,V)}^j(V_{\xi,\eta},V_{\lambda;\mu})\otimes \mc I_{\TT(V^*)}^{l}(L_{\lambda_{\bullet},\xi;\emptyset})\otimes \mc I_{\TT(\bar V)}^{m}(L_{\emptyset;\eta,\mu_\bullet}) \\
&\cong \bigoplus\limits_{(\kappa_\bullet,\kappa;\nu,\nu_\bullet)\in\bm\Lambda:k^{\lambda_\bullet,\lambda;\mu,\mu_\bullet}_{\kappa_\bullet,\kappa;\nu,\nu_\bullet}=k} \left(\sum\limits_{\xi,\eta,\zeta,\rho,\theta\in\Lambda} p^{\xi,\lambda_{\bullet}}_{\kappa,\rho,\kappa_{\bullet_{>0}}} N^{\kappa_0}_{\rho\zeta} m^{\lambda;\mu}_{\xi;\eta} N^{\nu_0}_{\theta\zeta^\perp} p^{\eta,\mu_\bullet}_{\nu,\theta,\nu_{\bullet_{>0}}} \right) \cdot I_{\kappa_\bullet,\kappa;\nu,\nu_\bullet} \;,
\end{align*}
where $k^{\lambda_\bullet,\lambda;\mu,\mu_\bullet}_{\kappa_\bullet,\kappa;\nu,\nu_\bullet}:=|\lambda|-|\kappa|+\sum\limits_{0\leq\alpha\leq t}(\alpha+\frac12)(|\kappa_\alpha|-|\lambda_\alpha|+|\nu_\alpha|-|\mu_\alpha|)$.
\end{theorem}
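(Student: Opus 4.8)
The strategy is to apply the tensor-product construction of injective resolutions (Lemma~\ref{Lemma TensInjRes}) to a factorization of $L_{\bm\lambda}$ into simple modules living in ``orthogonal'' subcategories, for which injective resolutions are already known. First I would write
$$
L_{\lambda_\bullet,\lambda;\mu,\mu_\bullet} \cong L_{\lambda_\bullet,\emptyset;\emptyset,\emptyset_\bullet}\otimes V_{\lambda;\mu}\otimes L_{\emptyset_\bullet,\emptyset;\emptyset,\mu_\bullet},
$$
but this naive splitting does not quite suffice because $V_{\lambda;\mu}$ is not built only from $V^*$ and $\bar V$ ``one-sidedly''; instead I would use the resolution of $V_{\lambda;\mu}$ in $\TT(V_*,V)$ from Theorem~\ref{Theo smallTT Inj n Ext to Hom}, whose $j$-th term is a sum of $(V_*)_\xi\otimes V_\eta$, together with the resolution of $\CC$ in $\TT$ from Proposition~\ref{Prop InjRes C in T} to ``absorb'' the contraction terms and relate $(V_*)_\xi\otimes V_\eta$ to the modules $I\otimes J_{\lambda_\bullet,\xi;\emptyset}$ and $I\otimes J_{\emptyset;\eta,\mu_\bullet}$. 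Concretely, the four-factor tensor product
$$
\mc I^i_{\TT}(\CC)\otimes \bigl(\text{$j$-term of the $\TT(V_*,V)$-resolution of }V_{\xi,\eta}\to V_{\lambda;\mu}\bigr)\otimes \mc I^l_{\TT(V^*)}(L_{\lambda_\bullet,\xi;\emptyset})\otimes \mc I^m_{\TT(\bar V)}(L_{\emptyset;\eta,\mu_\bullet})
$$
should assemble, via Lemma~\ref{Lemma TensInjRes} applied iteratively, into an injective resolution of $L_{\bm\lambda}$ in $\TT$, provided each factor is an injective resolution in a subcategory whose tensor products of injectives remain injective in $\TT$.

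The key technical points to establish, in order, are: (1) that $I\otimes$(injective in $\TT(V^*)$) and $I\otimes$(injective in $\TT(\bar V)$) and $I\otimes$(injective in $\TT(V_*,V)$) are all injective in $\TT$ — this follows from Corollary~\ref{Coro TT simples and inj hulls} identifying the indecomposable injectives of $\TT$ as the $I_{\bm\lambda}$, combined with the fact that $I\otimes I\cong I$ (as $I$ is the injective hull of $\CC$, a commutative algebra, Proposition~\ref{Prop I is algebra}) and that tensor products of the $J$-type injectives distribute over the components of $\bm\Lambda$ as in the decomposition $J_{\bm\lambda}=J_{\lambda;\emptyset}\otimes J_{\emptyset;\mu}\otimes\bigotimes_\alpha(J_{\lambda_\alpha,\emptyset;\emptyset}\otimes J_{\emptyset;\emptyset,\mu_\alpha})$; (2) exactness of the total complex, which is the Künneth formula as in Lemma~\ref{Lemma TensInjRes}, using that $\CC$, $V_{\lambda;\mu}$, $L_{\lambda_\bullet,\xi;\emptyset}$, $L_{\emptyset;\eta,\mu_\bullet}$ have no higher self-Tor obstructions in this setting; (3) the bookkeeping that turns the index conditions ``$i+j+l+m=k$'' plus the internal conditions of each sub-resolution into the single grading $k^{\lambda_\bullet,\lambda;\mu,\mu_\bullet}_{\kappa_\bullet,\kappa;\nu,\nu_\bullet}$. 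For (3) I would combine: the length-$|\lambda|-|\xi|=j$ condition from Theorem~\ref{Theo smallTT Inj n Ext to Hom} (so that the middle factor contributes $|\lambda|-|\kappa|$ after the one-sided pieces further truncate $\xi\to\kappa$ and $\eta\to\nu$); the grading $k^{\lambda_\bullet}_{\kappa_\bullet}=\sum_{\alpha}(\alpha+1)(|\kappa_\alpha|-|\lambda_\alpha|)$ from Theorem~\ref{Theo InjResLlambdabullet} applied to both $\TT(V^*)$ and $\TT(\bar V)$ (each contributing a ``$\alpha+1$'' weight, but since the $\CC$-resolution shift by $i$ pairs a $V^*$-step with a $\bar V$-step symmetrically, the effective weight per index $\alpha$ becomes $\alpha+\tfrac12$ on each side, matching the stated formula); and the $i=|\zeta|$ condition from Proposition~\ref{Prop InjRes C in T}, with $\zeta$ running over the LR-coupling diagram that glues $\kappa_0$ to $\rho$ and $\nu_0$ to $\theta^\perp$.

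For the multiplicity formula, once the terms are written as sums of $I_{\kappa_\bullet,\kappa;\nu,\nu_\bullet}$, I would expand each factor's multiplicity: $p^{\xi,\lambda_\bullet}_{\kappa,\rho,\kappa_{\bullet_{>0}}}$ from $\TT(V^*)$ (where the $\xi$-part and the $\lambda_0$-part of the index-$0$ slot get coupled, producing the auxiliary $\rho$), $m^{\lambda;\mu}_{\xi;\eta}$ from $\TT(V_*,V)$, $p^{\eta,\mu_\bullet}_{\nu,\theta,\nu_{\bullet_{>0}}}$ from $\TT(\bar V)$, and the two Littlewood–Richardson factors $N^{\kappa_0}_{\rho\zeta}$, $N^{\nu_0}_{\theta\zeta^\perp}$ coming from the $\Lambda^{|\zeta|}F = \Lambda^{|\zeta|}(V^*/V_*\otimes\bar V/V)$ factor of $\mc I^i_\TT(\CC)$ — here $\Lambda^j(X\otimes Y)=\bigoplus_{|\zeta|=j}X_\zeta\otimes Y_{\zeta^\perp}$ (Proposition in \S\ref{Sec Schur}) is what produces the conjugate $\zeta^\perp$ on the $\bar V$-side. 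Summing over $\xi,\eta,\zeta,\rho,\theta$ gives exactly the stated coefficient. \emph{The main obstacle} I anticipate is step~(1)–(2) done \emph{compatibly}: one must check not merely that each sub-resolution is a resolution in its own subcategory, but that after tensoring with $I$ and with the other factors the differentials still compose to zero and the homology vanishes in $\TT$ — i.e.\ that the embeddings $\TT(V^*)\hookrightarrow\TT$, $\TT(\bar V)\hookrightarrow\TT$, $\TT(V_*,V)\hookrightarrow\TT$ together with $\otimes I$ are exact and send injectives to injectives, and that no unexpected extensions appear among the $I_{\bm\kappa}$ that would force the complex to be non-minimal or non-exact. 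This is where the density arguments (Proposition~\ref{Prop DenseProperties}, Lemma~\ref{Lemma LinIndepToAny}) and the socle-filtration computations of \S\ref{Sec Ilm Ilambdamu} (especially Corollary~\ref{Coro soc filt Ilm}, which pins down that $\mathrm{Hom}(L_{\bm k},\ul{\rm soc}^{q+1}I_{\bm l})\neq0\Rightarrow \bm k\in\mc P^q(\bm l)$) must be invoked to rule out degeneracies and to confirm the resolution has the claimed length.
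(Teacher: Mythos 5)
Your proposal correctly identifies the ingredients (the resolution of $\CC$ from Proposition~\ref{Prop InjRes C in T}, the one-sided resolutions from Theorem~\ref{Theo InjResLlambdabullet}, the $\TT(V_*,V)$-resolution of $V_{\lambda;\mu}$ from Theorem~\ref{Theo smallTT Inj n Ext to Hom}), and your combinatorial bookkeeping for the weights $\alpha+\tfrac12$ and the conjugation $\zeta^\perp$ coming from $\Lambda^j(X\otimes Y)$ is sound. But there is a genuine structural gap: you cannot assemble the four factors by "iterative Lemma~\ref{Lemma TensInjRes}". That lemma resolves a tensor product $U_1\otimes U_2$ out of resolutions of $U_1$, $U_2$ by objects injective in the \emph{ambient} category; here the terms $(V_*)_\xi\otimes V_\eta$ of the $\TT(V_*,V)$-resolution of $V_{\lambda;\mu}$ are not injective in $\TT$, and the ${\rm Ext}_{\TT(V_*,V)}^j(V_{\xi;\eta},V_{\lambda;\mu})$ factors in the statement are fixed multiplicity vector spaces, not a third member of a K\"unneth tower. (Your assertion that $I\otimes I\cong I$ is also false; the socle filtration of $I\otimes I$ is strictly coarser than that of $I$, and it is not used anywhere.)

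What the paper actually does is: first build the triple K\"unneth product $\bigoplus_{i+j_1+j_2=k}\mc I_\TT^i(\CC)\otimes\mc I_{\TT(V^*)}^{j_1}(L_{\lambda_\bullet,\lambda;\emptyset_\bullet})\otimes\mc I_{\TT(\bar V)}^{j_2}(L_{\emptyset_\bullet;\mu,\mu_\bullet})$, with $\lambda$ and $\mu$ absorbed into the one-sided sequences, which by Lemma~\ref{Lemma TensInjRes} resolves the tensor product $L_{\lambda_\bullet,\lambda;\emptyset_\bullet}\otimes L_{\emptyset_\bullet;\mu,\mu_\bullet}$ --- but this is not $L_{\bm\lambda}$, which is only the \emph{socle} of that tensor product. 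The essential step you are missing is the construction of the modified differentials $y_{k,\bm\lambda}$, spliced together from (a) the K\"unneth maps $g_{k-k^{\lambda;\mu}_{\xi;\eta},(\lambda_\bullet,\xi;\eta,\mu_\bullet)}$ with $(\xi,\eta)$ ranging over the support of the socle filtration of $V^*_\lambda\otimes\bar V_\mu$, weighted by $m^{\lambda;\mu}_{\xi;\eta}$, and (b) the contraction morphisms $w_{(\lambda_\bullet,\xi;\eta,\mu_\bullet)}$ of type (iii) linking adjacent $(\xi,\eta)$-strata. Exactness of the resulting total complex then requires an additional nontrivial input --- the Koszulity of $\TT(V_*,V)$ and induction on $|\lambda\cap\mu^\perp|$ --- which is not a consequence of K\"unneth nor of "no higher self-Tor obstructions" as you propose. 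Without this splicing-plus-Koszulity argument your exactness claim has no proof.
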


\begin{proof}
Let us first establish the relation between the two expressions for $\mc I_{\TT}^{k}(L_{\lambda_\bullet,\lambda;\mu,\mu_\bullet})$. The building blocks of the first expression are computed, respectively, in Theorem \ref{Theo InjResLlambdabullet} for the injective resolutions of the ``one-sided'' modules $L_{\lambda_\bullet,\lambda;\emptyset_\bullet}$ and $L_{\emptyset_\bullet;\mu,\mu_\bullet}$ in the respective categories $\TT(V^*)$ and $\TT(\bar V)$, Proposition \ref{Prop InjRes C in T} for the resolution of $\CC$ in $\TT$, and Theorem \ref{Theo smallTT Inj n Ext to Hom} for the resolution of $V_{\lambda;\mu}$ in $\TT(V_*,V)$. Compiling the coefficients from these building blocks we obtain
$$
\mc I_{\TT}^{k}(L_{\bm\lambda})\cong \bigoplus\limits_{(\kappa_\bullet,\kappa;\nu,\nu_\bullet)\in\bm\Lambda} \left(\sum\limits_{\stackrel{\xi,\eta,\zeta,\rho,\theta\in\Lambda}{|\lambda|-|\xi|+|\zeta|+k^{\xi,\lambda_{\bullet}}_{\kappa,\rho,\kappa_{\bullet_{>0}}}+k^{\eta,\mu_\bullet}_{\nu,\theta,\nu_{\bullet_{>0}}} = k}}  p^{\xi,\lambda_{\bullet}}_{\kappa,\rho,\kappa_{\bullet_{>0}}} N^{\kappa_0}_{\rho\zeta} m^{\lambda;\mu}_{\xi;\eta} N^{\nu_0}_{\theta\zeta^\perp} p^{\eta,\mu_\bullet}_{\nu,\theta,\nu_{\bullet_{>0}}} \right) \cdot I_{\kappa_\bullet,\kappa;\nu,\nu_\bullet} \;,
$$
and observe that the equality $|\lambda|-|\xi|+|\zeta|+k^{\xi,\lambda_{\bullet}}_{\kappa,\rho,\kappa_{\bullet_{>0}}}+k^{\eta,\mu_\bullet}_{\nu,\theta,\nu_{\bullet_{>0}}}=k^{\lambda_\bullet,\lambda;\mu,\mu_\bullet}_{\kappa_\bullet,\kappa;\nu,\nu_\bullet}$ holds whenever $p^{\xi,\lambda_{\bullet}}_{\kappa,\rho,\kappa_{\bullet_{>0}}} N^{\kappa_0}_{\rho\zeta} m^{\lambda;\mu}_{\xi;\eta} N^{\nu_0}_{\theta\zeta^\perp} p^{\eta,\mu_\bullet}_{\nu,\theta,\nu_{\bullet_{>0}}}\ne 0$. This establishes the equivalence of our two expressions.

The modules $\mc I_{\TT}^{k}(L_{\bm\lambda})$ are injective since, by Corollary \ref{Coro TT simples and inj hulls}, the modules $I_{\bm \kappa}$ for $\bm\kappa\in\bm\Lambda$ are indecomposable injectives in $\TT$. To show that we have the desired resolution, it remains to construct an exact sequence of morphisms $y_{k,\bm\lambda}:\mc I_{\bf T}^k(L_{\bm \lambda})\to \mc I_{\bf T}^{k+1}(L_{\bm \lambda})$, with ${\rm ker}y_{0,\bm\lambda}=L_{\bm\lambda}$.

We consider the triple K\"unneth product with $k$-th term
\begin{gather}\label{For inj res Llambdabullet otimes Lmubullet}
\mc I^k_{\TT}(L_{\lambda_\bullet,\lambda;\emptyset_\bullet} \otimes L_{\emptyset_\bullet;\mu,\mu_\bullet}) := \bigoplus\limits_{i+j_1+j_2=k} \mc I_{\TT}^i(\CC)\otimes \mc I_{\TT(V^*)}^{j_1}(L_{\lambda_\bullet,\lambda;\emptyset_\bullet})\otimes\mc I_{\TT(\bar V)}^{j_2}(L_{\emptyset_\bullet;\mu,\mu_\bullet}) \;,
\end{gather}
and we let $g_{k,\bm\lambda}: \mc I^k_{\TT}(L_{\lambda_\bullet,\lambda;\emptyset_\bullet} \otimes L_{\emptyset_\bullet;\mu,\mu_\bullet}) \to \mc I^{k+1}_{\TT}(L_{\lambda_\bullet,\lambda;\emptyset_\bullet} \otimes L_{\emptyset_\bullet;\mu,\mu_\bullet})$ be its $k$-th map. We have ${\rm ker}g_{0,\bm\lambda}=L_{\lambda_\bullet,\lambda;\emptyset_\bullet} \otimes L_{\emptyset_\bullet;\mu,\mu_\bullet}$, and hence (\ref{For inj res Llambdabullet otimes Lmubullet}) is an injective resolution of $L_{\lambda_\bullet,\lambda;\emptyset_\bullet} \otimes L_{\emptyset_\bullet;\mu,\mu_\bullet}$ in $\TT$. We shall modify this resolution into a resolution of the simple module $L_{\lambda_\bullet,\lambda;\mu,\mu_\bullet}$ using the fact that, by Proposition \ref{Prop TT tensor prod simples},
$$
L_{\lambda_\bullet,\lambda;\mu,\mu_\bullet}={\rm soc}(L_{\lambda_\bullet,\lambda;\emptyset_\bullet}\otimes L_{\emptyset_\bullet;\mu,\mu_\bullet})\cong L_{\lambda_\bullet,\emptyset;\emptyset,\mu_\bullet}\otimes{\rm soc}(L_{\lambda;\emptyset}\otimes L_{\emptyset;\mu}) \;.
$$
We note that for every $\xi,\eta\in\Lambda$ such that $m^{\lambda;\mu}_{\xi;\eta}\ne 0$ and $k^{\lambda;\mu}_{\xi;\eta}=1$ we have $m^{\lambda;\mu}_{\xi;\eta}=1$. Thus
$$
\bigoplus\limits_{\xi,\eta\in\Lambda:k^{\lambda;\mu}_{\xi;\eta}=1} m^{\lambda;\mu}_{\xi;\eta}\cdot \mc I^0_{\TT}(L_{\lambda_\bullet,\xi;\emptyset_\bullet} \otimes L_{\emptyset_\bullet;\eta,\mu_\bullet}) \cong \bigoplus\limits_{\xi,\eta\in\Lambda:m^{\lambda;\mu}_{\xi;\eta}=k^{\lambda;\mu}_{\xi;\eta}=1} I_{\lambda_\bullet,\xi;\eta,\mu_\bullet} \;.
$$
Let
$$
w_{\bm\lambda}:= (\bigoplus\limits_{f\in\Xi^1(I_{\lambda_\bullet,\lambda;\mu,\mu_\bullet})\; \textrm{of type (iii)}} f): I_{\bm\lambda} \to \bigoplus\limits_{\xi,\eta\in\Lambda:m^{\lambda;\mu}_{\xi;\eta}=k^{\lambda;\mu}_{\xi;\eta}=1} I_{\lambda_\bullet,\xi;\eta,\mu_\bullet}\;.
$$
We obtain a morphism
\begin{gather}\label{For differential 0 InjLbmlambda}
y_{0,\bm\lambda}=g_{0,\bm\lambda}\oplus w_{\bm\lambda} : \mc I^0_{\TT}(L_{\lambda_\bullet,\lambda;\mu,\mu_\bullet}) \to \mc I^1_{\TT}(L_{\lambda_\bullet,\lambda;\mu,\mu_\bullet})
\end{gather}
with the properties ${\rm ker}y_{0,\bm\lambda}\cong L_{\lambda_\bullet,\lambda;\mu,\mu_\bullet}$ and ${\rm image}y_{0,\bm\lambda} \cap \mc I^0_{\TT}(L_{\lambda_\bullet,\xi;\emptyset_\bullet} \otimes L_{\emptyset_\bullet;\eta,\mu_\bullet}) \cong L_{\lambda_\bullet,\xi;\eta,\mu_\bullet}$. We proceed to define
$$
y_{1,\bm\lambda}:=g_{1,\bm\lambda} \oplus (\bigoplus\limits_{\xi,\eta\in\Lambda:m^{\lambda;\mu}_{\xi;\eta}=k^{\lambda;\mu}_{\xi;\eta}=1} y_{0,(\lambda_\bullet,\xi;\eta,\mu_\bullet)})
$$
and, more generally,
$$
y_{k,\bm\lambda}:= (\bigoplus\limits_{\xi,\eta\in\Lambda:0\leq k^{\lambda;\mu}_{\xi;\eta}\leq k} (g_{k-k^{\lambda;\mu}_{\xi;\eta},(\lambda_\bullet,\xi;\eta,\mu_\bullet)})^{\oplus m^{\lambda;\mu}_{\xi;\eta}}) \oplus (\bigoplus\limits_{\xi,\eta\in\Lambda:k^{\lambda;\mu}_{\xi;\eta}=k} (w_{(\lambda_\bullet,\xi;\eta,\mu_\bullet)})^{\oplus m^{\lambda;\mu}_{\xi;\eta}}) \;.
$$
It follows by induction on $|\lambda\cap\mu^\perp|$ (which is the injective length of $V_{\lambda;\mu}$ in $\TT(V_*,V)$), using the Koszulity of the category $\TT(V_*,V)$, that the morphisms $y_{k,\bm\lambda}$ form an exact sequence.
\end{proof}

\begin{coro}\label{Cor dimExt in TT}
Let $(\lambda_\bullet,\lambda;\mu,\mu_\bullet),(\kappa_\bullet,\kappa;\nu,\nu_\bullet)\in\bm\Lambda$. Then, for $k\geq 0$,
\begin{gather*}
\begin{array}{l}
\dim{\rm Ext}_{\TT}^k(L_{\kappa_\bullet,\kappa;\nu,\nu_\bullet},L_{\lambda_\bullet,\lambda;\mu,\mu_\bullet}) =\\ \qquad = \sum\limits_{\stackrel{\xi,\eta,\zeta,\rho,\theta\in\Lambda}{|\lambda|-|\xi|+|\zeta|+k^{\xi,\lambda_{\bullet}}_{\kappa,\rho,\kappa_{\bullet_{>0}}}+k^{\eta,\mu_\bullet}_{\nu,\theta,\nu_{\bullet_{>0}}} = k}}  p^{\xi,\lambda_{\bullet}}_{\kappa,\rho,\kappa_{\bullet_{>0}}} N^{\kappa_0}_{\rho\zeta} m^{\lambda;\mu}_{\xi;\eta} N^{\nu_0}_{\theta\zeta^\perp} p^{\eta,\mu_\bullet}_{\nu,\theta,\nu_{\bullet_{>0}}}\end{array}\;.
\end{gather*}
If ${\rm Ext}_{\TT}^k(L_{\kappa_\bullet,\kappa;\nu,\nu_\bullet},L_{\lambda_\bullet,\lambda;\mu,\mu_\bullet})\ne0$, then $k=k^{\lambda_\bullet,\lambda;\mu,\mu_\bullet}_{\kappa_\bullet,\kappa;\nu,\nu_\bullet}$.
\end{coro}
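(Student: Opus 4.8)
\textbf{Proof proposal for Corollary \ref{Cor dimExt in TT}.}
The plan is to feed the injective resolution of $L_{\lambda_\bullet,\lambda;\mu,\mu_\bullet}$ produced in Theorem \ref{Theo InjResLlambdamu TT} into the functor ${\rm Hom}(L_{\kappa_\bullet,\kappa;\nu,\nu_\bullet},-)$ and read off the cohomology of the resulting complex; by definition ${\rm Ext}^k_\TT(L_{\bm\kappa},L_{\bm\lambda})$ is the $k$-th cohomology of ${\rm Hom}(L_{\bm\kappa},\mc I^\bullet_\TT(L_{\bm\lambda}))$, so the task reduces to understanding the terms and differentials of this complex.

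First I would compute the individual Hom-spaces. Each $\mc I^k_\TT(L_{\bm\lambda})$ is, by Theorem \ref{Theo InjResLlambdamu TT}, a direct sum of indecomposable injectives $I_{\bm\kappa}$, and by Corollary \ref{Coro TT simples and inj hulls} the module $I_{\bm\kappa}$ is the injective hull of $L_{\bm\kappa}$, so ${\rm soc}I_{\bm\kappa}=L_{\bm\kappa}$ is simple. Hence any nonzero morphism $L_{\bm\kappa}\to I_{\bm\mu}$ has image in ${\rm soc}I_{\bm\mu}=L_{\bm\mu}$, forcing $\bm\kappa=\bm\mu$ (the simples are pairwise nonisomorphic by Theorem \ref{Theo EndoSimpl}), and then ${\rm Hom}(L_{\bm\kappa},I_{\bm\kappa})={\rm Hom}(L_{\bm\kappa},L_{\bm\kappa})=\CC$, again by Theorem \ref{Theo EndoSimpl}. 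Since simple objects of $\TT$ are noetherian, hence compact, ${\rm Hom}(L_{\bm\kappa},-)$ commutes with the (possibly infinite) coproducts occurring in Theorem \ref{Theo InjResLlambdamu TT}. Therefore $\dim{\rm Hom}(L_{\bm\kappa},\mc I^k_\TT(L_{\bm\lambda}))$ equals the multiplicity of $I_{\bm\kappa}$ in $\mc I^k_\TT(L_{\bm\lambda})$, which by the second line of the formula in Theorem \ref{Theo InjResLlambdamu TT} is exactly the coefficient $\sum_{\xi,\eta,\zeta,\rho,\theta} p^{\xi,\lambda_\bullet}_{\kappa,\rho,\kappa_{\bullet_{>0}}}N^{\kappa_0}_{\rho\zeta}m^{\lambda;\mu}_{\xi;\eta}N^{\nu_0}_{\theta\zeta^\perp}p^{\eta,\mu_\bullet}_{\nu,\theta,\nu_{\bullet_{>0}}}$ asserted in the corollary, with the sum restricted to indices meeting the degree constraint.

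The decisive observation is that a fixed indecomposable injective $I_{\bm\kappa}$ occurs in $\mc I^\bullet_\TT(L_{\bm\lambda})$ in the single cohomological degree $k=k^{\lambda_\bullet,\lambda;\mu,\mu_\bullet}_{\kappa_\bullet,\kappa;\nu,\nu_\bullet}$ only. This is precisely what the proof of Theorem \ref{Theo InjResLlambdamu TT} records: whenever the product $p^{\xi,\lambda_\bullet}_{\kappa,\rho,\kappa_{\bullet_{>0}}}N^{\kappa_0}_{\rho\zeta}m^{\lambda;\mu}_{\xi;\eta}N^{\nu_0}_{\theta\zeta^\perp}p^{\eta,\mu_\bullet}_{\nu,\theta,\nu_{\bullet_{>0}}}$ is nonzero, the homological degree equals $|\lambda|-|\xi|+|\zeta|+k^{\xi,\lambda_\bullet}_{\kappa,\rho,\kappa_{\bullet_{>0}}}+k^{\eta,\mu_\bullet}_{\nu,\theta,\nu_{\bullet_{>0}}}=k^{\lambda_\bullet,\lambda;\mu,\mu_\bullet}_{\kappa_\bullet,\kappa;\nu,\nu_\bullet}$, a quantity depending only on $\bm\kappa$ and $\bm\lambda$. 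Consequently the complex ${\rm Hom}(L_{\bm\kappa},\mc I^\bullet_\TT(L_{\bm\lambda}))$ has at most one nonzero term, sitting in degree $k^{\bm\lambda}_{\bm\kappa}$; its differentials vanish automatically, so its cohomology coincides with the complex itself. This gives both assertions: the dimension formula in degree $k=k^{\bm\lambda}_{\bm\kappa}$ and the vanishing of ${\rm Ext}^k_\TT$ for every other $k$.

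The only point demanding genuine care is this ``linearity'' of the resolution — that no indecomposable injective is spread over two cohomological degrees — and I expect it to be the sole real obstacle. It is, however, already built into Theorem \ref{Theo InjResLlambdamu TT}, whose proof pins the degree of each summand to the combinatorial invariant $k^{\lambda_\bullet,\lambda;\mu,\mu_\bullet}_{\kappa_\bullet,\kappa;\nu,\nu_\bullet}$; alternatively it follows from the additivity of this invariant under the K\"unneth construction used to assemble the resolution, since each of the four building-block resolutions (Theorems \ref{Theo InjResLlambdabullet} and \ref{Theo smallTT Inj n Ext to Hom}, and Proposition \ref{Prop InjRes C in T}) has the analogous single-degree property. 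The compactness remark needed to pass ${\rm Hom}(L_{\bm\kappa},-)$ through infinite coproducts is routine in a Grothendieck category with noetherian simple objects.
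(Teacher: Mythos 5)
Your proof is correct and takes the only sensible route; the paper presents the corollary with no separate argument, treating it as an immediate consequence of Theorem~\ref{Theo InjResLlambdamu TT}, and what you have written is exactly that implicit deduction. The decisive point — that each indecomposable injective $I_{\bm\kappa}$ appears in the resolution only in degree $k^{\bm\lambda}_{\bm\kappa}$, so the Hom-complex is concentrated in one degree with vanishing differentials — is, as you correctly observe, recorded in the proof of Theorem~\ref{Theo InjResLlambdamu TT}; your compactness remark is fine, though one can also argue directly that a morphism from the simple $L_{\bm\kappa}$ into a coproduct of injectives factors through the coproduct of their simple socles and then invoke Schur's lemma via Theorem~\ref{Theo EndoSimpl}.
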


\section{The category ${\bf T}_t$}

Recall Proposition \ref{Prop I is algebra} stating that the $\mk{gl}^M$-module $I$ is endowed with a structure of a commutative algebra via the isomorphism $I\cong S^\bullet Q/(1-\iota(1))$. Generalizing a concept introduced in \cite{Chirvasitu-Penkov-UTC}, we define a category ${\bf T}_t$ as the category of $(I,\mk{gl}^M)$-bimodules which are objects of $\TT$ and are free as $I$-modules. The category ${\bf T}_t$ is a tensor category with respect to $\otimes_I$. Since $t$ is fixed, we put ${\bf T}={\bf T}_t$. We note that the functor $I\otimes\bullet:\TT\to{\bf T}$ is left adjoint to the forgetful functor ${\bf T}\to\TT$. The simple objects in ${\bf T}$ are related to those in $\TT$ as follows.

\begin{theorem}\label{Theo L to IL} (\cite[Theorem~3.24]{Chirvasitu-Penkov-UTC})
The simple objects in ${\bf T}$ are exactly the modules of the form $I\otimes L$ with $L$ - a simple object in $\TT$. Furthermore, each simple object in ${\bf T}$ has endomorphism algebra isomorphic to $\CC$.

Consequently, the isomorphism classes of simple objects in ${\bf T}$ are parametrized by the set $\bm\Lambda=\Lambda^{2(t+2)}$ of $2(t+2)$-tuples of Young diagrams, with representatives (see (\ref{For LJITlambda}))
$$
K_{\bm\lambda}=I\otimes L_{\bm\lambda} \;,\; \bm\lambda\in\bm\Lambda \;.
$$
\end{theorem}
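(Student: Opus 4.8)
The plan is to run Schur's lemma through the adjunction between the functor $I\otimes\bullet:\TT\to{\bf T}$ and the forgetful functor $U:{\bf T}\to\TT$, after first identifying the $\mk{gl}^M$-socle of $K_{\bm\lambda}=I\otimes L_{\bm\lambda}$. The structural input that makes everything work is that, by Proposition~\ref{Prop I is algebra}, the algebra unit $1_I=\iota(1)$ of $I$ spans $\mk q={\rm soc}_{\mk{gl}^M}I$; in particular $\mk{gl}^M\cdot 1_I=0$, and $\mk q$ is the simple, essential socle of $I$ by Proposition~\ref{Prop soc q I} together with the fact that every object of $\TT$ is semi-artinian.

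The first and only substantive step will be to show ${\rm soc}_{\mk{gl}^M}K_{\bm\lambda}=1_I\otimes L_{\bm\lambda}$. Tensoring the essential inclusion $L_{\bm\lambda}\subset J_{\bm\lambda}$ of Theorem~\ref{Theo Simples and Injhulls} with $I$ embeds $K_{\bm\lambda}$ into $I_{\bm\lambda}=I\otimes J_{\bm\lambda}$, whose socle is the simple module $L_{\bm\lambda}$ by the $q=0$ case of Proposition~\ref{Prop soc q Ilambdamu}; hence ${\rm soc}K_{\bm\lambda}$ is a nonzero submodule of a simple module and must equal it, and it is realized inside $K_{\bm\lambda}$ as $1_I\otimes L_{\bm\lambda}$ because $1_I$ spans ${\rm soc}I$. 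Equivalently, one argues directly, exactly as in the proof of Proposition~\ref{Prop soc q Ilambdamu}, that the socle filtration of $K_{\bm\lambda}$ is the K\"unneth product of the trivial socle filtration of $L_{\bm\lambda}$ with that of $I$, using that the simple subquotients of $I$ tensor semisimple modules to semisimple ones and essential extensions to essential ones (see \S\ref{Sec Ilm Ilambdamu}); this yields $\ul{\rm soc}^{q+1}K_{\bm\lambda}\cong L_{\bm\lambda}\otimes\ul{\rm soc}^{q+1}I$ and in particular the claim for $q=0$. I expect this to be the main obstacle; the remaining steps are formal.

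Granting this, simplicity of $K_{\bm\lambda}$ in ${\bf T}$ is immediate: a nonzero subobject $N\subseteq K_{\bm\lambda}$ in ${\bf T}$ is in particular a nonzero $\mk{gl}^M$-submodule, hence contains a simple submodule, which can only be ${\rm soc}_{\mk{gl}^M}K_{\bm\lambda}=1_I\otimes L_{\bm\lambda}$; being also an $I$-submodule, and since $K_{\bm\lambda}$ is free over $I$ on $1_I\otimes L_{\bm\lambda}$, one gets $N\supseteq I\cdot(1_I\otimes L_{\bm\lambda})=K_{\bm\lambda}$, so $N=K_{\bm\lambda}$. Conversely, if $M$ is any simple object of ${\bf T}$, I would pick a simple $\mk{gl}^M$-submodule $L\subseteq U(M)$ (it exists, $U(M)$ being a nonzero semi-artinian object of $\TT$, and it lies in $\TT$ since $\TT$ is closed under subquotients); by the adjunction the inclusion $L\hookrightarrow U(M)$ corresponds to a morphism $\varphi:I\otimes L\to M$ in ${\bf T}$, and $\varphi$ composed with the adjunction unit $L\to U(I\otimes L)$, $\ell\mapsto 1_I\otimes\ell$, recovers that inclusion, so $\varphi\neq 0$; a nonzero morphism between the simple objects $I\otimes L$ and $M$ in the abelian category ${\bf T}$ is an isomorphism. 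With Corollary~\ref{Coro TT simples and inj hulls} this identifies the simple objects of ${\bf T}$ with the modules $K_{\bm\lambda}=I\otimes L_{\bm\lambda}$, $\bm\lambda\in\bm\Lambda$.

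For the endomorphism algebras, the adjunction gives ${\rm End}_{\bf T}(K_{\bm\lambda})\cong{\rm Hom}_{\mk{gl}^M}(L_{\bm\lambda},I\otimes L_{\bm\lambda})$, which is an isomorphism of $\CC$-algebras because $\mk{gl}^M\cdot 1_I=0$ forces the unit $L_{\bm\lambda}\to U(K_{\bm\lambda})$ to be a $\mk{gl}^M$-morphism and an $I$-linear endomorphism of $K_{\bm\lambda}$ is determined by its restriction to $1_I\otimes L_{\bm\lambda}$. A nonzero $\mk{gl}^M$-homomorphism $L_{\bm\lambda}\to I\otimes L_{\bm\lambda}$ has simple image, necessarily the socle $1_I\otimes L_{\bm\lambda}\cong L_{\bm\lambda}$, and is therefore a scalar by ${\rm End}_{\mk{gl}^M}L_{\bm\lambda}=\CC$ (Theorem~\ref{Theo EndoSimpl}); hence ${\rm End}_{\bf T}(K_{\bm\lambda})\cong\CC$. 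Finally $K_{\bm\lambda}\cong K_{\bm\mu}$ in ${\bf T}$ forces $L_{\bm\lambda}\cong L_{\bm\mu}$ as $\mk{gl}^M$-modules (apply $U$ and take socles), so $\bm\lambda=\bm\mu$ by Corollary~\ref{Coro TT simples and inj hulls}, and $\bm\lambda\mapsto K_{\bm\lambda}$ is a bijection onto the set of isomorphism classes of simple objects. Apart from the socle computation of the second paragraph, everything follows \cite[Theorem~3.24]{Chirvasitu-Penkov-UTC}, where the case $t=0$ is treated.
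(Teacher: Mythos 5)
Your argument is correct, and it coincides with the approach the paper itself defers to: the paper's ``proof'' is a one-sentence citation of \cite[Theorem~3.24]{Chirvasitu-Penkov-UTC} together with the remark that the argument there is independent of the hypothesis $t=0$. Your write-up is a faithful rendering of that argument, with the needed socle identification $\mathrm{soc}_{\mk{gl}^M}(I\otimes L_{\bm\lambda})=1_I\otimes L_{\bm\lambda}$ correctly supplied via Propositions~\ref{Prop soc q I} and~\ref{Prop soc q Ilambdamu} and Theorem~\ref{Theo Simples and Injhulls} of the present paper, and the free/forgetful adjunction and Schur's lemma doing the rest exactly as intended.
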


The proof given in \cite{Chirvasitu-Penkov-UTC} is independent of the assumption $t=0$ made in that article.

\begin{prop}
There is a surjective morphism of $\mk{gl}^M$-modules
$$
{_I}{\bf p}: (I\otimes V^*) \otimes_I (I\otimes \bar V) \to I \;.
$$
\end{prop}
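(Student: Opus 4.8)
The plan is to construct the map explicitly and then verify surjectivity. First I would recall that in the category ${\bf T}$ the tensor product is $\otimes_I$, so $(I\otimes V^*)\otimes_I(I\otimes \bar V)\cong I\otimes V^*\otimes \bar V = I\otimes J_{1;1}$ (using $J_{1;1}=V^*\otimes\bar V$ as in Example \ref{Exa socJ11}), where the $I$-module structure is the obvious one via the left tensorand. Thus defining ${_I}{\bf p}$ amounts to producing a $\mk{gl}^M$-module and $I$-module morphism $I\otimes V^*\otimes\bar V\to I$.

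The natural candidate is ${_I}{\bf p}:={\rm mult}_I\circ({\rm id}_I\otimes\tilde{\bf p})$, where $\tilde{\bf p}:V^*\otimes\bar V\to Q\subset I$ is the projection from (\ref{For ptilde Q}) (more precisely, the restriction of $\tilde{\bf p}:\bar V\otimes V^*\to Q$ to the relevant pair of tensorands, which is one of the type-(iii) morphisms $\tilde{\bf p}_{i,j}$ of Definition \ref{Def Xibml}), and ${\rm mult}_I:I\otimes I\to I$ is the multiplication of the commutative algebra $I$ from Proposition \ref{Prop I is algebra}. Each factor is a morphism of $\mk{gl}^M$-modules: $\tilde{\bf p}$ because it is a quotient map of $\mk{gl}^M$-modules by construction, and ${\rm mult}_I$ because the algebra structure on $I$ is $\mk{gl}^M$-equivariant (this is exactly the content of $I\cong S^\bullet Q/\langle 1-\iota(1)\rangle$ as an isomorphism of $\mk{gl}^M$-modules). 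Compatibility with the $I$-action on the left tensorand is automatic since ${\rm mult}_I$ is associative and commutative, so ${_I}{\bf p}$ is indeed a morphism in ${\bf T}$ once one checks it descends to $\otimes_I$; this descent holds because $\tilde{\bf p}$ is $I$-balanced trivially (the middle $I$ is moved across via multiplication).

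For surjectivity, the key point is that $\tilde{\bf p}(V^*\otimes\bar V)$ already contains $\mk q=\tilde{\bf p}(V\otimes V_*)\cong\CC$, i.e. the element $\tilde{\bf p}(x_b\otimes v_b)$ which under $\iota$ corresponds to $1\in I$ (recall $\iota(1)=\tilde{\bf p}(v_b\otimes x_b)$). Hence the image of ${_I}{\bf p}$ contains ${\rm mult}_I(I\otimes\iota(\CC))={\rm mult}_I(I\otimes\CC\cdot 1)=I$. So ${_I}{\bf p}$ is surjective.

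The step I expect to require the most care is the verification that ${_I}{\bf p}$ is well-defined on the relative tensor product $\otimes_I$ and is simultaneously a morphism of $\mk{gl}^M$-modules and of $I$-modules; everything else is a short bookkeeping argument. Concretely, one must check that the composite $I\otimes V^*\otimes I\otimes\bar V\to I$ obtained by acting with $I$ on either tensorand and then applying ${\rm mult}_I\circ({\rm id}\otimes\tilde{\bf p})$ coincides on both routes, which reduces to the associativity and commutativity of ${\rm mult}_I$ together with the fact that $\tilde{\bf p}$ is a map of $I$-modules when the target $Q\subset I$ carries the induced module structure. This is routine but is the only place where the algebra structure of $I$ is genuinely used beyond formal nonsense, so I would present it with care before invoking $\mk q\subset\tilde{\bf p}(V^*\otimes\bar V)$ to conclude.
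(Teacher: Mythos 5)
Your proposal is correct and follows essentially the same route as the paper: identify $(I\otimes V^*)\otimes_I(I\otimes\bar V)$ with $I\otimes V^*\otimes\bar V$, compose $\mathrm{id}_I\otimes\tilde{\bf p}$ with the multiplication $I\otimes Q\to I$, and deduce surjectivity from $\mk q\cong\CC\subset\tilde{\bf p}(V^*\otimes\bar V)\subset Q$ together with $\iota(1)=\tilde{\bf p}(v_b\otimes x_b)$. Your extra discussion of $I$-balancedness makes explicit what the paper treats as routine.
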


\begin{proof}
The claimed morphism is the following composition
$$
(I\otimes V^*) \otimes_I (I\otimes \bar V) \cong I \otimes (V^*\otimes \bar V) \stackrel{{\rm id}\otimes\tilde\pi}{\lw} I\otimes Q \stackrel{multiply}{\lw} I \;. 
$$
It is surjective, since $\CC\cong \mk q\subset Q$.
\end{proof}

\begin{prop}\label{Prop ITsoc Ilambdamu}
Let $\bm\lambda=(\lambda_\bullet,\lambda;\mu,\mu_\bullet)\in \bm\Lambda$. The injective object $I_{\bm\lambda}$ has finite length in ${\bf T}$. The socle filtration of $I_{\bf\lambda}$ in ${\bf T}$ has length $1+q^{(|\bm\lambda|)}$ and its layers are
\begin{align*}
\ul{\rm soc}_{{\bf T}}^{q+1}I_{\lambda_\bullet,\lambda;\mu,\mu_\bullet} & = I\otimes \ul{\rm soc}_{\TT}^{q+1}J_{\lambda_\bullet,\lambda;\mu,\mu_\bullet}\\
& \cong \bigoplus\limits_{j+k=q} \bigoplus\limits_{\xi,\eta\in\Lambda} {\rm Hom}(V_{\xi,\eta},\ul{\rm soc}_{\TT}^{j+1}(V_{\lambda;\emptyset}\otimes V_{\emptyset;\mu}))\otimes I\otimes Z^{k+1}_{\lambda_\bullet,\xi;\eta,\mu_\bullet}\\
& \cong \bigoplus\limits_{j+k=q} \bigoplus\limits_{\xi,\eta\in\Lambda:|\lambda|-|\xi|=j} h^{\lambda;\mu}_{\xi;\eta} \cdot I\otimes Z^{k+1}_{\lambda_\bullet,\xi;\eta,\mu_\bullet} \;,
\end{align*}
where $Z^{k+1}_{\lambda_\bullet,\xi;\eta,\mu_\bullet}$ are the $\mk{gl}^M$-modules defined in (\ref{For module Sk}) and $h^{\lambda;\mu}_{\xi;\eta}$ are the numbers defined in (\ref{For h lambdamuxieta}).
\end{prop}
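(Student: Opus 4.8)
The plan is to deduce the proposition from the socle filtration of $J_{\bm\lambda}$ in $\TT$ (Proposition~\ref{Prop soc q Jlambdamu}) by pushing it forward along the induction functor $G:=I\otimes(-):\TT\to{\bf T}$, which is exact, is left adjoint to the forgetful functor $U:{\bf T}\to\TT$, and by Theorem~\ref{Theo L to IL} carries the simple object $L_{\bm\kappa}$ to the simple object $K_{\bm\kappa}$. Since $I_{\bm\lambda}=G(J_{\bm\lambda})$ and an exact functor that sends simples to simples sends composition series to composition series, $\ell_{\bf T}(I_{\bm\lambda})=\ell_\TT(J_{\bm\lambda})<\infty$ by Propositions~\ref{Prop socqJlm} and~\ref{Prop soc q Jlambdamu}; this settles the finite-length assertion (the injectivity of $I_{\bm\lambda}$ in ${\bf T}$, which is known, plays no role in what follows).

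Applying the exact functor $G$ to the socle filtration $0={\rm soc}^0_\TT J_{\bm\lambda}\subset{\rm soc}^1_\TT J_{\bm\lambda}\subset\dots\subset J_{\bm\lambda}$, of length $1+q^{(|\bm\lambda|)}$ by Proposition~\ref{Prop soc q Jlambdamu}, produces an exhaustive filtration $\mathcal{F}^q:=I\otimes{\rm soc}^q_\TT J_{\bm\lambda}$ of $I_{\bm\lambda}$ in ${\bf T}$ with layers $\mathcal{F}^{q+1}/\mathcal{F}^q\cong I\otimes\ul{\rm soc}^{q+1}_\TT J_{\bm\lambda}$. Each $\ul{\rm soc}^{q+1}_\TT J_{\bm\lambda}$ is a semisimple $\mk{gl}^M$-module, so $G$ turns it into a direct sum of simples $K_{\bm\kappa}$; thus $\mathcal{F}^\bullet$ has semisimple layers, and by the elementary fact that such a filtration is dominated termwise by the socle filtration, $\mathcal{F}^q\subseteq{\rm soc}^q_{\bf T}I_{\bm\lambda}$ for all $q$.

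The substantive step is the reverse inclusion. By induction on $q$ it suffices to prove ${\rm soc}_{\bf T}(G(N))=G({\rm soc}_\TT N)$ for $N:=J_{\bm\lambda}/{\rm soc}^q_\TT J_{\bm\lambda}$, since this identifies $\mathcal{F}^{q+1}$ with the preimage of ${\rm soc}_{\bf T}(I_{\bm\lambda}/\mathcal{F}^q)$. The inclusion $\supseteq$ is clear. For the reverse I would compute multiplicities through the adjunction $G\dashv U$ together with ${\rm End}_{\bf T}K_{\bm\kappa}={\rm End}_\TT L_{\bm\kappa}=\CC$ (Theorems~\ref{Theo L to IL} and~\ref{Theo EndoSimpl}):
\begin{align*}
[{\rm soc}_{\bf T}G(N):K_{\bm\kappa}]
 &=\dim{\rm Hom}_{\bf T}(K_{\bm\kappa},G(N))
  =\dim{\rm Hom}_\TT(L_{\bm\kappa},UG(N))\\
 &=\dim{\rm Hom}_\TT(L_{\bm\kappa},I\otimes N)
  =[{\rm soc}_\TT(I\otimes N):L_{\bm\kappa}].
\end{align*}
Here I would invoke the property already exploited in the proof of Proposition~\ref{Prop soc q Ilambdamu} (resting on Proposition~\ref{Prop DenseProperties} applied to the ideals $\mk{gl}^M_\alpha$): every simple subquotient of $I$ tensors a semisimple module to a semisimple one and an essential extension to an essential one, whence the socle filtration of $I\otimes N$ is the K\"unneth convolution of those of $I$ and $N$; as ${\rm soc}_\TT I=\CC$, this gives ${\rm soc}_\TT(I\otimes N)={\rm soc}_\TT N$. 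Therefore $[{\rm soc}_{\bf T}G(N):K_{\bm\kappa}]=[{\rm soc}_\TT N:L_{\bm\kappa}]=[G({\rm soc}_\TT N):K_{\bm\kappa}]$, which with $\supseteq$ forces equality.

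This yields ${\rm soc}^\bullet_{\bf T}I_{\bm\lambda}=\mathcal{F}^\bullet$, hence socle length $1+q^{(|\bm\lambda|)}$ and $\ul{\rm soc}^{q+1}_{\bf T}I_{\bm\lambda}=I\otimes\ul{\rm soc}^{q+1}_\TT J_{\bm\lambda}$; the three displayed decompositions then follow by substituting Proposition~\ref{Prop soc q Jlambdamu} for $\ul{\rm soc}^{q+1}_\TT J_{\bm\lambda}$ and using the observation after Theorem~\ref{Theo PS soc filt Vlambda TENS Vmu} that ${\rm Hom}(V_{\xi;\eta},\ul{\rm soc}^{j+1}(V_{*\lambda}\otimes V_\mu))=h^{\lambda;\mu}_{\xi;\eta}$ when $j=|\lambda|-|\xi|$ and is $0$ otherwise. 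The one place I expect to require care is the identity ${\rm soc}_\TT(I\otimes N)={\rm soc}_\TT N$: one must check that the density hypotheses are genuinely available, which they are because $N$ is a subquotient of the tensor module $J_{\bm\lambda}$, so the K\"unneth argument for socle filtrations from the proof of Proposition~\ref{Prop soc q Ilambdamu} applies verbatim.
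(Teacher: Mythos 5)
Your proof is correct. The overall strategy is the same as the paper's---to transport the known $\TT$-socle filtration of $J_{\bm\lambda}$ through the functor $I\otimes(-)$ and show no simple subquotient drops or rises a layer in ${\bf T}$---but you execute it by a cleaner and more explicit mechanism. Where the paper quotes Proposition~\ref{Prop soc q Ilambdamu} and asserts (tersely) that the $\TT$-socle filtration of $I_{\bm\lambda}$ is ``subordinate'' to the ${\bf T}$-socle filtration and that ${\rm soc}_\TT I_{\bm\kappa}=L_{\bm\kappa}$ pins down the layer indices, you instead run an induction on $q$ reducing everything to the single identity ${\rm soc}_{\bf T}(G(N))=G({\rm soc}_\TT N)$, and you prove that identity by a multiplicity count using the adjunction $G\dashv U$ together with ${\rm End}_{\bf T}K_{\bm\kappa}={\rm End}_\TT L_{\bm\kappa}=\CC$. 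This converts the geometric ``subordination'' claim into a precise numerical equality of socle multiplicities, which is easier to audit.

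The one substantive ingredient in your argument is the claim ${\rm soc}_\TT(I\otimes N)={\rm soc}_\TT N$ for $N=J_{\bm\lambda}/{\rm soc}^q_\TT J_{\bm\lambda}$, and you correctly flag this as the place requiring care. It is exactly the K\"unneth-type input that the paper itself uses in Proposition~\ref{Prop soc q Ilambdamu}: all simple subquotients of $I$ are of the form $L_{\zeta,\emptyset;\emptyset,\zeta'}$ and so tensor semisimple tensor modules to semisimple ones and essential extensions to essential extensions. Since $N$ is a subquotient of the tensor module $J_{\bm\lambda}$, that argument indeed applies and gives ${\rm soc}_\TT(I\otimes N)={\rm soc}_\TT I\otimes{\rm soc}_\TT N={\rm soc}_\TT N$. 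Both proofs therefore rest on the same density/essentiality machinery; the difference is that you bypass the computation of the full $\TT$-socle filtration of $I_{\bm\lambda}$ and work directly with $J_{\bm\lambda}$. Your derivation of finite length via exactness of $G$ and the simple-to-simple property (Theorem~\ref{Theo L to IL}) is also valid, and arguably cleaner than deducing finiteness from the formula as the paper does.
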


\begin{proof}
Note that the finiteness of the length of $I_{\bm\lambda}$ follows from the proposed description of the socle filtration, because the multiplicities of simple objects in the (finitely many) socle layers are finite. Next, recall that the socle filtration of $I_{\bm\lambda}$ as a $\mk{gl}^M$-module is known from Proposition \ref{Prop soc q Ilambdamu}. Theorem \ref{Theo L to IL} allows us to determine the simple subquotients of $I_{\bm\lambda}$ in ${\bf T}$ and observe that they correspond to the simple subquotients of $J_{\bm\lambda}$ in $\TT$. To prove the first line of the formula claimed in the theorem, it remains to show that the number of the layer in which a given simple subquotient of $J_{\bm\lambda}$ appears remains the same for the respective simple subquotient of $I_{\bm\lambda}$ in ${\bf T}$. This holds, since ${\rm soc}_\TT I_{\bm\kappa} = L_{\bm\kappa}$ for every $\bm\kappa\in\bm\Lambda$, and the $\TT$-socle filtration of $I_{\bm\lambda}$ is subordinate to the ${\bf T}$-socle filtration. This implies the first line, and the rest follows from Proposition \ref{Prop soc q Jlambdamu} describing $\ul{\rm soc}_{\TT}^{q+1}J_{\bm\lambda}$.
\end{proof}

We are now ready to prove the following generalization of \cite[Proposition 3.25]{Chirvasitu-Penkov-UTC} where the result is obtained for $t=0$.

\begin{theorem}\label{Theo ITT is Grothendieck Cat}
The category ${\bf T}$ is an ordered Grothendieck category with order-defining objects $I_{\bm l}$, $\bm l\in{\bf P}$, parametrized by the poset ${\bf P}$ of Definition \ref{Def Poset IP}. The isomorphism classes of simple objects in ${\bf T}$ are parametrized by the set $\bm\Lambda$, with representatives $K_{\bm\lambda}$, $\bm\lambda\in\bm\Lambda$. The indecomposable injectives are, up to isomorphism, $I_{\bm\lambda}$, $\bm\lambda\in\bm\Lambda$. The socles of the order-defining objects are
\begin{gather*}
{\rm soc}_{{\bf T}}I_{\bm l} = K_{\bm l} = \bigoplus\limits_{\bm\lambda\in \mc S_{\bm l}} \CC^{\bm\lambda}\otimes K_{\bm\lambda} \;,
\end{gather*}
with $\mc S_{\bm l}=\{\bm\lambda\in\bm\Lambda:|\bm\lambda|=\bm l\}$ as in Theorem \ref{Theo TT Grothe Order}.
\end{theorem}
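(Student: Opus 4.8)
The plan is to verify the axioms (a)--(f) of Definition \ref{Def OrdGrothCat} for ${\bf T}$, taking the $I_{\bm l}$ with $\bm l\in{\bf P}$ as order-defining objects; the argument parallels \cite[Proposition~3.25]{Chirvasitu-Penkov-UTC}, with every input at the level of $\TT$ now available for general $t$: Theorem \ref{Theo L to IL} (the simple objects of ${\bf T}$ are the pairwise nonisomorphic $K_{\bm\lambda}$, each with $\End\cong\CC$), Proposition \ref{Prop ITsoc Ilambdamu} (the ${\bf T}$-socle filtration of $I_{\bm\lambda}$), and Theorems \ref{Theo Simples and Injhulls} and \ref{Theo TT Grothe Order}. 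As a preliminary one checks that ${\bf T}$ is a Grothendieck category: this is exactly the $t=0$ verification of \cite{Chirvasitu-Penkov-UTC}, which does not use $t=0$ (the forgetful functor ${\bf T}\to\TT$ creates colimits, its left adjoint $I\otimes\bullet$ supplies order-defining objects, and exactness of filtered colimits descends from $\TT$).

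First I would read off the socle of the order-defining objects. Putting $q=0$ in Proposition \ref{Prop ITsoc Ilambdamu} gives $\ul{\rm soc}_{{\bf T}}^{1}I_{\bm\lambda}=I\otimes\ul{\rm soc}_{\TT}^{1}J_{\bm\lambda}=I\otimes L_{\bm\lambda}=K_{\bm\lambda}$, which is simple by Theorem \ref{Theo L to IL}; combined with the decomposition $I_{\bm l}=\bigoplus_{|\bm\lambda|=\bm l}\CC^{\bm\lambda}\otimes I_{\bm\lambda}$ of (\ref{For LJITlm is sum LJITlambdamu}) this yields ${\rm soc}_{{\bf T}}I_{\bm l}=\bigoplus_{\bm\lambda\in\mc S_{\bm l}}\CC^{\bm\lambda}\otimes K_{\bm\lambda}=K_{\bm l}$, with $\mc S_{\bm l}=\{\bm\lambda\in\bm\Lambda:|\bm\lambda|=\bm l\}$. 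Since each $I_{\bm\lambda}$ has finite length in ${\bf T}$ by Proposition \ref{Prop ITsoc Ilambdamu}, axiom (a) is immediate and axiom (e) follows from the same decomposition. Axiom (c) holds because $\bm\lambda\mapsto|\bm\lambda|$ maps $\bm\Lambda$ into the underlying set of ${\bf P}$ and the $K_{\bm\lambda}$ are pairwise nonisomorphic, so every simple isomorphism type lies in exactly one $\mc S_{\bm l}$. Axiom (b) follows as in the $t=0$ case of \cite{Chirvasitu-Penkov-UTC}, from axiom (b) for $\TT$ (Theorem \ref{Theo TT Grothe Order}) and the free--forgetful adjunction: any $N\in{\bf T}$ is a quotient of the free module $I\otimes N$, and applying the exact functor $I\otimes\bullet$ to a $\TT$-presentation of $N$ by sums of $I_{\bm l}$ exhibits $I\otimes N$, hence $N$, as a subquotient of sums of order-defining objects.

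The essential new point concerns axioms (d) and (f), where the order must be the strengthened poset ${\bf P}$ rather than $\mc P$. The reason is that among the morphisms of Definition \ref{Def Xibml} only the compositions of types (i), (ii), (iii) are morphisms in ${\bf T}$: these are visibly $I$-linear (of the form ${\rm id}_I\otimes(-)$, or involving the multiplication $I\otimes I\to I$), whereas the type-(iv) morphism $\psi_{\bm l}$ is not $I$-linear, because $\psi$ annihilates the unit $1_I$ (one has $\ker\psi=\mk q=\CC\cdot 1_I$ inside $I\cong S^\bullet Q/(1-\iota(1))$) yet is nonzero. Consequently the targets of the ${\bf T}$-morphisms out of $I_{\bm l}$ range precisely over the $I_{\bm k}$ with $\bm k\in{\bf P}^{q}(\bm l)$, these being, by Lemma \ref{Lemma P1 and IP1}, the maximal elements below $\bm l$ in ${\bf P}$. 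For axiom (d): by Proposition \ref{Prop ITsoc Ilambdamu} the simple constituents of $I_{\bm\lambda}$ in ${\bf T}$ are those of $J_{\bm\lambda}$ in $\TT$, occurring in the same socle layer, so by Proposition \ref{Prop soc q Jlambdamu} a constituent $K_{\bm\kappa}$ outside ${\rm soc}_{{\bf T}}I_{\bm\lambda}$ has $|\bm\kappa|\in{\bf P}^{q}(|\bm\lambda|)$ with $q\geq 1$, hence $|\bm\kappa|\stackrel{\bf P}{\prec}|\bm\lambda|$ and $K_{\bm\kappa}\in\mc S_{|\bm\kappa|}$. For axiom (f): given $\bm k\stackrel{\bf P}{\prec}\bm l$, let $q$ be the unique integer with $\bm k\in{\bf P}^{q}(\bm l)$ (see (\ref{For bfPl and mcPl disjU})); I would show that the maximal subobject of $I_{\bm l}$ whose constituents lie in the $\mc S_{\bm j}$ with $\bm l\stackrel{\bf P}{\succeq}\bm j$, $\bm j\stackrel{\bf P}{\not\preceq}\bm k$, is the common kernel of the type-(i), (ii), (iii) morphisms in $\Xi^{q}(I_{\bm l})$ with target $I_{\bm k}$, proceeding by induction on $q$ via the additivity (\ref{For Pql compute}) of ${\bf P}^\bullet$, exactly as in the proofs of Corollary \ref{Coro soc filt Ilm} and Theorem \ref{Theo TT Grothe Order}.

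I expect the main obstacle to be precisely this last verification: confirming that discarding the type-(iv) morphisms loses no kernels, i.e.\ that ${\rm soc}_{{\bf T}}^{q+1}I_{\bm l}=\bigcap_{f}\ker f$ with $f$ ranging only over the type-(i), (ii), (iii) morphisms in $\Xi^{q+1}(I_{\bm l})$, reconciled with the fact (Corollary \ref{Coro soc filt Ilm}) that in $\TT$ the type-(iv) maps were also needed. The reconciliation rests on the collapse of the infinite $\TT$-socle filtration of $I$ to the single layer $I=\ul{\rm soc}_{{\bf T}}^{1}I$, already encoded in Proposition \ref{Prop ITsoc Ilambdamu}: in ${\bf T}$ the type-(iv) directions no longer produce new socle layers, so their kernels are subsumed. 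Once this is in place, the assertion that the $K_{\bm\lambda}$ exhaust the simple objects is Theorem \ref{Theo L to IL}, and the assertion that the $I_{\bm\lambda}$ exhaust the indecomposable injectives follows from Proposition \ref{Prop InjHullInGrothCat} applied to the decomposition $I_{\bm l}=\bigoplus_{|\bm\lambda|=\bm l}\CC^{\bm\lambda}\otimes I_{\bm\lambda}$ with each $I_{\bm\lambda}$ having simple socle $K_{\bm\lambda}$; the remaining bookkeeping is routine.
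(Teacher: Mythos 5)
Your proof is correct and follows essentially the same route as the paper's (terse) proof: both reduce the verification of axioms (a)--(f) of Definition \ref{Def OrdGrothCat} to the already-established facts about $\TT$, the key new input being the identity $\ul{\rm soc}_{\bf T}^{k+1}I_{\bm l}=I\otimes\ul{\rm soc}_{\TT}^{k+1}J_{\bm l}$ from Proposition \ref{Prop ITsoc Ilambdamu}, which shows the ${\bf T}$-socle layers are indexed by ${\bf P}$ rather than $\mc P$. One genuinely useful addition you supply, which the paper leaves implicit, is the explanation of \emph{why} type-(iv) morphisms $\psi_{\bm l}$ must be discarded in ${\bf T}$: since $\psi$ annihilates $1_I=\iota(1)$ while being nonzero, it cannot be $I$-linear, so only the type-(i), (ii), (iii) compositions in $\Xi^q(I_{\bm l})$ are morphisms in ${\bf T}$; this is the morphism-side counterpart of the socle-side observation that $I$ collapses to a simple object of ${\bf T}$ and its infinite $\TT$-socle filtration contributes nothing to the ${\bf T}$-layers. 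Your reconciliation in the final paragraph — that the type-(iv) kernels are subsumed precisely because $I=\ul{\rm soc}_{\bf T}^1 I$ — is exactly the right way to resolve the apparent tension with Corollary \ref{Coro soc filt Ilm}, and it matches what the paper means when it points to Proposition \ref{Prop socqJlm} and the indices $\bm k\stackrel{\bf P}{\preceq}\bm l$.
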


\begin{proof}
From Proposition \ref{Prop ITsoc Ilambdamu} we deduce that
\begin{gather}\label{For Relation socIT to socT}
\ul{\rm soc}_{{\bf T}}^{k+1}I_{\bm l} = I\otimes \ul{\rm soc}_{\TT}^{k+1}J_{\bm l} \;.
\end{gather}
Now, the theorem follows by arguments analogous to these in the proof of Theorem \ref{Theo ITT is Grothendieck Cat}, using the socle filtration of $J_{\bm l}$ determined in Proposition \ref{Prop socqJlm} where the layers correspond to indices $\bm k\stackrel{\bf P}{\preceq}\bm l$.
\end{proof}

\subsection{Tensor products of simple objects and the subcategory ${\bf T}(K_{1;0},K_{0;1})$}

We make here some technical observations which will be used further on for the construction of injective resolutions of simple objects in ${\bf T}$.

\begin{prop}\label{Prop IT tensor prod simples}
Let $\bf\lambda=(\lambda_\bullet,\lambda;\mu,\mu_\bullet),\bf\lambda'=(\lambda'_\bullet,\lambda';\mu',\mu'_\bullet)\in\bm\Lambda$. Then, for $q\geq 0$, we have ${\rm soc}_{\bf T}^{q+1}(K_{\bm\lambda}\otimes_I K_{\bm\lambda'}) \cong I\otimes {\rm soc}_{\TT}^{q+1}(L_{\bm\lambda}\otimes L_{\bm\lambda'})$ and
\begin{align*}
\ul{\rm soc}_{\bf T}^{q+1}(K_{\bm\lambda}\otimes_I K_{\bm\lambda'}) & \cong I\otimes \ul{\rm soc}_{\TT}^{q+1}(L_{\bm\lambda}\otimes L_{\bm\lambda'})\\ & \cong K_{\lambda_\bullet,\emptyset;\emptyset,\mu_\bullet}\otimes_I K_{\lambda'_\bullet,\emptyset;\emptyset,\mu'_\bullet} \otimes_I \ul{\rm soc}_{\bf T}^{q+1}(K_{\lambda;\mu}\otimes_I K_{\lambda';\mu'}) \\
& \cong K_{\lambda_\bullet,\emptyset;\emptyset,\mu_\bullet}\otimes_I K_{\lambda'_\bullet,\emptyset;\emptyset,\mu'_\bullet} \otimes_I \left(\bigoplus\limits_{\kappa,\nu\in\Lambda:|\lambda|+|\lambda'|-|\kappa|=q}(\sum\limits_{\xi,\eta\in\Lambda} N^{\xi}_{\lambda\lambda'}N^{\eta}_{\mu\mu'} h^{\xi;\eta}_{\kappa;\nu}) \cdot K_{\kappa;\nu} \right) \\
& \cong \bigoplus\limits_{(\kappa_\bullet,\kappa,\nu,\nu_\bullet)\in\bm\Lambda:|\lambda|+|\lambda'|-|\kappa|=q} \left({\bf N}^{\kappa_\bullet}_{\lambda_\bullet\lambda'_\bullet} {\bf N}^{\nu_\bullet}_{\mu_\bullet\mu'_\bullet}\sum\limits_{\xi,\eta\in\Lambda} N^{\xi}_{\lambda\lambda'}N^{\eta}_{\mu\mu'} h^{\xi;\eta}_{\kappa;\nu} \right) \cdot K_{\kappa_\bullet,\kappa;\nu,\nu_\bullet}  \;,
\end{align*}
where the numbers $h^{\lambda;\mu}_{\kappa;\nu}$ and ${\bf N}^{\kappa_\bullet}_{\lambda_\bullet\lambda'_\bullet}$ are given respectively in (\ref{For h lambdamuxieta}) and Proposition \ref{Prop TT tensor prod simples}.

In particular, the necessary and sufficient condition for semisimply of $L_{\bm\lambda} \otimes L_{\bm\lambda'}$ in $\TT$, given in part (c) of Proposition \ref{Prop TT tensor prod simples}, holds as well for $K_{\bm\lambda} \otimes_I K_{\bm\lambda'}$ in ${\bf T}$.
\end{prop}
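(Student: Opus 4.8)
The plan is to reduce the whole statement to one structural fact about the exact functor $I\otimes(-)\colon\TT\to{\bf T}$ and then to invoke Proposition~\ref{Prop TT tensor prod simples}. First I would record the identification $K_{\bm\lambda}\otimes_I K_{\bm\lambda'}=(I\otimes L_{\bm\lambda})\otimes_I(I\otimes L_{\bm\lambda'})\cong I\otimes(L_{\bm\lambda}\otimes L_{\bm\lambda'})$ of $(\mk{gl}^M,I)$-bimodules, valid because $(I\otimes A)\otimes_I(I\otimes B)\cong I\otimes A\otimes B$ for free $I$-modules. Thus the proposition is a statement about the ${\bf T}$-socle filtration of $I\otimes M$, where $M:=L_{\bm\lambda}\otimes L_{\bm\lambda'}$ is a finite-length object of $\TT$.

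The key step is the following ``bridge'', which extends equation~(\ref{For Relation socIT to socT}) and Proposition~\ref{Prop ITsoc Ilambdamu} from $J_{\bm l}$ to $M$: for every $q\geq 0$ one has $\ul{\rm soc}_{\bf T}^{q+1}(I\otimes M)\cong I\otimes\ul{\rm soc}_\TT^{q+1}M$ and ${\rm soc}_{\bf T}^{q+1}(I\otimes M)=I\otimes{\rm soc}_\TT^{q+1}M$. I would prove this by induction on $q$: exactness of $I\otimes(-)$ turns the inductive hypothesis into $(I\otimes M)/{\rm soc}_{\bf T}^q(I\otimes M)\cong I\otimes(M/{\rm soc}_\TT^q M)$, so it suffices to treat $q=0$, i.e. to show ${\rm soc}_{\bf T}(I\otimes N)=I\otimes{\rm soc}_\TT N$ for $N$ of this tensor type. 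One inclusion is immediate: $I\otimes{\rm soc}_\TT N$ is a finite direct sum of simple objects $K_{\bm\nu}=I\otimes L_{\bm\nu}$ (Theorem~\ref{Theo L to IL}), hence semisimple in ${\bf T}$, hence contained in ${\rm soc}_{\bf T}(I\otimes N)$. For the reverse inclusion it is enough that $I\otimes{\rm soc}_\TT N$ be an essential subobject of $I\otimes N$ in ${\bf T}$, since the socle is contained in every essential subobject. Here I would use that $\mk{gl}(V,V_*)=\mk{gl}^M_0$ annihilates $V^*/V_*$ and $\bar V/V$, hence acts trivially on $F$, on $I$, and on the outer tensorands $L_{\lambda_\bullet,\emptyset;\emptyset,\mu_\bullet}$, while by Proposition~\ref{Prop DenseVlambdamu PS} and Theorem~\ref{Theo PS soc filt Vlambda TENS Vmu} the $\mk{sl}(V,V_*)$-, $\mk{gl}(V,V_*)$- and $\mk{gl}^M$-socle filtrations of the relevant submodules of $V_*^{\otimes l}\otimes V^{\otimes m}$ all coincide. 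Consequently ${\rm soc}_\TT N\hookrightarrow N$ is already essential over $\mk{gl}^M_0$; tensoring with the $\mk{gl}^M_0$-trivial module $I$ preserves essentiality over $\mk{gl}^M_0$, a fortiori over $\mk{gl}^M$, and every nonzero subobject of $I\otimes N$ in ${\bf T}$ is in particular a nonzero $\mk{gl}^M_0$-submodule, so essentiality passes to ${\bf T}$.

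Granting the bridge, the remainder is bookkeeping. Applying it to $M$ and substituting the explicit layer decomposition of Proposition~\ref{Prop TT tensor prod simples}(b), I would distribute $I\otimes(-)$ over the direct sums and rewrite $I\otimes L_{\kappa_\bullet,\kappa;\nu,\nu_\bullet}=K_{\kappa_\bullet,\kappa;\nu,\nu_\bullet}$, $I\otimes(V_{\lambda;\mu}\otimes V_{\lambda';\mu'})=K_{\lambda;\mu}\otimes_I K_{\lambda';\mu'}$, and $I\otimes(L_{\lambda_\bullet,\emptyset;\emptyset,\mu_\bullet}\otimes L_{\lambda'_\bullet,\emptyset;\emptyset,\mu'_\bullet})\otimes_I(-)=K_{\lambda_\bullet,\emptyset;\emptyset,\mu_\bullet}\otimes_I K_{\lambda'_\bullet,\emptyset;\emptyset,\mu'_\bullet}\otimes_I(-)$; the middle two displayed isomorphisms then record the same information after pulling the $\mk{gl}^M_0$-trivial outer tensorands out of $\ul{\rm soc}_{\bf T}^{q+1}$ and applying Theorem~\ref{Theo PS soc filt Vlambda TENS Vmu} to $V_{\lambda;\mu}\otimes V_{\lambda';\mu'}$. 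All multiplicities survive $I\otimes(-)$ because this functor is injective on isomorphism classes of simples and ${\rm End}_{\bf T}K_{\bm\kappa}\cong\CC$ (Theorem~\ref{Theo L to IL}), so the coefficients are exactly the numbers $h^{\xi;\eta}_{\kappa;\nu}$ and ${\bf N}^{\kappa_\bullet}_{\lambda_\bullet\lambda'_\bullet}$ of Proposition~\ref{Prop TT tensor prod simples}. Finally, $K_{\bm\lambda}\otimes_I K_{\bm\lambda'}$ is semisimple in ${\bf T}$ iff ${\rm soc}_{\bf T}^1$ exhausts it, which by the bridge means $I\otimes{\rm soc}_\TT^1(L_{\bm\lambda}\otimes L_{\bm\lambda'})=I\otimes(L_{\bm\lambda}\otimes L_{\bm\lambda'})$; since $I\otimes(-)$ is faithful this is equivalent to semisimplicity of $L_{\bm\lambda}\otimes L_{\bm\lambda'}$ in $\TT$, i.e. to condition~(c) of Proposition~\ref{Prop TT tensor prod simples}.

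I expect the essentiality half of the bridge to be the only genuine obstacle: one must rule out that tensoring with $I$ --- which has infinite socle length in $\TT$ --- could enlarge the ${\bf T}$-socle of $I\otimes M$ beyond $I\otimes{\rm soc}_\TT M$. Concretely, a simple ${\bf T}$-submodule $K\subseteq I\otimes M$ has to lie inside $I\otimes{\rm soc}_\TT M$, not merely meet it; this is handled exactly as in the proof of Proposition~\ref{Prop ITsoc Ilambdamu}, using that $K$ is a $\mk{gl}^M_0$-submodule and ${\rm soc}_\TT K_{\bm\kappa}=L_{\bm\kappa}$, so that $K\cap(I\otimes{\rm soc}_\TT M)$, being a nonzero ${\bf T}$-subobject of the simple object $K$, must equal $K$. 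Once this is secured, the remaining steps are routine manipulations with Littlewood--Richardson numbers.
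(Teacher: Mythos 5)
Your proof is substantially correct and takes the same route as the paper: the paper's proof is terse but rests on exactly the two facts you isolate, namely that the finitary ideal $\mk{gl}^M_0$ (the paper uses $\mk{sl}(V,V_*)$, but since $\mk{sl}(V,V_*)\subset\mk{gl}^M_0\subset\mk{gl}^M$ this makes no difference) acts trivially on $I$ and on the outer tensorands, and that the socle filtration of the inner piece $V_{\lambda;\mu}\otimes V_{\lambda';\mu'}$ is insensitive to which of these three Lie algebras one uses (Theorem~\ref{Theo PS soc filt Vlambda TENS Vmu}). The paper then simply invokes Proposition~\ref{Prop DenseProperties}(b) (the density criterion preserves essential inclusions and simples, hence socle filtrations) and cites Proposition~\ref{Prop TT tensor prod simples} for the bookkeeping; you instead spell out the ``bridge'' $\ul{\rm soc}_{\bf T}^{q+1}(I\otimes M)\cong I\otimes\ul{\rm soc}_\TT^{q+1}M$ by induction, which is a valid and somewhat more self-contained rendering of the same idea.

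Two small points deserve attention. First, your inductive step reduces to the base case for $N=M/{\rm soc}_\TT^q M$, not just $N=M$; the justification goes through because the coincidence of $\mk{gl}^M_0$- and $\mk{gl}^M$-socle filtrations established for $M$ passes automatically to all quotients of $M$ along those filtrations, but you should say this explicitly rather than asserting the base case only ``for $N$ of this tensor type.'' Second, and more substantively, the final step --- deducing ${\rm soc}_{\bf T}(I\otimes N)\subseteq I\otimes{\rm soc}_\TT N$ from $\mk{gl}^M_0$-essentiality --- requires that the module-theoretic intersection $K\cap(I\otimes{\rm soc}_\TT N)$, for a simple ${\bf T}$-subobject $K$, actually is a ${\bf T}$-subobject of $K$ (in particular free over $I$); you assert this but do not justify it, and it is not automatic in ${\bf T}$ since ${\bf T}$ is a full subcategory cut out by a freeness condition. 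A cleaner route avoiding this is to consider the composite $K\hookrightarrow I\otimes N\twoheadrightarrow I\otimes(N/{\rm soc}_\TT N)$, which is a genuine morphism in ${\bf T}$: by simplicity of $K$ it is either zero (giving $K\subseteq I\otimes{\rm soc}_\TT N$ as desired) or injective (contradicting the $\mk{gl}^M_0$-essentiality you have established). Alternatively one can argue via the ``subordinate filtration'' device the paper uses in the proof of Proposition~\ref{Prop ITsoc Ilambdamu}, which sidesteps the intersection issue entirely. With one of these repairs, your proof is complete.
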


\begin{proof}
The socle filtrations of the $\mk{gl}^M$ modules $V_{*\lambda}\otimes V_\mu$ remain unaltered after restriction to the ideal $\mk{sl}(V,V_*)\subset\mk{gl}(V)$, by Theorem \ref{Theo PS soc filt Vlambda TENS Vmu}. On the other hand, $\mk{sl}(V,V_*)$ acts trivially on $I$. We can apply Proposition \ref{Prop DenseProperties} that the claim holds for the socle filtration of a tensor product of the form $K_{\lambda;\emptyset}\otimes_I K_{\emptyset;\mu'}$. Now the general statement follows from Proposition \ref{Prop TT tensor prod simples} in a straightforward manner.
\end{proof}

Let $\ul{\bf T}={\bf T}(K_{1;0},K_{0;1})$ be the smallest full tensor Grothendieck subcategory of ${\bf T}$ containing the objects $K_{1;0}=I\otimes V_*$ and $K_{0;1}=I\otimes V$ and closed under taking subquotients.

\begin{theorem}\label{Theo smallIT smallTT}
The categories $\TT(V_*,V)$ and $\ul{\bf T}$ are equivalent under the functor $I\otimes\bullet$. This functor is also determined by the universal property of $\TT(V_*,V)$ and the assignment $V_*\mapsto K_{1;0}$, $V\mapsto K_{0;1}$, ${\bf p} \mapsto (K_{1;0}\otimes_I K_{0;1}\cong I\otimes V_*\otimes V\stackrel{{\rm id}\otimes{\bf p}}{\to} I)$. In particular, $\ul{\bf T}$ has the structure of an ordered Grothendieck category, with order-defining objects $I_{l;m}$, $(l;m)\in\NN\times\NN$, parametrized by the poset $\mc P_{0,0}$ from Definition \ref{Def Poset 0 0}. Representatives of the isomorphism classes of simple objects and indecomposable injective objects of $\ul{\bf T}$ are given respectively by $K_{\lambda;\mu}$ and $I_{\lambda;\emptyset}\otimes_I I_{\emptyset;\mu}$ for $(\lambda;\mu)\in\Lambda\times\Lambda$.
\end{theorem}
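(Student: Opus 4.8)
The plan is to realise $I\otimes\bullet$ as the universal functor, prove that its restriction to $\TT(V_*,V)$ is fully faithful and essentially surjective onto $\ul{\bf T}$, and then transport the ordered Grothendieck structure along the resulting equivalence. First, the monoidal unit of ${\bf T}$ is $I$, since $A\otimes_I I\cong A$. The functor $\Phi:=I\otimes\bullet:\TT\to{\bf T}$ is exact (it is $I\otimes_\CC\bullet$ equipped with the obvious free $I$-action, and exactness in ${\bf T}$ is underlying exactness), monoidal because $(I\otimes M)\otimes_I(I\otimes N)\cong I\otimes(M\otimes N)$, and left adjoint to the forgetful functor ${\rm Res}:{\bf T}\to\TT$. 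On $\TT(V_*,V)$ it sends $V_*\mapsto K_{1;0}$, $V\mapsto K_{0;1}$ and the pairing ${\bf p}$ to ${\rm id}_I\otimes{\bf p}:I\otimes V_*\otimes V\to I$, i.e.\ to ${_I}{\bf p}$; hence, by the universal property of $\TT(V_*,V)$ (universality for $\CC$-linear symmetric monoidal abelian categories equipped with two objects and a morphism from their tensor product to the unit), $\Phi|_{\TT(V_*,V)}$ is, up to a canonical monoidal natural isomorphism, the universal functor determined by these assignments.

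For full faithfulness, the adjunction gives ${\rm Hom}_{\ul{\bf T}}(\Phi M,\Phi N)={\rm Hom}_\TT(M,I\otimes N)$ for $M,N\in\TT(V_*,V)$, so it suffices that the inclusion $N=\mk q\otimes N\hookrightarrow I\otimes N$ induce an isomorphism ${\rm Hom}_\TT(M,N)\to{\rm Hom}_\TT(M,I\otimes N)$. By (\ref{For I}) and (\ref{For SkCQF}), $I\otimes N=\lim\limits_{\lw}S^kQ\otimes N$ is a filtered union of subobjects with successive quotients $S^kF\otimes N$ for $k\geq1$, and $S^kF\cong\bigoplus_{|\zeta|=k}J_{\zeta,\emptyset;\emptyset,\zeta}$ by Proposition~\ref{Prop soc q I}. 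Every composition factor of $S^kF\otimes N$ with $k\geq1$ has the form $L_{\kappa_\bullet,\kappa;\nu,\nu_\bullet}$ with $\kappa_\bullet\neq\emptyset_\bullet$ (indeed $\sum_\alpha|\kappa_\alpha|=|\zeta|=k$), by Proposition~\ref{Prop soc q Jlambdamu} describing the factors of $J_{\zeta,\emptyset;\emptyset,\zeta}$ together with Proposition~\ref{Prop TT tensor prod simples}, since tensoring with the factors $V_{\lambda;\mu}$ of $N$ only changes the two inner diagrams. As $\TT(V_*,V)$ is closed under subquotients in $\TT$, no such module occurs in $M$ (Corollary~\ref{Coro TT simples and inj hulls}), whence ${\rm Hom}_\TT(M,S^kF\otimes N)=0$ for $k\geq1$, and left exactness gives ${\rm Hom}_\TT(M,S^kQ\otimes N)={\rm Hom}_\TT(M,N)$ for all $k$. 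Finally, every object of $\TT(V_*,V)$ is a directed union of finite-length subobjects, so a morphism $M\to I\otimes N$ restricts on each finite-length piece of $M$ to one factoring uniquely through $N$; these glue to a morphism $M\to N$, which yields the required isomorphism.

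For essential surjectivity, let $\mc E\subseteq\ul{\bf T}$ be the essential image of $\Phi|_{\TT(V_*,V)}$. It contains $K_{1;0}$ and $K_{0;1}$, is closed under $\otimes_I$ and arbitrary direct sums, and is closed under filtered colimits (since $\Phi$ is a left adjoint and, by full faithfulness, any filtered system in $\mc E$ lifts to $\TT(V_*,V)$). As the order-defining objects of ${\bf T}$ have finite length in ${\bf T}$ (Proposition~\ref{Prop ITsoc Ilambdamu}), $\ul{\bf T}$ is locally finite, so it suffices to show $\mc E$ contains every finite-length object, by induction on length. The simple objects of $\ul{\bf T}$ are precisely the $K_{\lambda;\mu}=\Phi(V_{\lambda;\mu})$, by Theorem~\ref{Theo L to IL} and the fact that the simple subquotients of the $\otimes_I$-powers of $K_{1;0},K_{0;1}$ are exactly these. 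For the inductive step we must see $\mc E$ is closed under extensions by such simples. Since ${\rm Res}$ is exact and preserves injectives (being right adjoint to the exact functor $\Phi$), and ${\bf T}$ has enough injectives (Theorem~\ref{Theo ITT is Grothendieck Cat}), applying ${\rm Res}$ to an injective resolution of $\Phi D$ in ${\bf T}$ gives ${\rm Ext}^i_{\bf T}(\Phi M,\Phi D)={\rm Ext}^i_\TT(M,I\otimes D)$ for all $i$ and all $M,D\in\TT(V_*,V)$. The vanishing ${\rm Hom}_\TT(V_{\lambda;\mu},S^kF\otimes D)=0={\rm Ext}^1_\TT(V_{\lambda;\mu},S^kF\otimes D)$ for $k\geq1$ — the first as in the previous paragraph, the second from Corollary~\ref{Cor dimExt in TT}, where the relevant Littlewood--Richardson coefficients force the index sequences of any ${\rm Ext}$-related pair to remain nonzero and hence kill the ${\rm Ext}$ — together with the finite length of $V_{\lambda;\mu}$ (so that any extension of $V_{\lambda;\mu}$ by $I\otimes D$ is pushed out from an extension by some $S^kQ\otimes D$) yields ${\rm Ext}^1_\TT(V_{\lambda;\mu},I\otimes D)\cong{\rm Ext}^1_\TT(V_{\lambda;\mu},D)$. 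The extension $0\to D\to D'\to V_{\lambda;\mu}\to 0$ representing a given class has all simple subquotients among the $V_{\xi;\eta}$, hence $D'\in\TT(V_*,V)$ (a Serre subcategory of $\TT$). Thus every extension of $K_{\lambda;\mu}$ by $\Phi D$ in ${\bf T}$ is $\Phi$ of an extension in $\TT(V_*,V)$, so the finite-length object under consideration lies in $\mc E$; the induction gives $\mc E=\ul{\bf T}$.

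Therefore $\Phi|_{\TT(V_*,V)}:\TT(V_*,V)\to\ul{\bf T}$ is an equivalence of tensor categories, and an equivalence transports ordered Grothendieck structures. From Theorem~\ref{Theo Tsmall is ord Groth}: the order-defining objects $(V_*)^{\otimes l}\otimes V^{\otimes m}$ go to $I_{l;m}:=K_{1;0}^{\otimes_I l}\otimes_I K_{0;1}^{\otimes_I m}\cong I\otimes((V_*)^{\otimes l}\otimes V^{\otimes m})$, still parametrized by $\mc P_{0,0}$; the simple objects $V_{\lambda;\mu}$ go to $K_{\lambda;\mu}$; and the indecomposable injectives $(V_*)_\lambda\otimes V_\mu$ go to $(I\otimes(V_*)_\lambda)\otimes_I(I\otimes V_\mu)=I_{\lambda;\emptyset}\otimes_I I_{\emptyset;\mu}$. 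The main obstacle is the essential surjectivity step, concretely the verification that subobjects and quotients in the $I$-module category ${\bf T}$ of an object of the form $I\otimes N$ are again of this form; this is what the ${\rm Ext}$-comparison above encodes, and it carries with it a routine but not entirely automatic amount of colimit bookkeeping stemming from the presentation $I=\lim\limits_{\lw}S^kQ$.
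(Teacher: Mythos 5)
Your proof is essentially correct in substance but takes a genuinely different and far more detailed route than the paper's, which is a three‑line sketch: the paper constructs $I\otimes\bullet$ from the universal property of $\TT(V_*,V)$, checks that it sends $V_*,V,{\bf p}$ to $K_{1;0},K_{0;1},{_I}{\bf p}$, and then declares the equivalence ``straightforward in view of Proposition~\ref{Prop IT tensor prod simples}'' (matching socle filtrations of $\otimes_I$-products of simples with those of $\otimes$-products of simples). You instead prove full faithfulness through the free--forgetful adjunction together with a Hom-vanishing computation on the layers $S^kF\otimes N$, and essential surjectivity through an Ext-comparison and induction on length. This is a valid and illuminating alternative: the adjunction argument makes explicit why $\Phi|_{\TT(V_*,V)}$ is fully faithful, while the paper's invocation of Proposition~\ref{Prop IT tensor prod simples} encodes the same information about socle structure at the level of order-defining objects. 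Your Ext-vanishing input is correct; plugging $V_{\lambda;\mu}=L_{\emptyset_\bullet,\lambda;\mu,\emptyset_\bullet}$ into Corollary~\ref{Cor dimExt in TT}, the coefficients $p^{\xi,\kappa_\bullet}_{\lambda,\emptyset,\emptyset_\bullet}$ force $\kappa_\bullet=\emptyset_\bullet$, so ${\rm Ext}^q_\TT(V_{\lambda;\mu},L_{\kappa_\bullet,\kappa;\nu,\nu_\bullet})=0$ for all $q$ whenever $\kappa_\bullet\neq\emptyset_\bullet$, which is exactly what you need.

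The one point that needs tightening is the passage from ${\rm Ext}^1_\TT(V_{\lambda;\mu},S^kQ\otimes D)\cong{\rm Ext}^1_\TT(V_{\lambda;\mu},D)$ at each finite stage to ${\rm Ext}^1_\TT(V_{\lambda;\mu},I\otimes D)\cong{\rm Ext}^1_\TT(V_{\lambda;\mu},D)$. You say that finite length of $V_{\lambda;\mu}$ lets any extension by $I\otimes D$ be pushed out from a finite stage, and you flag the ``colimit bookkeeping'' as ``not entirely automatic'' --- but the argument as written does not actually establish that the intersection of a finitely generated submodule of the extension with $I\otimes D$ lands in some $S^kQ\otimes D$. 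What is really being used is that $\TT$ is locally noetherian (equivalently, every object is a filtered union of finite-length, noetherian subobjects), so that ${\rm Ext}^i(V_{\lambda;\mu},-)$ commutes with filtered colimits for noetherian $V_{\lambda;\mu}$. This is true here (the category is generated under colimits by the finite-length $J_{\bm l}$), but it is not stated in the paper and deserves to be made explicit, or else replaced by an argument closer to the paper's, working directly with the socle filtrations of the order-defining objects via (\ref{For Relation socIT to socT}) and Proposition~\ref{Prop ITsoc Ilambdamu}. Everything else --- the identification of ${\rm Hom}_{\bf T}(\Phi M,\Phi N)$ with ${\rm Hom}_\TT(M,I\otimes N)$, the Hom-vanishing against $S^kF\otimes N$ for $k\geq 1$ (the layers $J_{\zeta,\emptyset;\emptyset,\zeta}$ have all simple subquotients with nonempty $\kappa_\bullet$ and $\nu_\bullet$), the fact that ${\rm Res}$ preserves injectives as right adjoint to the exact $\Phi$, and the transport of the ordered Grothendieck structure --- is sound.
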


\begin{proof}
The existence of the claimed functor is due to the universality property of $\TT(V_*,V)$, cf. \cite{Penkov-Serganova-Mackey},\cite{Chirvasitu-Penkov-OTC}. The fact that $I\otimes\bullet$ fits exactly with the required assignment for the universal functor is obvious. The verification that this functor defines an equivalence in our case is straightforward in view of Proposition \ref{Prop IT tensor prod simples}.
\end{proof}

Theorem \ref{Theo smallIT smallTT} allows us to translate the results from Section \ref{Sec Vstar l po V m} into results about the category $\ul{\bf T}$. In particular, Theorem \ref{Theo smallTT Inj n Ext to Hom} yields the following.

\begin{coro}\label{Coro smallIT Ext}
For $\lambda,\mu,\xi,\eta\in\Lambda$ and $k\geq 0$, we have
$$
\dim{\rm Ext}^k_{\ul{\bf T}}(K_{\xi;\eta},K_{\lambda,\mu}) = \dim{\rm Ext}^k_{\TT(V_*,V)}(V_{\xi;\eta},V_{\lambda,\mu}) = m^{\lambda;\mu}_{\xi,\eta} \;.
$$
If this dimension is nonzero then $k=k^{\lambda;\mu}_{\xi;\eta}=|\lambda|-|\xi|=|\mu|-|\eta|$.
\end{coro}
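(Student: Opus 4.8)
The plan is to deduce the corollary directly from the category equivalence of Theorem~\ref{Theo smallIT smallTT} together with the Ext-computation of Theorem~\ref{Theo smallTT Inj n Ext to Hom}. First I would recall that the functor $\Phi := I\otimes\bullet : \TT(V_*,V) \to \ul{\bf T}$ is an equivalence of abelian categories; it sends the simple object $V_{\xi;\eta}$ to $K_{\xi;\eta}=I\otimes V_{\xi;\eta}$ and, by the last sentence of Theorem~\ref{Theo smallIT smallTT}, it sends the indecomposable injective $(V_*)_\xi\otimes V_\eta$ of $\TT(V_*,V)$ to the indecomposable injective $I_{\xi;\emptyset}\otimes_I I_{\emptyset;\eta}$ of $\ul{\bf T}$. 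Since any equivalence of abelian categories is exact and preserves injective objects (injectivity being a purely categorical notion), $\Phi$ carries a chosen injective resolution of $V_{\lambda;\mu}$ in $\TT(V_*,V)$ term by term to an injective resolution of $K_{\lambda;\mu}$ in $\ul{\bf T}$. Applying ${\rm Hom}_{\TT(V_*,V)}(V_{\xi;\eta},-)$ to the former complex and ${\rm Hom}_{\ul{\bf T}}(K_{\xi;\eta},-)$ to the latter, and using that $\Phi$ induces isomorphisms on all ${\rm Hom}$-spaces, gives a canonical isomorphism ${\rm Ext}^k_{\ul{\bf T}}(K_{\xi;\eta},K_{\lambda;\mu}) \cong {\rm Ext}^k_{\TT(V_*,V)}(V_{\xi;\eta},V_{\lambda;\mu})$ for every $k\geq 0$.

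It then remains to invoke Theorem~\ref{Theo smallTT Inj n Ext to Hom}, which asserts that $\dim{\rm Ext}^k_{\TT(V_*,V)}(V_{\xi;\eta},V_{\lambda;\mu}) = m^{\lambda;\mu}_{\xi,\eta}$ when $k=k^{\lambda;\mu}_{\xi;\eta}=|\lambda|-|\xi|=|\mu|-|\eta|$ and vanishes otherwise; combined with the isomorphism above, this yields both displayed equalities and the vanishing claim at once. I expect no genuine obstacle here: the only point deserving an explicit line is the standard fact that an equivalence of abelian categories preserves ${\rm Ext}$, and even this can be sidestepped by writing the injective resolution $0\to K_{\lambda;\mu}\to I\otimes\mc I^0(V_{\lambda;\mu})\to I\otimes\mc I^1(V_{\lambda;\mu})\to\cdots$ of $K_{\lambda;\mu}$ in $\ul{\bf T}$ explicitly --- each term being a finite direct sum of the injectives $I_{\rho;\emptyset}\otimes_I I_{\emptyset;\sigma}$ --- and computing ${\rm Hom}_{\ul{\bf T}}(K_{\xi;\eta},\,I_{\rho;\emptyset}\otimes_I I_{\emptyset;\sigma})$ directly from Proposition~\ref{Prop IT tensor prod simples}. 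The equivalence argument is, however, the shortest route and is the one I would present.
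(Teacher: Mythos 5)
Your proposal is correct and follows essentially the same route as the paper: the paper's own (one-line) proof states that Theorem \ref{Theo smallIT smallTT} allows one to translate the Ext-computation of Theorem \ref{Theo smallTT Inj n Ext to Hom} from $\TT(V_*,V)$ to $\ul{\bf T}$, which is precisely your invocation of the category equivalence $I\otimes\bullet$ preserving injective resolutions and Hom-spaces. Your extra remark about computing ${\rm Hom}_{\ul{\bf T}}(K_{\xi;\eta},I_{\rho;\emptyset}\otimes_I I_{\emptyset;\sigma})$ directly is a harmless alternative, but unnecessary; the argument you actually give matches the paper.
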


\subsection{Injective resolutions of simple objects in ${\bf T}_t$}

\begin{theorem}\label{Theo InjResILlambdamu IT}
Let $\bm\lambda=(\lambda_\bullet,\lambda;\mu,\mu_\bullet)\in \bm\Lambda$. There is an injective resolution of the simple object $K_{\bm\lambda}$ in ${\bf T}$, of length $k^{\bm\lambda}:=||\bm\lambda||-(|\lambda_t|+|\mu_t|)$ and with $k$-th term
\begin{align*}
\mc I_{\bf T}^{k}(K_{\lambda_\bullet,\lambda;\mu,\mu_\bullet}) & \cong \bigoplus\limits_{i+j_1+j_2=k} \bigoplus\limits_{\xi,\eta\in\Lambda: k^{\lambda;\mu}_{\xi;\eta}=i} m^{\lambda;\mu}_{\xi;\eta}\cdot \left( \mc I_{\bf T}^{j_1}(K_{\lambda_\bullet,\xi;\emptyset_\bullet})\otimes_I\mc I_{\bf T}^{j_2}(K_{\emptyset_\bullet;\eta,\mu_\bullet}) \right) \\
& \cong \bigoplus\limits_{i+j_1+j_2=k} \bigoplus\limits_{\xi,\eta\in\Lambda: k^{\lambda;\mu}_{\xi;\eta}=i}  m^{\lambda;\mu}_{\xi;\eta} \cdot I\otimes  \mc I_{\TT(V^*)}^{j_1}(L_{\lambda_\bullet,\xi;\emptyset_\bullet})\otimes\mc I_{\TT(\bar V)}^{j_2}(L_{\emptyset_\bullet;\eta,\mu_\bullet}) \\
&\cong \bigoplus\limits_{(\kappa_\bullet,\kappa;\nu,\nu_\bullet)\in\bm\Lambda:k^{\lambda_\bullet,\lambda;\mu,\mu_\bullet}_{\kappa_\bullet,\kappa;\nu,\nu_\bullet}=k} \left(\sum\limits_{\xi,\eta\in\Lambda} p^{\xi,\lambda_\bullet}_{\kappa,\kappa_\bullet} m^{\lambda;\mu}_{\xi;\eta} p^{\eta,\mu_\bullet}_{\nu,\nu_\bullet} \right) \cdot I_{\kappa_\bullet,\kappa;\nu,\nu_\bullet} \;,
\end{align*}
where $k^{\lambda_\bullet,\lambda;\mu,\mu_\bullet}_{\kappa_\bullet,\kappa;\nu,\nu_\bullet}$ is as in Theorem \ref{Theo InjResLlambdamu TT}.
\end{theorem}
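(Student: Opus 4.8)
The plan is to reproduce the architecture of the proof of Theorem~\ref{Theo InjResLlambdamu TT}, assembling an injective resolution of $K_{\bm\lambda}$ in ${\bf T}$ out of three finite ``building blocks'': injective resolutions of the one-sided simple objects $K_{\lambda_\bullet,\xi;\emptyset_\bullet}$ and $K_{\emptyset_\bullet;\eta,\mu_\bullet}$ (obtained from $\TT(V^*)$ and $\TT(\bar V)$), and of the inner object $K_{\lambda;\mu}$ in $\ul{\bf T}$, combined by a K\"unneth product and then modified along the inner factor. The decisive feature distinguishing ${\bf T}$ from $\TT$ is that the monoidal unit $I=K_{\emptyset;\emptyset}$ is simple \emph{and injective} in ${\bf T}$ (Theorem~\ref{Theo L to IL}), so the injective resolution of the trivial object is merely $0\to I\to I\to 0$. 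This makes all three building blocks, and hence the resolution of $K_{\bm\lambda}$, finite; moreover it removes the factor $\mc I^i_{\TT}(\CC)$ present in Theorem~\ref{Theo InjResLlambdamu TT}, reducing the triple K\"unneth product there to a double one, which is why the multiplicities in the statement carry only the inner Littlewood--Richardson data $m^{\lambda;\mu}_{\xi;\eta}$ and not the extra diagrams $\zeta,\rho,\theta$.

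First I would transport the one-sided resolutions into ${\bf T}$. The functor $I\otimes\bullet\colon\TT\to{\bf T}$ is exact and carries the indecomposable injective $J_{\kappa_\bullet;\emptyset}$ of $\TT(V^*)$ to $I_{\kappa_\bullet;\emptyset}$, which is injective in ${\bf T}$ by Theorem~\ref{Theo ITT is Grothendieck Cat}; hence applying it to the resolution of $L_{\lambda_\bullet,\xi;\emptyset_\bullet}$ from Theorem~\ref{Theo InjResLlambdabullet} (read with $\xi$ placed in the slot $\alpha=-1$) yields an injective resolution $\mc I^\bullet_{\bf T}(K_{\lambda_\bullet,\xi;\emptyset_\bullet})=I\otimes\mc I^\bullet_{\TT(V^*)}(L_{\lambda_\bullet,\xi;\emptyset_\bullet})$ of length $|\xi|+||\lambda_\bullet||-|\lambda_t|$, whose terms are direct sums of the $I_{\kappa_\bullet,\kappa;\emptyset_\bullet}$ with the coefficients $p^{\xi,\lambda_\bullet}_{\kappa,\kappa_\bullet}$; symmetrically for $K_{\emptyset_\bullet;\eta,\mu_\bullet}$. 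To feed these into Lemma~\ref{Lemma TensInjRes} one checks the K\"unneth hypothesis inside ${\bf T}$: for the injectives at hand $I_{\bm\kappa}\otimes_I I_{\bm\nu}\cong I\otimes(J_{\bm\kappa}\otimes J_{\bm\nu})$, and $J_{\bm\kappa}\otimes J_{\bm\nu}$ is a finite direct sum of modules $J_{\bm\mu}$ (expand each tensorand of the product expression underlying~(\ref{For LJITlambda}) by Littlewood--Richardson), so $I_{\bm\kappa}\otimes_I I_{\bm\nu}$ is a finite direct sum of indecomposable injectives. Lemma~\ref{Lemma TensInjRes} then produces a finite injective resolution of $K_{\lambda_\bullet,\xi;\emptyset_\bullet}\otimes_I K_{\emptyset_\bullet;\eta,\mu_\bullet}$, a module whose socle is $K_{\lambda_\bullet,\xi;\eta,\mu_\bullet}$ by Proposition~\ref{Prop IT tensor prod simples}.

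Next comes the modification, carried out as in the proof of Theorem~\ref{Theo InjResLlambdamu TT}: at each pair $\xi,\eta$ with $m^{\lambda;\mu}_{\xi;\eta}\neq 0$ one splices in the above K\"unneth resolution of $K_{\lambda_\bullet,\xi;\emptyset_\bullet}\otimes_I K_{\emptyset_\bullet;\eta,\mu_\bullet}$, shifted up in homological degree by $k^{\lambda;\mu}_{\xi;\eta}$ and taken with multiplicity $m^{\lambda;\mu}_{\xi;\eta}$, the gluing maps between consecutive blocks being the ${\bf T}$-analogues of the type-(iii) morphisms of Definition~\ref{Def Xibml}, built from ${_I}{\bf p}$. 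This produces differentials $y_{k,\bm\lambda}$ with injective terms $\mc I^k_{\bf T}(K_{\bm\lambda})$ and $\ker y_{0,\bm\lambda}\cong K_{\bm\lambda}$, using $K_{\bm\lambda}\cong{\rm soc}_{\bf T}(K_{\lambda_\bullet,\lambda;\emptyset_\bullet}\otimes_I K_{\emptyset_\bullet;\mu,\mu_\bullet})$ from Proposition~\ref{Prop IT tensor prod simples}. The genuinely nontrivial point, and the main obstacle, is the exactness of $(\mc I^\bullet_{\bf T}(K_{\bm\lambda}),y_{\bullet,\bm\lambda})$: I would prove it by induction on $|\lambda\cap\mu^\perp|$, the injective length of the inner factor $K_{\lambda;\mu}$ in $\ul{\bf T}$, exploiting that $\ul{\bf T}\cong\TT(V_*,V)$ is Koszul (Theorem~\ref{Theo smallIT smallTT}, Corollary~\ref{Coro smallIT Ext}) and that the left and right resolutions are untouched by the inner modification, so that the inductive step reduces to the same mapping-cone argument as in the $\TT$ case.

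Finally I would read off the explicit description. Composing the coefficients of the three blocks gives $\sum_{\xi,\eta}p^{\xi,\lambda_\bullet}_{\kappa,\kappa_\bullet}\,m^{\lambda;\mu}_{\xi;\eta}\,p^{\eta,\mu_\bullet}_{\nu,\nu_\bullet}$ as the multiplicity of $I_{\kappa_\bullet,\kappa;\nu,\nu_\bullet}$ in $\mc I^k_{\bf T}(K_{\bm\lambda})$, subject to the degree bookkeeping $k^{\lambda;\mu}_{\xi;\eta}+j_1+j_2=k$ with $j_1,j_2$ the homological degrees of the left and right resolutions; exactly as in part~3 of the claim inside the proof of Theorem~\ref{Theo InjResLlambdabullet}, non-vanishing of a summand forces $j_1$ and $j_2$ to equal the corresponding $k$-invariants, whence $k=k^{\lambda_\bullet,\lambda;\mu,\mu_\bullet}_{\kappa_\bullet,\kappa;\nu,\nu_\bullet}$. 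The length is the maximum of this quantity over indices with non-vanishing coefficient; since $m^{\lambda;\mu}_{\lambda;\mu}=1$ with $k^{\lambda;\mu}_{\lambda;\mu}=0$, it is attained at $\kappa=\lambda$, $\nu=\mu$ together with the tops of the left and right resolutions, giving $|\lambda|+|\mu|+||\lambda_\bullet||+||\mu_\bullet||-|\lambda_t|-|\mu_t|=||\bm\lambda||-(|\lambda_t|+|\mu_t|)=k^{\bm\lambda}$.
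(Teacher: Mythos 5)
Your proposal is correct and follows essentially the same route as the paper's own proof: transport the one-sided resolutions of Theorem~\ref{Theo InjResLlambdabullet} to ${\bf T}$ via the exact functor $I\otimes\bullet$, form the K\"unneth product via Lemma~\ref{Lemma TensInjRes}, splice along the inner factor using Corollary~\ref{Coro smallIT Ext} as in Theorem~\ref{Theo InjResLlambdamu TT}, and read off coefficients and length. Your observation that the injectivity of the monoidal unit $I$ in ${\bf T}$ is exactly what collapses the triple K\"unneth of the $\TT$-case to a double one, and what makes the resolution finite, is the right way to see why the formula simplifies.
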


\begin{proof}
The strategy relays on the fact that the tensor product of injective objects in ${\bf T}$ is injective, which allows us to apply Lemma \ref{Lemma TensInjRes}.

We begin with the one-sided case, and the observation that the injective resolution of $L_{\lambda_\bullet,\lambda;\emptyset_\bullet}$ in $\TT(V^*)$ (see Theorem \ref{Theo InjResLlambdabullet}) is transformed under the functor $I\otimes\bullet$ into an exact sequence with $j$-th term $\mc I_{\bf T}^{j}(K_{\lambda_\bullet,\lambda;\emptyset_\bullet}):=I\otimes \mc I_{\TT(V^*)}^{j}(L_{\lambda_\bullet,\lambda;\emptyset_\bullet})$. The indecomposable injectives of $\TT(V^*)$ are of the form $J_{\kappa_\bullet,\kappa;\emptyset_\bullet}$, and hence $I\otimes\bullet$ transforms them into indecomposable injectives of ${\bf T}$, by Theorem \ref{Theo ITT is Grothendieck Cat}. In particular, $\mc I_{\bf T}^{j}(K_{\lambda_\bullet,\lambda;\emptyset_\bullet})$ is injective in ${\bf T}$ for all $j$, and our exact sequence is an injective resolution of $K_{\lambda_\bullet,\lambda;\emptyset_\bullet}$ in ${\bf T}$. The case of $\mc I_{\bf T}^{j}(K_{\emptyset_\bullet;\mu,\mu_\bullet}):=I\otimes \mc I_{\TT(\bar V)}^{j}(L_{\emptyset_\bullet;\mu,\mu_\bullet})$ is analogous.

By Lemma \ref{Lemma TensInjRes}, the K\"unneth product of the resolutions of $K_{\lambda_\bullet,\lambda;\emptyset_\bullet}$ and $K_{\emptyset_\bullet;\mu,\mu_\bullet}$ is a resolution of $K_{\lambda_\bullet,\lambda;\emptyset_\bullet}\otimes_I K_{\emptyset_\bullet;\mu,\mu_\bullet}$, with $k$-th term
$$
\bigoplus\limits_{j_1+j_2=k} \mc I_{\bf T}^{j_1}(K_{\lambda_\bullet,\lambda;\emptyset_\bullet})\otimes_I\mc I_{\bf T}^{j_2}(K_{\emptyset_\bullet;\mu,\mu_\bullet}) \;.
$$
We have an analogous resolution of $K_{\lambda_\bullet,\xi;\emptyset_\bullet}\otimes_I K_{\emptyset_\bullet;\eta,\mu_\bullet}$ for $\xi,\eta\in\Lambda$ such that $m^{\lambda;\mu}_{\xi;\eta}$, and we can combine these resolutions, using Corollary \ref{Coro smallIT Ext}, in a manner similar to the one in the proof of Theorem \ref{Theo InjResLlambdamu TT}, to obtain the claimed resolution of $K_{\lambda_\bullet,\lambda;\mu,\mu_\bullet}$.
\end{proof}

\begin{coro}\label{Coro dimExt Tkk Tll}
For $(\kappa_\bullet,\kappa;\nu,\nu_\bullet),(\lambda_\bullet,\lambda;\mu,\mu_\bullet)\in\bm\Lambda$ and $q\geq0$, we have
$$
\dim {\rm Ext}_{{\bf T}}^q (K_{\kappa_\bullet,\kappa;\nu,\nu_\bullet},K_{\lambda_\bullet,\lambda;\mu,\mu_\bullet}) = \sum\limits_{\xi,\eta\in\Lambda:q=k^{\xi,\lambda_\bullet}_{\kappa,\kappa_\bullet} +k^{\lambda;\mu}_{\xi;\eta} +k^{\eta,\mu_\bullet}_{\nu,\nu_\bullet}} p^{\xi,\lambda_\bullet}_{\kappa,\kappa_\bullet} m^{\lambda;\mu}_{\xi;\eta} p^{\eta,\mu_\bullet}_{\nu,\nu_\bullet} \;.
$$
If ${\rm Ext}_{{\bf T}}^q (K_{\kappa_\bullet,\kappa;\nu,\nu_\bullet},K_{\lambda_\bullet,\lambda;\mu,\mu_\bullet})\ne 0$, then $q=k^{\lambda_\bullet,\lambda;\mu,\mu_\bullet}_{\kappa_\bullet,\kappa;\nu,\nu_\bullet}$.
\end{coro}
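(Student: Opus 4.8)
The plan is to extract the dimension directly from the explicit injective resolution $\mc I_{\bf T}^\bullet(K_{\bm\lambda})$ constructed in Theorem \ref{Theo InjResILlambdamu IT}. Since ${\bf T}$ is a Grothendieck category (Theorem \ref{Theo ITT is Grothendieck Cat}), it has enough injectives and
\[
{\rm Ext}_{\bf T}^q(K_{\bm\kappa},K_{\bm\lambda}) \cong H^q\big({\rm Hom}_{\bf T}(K_{\bm\kappa}, \mc I_{\bf T}^\bullet(K_{\bm\lambda}))\big),
\]
so everything reduces to understanding the complex obtained by applying ${\rm Hom}_{\bf T}(K_{\bm\kappa},-)$ to that resolution.

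First I would record the elementary computation
\[
\dim {\rm Hom}_{\bf T}(K_{\bm\kappa}, I_{\bm\kappa'}) = \delta_{\bm\kappa,\bm\kappa'}, \qquad \bm\kappa,\bm\kappa'\in\bm\Lambda.
\]
Indeed $K_{\bm\kappa}$ is simple (Theorem \ref{Theo L to IL}), so any nonzero map into $I_{\bm\kappa'}$ is injective with simple image contained in ${\rm soc}_{\bf T} I_{\bm\kappa'} = K_{\bm\kappa'}$ (the $q=0$ layer in Proposition \ref{Prop ITsoc Ilambdamu}), while the simple objects $K_{\bm\mu}$ are pairwise non-isomorphic with ${\rm End}_{\bf T}K_{\bm\mu}\cong\CC$ (Theorem \ref{Theo L to IL}). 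Applying this to the decomposition of $\mc I_{\bf T}^q(K_{\bm\lambda})$ into indecomposable injectives exhibited in the third line of Theorem \ref{Theo InjResILlambdamu IT}, we get that ${\rm Hom}_{\bf T}(K_{\bm\kappa},\mc I_{\bf T}^q(K_{\bm\lambda}))$ has dimension $\sum_{\xi,\eta\in\Lambda} p^{\xi,\lambda_\bullet}_{\kappa,\kappa_\bullet} m^{\lambda;\mu}_{\xi;\eta} p^{\eta,\mu_\bullet}_{\nu,\nu_\bullet}$ when $q=k^{\lambda_\bullet,\lambda;\mu,\mu_\bullet}_{\kappa_\bullet,\kappa;\nu,\nu_\bullet}$, and is $0$ otherwise.

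The crucial structural point — already part of the statement of Theorem \ref{Theo InjResILlambdamu IT} — is that the indecomposable injective $I_{\bm\kappa}$ occurs in the term $\mc I_{\bf T}^q(K_{\bm\lambda})$ for exactly one value of $q$, namely $q=k^{\lambda_\bullet,\lambda;\mu,\mu_\bullet}_{\kappa_\bullet,\kappa;\nu,\nu_\bullet}$. By the previous paragraph this forces the complex ${\rm Hom}_{\bf T}(K_{\bm\kappa},\mc I_{\bf T}^\bullet(K_{\bm\lambda}))$ to be concentrated in that single cohomological degree: all its differentials vanish for trivial reasons (of any two consecutive terms at least one is zero), so its cohomology coincides termwise with the complex itself. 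Both assertions of the corollary follow immediately: the stated dimension in the critical degree $q=k^{\lambda_\bullet,\lambda;\mu,\mu_\bullet}_{\kappa_\bullet,\kappa;\nu,\nu_\bullet}$, and the vanishing of ${\rm Ext}_{\bf T}^q$ in every other degree.

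There is no genuine obstacle once Theorem \ref{Theo InjResILlambdamu IT} is in hand; the only points requiring a little care are (i) that $\mc I_{\bf T}^\bullet(K_{\bm\lambda})$ really is supported, term by term, on the single degree dictated by the invariant $k^{\lambda_\bullet,\lambda;\mu,\mu_\bullet}_{\kappa_\bullet,\kappa;\nu,\nu_\bullet}$ — this is inherited from the one-sided statement of Theorem \ref{Theo InjResLlambdabullet} and Corollary \ref{Coro dimExtsimples in Tleft}, together with the fact recorded after Theorem \ref{Theo PS soc filt Vlambda TENS Vmu} that $m^{\lambda;\mu}_{\xi;\eta}\ne 0$ forces $k^{\lambda;\mu}_{\xi;\eta}=|\lambda|-|\xi|=|\mu|-|\eta|$ — and (ii) the additivity identity $k^{\xi,\lambda_\bullet}_{\kappa,\kappa_\bullet}+k^{\lambda;\mu}_{\xi;\eta}+k^{\eta,\mu_\bullet}_{\nu,\nu_\bullet}=k^{\lambda_\bullet,\lambda;\mu,\mu_\bullet}_{\kappa_\bullet,\kappa;\nu,\nu_\bullet}$, valid whenever the relevant product of Littlewood--Richardson-type numbers is nonzero, which is the same routine arithmetic check already made in the proof of Theorem \ref{Theo InjResLlambdamu TT}.
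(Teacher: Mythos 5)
Your proof is correct and is precisely the argument the paper leaves implicit: the corollary is stated with no proof, as an immediate consequence of Theorem~\ref{Theo InjResILlambdamu IT}. You correctly spell out the standard derivation — applying ${\rm Hom}_{\bf T}(K_{\bm\kappa},-)$ to the resolution of Theorem~\ref{Theo InjResILlambdamu IT} yields a complex concentrated in the single degree $k^{\bm\lambda}_{\bm\kappa}$, since $\dim{\rm Hom}_{\bf T}(K_{\bm\kappa},I_{\bm\kappa'})=\delta_{\bm\kappa,\bm\kappa'}$ and $I_{\bm\kappa}$ occurs in exactly one term of the resolution, so cohomology equals the complex termwise.
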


\begin{coro}\label{Coro Ext to Hom in IT}
For $\bm\kappa,\bm\lambda\in\bm\Lambda$ and $q\geq0$,
$$
\dim {\rm Ext}_{{\bf T}}^k (K_{\bm\kappa^{e\perp o}},K_{\bm\lambda^{e\perp o}})= \dim {\rm Ext}_{{\bf T}}^k (K_{\bm\kappa^{o\perp e}},K_{\bm\lambda^{o\perp e}}) = \dim {\rm Hom}_{\bf T}(K_{\bm\kappa},\ul{\rm soc}_{\bf T}^{k+1}(I\otimes M_{\bm\lambda}))\;,
$$
where $M_{\lambda_\bullet,\lambda;\mu,\mu_\bullet}$ is the module defined in (\ref{For Modules M}) and ${^{e\perp o}}$ is the involution defined in \S\ref{Sec InvolutLambdaBeth}.
\end{coro}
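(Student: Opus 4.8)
The plan is to combine three ingredients: the closed formula of Corollary~\ref{Coro dimExt Tkk Tll} for $\dim{\rm Ext}_{\bf T}^k$, the one-sided translations of Ext's in $\TT(V^*)$ and $\TT(\bar V)$ into socle multiplicities of the modules $M_{\lambda_\bullet;\emptyset}$, $M_{\emptyset;\mu_\bullet}$ furnished by Corollary~\ref{Coro Ext to Hom in TVstar}, and the identity $m^{\lambda;\mu}_{\xi;\eta}=h^{\lambda;\mu^{\perp}}_{\xi;\eta^{\perp}}$ (the observation following Theorem~\ref{Theo smallTT Inj n Ext to Hom}) together with Theorem~\ref{Theo PS soc filt Vlambda TENS Vmu} for the inner Littlewood--Richardson factor. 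First I would apply Corollary~\ref{Coro dimExt Tkk Tll} to the pair $(\bm\kappa^{e\perp o},\bm\lambda^{e\perp o})$; since $\bm\lambda^{e\perp o}=(\lambda_\bullet^{e\perp},\lambda;\mu^{\perp},\mu_\bullet^{o\perp})$, this writes $\dim{\rm Ext}_{\bf T}^k(K_{\bm\kappa^{e\perp o}},K_{\bm\lambda^{e\perp o}})$ as $\sum_{\xi,\eta}p^{\xi,\lambda_\bullet^{e\perp}}_{\kappa,\kappa_\bullet^{e\perp}}\,m^{\lambda;\mu^{\perp}}_{\xi;\eta}\,p^{\eta,\mu_\bullet^{o\perp}}_{\nu,\nu_\bullet^{o\perp}}$, the sum taken over $\xi,\eta\in\Lambda$ for which the three partial degrees appearing in Corollary~\ref{Coro dimExt Tkk Tll} add up to $k$. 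The goal is to recognise this expression as $\dim{\rm Hom}_{\bf T}(K_{\bm\kappa},\ul{\rm soc}_{\bf T}^{k+1}(I\otimes M_{\bm\lambda}))$.

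Next I would compute the socle filtration of $I\otimes M_{\bm\lambda}$ in ${\bf T}$. By the argument of Proposition~\ref{Prop ITsoc Ilambdamu} (using that ${\rm soc}_\TT I_{\bm\kappa}=L_{\bm\kappa}$, so that the $\TT$-socle filtration is subordinate to the ${\bf T}$-socle filtration, and that $M_{\bm\lambda}$ is a tensor module), the layer $\ul{\rm soc}_{\bf T}^{k+1}(I\otimes M_{\bm\lambda})$ is $I\otimes\bullet$ applied to a socle layer of $M_{\bm\lambda}$ computed inside $\TT$; hence $\dim{\rm Hom}_{\bf T}(K_{\bm\kappa},\ul{\rm soc}_{\bf T}^{k+1}(I\otimes M_{\bm\lambda}))$ equals the multiplicity of $L_{\bm\kappa}$ in that $\TT$-socle layer. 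To compute the latter I would decompose $M_{\bm\lambda}$ along its tensor factors --- the ``top'' factors $(V^*/V^*_t)_{\lambda_t}$, $(\bar V/\bar V_t)_{\mu_t}$, the two-step ``left'' and ``right'' blocks $(V^*_{\alpha+2}/V^*_\alpha)_{\lambda_\alpha}$ and $(\bar V_{\alpha+2}/\bar V_\alpha)_{\mu_\alpha}$ for $0\le\alpha\le t-1$, and the innermost blocks $(V^*_1)_\lambda$, $(\bar V_1)_\mu$ --- and obtain its socle filtration as a K\"unneth product, exactly as in the proofs of Proposition~\ref{Prop socqJlm} and Corollary~\ref{Coro Ext to Hom in TVstar}. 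The density statements of Lemma~\ref{Lemma LinIndepToAny} and Proposition~\ref{Prop DenseProperties}, applied to the ideals $\mk{gl}^M_\alpha$ (governing the left/right blocks) and to $\mk{sl}(V,V_*)$ (governing the contraction of the innermost blocks, via Theorem~\ref{Theo PS soc filt Vlambda TENS Vmu}), guarantee that no simple constituent descends to a socle layer lower than the K\"unneth product predicts.

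The last step is a term-by-term identification governed by Littlewood--Richardson symmetry. The left factor $p^{\xi,\lambda_\bullet^{e\perp}}_{\kappa,\kappa_\bullet^{e\perp}}$ is, by Corollary~\ref{Coro Ext to Hom in TVstar} (applied after $I\otimes\bullet$, which is harmless since that functor is exact and carries simples to simples by Theorem~\ref{Theo L to IL}), the multiplicity of $L_{\kappa_\bullet,\kappa}$ in the appropriate socle layer of the left part of $M_{\bm\lambda}$ with innermost diagram $\xi$; symmetrically for $p^{\eta,\mu_\bullet^{o\perp}}_{\nu,\nu_\bullet^{o\perp}}$ on the right. The inner factor $m^{\lambda;\mu^{\perp}}_{\xi;\eta}$ is rewritten as $h^{\lambda;\mu}_{\xi;\eta^{\perp}}$, which by Theorem~\ref{Theo PS soc filt Vlambda TENS Vmu} is the multiplicity of $V_{\xi;\eta^{\perp}}$ in the relevant socle layer of $(V_*)_\lambda\otimes V_\mu$, i.e.\ of the socle part of the innermost block $(V^*_1)_\lambda\otimes(\bar V_1)_\mu$, the remaining layers of that block produced by $V^*_1/V^*_0$ and $\bar V_1/\bar V_0$ combining with the $\alpha=0$ blocks on the two sides. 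The degree matching follows from the additivity of the socle index under K\"unneth products together with the formula for $k^{\lambda_\bullet,\lambda;\mu,\mu_\bullet}_{\kappa_\bullet,\kappa;\nu,\nu_\bullet}$. Finally, that $(\bm\kappa^{o\perp e},\bm\lambda^{o\perp e})$ yields the same dimension is the parity argument of Corollary~\ref{Coro Ext to Hom in TVstar}: grouping the products that define the $p$- and $m$-factors into consecutive pairs, exactly one entry of each pair is transposed under either involution, and the symmetry $N^{\lambda}_{\mu\nu}=N^{\lambda^{\perp}}_{\mu^{\perp}\nu^{\perp}}$, compensated by transposing the summation diagrams, equates the two sums.

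I expect the main obstacle to be the middle step: organising the socle filtration of $I\otimes M_{\bm\lambda}$ so that the interaction between the inner contraction and the surrounding two-step blocks is correctly disentangled --- in particular that the $V^*_1/V^*_0$ and $\bar V_1/\bar V_0$ contributions are redistributed consistently between the innermost and the $\alpha=0$ blocks --- and checking that the passage from the $\TT$-socle filtration to the ${\bf T}$-socle filtration preserves layer indices for $M_{\bm\lambda}$, which is not one of the order-defining objects $I_{\bm l}$. Once the density arguments pin down this socle filtration, what remains is Littlewood--Richardson bookkeeping of the kind already carried out for $\TT(V^*)$.
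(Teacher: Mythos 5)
Your proposal follows the paper's proof exactly: the paper's argument is precisely the one-line citation of Corollary~\ref{Coro dimExt Tkk Tll}, Corollary~\ref{Coro Ext to Hom in TVstar}, and Theorem~\ref{Theo smallTT Inj n Ext to Hom}, and your K\"unneth analysis of $\ul{\rm soc}_{\bf T}^{\bullet}(I\otimes M_{\bm\lambda})$ is the right way to unpack that citation. One transcription slip: applying the involution to $\bm\kappa=(\kappa_\bullet,\kappa;\nu,\nu_\bullet)$ gives $\bm\kappa^{e\perp o}=(\kappa_\bullet^{e\perp},\kappa;\nu^\perp,\nu_\bullet^{o\perp})$, so the right-hand factor should read $p^{\eta,\mu_\bullet^{o\perp}}_{\nu^\perp,\nu_\bullet^{o\perp}}$ rather than $p^{\eta,\mu_\bullet^{o\perp}}_{\nu,\nu_\bullet^{o\perp}}$; this is exactly the conjugate needed for Corollary~\ref{Coro Ext to Hom in TVstar} to return $L_{\nu,\nu_\bullet;\emptyset}$ un-conjugated and hence $K_{\bm\kappa}$ in the Hom-count, so the rest of your argument already presupposes the corrected form.
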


\begin{proof}
The corollary follows from Corollary \ref{Coro dimExt Tkk Tll}, Corollary \ref{Coro Ext to Hom in TVstar}, Theorem \ref{Theo smallTT Inj n Ext to Hom}.
\end{proof}

As a special case, we obtain the following.

\begin{coro}\label{Coro Ext to Hom IT aleph0}
Assume $t=0$, meaning that $V$ is of countable dimension. Then
$$
\dim {\rm Ext}_{{\bf T}_0}^k (K_{\kappa_0^\perp,\kappa;\nu^\perp,\nu_0},K_{\lambda_0^\perp,\lambda;\mu^\perp,\mu_0}) = \dim {\rm Hom}_{{\bf T}_0}(K_{\kappa_0,\kappa;\nu,\nu_0},\ul{\rm soc}_{{\bf T}_0}^{k+1}I_{\lambda_0,\lambda;\mu,\mu_0})
$$
holds for any $(\kappa_0,\kappa;\nu,\nu_0), (\lambda_0,\lambda;\mu,\mu_0)\in\bm\Lambda$ and $k\geq0$.
\end{coro}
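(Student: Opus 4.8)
The plan is to obtain this identity by specializing Corollary \ref{Coro Ext to Hom in IT} to $t=0$, once the conventions of \S\ref{Sec InvolutLambdaBeth} and of (\ref{For Modules M}) are unwound in this degenerate case. First I would record what the involution ${}^{e\perp o}$ does for $t=0$: the indexed sequences $\lambda_\bullet,\mu_\bullet$ each consist of a single diagram $\lambda_0$, resp.\ $\mu_0$, carrying the even index $0$, so $\lambda_\bullet^{e\perp}=(\lambda_0^\perp)$ is transposed while $\mu_\bullet^{o\perp}=(\mu_0)$ is left unchanged; combined with the conjugation of the inner right diagram this yields $(\kappa_0,\kappa;\nu,\nu_0)^{e\perp o}=(\kappa_0^\perp,\kappa;\nu^\perp,\nu_0)$, which is exactly the pair of tuples appearing on the left-hand side of the asserted equality.

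Second, I would verify that for $t=0$ the module $M_{\bm\lambda}$ of (\ref{For Modules M}) coincides with $J_{\bm\lambda}$ of (\ref{For LJITlambda}). Indeed, for $t=0$ the range $-1\le\alpha\le t-1$ in the definition of $M_{\bm\lambda}$ shrinks to the single value $\alpha=-1$; using the conventions $\lambda_{-1}=\lambda$, $\mu_{-1}=\mu$, $V^*_{-1}=\bar V_{-1}=0$ of Remark \ref{Rem index lm} together with $V^*_t=V_*$ and $\bar V_t=V$, both tensor products reduce literally to $(V^*/V_*)_{\lambda_0}\otimes V^*_\lambda\otimes\bar V_\mu\otimes(\bar V/V)_{\mu_0}$. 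Hence $I\otimes M_{\bm\lambda}=I\otimes J_{\bm\lambda}=I_{\lambda_0,\lambda;\mu,\mu_0}$, and the Hom-space on the right-hand side of Corollary \ref{Coro Ext to Hom in IT} becomes $\dim{\rm Hom}_{{\bf T}_0}(K_{\kappa_0,\kappa;\nu,\nu_0},\ul{\rm soc}_{{\bf T}_0}^{k+1}I_{\lambda_0,\lambda;\mu,\mu_0})$.

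Combining these two observations, the first equality of Corollary \ref{Coro Ext to Hom in IT} at $t=0$ is precisely the claim. I do not anticipate any real obstacle here; the only point requiring attention is the index bookkeeping — the role of the index $-1$ and the identifications $V^*_t=V_*$, $\bar V_t=V$ valid at $t=0$ — and making sure that the specialized involution produces the transposition pattern in the statement rather than the one coming from ${}^{o\perp e}$, which for $t=0$ would give the different tuple $(\kappa_0,\kappa;\nu^\perp,\nu_0^\perp)$.
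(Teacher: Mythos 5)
Your argument is correct and follows the same route as the paper's proof: observe that for $t=0$ the module $M_{\bm\lambda}$ of (\ref{For Modules M}) coincides with $J_{\bm\lambda}$, so that $I\otimes M_{\bm\lambda}=I_{\bm\lambda}$, and then specialize Corollary \ref{Coro Ext to Hom in IT}. You spell out the index bookkeeping and the action of the involution ${}^{e\perp o}$ at $t=0$ more explicitly than the paper does, but the underlying idea is identical.
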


\begin{proof}
The cardinal $\aleph_0$ is the only infinite cardinal for which $M_{\bm\lambda}=J_{\bm\lambda}$ holds for all $\bm\lambda\in\bm\Lambda$. Now the statement is given by Corollary \ref{Coro Ext to Hom in IT}.
\end{proof}

\section{Symmetries}\label{Sec Sym}

If $\lambda_\bullet=(\lambda_{-1},\lambda_0,...,\lambda_t)$ is a finite sequence of Young diagrams, we denote by ${\rm rev}\lambda_\bullet=(\lambda_t,...\lambda_0,\lambda_{-1})$ the reversed sequence.

\begin{prop}\label{Prop REV SYM}
Let $(\lambda_\bullet;\mu_\bullet),(\kappa_\bullet;\nu_\bullet)\in\bm\Lambda$ and $q\geq 0$. Then:
\begin{enumerate}
\item $p^{\lambda_\bullet}_{\kappa_\bullet} = p_{{\rm rev}\lambda_\bullet}^{{\rm rev}\kappa_\bullet}$ and if this number is nonzero then $k^{\lambda_\bullet}_{\kappa_\bullet} = k_{{\rm rev}\lambda_\bullet}^{{\rm rev}\kappa_\bullet}$; analogously $p^{\mu_\bullet}_{\nu_\bullet} = p_{{\rm rev}\nu_\bullet}^{{\rm rev}\mu_\bullet}$ and if this number is nonzero then $k^{\lambda_\bullet}_{\kappa_\bullet} = k_{{\rm rev}\lambda_\bullet}^{{\rm rev}\kappa_\bullet}$;
\item $\dim{\rm Hom}_\TT(L_{\kappa_\bullet;\emptyset_\bullet},\ul{\rm soc}_\TT^{q+1}J_{\lambda_\bullet;\emptyset_\bullet}) = \dim{\rm Hom}_\TT(L_{{\rm rev}\lambda_\bullet;\emptyset_\bullet},\ul{\rm soc}_\TT^{q+1}J_{{\rm rev}\kappa_\bullet;\emptyset_\bullet})$;
\item $\dim{\rm Ext}_\TT^q(L_{\kappa_\bullet;\emptyset_\bullet},L_{\lambda_\bullet;\emptyset_\bullet}) = \dim{\rm Ext}_\TT^q(L_{{\rm rev}\lambda_\bullet;\emptyset_\bullet},L_{{\rm rev}\kappa_\bullet;\emptyset_\bullet})$;
\item $\dim{\rm Hom}_{\bf T}(K_{\kappa_\bullet;\emptyset_\bullet},\ul{\rm soc}_{\bf T}^{q+1}I_{\lambda_\bullet;\emptyset_\bullet}) = \dim{\rm Hom}_{\bf T}(K_{{\rm rev}\lambda_\bullet;\emptyset_\bullet},\ul{\rm soc}_{\bf T}^{q+1}I_{{\rm rev}\kappa_\bullet;\emptyset_\bullet})$;
\item $\dim{\rm Ext}_{\bf T}^q(K_{\kappa_\bullet;\emptyset_\bullet},K_{\lambda_\bullet;\emptyset_\bullet}) = \dim{\rm Ext}_{\bf T}^q(K_{{\rm rev}\lambda_\bullet;\emptyset_\bullet},K_{{\rm rev}\kappa_\bullet;\emptyset_\bullet})$;
\item we have
$$
\dim{\rm Ext}^q_\TT(L_{\kappa_0,\kappa;\nu,\nu_0},L_{\lambda_0,\lambda;\mu,\mu_0}) = \dim{\rm Ext}^q_\TT(L_{\lambda,\lambda_0;\mu_0,\mu},L_{\kappa,\kappa_0;\nu_0,\nu})
$$
$$
= \sum\limits_{\delta,\tau,\theta,\varphi,\psi,\xi,\eta,\zeta\in\Lambda} N^{\kappa_0}_{\zeta\varphi} N^{\varphi}_{\lambda_0\tau} N^{\xi}_{\tau^\perp\kappa} N^{\lambda}_{\xi\delta} N^{\mu}_{\delta^\perp\eta} N^{\eta}_{\nu\theta^\perp} N^{\psi}_{\theta\mu_0} N^{\nu_0}_{\psi\zeta^\perp} \;,
$$
and if this number is nonzero then $q$ is unique and equals
$$
q=|\kappa_0|-|\lambda_0| + |\mu|-|\nu| = |\lambda|-|\kappa| + |\nu_0|-|\mu_0|\;;
$$
\item if $|\lambda_0|+|\lambda|=|\kappa_0|+|\kappa|$ we have
$$
\dim{\rm Ext}^q_{\bf T}(K_{\kappa_0,\kappa;\nu,\nu_0},K_{\lambda_0,\lambda;\mu,\mu_0}) = \dim{\rm Ext}^q_{\bf T}(K_{\lambda,\lambda_0;\mu_0,\mu},K_{\kappa,\kappa_0;\nu_0,\nu})
$$
$$
= \sum\limits_{\tau,\theta\in\Lambda} N^{\kappa_0}_{\lambda_0\tau} N^{\lambda}_{\tau^\perp\kappa} N^{\mu}_{\nu\theta^\perp} N^{\nu_0}_{\theta\mu_0} \;,
$$
and if this number is nonzero then $q$ is unique and equals
$$
q=|\lambda|-|\kappa|+|\mu|-|\nu|=|\kappa_0|-|\lambda_0| + |\mu|-|\nu| = |\lambda|-|\kappa| + |\nu_0|-|\mu_0|\;.
$$
\end{enumerate}
\end{prop}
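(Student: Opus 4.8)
The seven assertions form a cascade whose only nontrivial input is a Littlewood--Richardson identity, namely part~1.

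\emph{Part 1.} I would prove the equality $p^{\lambda_\bullet}_{\kappa_\bullet}=p^{{\rm rev}\kappa_\bullet}_{{\rm rev}\lambda_\bullet}$ by a term-by-term re-indexing of the sum in \eqref{For klambdakappa plambdakappa}. That sum has the shape of a one-dimensional transfer product along the sites $-1,0,\dots,t$: the variables $\sigma_\alpha$ are internal legs, the $\tau_\alpha$ are bonds between consecutive sites, and $\sigma_t=\tau_{-1}=\emptyset$ are boundary conditions. Reversing the chain via the substitution $\sigma_\alpha\leftrightarrow\sigma_{t-1-\alpha}$, $\tau_\alpha\leftrightarrow\tau_{t-\alpha}^{\perp}$, interchanging the roles of the upper and lower sequences, and then using $N^{\zeta}_{\xi\eta}=N^{\zeta}_{\eta\xi}=N^{\zeta^{\perp}}_{\xi^{\perp}\eta^{\perp}}$ together with the two boundary collapses $N^{\nu}_{\mu\emptyset}=\delta_{\mu\nu}$, carries each factor of the product for $p^{\lambda_\bullet}_{\kappa_\bullet}$ onto the corresponding factor for $p^{{\rm rev}\kappa_\bullet}_{{\rm rev}\lambda_\bullet}$; keeping the conjugations on the $\tau$'s straight is the only delicate point. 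For the cohomological degrees: when $p^{\lambda_\bullet}_{\kappa_\bullet}\ne0$ one has $|\kappa_\bullet|=|\lambda_\bullet|$ (this is part of the ``claim'' proved inside Theorem~\ref{Theo InjResLlambdabullet}), and with $({\rm rev}\kappa_\bullet)_\alpha=\kappa_{t-1-\alpha}$ a one-line computation from the definition $k^{\lambda_\bullet}_{\kappa_\bullet}=\sum_{0\le\alpha\le t}(\alpha+1)(|\kappa_\alpha|-|\lambda_\alpha|)$ gives $k^{\lambda_\bullet}_{\kappa_\bullet}=k^{{\rm rev}\kappa_\bullet}_{{\rm rev}\lambda_\bullet}$. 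The statements for $(\mu_\bullet;\nu_\bullet)$, i.e.\ for $\TT(\bar V)$, are identical after interchanging $V^{*}$ and $\bar V$.

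\emph{Parts 2--5.} Part~3 is then immediate from Corollary~\ref{Coro dimExtsimples in Tleft}, which identifies $\dim{\rm Ext}^{q}_{\TT(V^{*})}(L_{\kappa_\bullet;\emptyset_\bullet},L_{\lambda_\bullet;\emptyset_\bullet})$ with $p^{\lambda_\bullet}_{\kappa_\bullet}$ in degree $q=k^{\lambda_\bullet}_{\kappa_\bullet}$ (and with $0$ otherwise). Part~2 is the analogous reversal invariance $z^{\lambda_\bullet}_{\kappa_\bullet}=z^{{\rm rev}\kappa_\bullet}_{{\rm rev}\lambda_\bullet}$ of the socle multiplicities of the order-defining objects $J_{\lambda_\bullet;\emptyset_\bullet}$ from Lemma~\ref{Lemma soc q Jlambda}, which I would obtain by the same re-indexing argument applied to the defining formula for $z^{\lambda_\bullet}_{\kappa_\bullet}$ (the matching of socle layers again coming from $|\kappa_\bullet|=|\lambda_\bullet|$). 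Parts~4 and~5 then follow from parts~2 and~3 by applying the exact, fully faithful functor $I\otimes\bullet$: by Theorem~\ref{Theo ITT is Grothendieck Cat} it carries the indecomposable injectives $J_{\kappa_\bullet;\emptyset_\bullet}$ to the indecomposable injectives $I_{\kappa_\bullet;\emptyset_\bullet}$, and by \eqref{For Relation socIT to socT} it intertwines socle filtrations, so the relevant Ext-dimensions and socle-layer multiplicities in ${\bf T}$ agree with those in $\TT(V^{*})$ (resp.\ $\TT(\bar V)$).

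\emph{Parts 6--7.} Here I would start from the closed Ext-formulas of Corollary~\ref{Cor dimExt in TT} (for $\TT$) and Corollary~\ref{Coro dimExt Tkk Tll} (for ${\bf T}$), specialized to index-sequences supported in the single slot $\alpha=0$. The $p$-factors collapse by the support constraint from the proof of Theorem~\ref{Theo InjResLlambdabullet}: $p^{(\xi,\lambda_0,\emptyset,\dots,\emptyset)}_{(\kappa,\rho,\emptyset,\dots,\emptyset)}=\sum_{\tau}N^{\rho}_{\lambda_0\tau}N^{\xi}_{\kappa\tau^{\perp}}$, and symmetrically for the right-hand data. Substituting this and $m^{\lambda;\mu}_{\xi;\eta}=\sum_{\delta}N^{\lambda}_{\xi\delta}N^{\mu}_{\delta^{\perp}\eta}$ into Corollary~\ref{Cor dimExt in TT}, and performing two associativity moves to absorb the factors $N^{\kappa_0}_{\rho\zeta}$ and $N^{\nu_0}_{\theta\zeta^{\perp}}$ — the latter coming from the summand $I\otimes\Lambda^{j}F$, $\Lambda^{j}F=\bigoplus_{|\zeta|=j}(V^{*}/V_{*})_{\zeta}\otimes(\bar V/V)_{\zeta^{\perp}}$, of the resolution of $\CC$ in Proposition~\ref{Prop InjRes C in T} — reproduces exactly the displayed eight-fold sum of part~6. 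The same but shorter computation from Corollary~\ref{Coro dimExt Tkk Tll} gives part~7; there the hypothesis $|\lambda_0|+|\lambda|=|\kappa_0|+|\kappa|$ forces $\delta=\emptyset$, hence $\xi=\lambda$ and $\eta=\mu$, leaving $\sum_{\tau,\theta}N^{\kappa_0}_{\lambda_0\tau}N^{\lambda}_{\tau^{\perp}\kappa}N^{\mu}_{\nu\theta^{\perp}}N^{\nu_0}_{\theta\mu_0}$. Finally, repeating the derivation with the swapped modules $L_{\lambda,\lambda_0;\mu_0,\mu}$, $L_{\kappa,\kappa_0;\nu_0,\nu}$ yields a second eight-fold (resp.\ two-fold) sum; the two sums are related by a reflection of the underlying octagon of Littlewood--Richardson couplings (whose internal edges carry conjugations on alternate positions), and one checks their equality edge by edge from $N^{\zeta}_{\xi\eta}=N^{\zeta}_{\eta\xi}=N^{\zeta^{\perp}}_{\xi^{\perp}\eta^{\perp}}$ and the associativity identity, after the relabellings $\tau\mapsto\tau^{\perp}$, $\theta\mapsto\theta^{\perp}$ (and, for part~6, the corresponding relabellings of $\varphi,\psi,\delta,\zeta$). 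Uniqueness of $q$ and its two equivalent expressions follow from the degree balance among the non-vanishing summands, as in Corollaries~\ref{Cor dimExt in TT} and~\ref{Coro dimExt Tkk Tll}.

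The conceptual content is slight; the difficulty is entirely Littlewood--Richardson bookkeeping, and I expect the two main friction points to be the conjugation bookkeeping in the re-indexings of parts~1 and~2, and, in part~6, producing the eight-term sum in precisely the stated form and exhibiting its reflection symmetry explicitly rather than pictorially.
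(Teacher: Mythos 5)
Your approach is essentially the same as the paper's, which is likewise a re-indexing of the defining Littlewood--Richardson sums for $p^{\lambda_\bullet}_{\kappa_\bullet}$ together with the explicit Ext/Hom formulas established earlier; you simply supply the details that the paper compresses into two sentences. The re-indexing $\sigma_\alpha\mapsto\sigma_{t-1-\alpha}$, $\tau_\alpha\mapsto\tau^\perp_{t-\alpha}$ with the two boundary collapses via $N^{\nu}_{\mu\emptyset}=\delta_{\mu\nu}$ is correct, and the computation of $k^{\lambda_\bullet}_{\kappa_\bullet}-k^{{\rm rev}\kappa_\bullet}_{{\rm rev}\lambda_\bullet}=(t+1)(|\kappa_\bullet|-|\lambda_\bullet|)$ checks out, since $p\neq0$ forces $|\kappa_\bullet|=|\lambda_\bullet|$ by the claim in the proof of Theorem~\ref{Theo InjResLlambdabullet}. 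Two points deserve a clean-up. First, in part~3 you cite Corollary~\ref{Coro dimExtsimples in Tleft}, which computes ${\rm Ext}$ in $\TT(V^*)$, whereas the proposition asserts ${\rm Ext}_\TT$; the dimensions do coincide, but this needs to be said — it follows because specializing Corollary~\ref{Cor dimExt in TT} to $\mu=\nu=\emptyset$, $\mu_\bullet=\nu_\bullet=\emptyset_\bullet$ forces $\xi=\lambda$, $\eta=\theta=\zeta=\emptyset$, $\rho=\kappa_0$, and the resolution-of-$\CC$ factor $\mc I_\TT^i(\CC)=\bigoplus_{|\zeta|=i}I_{\zeta,\emptyset;\emptyset,\zeta^\perp}$ contributes a one-sided indecomposable injective only when $\zeta=\emptyset$, so the answer is again $p^{\lambda_\bullet}_{\kappa_\bullet}$. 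Second, part~2 is not a formal consequence of part~1: the equality $z^{\lambda_\bullet}_{\kappa_\bullet}=z^{{\rm rev}\kappa_\bullet}_{{\rm rev}\lambda_\bullet}$ is a separate (though entirely analogous) Littlewood--Richardson identity applied to the multi-variable sum of Lemma~\ref{Lemma soc q Jlambda}; you flag this correctly but leave it as a sketch, and since the paper itself only says ``the rest follow from the first and the explicit formulas'' it would be worth actually carrying out this second re-indexing. Your analysis of parts~6 and~7, including the collapse $p^{(\xi,\lambda_0,\emptyset,\dots)}_{(\kappa,\rho,\emptyset,\dots)}=\sum_\tau N^\rho_{\lambda_0\tau}N^\xi_{\kappa\tau^\perp}$ and the variable relabellings $\tau\mapsto\tau^\perp$, $\theta\mapsto\theta^\perp$, $\rho\leftrightarrow\xi$, $\zeta\leftrightarrow\delta$, $\eta\leftrightarrow\theta$ that realize the reflection of the eight-fold product, is correct, as is the degree bookkeeping via the size constraints on the nonvanishing summands.
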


\begin{proof}
The first statement follows by standard properties of Littlewood-Richardson coefficients from the defining formulas of $p^{\lambda_\bullet}_{\mu_\bullet}$ and $k^{\lambda_\bullet}_{\mu_\bullet}$ in Theorem \ref{Theo InjResLlambdabullet}. The rest of the statements follow from the first and the explicit formulas for the dimensions of the involved Ext- and Hom-groups, obtained in Proposition \ref{Prop soc q Jlambdamu}, Corollary \ref{Cor dimExt in TT} and Corollary \ref{Coro dimExt Tkk Tll}.
\end{proof}

\section{Universality}\label{Sec Univ}

Before addressing the topic of universality we should point out that a seed of the following discussion can be traced to the work \cite{Sam-Snowden}. Here we follow \cite{Chirvasitu-Penkov-UTC}.

Let ${\bf T}_{\rm fin}$ denote the full tensor subcategory of ${\bf T}$ containing $I_{\bm l}$ for $\bm l\in{\bf P}$ and closed under taking subquotients. The goal of this section is to prove the following theorem.

\begin{theorem}\label{Theo Univ IT}
Let $t\in\NN$. Let $(\mc D, \otimes, \bm 1)$ be a ($\CC$-linear abelian) tensor category with a given pair of objects $X,Y$, a morphism
\begin{gather}\label{For D pairing}
{\bf q}:X\otimes Y \to \bm 1 \;,
\end{gather}
and filtrations $0=X_{-1}\subset X_{0}\subset X_1\subset ... \subset X_{t+1}=X$ and $0=Y_{-1}\subset Y_{0}\subset Y_1\subset ... \subset Y_{t+1}=Y$. Then the following hold.
\begin{enumerate}
\item[{\rm (i)}] There is a unique, up to a monoidal isomorphism, left exact symmetric monoidal functor $\Phi: {\bf T}_{\rm fin}\to \mc D$ sending the pairing ${_I}{\bf p}:(I \otimes V^*)\otimes_I (I\otimes \bar V)\to I$ to the pairing ${\bf q}$, and for $-1\leq\alpha<\beta\leq t$ the morphisms $I\otimes (V^*/V^*_{\alpha})\to I\otimes (V^*/V^*_\beta)$ and $I\otimes(\bar V/\bar V_{\alpha})\to I\otimes(\bar V/\bar V_\beta)$ respectively to the morphisms $X/X_{\alpha}\to X/X_\beta$ and $Y/Y_{\alpha}\to Y/Y_\beta$.
\item[{\rm (ii)}] If $\mc D$ is additionally a Grothendieck category then $\Phi$ extends to a functor ${\bf T}\to \mc D$. 
\end{enumerate}
\end{theorem}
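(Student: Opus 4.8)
The plan is to follow the blueprint of \cite{Chirvasitu-Penkov-UTC}, where the analogous universality statement is proved for $t=0$, and to track the places where the length-$(t+2)$ filtrations require genuinely new input. I would organize the argument around the ordered Grothendieck structure of ${\bf T}$ (Theorem \ref{Theo ITT is Grothendieck Cat}) and the explicit injective resolutions of simple objects (Theorem \ref{Theo InjResILlambdamu IT}).

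\textbf{Construction of $\Phi$ on ${\bf T}_{\rm fin}$.} First I would build $\Phi$ on the order-defining objects $I_{\bm l}$, $\bm l\in{\bf P}$. Each $I_{\bm l}$ is, by (\ref{For LJITlm}), an iterated $\otimes_I$-product of the ``atomic'' objects $I\otimes(V^*/V^*_\alpha)$, $I\otimes(\bar V/\bar V_\beta)$, $I\otimes V^*$, $I\otimes\bar V$, together with $I$ itself (which must go to $\bm 1$). The prescribed assignment forces $\Phi(I\otimes(V^*/V^*_\alpha))=X/X_\alpha$ and $\Phi(I\otimes(\bar V/\bar V_\beta))=Y/Y_\beta$, and then $\Phi(I_{\bm l})$ is the corresponding tensor product of quotients of $X$ and $Y$ over $\bm 1$. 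The point is to extend this to morphisms and check functoriality and monoidality. The key structural fact, which I would isolate as a lemma, is that every morphism in ${\bf T}_{\rm fin}$ between the $I_{\bm l}$ is generated (under composition, $\otimes_I$, and $\CC$-linear combination) by the four elementary types in Definition \ref{Def Xibml}: the projections $f^{(\alpha)}_j$, $\bar f^{(\alpha)}_j$ of type (i),(ii), the contractions $\tilde{\bf p}_{i,j}$ of type (iii), and the comultiplication-type maps $\psi_{\bm l}$ of type (iv). Types (i),(ii),(iii) have evident images in $\mc D$ built from the filtration maps $X/X_\alpha\to X/X_\beta$, $Y/Y_\alpha\to Y/Y_\beta$ and the pairing ${\bf q}$; the subtlety is type (iv). Here I would argue that, because ${\bf T}_{\rm fin}$ consists of subquotients of the $I_{\bm l}$ and $\Phi$ is required to be \emph{left exact}, the morphism $\psi$ is determined: $\psi:I\to(V^*/V_*)\otimes(\bar V/V)\otimes I$ has kernel $\mk q\cong\CC$ (Lemma in \S\ref{Sec Module I}) and is reconstructible from the pairing data exactly as in \cite[\S3.5, \S8]{Chirvasitu-Penkov-UTC}; left-exactness pins down its image on the nose. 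Then one checks that all relations among the elementary morphisms (commuting squares, the defining relation $\psi^{k+1}\circ\iota_k=({\rm id}\otimes\iota_{k-1})\circ\psi^k$, symmetry of contractions) are sent to valid relations in $\mc D$ — this is a finite, if tedious, verification that the coherence diagrams in ${\bf T}_{\rm fin}$ are ``freely'' generated subject to these, so that the assignment is well defined and monoidal. Left exactness of $\Phi$ on ${\bf T}_{\rm fin}$ then follows because ${\bf T}_{\rm fin}$ is the subcategory of subquotients: a subobject of $I_{\bm l}$ is cut out by a family of the elementary morphisms (this is exactly the content of the formula ${\rm soc}^{q+1}I_{\bm l}=\bigcap_{f\in\Xi^{q+1}(I_{\bm l})}{\rm ker}f$ in Corollary \ref{Coro soc filt Ilm}), and $\Phi$ sends such intersections of kernels to the analogous intersection, i.e.\ to the ``same'' subobject computed inside $\mc D$. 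Uniqueness up to monoidal isomorphism is immediate from the fact that every object of ${\bf T}_{\rm fin}$ is a finite iterated subquotient of objects $I_{\bm l}$ whose images are forced, and every morphism is a combination of elementary ones whose images are forced.

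\textbf{Extension to ${\bf T}$ when $\mc D$ is Grothendieck.} For part (ii), every object of ${\bf T}$ is, by Theorem \ref{Theo ITT is Grothendieck Cat} together with the ordered-Grothendieck axioms, a subquotient of a (possibly infinite) direct sum of order-defining objects $I_{\bm l}$; more precisely $M=\varinjlim M_q$ along its socle filtration, each $\ul{\rm soc}^{q+1}M$ being a subquotient of a direct sum of the $I_{\bm l}$, and each socle layer has finite multiplicities. Since $\mc D$ is Grothendieck it has arbitrary coproducts and filtered colimits are exact, so I would define $\Phi(M):=\varinjlim \Phi(M_q)$ using the already-constructed $\Phi$ on the finite-length pieces (each $M_q$ lies in ${\bf T}_{\rm fin}$ after choosing a finite presentation, or one passes through the canonical presentation $\bigoplus I_{\bm l_i}\to\bigoplus I_{\bm k_j}\to M\to 0$ arising from axiom (f) of Definition \ref{Def OrdGrothCat}). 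The standard check — that this is independent of the chosen presentation, functorial, left exact, and monoidal — is exactly the argument of \cite[\S8]{Chirvasitu-Penkov-UTC}; the only ingredient beyond formal nonsense is that $\otimes_I$ in ${\bf T}$ and $\otimes$ in $\mc D$ both commute with colimits in each variable, which holds since $\mc D$ is Grothendieck and $I\otimes\bullet$ is a left adjoint.

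\textbf{Expected main obstacle.} The formal skeleton (colimit extension, ordered-Grothendieck bookkeeping) is routine given the earlier results; I expect the real work to be in part (i), specifically in \emph{verifying that the generators-and-relations description of ${\bf T}_{\rm fin}$ is complete} — i.e.\ that there are no ``hidden'' morphisms among the $I_{\bm l}$ beyond compositions of the four elementary types and their linear combinations, and no relations beyond the ones inherited from the module structure. For $t=0$ this was manageable because the filtrations had only one nontrivial step; with filtrations of length $t+2$ one must control the interaction of the successive projections $V^*/V^*_\alpha\to V^*/V^*_\beta$ across all $\alpha<\beta$ with the pairing and with $\psi$. The technology to do this is already in place — it is precisely the explicit socle filtrations of $J_{\bm l}$ and $I_{\bm l}$ (Propositions \ref{Prop socqJlm}, \ref{Prop soc q Ilambdamu}, Corollary \ref{Coro soc filt Ilm}), which identify every simple subquotient and the layer it sits in, and the density Lemmas \ref{Lemma LinIndepToAny}, \ref{Lemma First LinIndepToAny}, which guarantee that ${\rm Hom}_{{\bf T}}(I_{\bm l},I_{\bm k})$ is spanned by the expected maps. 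So the proof amounts to assembling these into the statement ``$\Phi$ is well defined on morphisms and respects $\otimes_I$,'' and then invoking left-exactness plus the ordered-Grothendieck structure to get everything else.
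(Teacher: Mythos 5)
Your outline correctly identifies the skeleton (define $\Phi$ on order-defining objects via the prescribed assignment, extend to morphisms by sending elementary maps to their analogues built from ${\bf q}$ and the filtration maps, verify relations, then handle left exactness and the colimit extension). But the central step — ``verifying that the generators-and-relations description of ${\bf T}_{\rm fin}$ is complete'' — is exactly where your proposal has a genuine gap, and you flag it yourself without closing it. You assert that the socle filtrations of $I_{\bm l}$ and the density lemmas ``guarantee that ${\rm Hom}_{{\bf T}}(I_{\bm l},I_{\bm k})$ is spanned by the expected maps,'' but that only handles the \emph{no-hidden-morphisms} half. It says nothing about \emph{hidden relations}: a priori there could be relations among compositions of the elementary morphisms in arbitrarily high degree, and each such relation would impose a constraint on $\mc D$ that the axiomatic data $({\bf q},\text{filtrations})$ might fail to satisfy. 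Without a bound on the degree of the relations, ``a finite, if tedious, verification'' is not justified.

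The paper closes this gap by a different route: Theorem \ref{Theo IT Koszul A quadr} proves that the category ${\bf T}$ is Koszul (surjectivity of Yoneda products, deduced from the explicit ${\rm Ext}$-computations in Corollary \ref{Coro dimExt Tkk Tll}), hence the graded endomorphism algebra $\mc A$ of (\ref{For Algebra A}) is a Koszul algebra and in particular \emph{quadratic}. That is the crucial input: it reduces the relations check to degree $2$, and Proposition \ref{Prop A QuadRel} then lists these quadratic relations case by case in terms of the elementary maps ${\bf p}_{\bm l}$, $f^{(\alpha)}_{\bm l}$, $\bar f^{(\alpha)}_{\bm l}$. The proof of Theorem \ref{Theo Univ IT} is then: check (from the general axioms of a tensor category) that each of these finitely many quadratic relations holds in $\mc D$ for the objects built from $X,Y$, and conclude by quadraticity that the assignment extends consistently to a functor. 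Your proposal never invokes Koszulity or quadraticity, so the ``well-definedness on morphisms'' step is unsupported.

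A second, smaller issue: you include $\psi$ (type (iv) of Definition \ref{Def Xibml}) among the generating morphisms of ${\bf T}_{\rm fin}$. This is incorrect for the category ${\bf T}$: the order-defining objects of ${\bf T}$ are indexed by the poset ${\bf P}$ of Definition \ref{Def Poset IP}, and by Lemma \ref{Lemma P1 and IP1} the degree-one piece ${\bf P}^1(\bm l)$ involves only moves of types (i), (ii), (iii); $\psi_{\bm l}:I_{\bm l}\to I_{\bm l+(1_0,0;0,1_0)}$ moves in a direction excluded by $\stackrel{\bf P}{\preceq}$. Consistently, the statement of the theorem asks $\Phi$ to send only the pairing and the filtration maps, and the generating object $R_1=\bigoplus_{|\bm l|=1}I_{\bm l}$ and the algebra $\mc A$ do not see $\psi$. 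Your digression on reconstructing $\psi$ from left exactness is therefore unnecessary and somewhat misleading. Part (ii) of your proposal (formal colimit extension when $\mc D$ is Grothendieck) is fine and matches the paper's appeal to standard arguments.
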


The proof will be given after some preparation.

\begin{prop}\label{Prop End Ilm in IT}
For $\bm l\in{\bf P}$, the endomorphism algebra ${\rm End}_{{\bf T}}I_{\bm l}$ is isomorphic to the group algebra $\CC[\mk S_{\bm l}]$ via the $\mk S_{\bm l}$-action on $I_{\bm l}$, where the $\mk S_{l_\alpha}$-factor of $\mk S_{\bm l}$ permutes the tensorands of the tensorand $(V^*/V^*_\alpha)^{\otimes l_\alpha}$ of $I_{\bm l}$ and the $\mk S_{m_\alpha}$-factor permutes the tensorands of the tensorand $(\bar V/\bar V_\alpha)^{\otimes m_\alpha}$ of $I_{\bm l}$.
\end{prop}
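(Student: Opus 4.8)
The plan is to compute ${\rm End}_{{\bf T}}I_{\bm l}$ by passing through the adjunction between ${\bf T}$ and $\TT$. First I would recall that $I_{\bm l}=I\otimes J_{\bm l}$ and that, by Corollary \ref{Coro TT simples and inj hulls}, each $I_{\bm\lambda}$ (for $\bm\lambda\in\bm\Lambda$) is an indecomposable injective object of $\TT$ with simple socle $L_{\bm\lambda}$, while in ${\bf T}$ the object $I_{\bm\lambda}$ is the injective hull of $K_{\bm\lambda}=I\otimes L_{\bm\lambda}$ by Theorem \ref{Theo ITT is Grothendieck Cat}. Using the decomposition (\ref{For LJITlm is sum LJITlambdamu}), $I_{\bm l}=\bigoplus_{|\bm\lambda|=\bm l}\CC^{\bm\lambda}\otimes I_{\bm\lambda}$, so as in the classical Schur–Weyl situation ${\rm End}_{{\bf T}}I_{\bm l}=\bigoplus_{|\bm\lambda|=\bm l}{\rm End}_{\CC}(\CC^{\bm\lambda})\otimes{\rm End}_{{\bf T}}I_{\bm\lambda}$, and the claim reduces to showing ${\rm End}_{{\bf T}}I_{\bm\lambda}\cong\CC$ together with the fact that the $\mk S_{\bm l}$-action realizes $\bigoplus_{\bm\lambda}{\rm End}_\CC(\CC^{\bm\lambda})$, which is exactly $\CC[\mk S_{\bm l}]$ by the decomposition $\CC[\mk S_{\bm l}]\cong\bigoplus_{\bm\lambda}{\rm End}_\CC(\CC^{\bm\lambda})$.

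Next I would establish ${\rm End}_{{\bf T}}I_{\bm\lambda}\cong\CC$. By the adjunction $I\otimes\bullet\dashv({\rm forget})$ one has ${\rm Hom}_{{\bf T}}(I\otimes M,N)\cong{\rm Hom}_{\TT}(M,N)$ for $N\in{\bf T}$; applying this with $M=J_{\bm\lambda}$ and $N=I_{\bm\lambda}=I\otimes J_{\bm\lambda}$ gives ${\rm End}_{{\bf T}}I_{\bm\lambda}\cong{\rm Hom}_{\TT}(J_{\bm\lambda},I\otimes J_{\bm\lambda})$. Since $I_{\bm\lambda}$ is indecomposable injective in ${\bf T}$ with simple socle $K_{\bm\lambda}$ (and ${\rm End}K_{\bm\lambda}\cong\CC$ by Theorem \ref{Theo L to IL}), its endomorphism ring is local; to see it is actually $\CC$ it suffices to show it has no nilpotents, i.e. every nonzero endomorphism is an isomorphism. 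One argues this via the socle filtration of $I_{\bm\lambda}$ in ${\bf T}$ computed in Proposition \ref{Prop ITsoc Ilambdamu}: a nonzero endomorphism $f$ restricts to a nonzero, hence (by ${\rm End}K_{\bm\lambda}\cong\CC$) invertible, map on ${\rm soc}_{{\bf T}}I_{\bm\lambda}=K_{\bm\lambda}$; then injectivity of $I_{\bm\lambda}$ forces $f$ to be a monomorphism, and since $I_{\bm\lambda}$ has finite length in ${\bf T}$ (also Proposition \ref{Prop ITsoc Ilambdamu}) a monomorphism is an isomorphism. Hence ${\rm End}_{{\bf T}}I_{\bm\lambda}\cong\CC$.

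It then remains to identify the algebra map $\CC[\mk S_{\bm l}]\to{\rm End}_{{\bf T}}I_{\bm l}$ coming from the described permutation action with an isomorphism. Injectivity: if a group-algebra element acts as zero on $I_{\bm l}$, it already acts as zero on the $\mk{gl}^M$-submodule $J_{\bm l}\subset I_{\bm l}$, and on the further submodule ${\rm soc}J_{\bm l}=L_{\bm l}=\bigoplus_{|\bm\lambda|=\bm l}\CC^{\bm\lambda}\otimes L_{\bm\lambda}$, where the $\mk S_{\bm l}$-action on the multiplicity spaces $\CC^{\bm\lambda}$ is faithful for the full group algebra by classical representation theory of products of symmetric groups; so the element is zero. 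Surjectivity follows by the dimension count: $\dim{\rm End}_{{\bf T}}I_{\bm l}=\sum_{|\bm\lambda|=\bm l}(\dim\CC^{\bm\lambda})^2=|\mk S_{\bm l}|=\dim\CC[\mk S_{\bm l}]$, using ${\rm End}_{{\bf T}}I_{\bm\lambda}\cong\CC$ and $I_{\bm\lambda}\not\cong I_{\bm\lambda'}$ for $\bm\lambda\ne\bm\lambda'$ (Theorem \ref{Theo L to IL}). I expect the main obstacle to be the finite-length/no-nilpotents argument for ${\rm End}_{{\bf T}}I_{\bm\lambda}$: one must be careful that the relevant finiteness holds in ${\bf T}$ rather than in $\TT$ (where $I_{\bm\lambda}$ has infinite length), and that the transition between the $\TT$- and ${\bf T}$-socle filtrations, as in the proof of Proposition \ref{Prop ITsoc Ilambdamu}, is used correctly; everything else is the standard Schur–Weyl bookkeeping together with the adjunction.
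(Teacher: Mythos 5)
Your overall strategy — establish ${\rm End}_{\bf T}I_{\bm\lambda}\cong\CC$ for indecomposable injectives, map $\CC[\mk S_{\bm l}]$ into ${\rm End}_{{\bf T}}I_{\bm l}$ via the permutation action, check injectivity on the socle of $J_{\bm l}$, and close with a dimension count — is exactly the paper's strategy (which defers the details to \cite[Lemma~3.34]{Chirvasitu-Penkov-UTC}).

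There is, however, a gap in your argument that ${\rm End}_{\bf T}I_{\bm\lambda}\cong\CC$. You assert that a nonzero endomorphism $f$ of $I_{\bm\lambda}$ necessarily restricts to a nonzero map on the socle $K_{\bm\lambda}$. This is not a formal consequence of $I_{\bm\lambda}$ being an indecomposable injective of finite length with simple socle of scalar endomorphisms: the two-dimensional injective over $\CC[x]/(x^2)$ has all of these properties, yet multiplication by $x$ is a nonzero endomorphism that kills the socle, and its endomorphism ring is $\CC[x]/(x^2)\ne\CC$. What saves the argument in the present situation is that $K_{\bm\lambda}$ occurs with multiplicity one as a ${\bf T}$-subquotient of $I_{\bm\lambda}$, and only in the bottom socle layer. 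This follows from Proposition \ref{Prop ITsoc Ilambdamu} together with the layer constraint of Corollary \ref{Coro soc filt Ilm} (a simple $K_{\bm\kappa}$ can appear in $\ul{\rm soc}_{\bf T}^{q+1}I_{\bm\lambda}$ only if $|\bm\kappa|\in{\bf P}^q(|\bm\lambda|)$, and ${\bf P}^0(|\bm\lambda|)=\{|\bm\lambda|\}$). Given multiplicity one, if $f|_{K_{\bm\lambda}}=0$ then $f$ factors through $I_{\bm\lambda}/K_{\bm\lambda}$; a nonzero image would be a nonzero submodule of $I_{\bm\lambda}$, hence would contain the socle $K_{\bm\lambda}$, exhibiting $K_{\bm\lambda}$ as a subquotient of $I_{\bm\lambda}/K_{\bm\lambda}$, which is impossible. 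The same socle-layer constraint is also what is actually needed, and is silently used, in your dimension count: you need ${\rm Hom}_{\bf T}(I_{\bm\lambda},I_{\bm\lambda'})=0$ for $\bm\lambda\ne\bm\lambda'$ with $|\bm\lambda|=|\bm\lambda'|$, and non-isomorphism of the two indecomposable injectives alone does not give this — again it follows because $K_{\bm\lambda'}$ is not a subquotient of $I_{\bm\lambda}$. With those two points supplied, the rest of your proof (faithfulness of the $\mk S_{\bm l}$-action on ${\rm soc}J_{\bm l}$, and $\dim{\rm End}_{\bf T}I_{\bm l}=\sum(\dim\CC^{\bm\lambda})^2=|\mk S_{\bm l}|$) is correct.
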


\begin{proof}
We follow the idea of \cite[Lemma 3.34]{Chirvasitu-Penkov-UTC} and only outline the main steps as the details are analogous. By Theorems \ref{Theo L to IL} and \ref{Theo ITT is Grothendieck Cat}, the endomorphism algebra of every indecomposable injective is trivial: ${\rm End}_{{\bf T}}I_{\bm\lambda}\cong \CC$ for all $\bm\lambda\in\bm\Lambda$. The $\mk S_{\bm l}$-action defined in the proposition extends to an injective homomorphism $\CC[\mk S_{\bm l}]\hookrightarrow{\rm End}_{{\bf T}}I_{\bm l}$. The surjectivity follows from a dimension argument. 
\end{proof}

Let $R$ be the tensor algebra in ${\bf T}$ of the object $R_1:=\bigoplus\limits_{\bm l\in{\bf P}:|\bm l|=1} I_{\bm l}$ and let $R_d:=R_1^{\otimes_I d}$ be the degree $d$ component of $R$. Let
\begin{gather}\label{For Algebra A}
\mc A := \bigoplus\limits_{k,l\in\NN} {\rm Hom}_{{\bf T}}(R_l,R_k) \cong \bigoplus\limits_{\bm k,\bm l\in{\bf P}} {\rm Hom}_{{\bf T}}(I_{\bm l},I_{\bm k}) \;;
\end{gather}
this is an $\NN$-graded algebra with degree components
$$
\mc A_d:=\bigoplus\limits_{\bm k,\bm l\in{\bf P}:\bm k\in{\bf P}^d(\bm l)} {\rm Hom}_{{\bf T}}(I_{\bm l},I_{\bm k}) \;.
$$

\begin{theorem}\label{Theo IT Koszul A quadr}
The category ${\bf T}$ is Koszul, in the sense of \cite{Chirvasitu-Penkov-RC}, namely, for every pair of simple objects $K,L$ and every $q\geq 2$, the canonical Yoneda map
$$
\bigoplus\limits_{M_{1},...,M_{q-1}\; simple} {\rm Ext}^1(K,M_1)\otimes {\rm Ext}^1(M_1,M_2)\otimes...\otimes {\rm Ext}^1(M_{q-1},L) \to {\rm Ext}^q(K,L)
$$
is surjective. Consequently, the algebra $\mc A$ is Koszul and, in particular, quadratic.
\end{theorem}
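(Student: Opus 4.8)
The plan is to deduce Koszulity of ${\bf T}$ from the explicit injective resolutions already constructed, by exhibiting ${\bf T}$ as a \emph{graded} ordered Grothendieck category (in the sense of \cite{Chirvasitu-Penkov-RC}) all of whose simple objects admit linear minimal injective resolutions. By Theorem~\ref{Theo ITT is Grothendieck Cat} the category ${\bf T}$ is an ordered Grothendieck category over the poset ${\bf P}$, and by Lemma~\ref{Lemma P1 and IP1} together with the decomposition (\ref{For bfPl and mcPl disjU}) the poset ${\bf P}$ is graded: every interval of ${\bf P}$ has all its maximal chains of the same length, and $\bm k\in{\bf P}^q(\bm l)$ precisely when this common length equals $q$. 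The first step is to check that this intrinsic ${\bf P}$-grading agrees with the homological grading, i.e.\ that ${\rm Ext}^q_{\bf T}(K_{\bm\kappa},K_{\bm\lambda})\ne 0$ forces $|\bm\kappa|\in{\bf P}^q(|\bm\lambda|)$. By Corollary~\ref{Coro dimExt Tkk Tll} this Ext-group can be nonzero only for $q$ equal to the explicit integer $k^{\lambda_\bullet,\lambda;\mu,\mu_\bullet}_{\kappa_\bullet,\kappa;\nu,\nu_\bullet}$ of Theorem~\ref{Theo InjResLlambdamu TT}, and it is a routine (if slightly fiddly) matter, using the vanishing constraints on the Littlewood--Richardson coefficients occurring in the definition of this integer, to verify that it is additive along composable extensions and coincides with the ${\bf P}$-rank distance from $|\bm\lambda|$ to $|\bm\kappa|$. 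This makes ${\bf T}$ a graded ordered Grothendieck category.

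Next, Theorem~\ref{Theo InjResILlambdamu IT} provides, for each simple object $K_{\bm\lambda}$, an injective resolution whose $k$-th term $\mc I^k_{\bf T}(K_{\bm\lambda})$ is a direct sum of the indecomposable injectives $I_{\bm\kappa}$ (Corollary~\ref{Coro TT simples and inj hulls}) with $k^{\bm\lambda}_{\bm\kappa}=k$; by the previous paragraph these are exactly the indecomposable injectives lying in ${\bf P}$-degree $k$ above $K_{\bm\lambda}$, so the resolution is linear. It is automatically minimal, since no nonzero morphism between indecomposable injectives lying in distinct ${\bf P}$-degrees can be an isomorphism (equivalently, minimality is visible from the fact that the multiplicity of $I_{\bm\kappa}$ in $\mc I^k_{\bf T}(K_{\bm\lambda})$ equals $\dim{\rm Ext}^k_{\bf T}(K_{\bm\kappa},K_{\bm\lambda})$ by Corollary~\ref{Coro dimExt Tkk Tll}). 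Conceptually, linearity and minimality reflect that this resolution is obtained, via the exact monoidal functor $I\otimes(-)$, as a K\"unneth product of the minimal linear resolutions of Theorem~\ref{Theo InjResLlambdabullet} in the Koszul categories $\TT(V^*)$ and $\TT(\bar V)$ (cf.\ Corollary~\ref{Coro dimExtsimples in Tleft}) and of the minimal resolution of $V_{\lambda;\mu}$ in the Koszul category $\TT(V_*,V)$ (a fact already used in the proof of Theorem~\ref{Theo InjResLlambdamu TT}), glued along the coefficients $m^{\lambda;\mu}_{\xi;\eta}$; a K\"unneth product of minimal linear resolutions is again minimal and linear.

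Since every simple object of ${\bf T}$ has a linear minimal injective resolution, ${\rm Ext}^i_{\bf T}$ between simples is concentrated in homological degree $i$, hence the Yoneda Ext-algebra of ${\bf T}$ is generated in degree $1$; this is precisely the surjectivity of the Yoneda maps asserted in the theorem, and is the form in which Koszulity is recorded in \cite{Chirvasitu-Penkov-RC}. Finally, by the Koszul-duality formalism for graded ordered Grothendieck categories of \cite{Chirvasitu-Penkov-RC}, Koszulity of ${\bf T}$ is equivalent to Koszulity of the $\NN$-graded algebra $\mc A=\bigoplus_{\bm k,\bm l\in{\bf P}}{\rm Hom}_{\bf T}(I_{\bm l},I_{\bm k})$ of its order-defining injectives, and a Koszul algebra is in particular quadratic. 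The argument is thus essentially formal given the prior results; the only point requiring genuine care is the bookkeeping of the first paragraph, namely identifying the homological grading $k^{\bm\lambda}_{\bm\kappa}$ with the rank function of the poset ${\bf P}$ (additivity along chains and integrality on the Ext-support), which is what makes "linear resolution" meaningful and lets the resolution of Theorem~\ref{Theo InjResILlambdamu IT} qualify. The substantive mathematical input, the Koszulity of the small category $\TT(V_*,V)$ and the linearity of the resolutions in $\TT(V^*)$ and $\TT(\bar V)$, is available from the cited results.
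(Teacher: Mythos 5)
Your proof is correct and follows essentially the same route as the paper: the paper's own argument for surjectivity of the Yoneda maps is the single sentence ``The surjectivity of the Yoneda maps in ${\bf T}$ follows from Corollary \ref{Coro dimExt Tkk Tll},'' which encodes exactly the observation you spell out — that the injective resolutions of Theorem \ref{Theo InjResILlambdamu IT} are linear (the $k$-th term is a sum of $I_{\bm\kappa}$ at ${\bf P}$-degree $k$ above $K_{\bm\lambda}$) and minimal (multiplicities match Ext dimensions), and that by the standard Koszul formalism for graded ordered Grothendieck categories this forces generation of the Ext algebra in degree $1$. You make explicit the K\"unneth decomposition into the three Koszul component categories $\TT(V^*)$, $\TT(\bar V)$, $\TT(V_*,V)$, which the paper leaves implicit. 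Two small caveats worth noting: the phrase ``${\rm Ext}^i$ between simples is concentrated in homological degree $i$'' should really say ``in internal (${\bf P}$-)degree $i$,'' since the passage from diagonal concentration to generation in degree $1$ is the nontrivial content of Koszul theory rather than a tautology; and the second half of the statement (Koszulity and quadraticity of $\mc A$) is attributed in the paper to \cite{Chirvasitu-Penkov-UTC}, not \cite{Chirvasitu-Penkov-RC}.
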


\begin{proof}
The surjectivity of the Yoneda maps in ${\bf T}$ follows from Corollary \ref{Coro dimExt Tkk Tll}. It is shown in \cite{Chirvasitu-Penkov-UTC} that the Koszulity of the category ${\bf T}$ implies that $\mc A$ is a Koszul algebra, and is hence quadratic.  
\end{proof}

\begin{prop}\label{Prop A QuadRel}
The space of quadratic relations between degree 1 elements of $\mc A$ decomposes as a sum of monogenerated $(\mk S_{\bm l},\mk S_{\bm k})$-bimodules, along the pairs $\bm l,\bm k$ at distance 2 in the poset ${\bf P}$, as follows:
$$
{\rm ker}(\mc A_1\otimes\mc A_1\to\mc A_2) = \bigoplus\limits_{\bm k,\bm l\in{\bf P}:\bm k\in{\bf P}^2(\bm l)} {\rm ker}g_{\bm l,\bm k} \;,\quad {\rm ker}g_{\bm l,\bm k} \cong (\CC[\mk S_{\bm k}]\otimes \CC[\mk S_{\bm l}])\cdot f_{\bm l,\bm k}.
$$
Here
\begin{gather*}
g_{\bm l,\bm k}:\bigoplus\limits_{\bm k'\in{\bf P}:\bm l\stackrel{\bf P}{\succ}\bm k'\stackrel{\bf P}{\succ}\bm k} {\rm Hom}(I_{\bm k'},I_{\bm k})\otimes_{{\rm End}I_{\bm k'}}{\rm Hom}(I_{\bm l},I_{\bm k'}) \to {\rm Hom}(I_{\bm l},I_{\bm k})
\end{gather*}
is the morphism induced by composition, it is surjective, and a generator $f_{\bm l,\bm k}\in {\rm ker}g_{\bm l,\bm k}$ is specified below, case by case, along with the $(\mk S_{\bm l},\mk S_{\bm k})$-bimodule structure on the domain of $g_{\bm l,\bm k}$, in dependence of the occurring intermediate elements $\bm l\stackrel{\bf P}{\succ}\bm k'\stackrel{\bf P}{\succ}\bm k$. For $\bm l=(l_\bullet,l;m,n_\bullet)\in{\bf P}$, we let ${\bf p}_{\bm l}:I_{\bm l}\to I_{\bm l-(1;1)}$ stand for the morphism defined by the identity on all tensorands in $I_{\bm l}=I_{l_\bullet,0;0,n_\bullet}\otimes_I I_{1;0}^{\otimes_I l} \otimes I_{0;1}^{\otimes_I m}$, except on the last tensorand of $I_{1;0}^{\otimes_I l}$ and the last tensorand of $I_{0;1}^{\otimes_I m}$ on which ${_I}{\bf p}:I_{1;0}\otimes I_{0;1}\to I_{0;0}$ is applied. Similarly, for $0\leq\alpha\leq t$, $f^{(\alpha)}_{\bm l}:I_{\bm l}\to I_{\bm l+(1_\alpha,-1_{\alpha-1};0)}$ is the projection $V^*/V^*_{\alpha-1}\to V^*/V^*_{\alpha}$ applied to the last tensorand in $I_{1_{\alpha-1};0}^{\otimes_I l_{\alpha}}$, extended by identity on all other tensorands in $I_{\bm l}$, and $\bar f^{(\alpha)}_{\bm l}:I_{\bm l}\to I_{\bm l +(0;-1_{\alpha-1},1_{\alpha})}$ is the analogous morphism obtained from $\bar V/\bar V_{\alpha-1}\to \bar V/\bar V_{\alpha}$.

\begin{enumerate}
\item For $\bm l=(l_t,...,l_0,l;m,m_0,...,m_t)$, $\bm k=(l_t,...,l_0,l-2;m-2,m_0,...,m_t)=\bm l-(2;2)$ we have a single intermediate element $\bm k'=(l_t,...,l_0,l-1;m-1,m_0,...,m_t)=\bm l-(1;1)$; the domain of $g_{\bm l,\bm k}$ is
$$
{\rm Hom}(I_{\bm k'},I_{\bm k})\otimes_{{\rm End}I_{\bm k'}}{\rm Hom}(I_{\bm l},I_{\bm k'}) \cong \CC[\mk S_{\bm l}]
$$
as an $(\mk S_{\bm l},\mk S_{\bm k})$-bimodule, and the kernel of $g_{\bm l,\bm k}$ is generated by
$$
f_{\bm l,\bm k}={\bf p}_{\bm k'}\otimes{\bf p}_{\bm l} - {\bf p}_{\bm k'}\otimes{\bf p}_{\bm l}\circ s \;,
$$
where $s$ is the product of the two simple transpositions in $\mk S_{l}\times \mk S_{m}$ exchanging respectively the last two tensorands in $(V^*)^{\otimes l}$ and the last two tensorands in $\bar V^{\otimes m}$.
\item For $\bm l=(l_t,...,l_0,l;m,m_0,...,m_t)$, $\bm k=(l_t,...,l_0+1,l-2;m-1,m_0,...,m_t)$ we have two intermediate elements $\bm k'=(l_t,...,l_0,l-1;m-1,m_0,...,m_t)$, $\bm k''=(l_t,...,l_0+1,l-1;m,m_0,...,m_t)$; the domain of $g_{\bm l,\bm k}$ is
\begin{align*}
& {\rm Hom}(I_{\bm k'},I_{\bm k})\otimes_{{\rm End}I_{\bm k'}}{\rm Hom}(I_{\bm l},I_{\bm k'})\oplus {\rm Hom}(I_{\bm k''},I_{\bm k})\otimes_{{\rm End}I_{\bm k''}}{\rm Hom}(I_{\bm l},I_{\bm k''})\\
& \qquad\qquad\qquad \cong \CC[\mk S_{l_t,...,l_0+1,l;m,m_0,...,m_t}]^{\oplus 2}\cong ({\rm ind}^{\mk S_{l_0+1}}_{\mk S_{l_0}}\CC[\mk S_{\bm l}])^{\oplus 2}
\end{align*}
as an $(\mk S_{\bm l},\mk S_{\bm k})$-bimodule (the two summands are isomorphic), and the kernel of $g_{\bm l,\bm k}$ is generated by
$$
f_{\bm l,\bm k}= f^{(0)}_{\bm k'}\otimes{\bf p}_{\bm l} - {\bf p}_{\bm k''}\otimes f^{(0)}_{\bm l} \circ s ,
$$
where $s$ is the simple transposition in $\mk S_{l}\subset\mk S_{\bm l}$ exchanging respectively the last two tensorands in $(V^*)^{\otimes l}$.
\item For $\bm l=(l_t,...,l_0,l;m,m_0,...,m_t)$, $\bm k=\bm l+(1_{\alpha},-1_{\alpha-1};-1_{\beta-1},1_{\beta})$, with $0\leq\alpha,\beta\leq t$, there are two intermediate elements $\bm k'=\bm l+(1_{\alpha},-1_{\alpha-1};0_\bullet)$, $\bm k''=\bm l+(0_\bullet;-1_{\beta-1},1_{\beta})$; the domain of $g_{\bm l,\bm k}$ is
$$
{\rm Hom}(I_{\bm k'},I_{\bm k})\otimes_{{\rm End}I_{\bm k'}}{\rm Hom}(I_{\bm l},I_{\bm k'})\oplus {\rm Hom}(I_{\bm k''},I_{\bm k})\otimes_{{\rm End}I_{\bm k''}}{\rm Hom}(I_{\bm l},I_{\bm k''}) \cong \CC[\mk S_{\bm l+(1_\alpha;1_{\beta})}]^{\oplus 2}
$$
as an $(\mk S_{\bm l},\mk S_{\bm k})$-bimodule (the two summands are isomorphic), and the kernel of $g_{\bm l,\bm k}$ is generated by
$$
f_{\bm l,\bm k}= f^{(\alpha)}_{\bm k''}\otimes \bar f^{(\beta)}_{\bm l} - \bar f^{(\beta)}_{\bm k'}\otimes f^{(\alpha)}_{\bm l} \;.
$$
\item For $\bm l=(l_t,...,l_0,l;m,m_0,...,m_t)$, $\bm k=\bm l+(2_{\alpha},-2_{\alpha-1};0_\bullet)$, with $0\leq\alpha\leq t$, there is one intermediate element $\bm k'=\bm l+(1_{\alpha},-1_{\alpha-1};0_\bullet)$; the domain of $g_{\bm l,\bm k}$ is
$$
{\rm Hom}(I_{\bm k'},I_{\bm k})\otimes_{{\rm End}I_{\bm k'}}{\rm Hom}(I_{\bm l},I_{\bm k'}) \cong \CC[\mk S_{\bm l+(2_{\alpha};0_\bullet)}]
$$
as an $(\mk S_{\bm l},\mk S_{\bm k})$-bimodule, and the kernel of $g_{\bm l,\bm k}$ is generated by
$$
f_{\bm l,\bm k}= s' \circ (f^{(\alpha)}_{\bm k'}\otimes f^{(\alpha)}_{\bm l}) - (f^{(\alpha)}_{\bm k'}\otimes f^{(\alpha)}_{l_{\alpha}}) \circ s \;,
$$
where $s\in \mk S_{l_{\alpha-1}}\subset \mk S_{\bm l}$ is the transposition of the last two tensorands in $(V^*_{\alpha-1})^{\otimes l_{\alpha-1}}$ as a tensorand of $I_{\bm l}$ and $s'\in \mk S_{l_{\alpha}+2}\subset\mk S_{\bm k}$ is the transposition of the last two tensorands of $(V^*_{\alpha})^{\otimes l_{\alpha}+2}$ as a tensorand of $I_{\bm k}$.
\item For $\bm l=(l_t,...,l_0,l;m,m_0,...,m_t)$, $\bm k=\bm l+(1_{\alpha},-1_{\alpha-1};0_\bullet)+(1_{\beta},-1_{\beta-1};0_\bullet)$, with $0\leq\alpha<\beta\leq t$, there are two intermediate elements $\bm k'=\bm l+(1_{\alpha},-1_{\alpha-1};0_\bullet)$, $\bm k''=\bm l+(1_{\beta},-1_{\beta-1};0_\bullet)$; the domain of $g_{\bm l,\bm k}$ is
$$
{\rm Hom}(I_{\bm k'},I_{\bm k})\otimes_{{\rm End}I_{\bm k'}}{\rm Hom}(I_{\bm l},I_{\bm k'})\oplus {\rm Hom}(I_{\bm k''},I_{\bm k})\otimes_{{\rm End}I_{\bm k''}}{\rm Hom}(I_{\bm l},I_{\bm k''}) \cong \CC[\mk S_{\bm l+(1_{\beta},1_\alpha;0_\bullet)}]^{\oplus 2}
$$
as an $(\mk S_{\bm l},\mk S_{\bm k})$-bimodule (the two summands are isomorphic), and the kernel of $g_{\bm l,\bm k}$ is generated by an element $f_{\bm l,\bm k}$ determined depending on $\beta-\alpha$ as follows:
\begin{enumerate}
\item if $\beta=\alpha+1$ then 
$$
f_{\bm l,\bm k}= f^{(\alpha)}_{\bm k''}\otimes f^{(\alpha+1)}_{\bm l} - s\circ (f^{(\alpha+1)}_{\bm k'}\otimes f^{(\alpha)}_{\bm l}) \;,
$$
where $s\in \mk S_{l_{\alpha}+1}\subset \mk S_{\bm l+(1_{\alpha+1},1_\alpha;0_\bullet)}$ is the transposition of the last two tensorands in $(V^*_{\alpha})^{\otimes l_{\alpha}}$ as a tensorand of $I_{\bm l+(1_{\alpha+1},1_\alpha;0_\bullet)}$.
\item if $\beta>\alpha+1$ then 
$$
f_{\bm l,\bm k}= f^{(\alpha)}_{\bm k''}\otimes f^{(\beta)}_{\bm l} - f^{(\beta)}_{\bm k'}\otimes f^{(\alpha)}_{\bm l} \;,
$$
where $s\in \mk S_{l_{\alpha}+1}\subset \mk S_{\bm l+(1_{\alpha+1},1_\alpha;0_\bullet)}$ is the transposition of the last two tensorands in $(V^*_{\alpha})^{\otimes l_{\alpha}}$ as a tensorand of $I_{\bm l+(1_{\alpha+1},1_\alpha;0_\bullet)}$.
\end{enumerate}
\end{enumerate}
\end{prop}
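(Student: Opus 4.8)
The plan is to extract the description of the relations from the quadraticity of $\mc A$ (Theorem~\ref{Theo IT Koszul A quadr}) together with the explicit knowledge of $\mc A_0\cong\bigoplus_{\bm l\in{\bf P}}\CC[\mk S_{\bm l}]$ (Proposition~\ref{Prop End Ilm in IT}) and of the degree-one morphisms $\Xi^1(I_{\bm l})$ (Definition~\ref{Def Xibml}). Quadraticity gives $\mc A\cong T_{\mc A_0}(\mc A_1)/(R)$ with $R={\rm ker}(\mc A_1\otimes_{\mc A_0}\mc A_1\to\mc A_2)$, so it suffices to identify $R$. Using the grading $\mc A_d=\bigoplus_{\bm l,\bm k:\,\bm k\in{\bf P}^d(\bm l)}{\rm Hom}_{{\bf T}}(I_{\bm l},I_{\bm k})$ together with the additivity of the ${\bf P}$-layering along composable morphisms (which is built into the poset structure recorded in Remark~\ref{Rem mcPl has finite assend chains} and formulas (\ref{For Pq additive})--(\ref{For Pql compute})), the multiplication map splits as a direct sum, over pairs $\bm l\stackrel{\bf P}{\succ}\bm k$ with $\bm k\in{\bf P}^2(\bm l)$, of the composition maps $g_{\bm l,\bm k}$; hence $R=\bigoplus_{\bm l,\bm k}{\rm ker}g_{\bm l,\bm k}$. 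Each $g_{\bm l,\bm k}$ is surjective, since this is precisely the assertion that $\mc A$ is generated in degree one, which is part of quadraticity.

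Next I would fix a distance-two pair $\bm l\stackrel{\bf P}{\succ}\bm k$ and run through the intermediate elements. By Lemma~\ref{Lemma P1 and IP1}, every $\bm k'$ with $\bm l\stackrel{\bf P}{\succ}\bm k'\stackrel{\bf P}{\succ}\bm k$ arises by applying one elementary alteration of type (i), (ii) or (iii) to $\bm l$ and a second one to $\bm k'$; sorting the unordered pairs of alterations by which coordinates of $\bm l$ they touch produces precisely the list (1)--(5) above, together with the evident $V^*\leftrightarrow\bar V$ mirror images. There is a single intermediate element exactly when the two alterations compete for one coordinate and only one order of composition is admissible in ${\bf P}$ (cases (1) and (4)); in all other cases there are two. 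Given the intermediate elements, the $(\mk S_{\bm l},\mk S_{\bm k})$-bimodule structure on $\bigoplus_{\bm k'}{\rm Hom}(I_{\bm k'},I_{\bm k})\otimes_{{\rm End}I_{\bm k'}}{\rm Hom}(I_{\bm l},I_{\bm k'})$ is computed from Proposition~\ref{Prop End Ilm in IT} and the identification of ${\rm Hom}(I_{\bm l},I_{\bm k'})$, for $\bm k'\in{\bf P}^1(\bm l)$, with the span of the elementary morphisms in $\Xi^1(I_{\bm l})$ of the appropriate type, with $\mk S_{\bm l}$ acting on the source and $\mk S_{\bm k'}$ on the target by permutation of tensorands; this yields the induced-module descriptions recorded case by case.

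It then remains to pin down ${\rm ker}g_{\bm l,\bm k}$. In every case the displayed element $f_{\bm l,\bm k}$ lies in the kernel, because its two terms, once composed, define the same morphism $I_{\bm l}\to I_{\bm k}$: this is literal commutativity of elementary morphisms supported on disjoint tensorands when the two alterations are independent (cases (3), (5b) and the mirror images), and it uses the symmetry of the pairing ${_I}{\bf p}$ (cases (1), (2)) or the functoriality of the Schur projections, encoded by the transpositions $s,s'$, when a tensorand is shared (cases (4), (5a)). That $(\CC[\mk S_{\bm k}]\otimes\CC[\mk S_{\bm l}])\cdot f_{\bm l,\bm k}$ exhausts ${\rm ker}g_{\bm l,\bm k}$ I would check by a dimension count: $\dim{\rm ker}g_{\bm l,\bm k}=\dim(\mathrm{domain})-\dim{\rm Hom}_{{\bf T}}(I_{\bm l},I_{\bm k})$, where the first quantity is read from the bimodule description, $\dim{\rm Hom}_{{\bf T}}(I_{\bm l},I_{\bm k})$ from the description of this space as the span of the length-two elementary compositions, and $\dim\bigl((\CC[\mk S_{\bm k}]\otimes\CC[\mk S_{\bm l}])\cdot f_{\bm l,\bm k}\bigr)$ from the stabilizer of $f_{\bm l,\bm k}$ in $\mk S_{\bm k}\times\mk S_{\bm l}$. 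A more robust cross-check, which I would use to guard against an omitted relation, is that Koszulity forces the total graded dimension of $R$ to match $\sum_{\bm\kappa,\bm\lambda}\dim{\rm Ext}^2_{{\bf T}}(K_{\bm\kappa},K_{\bm\lambda})$ (with the appropriate $\CC^{\bm\lambda}$-multiplicities), which is available in closed form from Corollary~\ref{Coro dimExt Tkk Tll}; comparing this with $\sum\dim{\rm ker}g_{\bm l,\bm k}$ graded by $(\bm l,\bm k)$ confirms the claimed decomposition.

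The main obstacle I anticipate is the bookkeeping in this last step: keeping track of exactly which transpositions enter $f_{\bm l,\bm k}$ in the ``interacting'' cases, and matching $\dim{\rm ker}g_{\bm l,\bm k}$ against the dimension of the cyclic bimodule it generates. The conceptual reduction --- quadraticity $\Rightarrow$ relations concentrated in degree two $\Rightarrow$ enumerate composable pairs of elementary moves --- is immediate from the results already established; the weight of the argument lies in the explicit $\mk S$-module combinatorics of each case and its agreement with the Ext-dimensions of Corollary~\ref{Coro dimExt Tkk Tll}.
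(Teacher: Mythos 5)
Your strategy is a reasonable reconstruction of what the paper (by citing \cite{Chirvasitu-Penkov-RC} and \cite{Chirvasitu-Penkov-UTC}) is appealing to: use the grading of $\mc A$ over the poset ${\bf P}$ to split $\ker(\mc A_1\otimes_{\mc A_0}\mc A_1\to\mc A_2)$ into the summands $\ker g_{\bm l,\bm k}$, get surjectivity of $g_{\bm l,\bm k}$ from the quadraticity established in Theorem~\ref{Theo IT Koszul A quadr}, enumerate the intermediate covers via Lemma~\ref{Lemma P1 and IP1}, and then identify each $\ker g_{\bm l,\bm k}$ by exhibiting the explicit element $f_{\bm l,\bm k}$ and matching dimensions. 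The cross-check against $\sum\dim{\rm Ext}^2$ via Corollary~\ref{Coro dimExt Tkk Tll} is a sensible safeguard that the cited references likewise rely on. Conceptually this is the same route as the paper.

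Two points deserve more care before this would count as a proof rather than a plan. First, you invoke ``the stabilizer of $f_{\bm l,\bm k}$ in $\mk S_{\bm k}\times\mk S_{\bm l}$'' to compute $\dim\bigl((\CC[\mk S_{\bm k}]\otimes\CC[\mk S_{\bm l}])\cdot f_{\bm l,\bm k}\bigr)$; in general the dimension of a cyclic $\CC[G]$-module is $\dim\CC[G]-\dim\mathrm{Ann}(x)$, not $|G/\mathrm{Stab}(x)|$, and these differ once the orbit becomes linearly dependent. In the cases at hand $f_{\bm l,\bm k}$ is a difference $a-b$ of group elements inside (a direct sum of copies of) a bi-regular module, and the correct count comes from identifying the two-sided ideal generated by $a-b$; this does work out, but the stabilizer heuristic is not the right formulation. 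Second, $\dim{\rm Hom}_{\bf T}(I_{\bm l},I_{\bm k})$ cannot be ``read from the span of length-two elementary compositions'' without circularity --- you must compute it independently, e.g.\ as $\sum_{\bm\lambda,\bm\kappa}\dim\CC^{\bm\lambda}\dim\CC^{\bm\kappa}\cdot[I_{\bm\lambda}:K_{\bm\kappa}]$ using the socle filtration of Proposition~\ref{Prop ITsoc Ilambdamu}. With these two repairs the dimension matching in each of the five cases becomes the genuine content of the proof, and that bookkeeping is exactly what you acknowledge you have not done.
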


\begin{proof}
The proof is a compilation of the proofs of \cite[Lemma 5.16]{Chirvasitu-Penkov-RC} and \cite[Theorem 3.33]{Chirvasitu-Penkov-UTC}.
\end{proof}

\begin{proof}[Proof of Theorem \ref{Theo Univ IT}]
We follow the strategy of \cite[Theorem 3.33]{Chirvasitu-Penkov-UTC}, \cite[Theorem 5.3]{Chirvasitu-Penkov-RC}. The general properties of tensor categories imply that the relations given Proposition \ref{Prop A QuadRel} are satisfied in $\mc D$ for the respective objects and morphisms derived from $X$ and $Y$ instead of $V^*$ and $\bar V$. Now Theorem \ref{Theo IT Koszul A quadr}, together with Proposition \ref{Prop A QuadRel}, implies that the assignment $\Phi(V^*_\alpha)=X_\alpha$, $\Phi(\bar V_\alpha)=Y_\alpha$, for $\alpha=-1,0,...,t$, $\Phi({\bf p})={\bf q}$, $\Phi(V^*/V^*_\alpha\to V^*/V^*_{\alpha+1})=X/X_{\alpha}\to X/X_{\alpha+1}$, $\Phi(\bar V/\bar V_\alpha\to\bar V/\bar V_{\alpha+1})=Y/Y_{\alpha}\to Y/Y_{\alpha+1}$, for $\alpha=-1,...,t-1$, provides a consistent definition of a functor $\Phi:{\bf T}_{\rm fin}\to\mc D$. The uniqueness of this functor, up to tensor natural isomorphism, its left-exactness, and its extension to the Grothendieck category ${\bf T}$ if $\mc D$ is a Grothendieck category, follow from standard arguments as in \cite[\S 8]{Chirvasitu-Penkov-UTC}.
\end{proof}

\begin{coro}\label{Coro ITs is ITVsWs}
Let $0\leq s\leq t$ and let $\TT(V^*_s,I^{s,s},\bar V_s)\subset \TT_t$ be the smallest full tensor Grothendieck subcategory of $\TT_t$ containing $V^*_s,\bar V_s$ and the module $I^{s,s}$, which is also a commutative subalgebra of $I$, defined at the end of section \ref{Sec Module I}. Let ${\bf T}(V^*_s,I^{s,s},\bar V_s)$ be the category, whose objects are $\mk{gl}^M$-modules in $\TT(V^*_s,I^{s,s},\bar V_s)$ which are also free as $I^{s,s}$-modules, and whose morphisms are morphisms of $\mk{gl}^M$-modules as well as of $I^{s,s}$-modules. Then ${\bf T}(V^*_s,I^{s,s},\bar V_s)$ is equivalent to the category ${\bf T}_s$ constructed from an arbitrary diagonalizable pairing between two $\aleph_s$-dimensional complex vector spaces.
\end{coro}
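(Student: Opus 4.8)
The plan is to deduce the equivalence from the universal property of Theorem~\ref{Theo Univ IT}, applied with $s$ in place of $t$, and then to recognise the resulting functor as an equivalence by comparing order-defining objects and morphism spaces, much as in the proof of Theorem~\ref{Theo smallIT smallTT}. First I would record that $(V^*_s,\bar V_s,I^{s,s})$ reproduces inside $\TT_t$ all the data from which $\TT_s$ and ${\bf T}_s$ are built: since $I^{s,s}=\lim_k S^kQ^{s,s}$ with $Q^{s,s}=\tilde{\bf p}(\bar V_s\otimes V^*_s)$, Proposition~\ref{Prop I is algebra} applies verbatim to make $I^{s,s}$ a commutative algebra, the morphism $\psi$ of \S\ref{Sec Module I} restricts to $\psi:I^{s,s}\to F^{s,s}\otimes I^{s,s}$ with $F^{s,s}=\bar V_s/\bar V_0\otimes V^*_s/V^*_0$, and the surjection ${}_{I^{s,s}}{\bf p}:(I^{s,s}\otimes V^*_s)\otimes_{I^{s,s}}(I^{s,s}\otimes\bar V_s)\to I^{s,s}$ is defined exactly as before; also $I^{s,s}\in\TT(V^*_s,\bar V_s)$, since each $S^kQ^{s,s}$ is a subquotient of $(\bar V_s\otimes V^*_s)^{\otimes k}$, so that $\TT(V^*_s,I^{s,s},\bar V_s)=\TT(V^*_s,\bar V_s)$. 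Feeding the Grothendieck tensor category $\mc D:={\bf T}(V^*_s,I^{s,s},\bar V_s)$ into Theorem~\ref{Theo Univ IT}, with $X:=I^{s,s}\otimes V^*_s$, $Y:=I^{s,s}\otimes\bar V_s$, pairing ${}_{I^{s,s}}{\bf p}$, and filtrations $X_\alpha:=I^{s,s}\otimes V^*_\alpha$, $Y_\alpha:=I^{s,s}\otimes\bar V_\alpha$, yields a left exact symmetric monoidal functor $\Phi:{\bf T}_s\to\mc D$; it is essentially surjective because $\mc D$ is generated as a Grothendieck tensor category by $X,Y$, which lie in its image.

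For full faithfulness I would pass to the ordered Grothendieck structure. By Theorem~\ref{Theo ITT is Grothendieck Cat} at level $s$, ${\bf T}_s$ has order-defining objects $I^{(s)}_{\bm l}$, $\bm l\in{\bf P}$, and $\Phi$ sends $I^{(s)}_{\bm l}$ to the ``level-$\le s$'' analogue of $I\otimes J_{\bm l}$, namely the module obtained by replacing $V^*,\bar V$ everywhere by $V^*_s,\bar V_s$. Since (as in Theorem~\ref{Theo smallIT smallTT}) a left exact functor of ordered Grothendieck categories that matches order-defining objects, preserves their socles, and is bijective on the corresponding Hom-spaces is an equivalence, it is enough to verify these three conditions. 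The socle of $\Phi(I^{(s)}_{\bm l})$, its injectivity in $\mc D$, and the equality $\mathrm{End}_{\mc D}\Phi(I^{(s)}_{\bm l})\cong\CC[\mk S_{\bm l}]$ are obtained by the computations of \S\ref{Sec Jlm}, \S\ref{Sec Ilm Ilambdamu} and Propositions~\ref{Prop socqJlm}, \ref{Prop ITsoc Ilambdamu}, \ref{Prop End Ilm in IT}, read with $V^*_s,\bar V_s,I^{s,s}$ in place of $V^*,\bar V,I$; the space $\mathrm{Hom}_{\mc D}$ between distinct order-defining objects is obtained the same way for $\bm k\prec\bm l$. The point is that the density inputs to all these computations --- Lemma~\ref{Lemma LinIndepToAny}, Proposition~\ref{Prop DenseVlambdamu PS} and Proposition~\ref{Prop DenseProperties}, applied to the ideals $\mk{gl}^M_\alpha$ with $\alpha\le s$ --- are available in exactly the same form, so that no simple constituent drops to a lower socle layer than predicted and the multiplicities are unchanged.

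The step I expect to be the main obstacle is precisely this last verification: that $\TT(V^*_s,I^{s,s},\bar V_s)$ stays inside the ``level $\le s$'' world. One must confirm that every subquotient of a tensor power of $V^*_s$, $\bar V_s$ (and of $I^{s,s}$ tensored with such) has all its simple constituents among the $L_{\bm\lambda}$ supported on coordinates $\le s$ --- so that $V^*_{s+1}/V^*_s$ and its analogues never appear --- and that the relevant socle filtrations are governed by the restrictions to coordinates $\le s$ of the posets $\mc P$, ${\bf P}$, matching those of $\TT_s$, ${\bf T}_s$. This rests on the regularity of $\aleph_s$: the condition $|\mathrm{supp}(\cdot)|<\aleph_s$ is closed under finite unions, hence under tensor multiplication of the modules involved, and $I^{s,s}$ is assembled from $\bar V_s/\bar V_0\otimes V^*_s/V^*_0$, which remains in this regime. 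Granting this, the identification of the posets, order-defining objects and morphism spaces of $\TT(V^*_s,I^{s,s},\bar V_s)$ and ${\bf T}(V^*_s,I^{s,s},\bar V_s)$ with those of $\TT_s$ and ${\bf T}_s$ is routine bookkeeping, and the equivalence $\Phi$ follows.
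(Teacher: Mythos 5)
Your strategy -- build $\Phi$ from the universal property of ${\bf T}_s$ and then recognize it as an equivalence via the ordered Grothendieck structure, mimicking Theorem~\ref{Theo smallIT smallTT} -- is the natural one, and since the paper gives no proof of this corollary there is no argument of the authors to compare against. However, there is an index mismatch in the data you feed into the universal property, and I believe it is a genuine problem rather than a bookkeeping quibble.

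Theorem~\ref{Theo Univ IT}, applied at level $s$, requires an object $X$ with a filtration $0=X_{-1}\subset X_0\subset\ldots\subset X_{s+1}=X$, that is, $s+2$ successive inclusions. This matches the fact that in ${\bf T}_s$ the generator $I^{(s)}\otimes V^{*,(s)}$, where $V^{*,(s)}$ is the full dual of an $\aleph_s$-dimensional space, has exactly $s+2$ socle layers $V^{*,(s)}_0,\,V^{*,(s)}_1/V^{*,(s)}_0,\ldots,V^{*,(s)}_{s+1}/V^{*,(s)}_s$. You take $X:=I^{s,s}\otimes V^*_s$ with $X_\alpha:=I^{s,s}\otimes V^*_\alpha$; but $V^*_\alpha$ is defined only for $\alpha\le s$, and the $\mk{gl}^M$-module $V^*_s$ (support $<\aleph_s$ inside the $\aleph_t$-dimensional picture) has exactly $s+1$ socle layers $V^*_0,\,V^*_1/V^*_0,\ldots,V^*_s/V^*_{s-1}$ -- one fewer than $V^{*,(s)}$. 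So your filtration necessarily collapses one level (effectively $X_{s+1}=X_s$), the universal functor $\Phi$ factors through a length-$(s+1)$ datum, and since any equivalence of abelian categories preserves socle lengths, this $\Phi$ cannot be an equivalence onto $\mathcal D$. The same off-by-one appears for the unit: $I^{(s)}$ is built from $Q^{(s)}=\tilde{\bf p}\bigl(\bar V^{(s)}_{s+1}\otimes V^{*,(s)}_{s+1}\bigr)$, which under the natural dictionary corresponds to $I^{s+1,s+1}$, not $I^{s,s}$.

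Concretely, what your argument would establish (once the remaining socle and endomorphism computations are carried out with the level-restricted versions of Propositions~\ref{Prop socqJlm}, \ref{Prop ITsoc Ilambdamu}, \ref{Prop End Ilm in IT}) is ${\bf T}(V^*_s,I^{s,s},\bar V_s)\simeq{\bf T}_{s-1}$, equivalently the statement should read ${\bf T}(V^*_{s+1},I^{s+1,s+1},\bar V_{s+1})\simeq{\bf T}_s$ for $0\le s\le t$. A quick sanity check: in the corrected version, $s=t$ gives the tautology ${\bf T}(V^*_{t+1},I^{t+1,t+1},\bar V_{t+1})={\bf T}(V^*,I,\bar V)={\bf T}_t$, whereas the current statement at $s=t$ would identify ${\bf T}(V^*_t,I^{t,t},\bar V_t)$, whose order-defining object $I^{t,t}\otimes V^*_t$ has $t+1$ socle layers, with ${\bf T}_t$, whose corresponding object has $t+2$ -- impossible. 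I would flag this re-indexing to the authors; modulo it, the outline you give -- universal-property functor, then matching order-defining objects, socles and endomorphism algebras -- is the correct approach. Also, be aware that your essential-surjectivity sentence ("$\mathcal D$ is generated by $X,Y$, which lie in the image") is too quick for a merely left exact $\Phi$; it is cleaner to conclude equivalence directly from the identification of the algebra of morphisms between order-defining objects, as in the proof of Theorem~\ref{Theo smallIT smallTT}.
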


\bibliographystyle{plain}

\small{

}

\noindent Ivan Penkov\\
Constructor University, 28759 Bremen, Germany\\
E-mail address: ipenkov@constructor.university\\

\noindent Valdemar Tsanov\\
Institute of Mathematics and Informatics, Bulgarian Academy of Sciences,\\
Bulgaria, Sofia 1113, Acad. G. Bonchev Str., Bl. 8\\
and Constructor University, 28759 Bremen, Germany.\\
E-mail address: valdemar.tsanov@math.bas.bg\\

\end{document}